\newcommand{\id}{\mathrm{id}}
\newcommand{\supp}{\mathrm{supp}}
\newcommand{\ndN}{\mathbb{N}}
\newcommand{\ndZ}{\mathbb{Z}}
\newcommand{\fK}{\Bbbk}
\newcommand{\B}{\mathcal{B}}
\newcommand{\ot}{\otimes}
\newcommand{\tri}{\triangleright}
\newcommand{\pair}{\left\langle\cdot,\cdot\right\rangle}
\theoremstyle{plain}
\newtheorem{thm}{Theorem}[section]
\newtheorem*{thm*}{Theorem}
\newtheorem*{classification*}{Classification theorem}
\newtheorem{lem}[thm]{Lemma}
\newtheorem{cor}[thm]{Corollary}
\newtheorem*{notation*}{Notation}
\newtheorem{prop}[thm]{Proposition}
\newtheorem{defi}[thm]{Definition}
\theoremstyle{remark}
\newtheorem{rema}[thm]{Remark}
\newtheorem*{rema*}{Remark}
\newtheorem{exa}[thm]{Example}
\numberwithin{equation}{section}
\begin{document}

\title[Left Coideal subalgebras of Nichols algebras]{Left Coideal subalgebras of Nichols algebras\footnote{This work was supported by the Deutsche Forschungsgemeinschaft under grant number HE 5375/9-1.}}
\author{Istvan Heckenberger}
\address{Fachbereich Mathematik und Informatik,
Philipps-Universit\"at Marburg,
Hans-Meerwein-Str.~6, 35032 Marburg, Germany}
\email{heckenberger@mathematik.uni-marburg.de}

\author{Katharina Sch\"afer}
\address{Fachbereich Mathematik und Informatik,
Philipps-Universit\"at Marburg,
Hans-Meerwein-Str.~6, 35032 Marburg, Germany}
\email{schaef7a@mathematik.uni-marburg.de}

\begin{abstract} 
  We determine all Nichols algebras of finite-di\-men\-sion\-al Yetter-Drinfeld modules over groups such that all its left coideal subalgebras in the category of $\ndN_0$-graded comodules over the group algebra are generated in degree one as an algebra. Here we confine ourselves to Yetter-Drinfeld modules in which each group-homogeneous component is at most one-dimensional. We present a strategy to extend left coideal subalgebras by adding a suitable generator in degree two, three or four to a smaller left coideal subalgebra. We also discuss some methods for the construction of left coideal subalgebras of a Nichols algebra in the category of $\ndN_0$-graded $H$-comodules, where $H$ is a Hopf algebra, that is not necessarily a group algebra. 
\end{abstract}

\maketitle

\section*{Introduction}
In a Nichols algebra $\B(V)$ of an irreducible Yetter-Drinfeld module $V$ over a Hopf algebra $H$ with bijective antipode the only non-zero left coideal subalgebras in the category of Yetter-Drinfeld modules over $H$ are $\fK 1$ and $\B(V)$ (see \cite[Lemma~14.1.1]{MR4164719}). However, typically there are more left coideal subalgebras of $\B(V)$ in the category of $\ndN_0$-graded $H$-comodules. Left coideal subalgebras of $\B(V)$ which are $\ndN_0$-graded $H$-comodules are in bijection with $\ndN_0$-graded left coideal subalgebras of the bosonization $\B(V)\# H$ intersecting $H$ trivially (by \cite[Lemma~12.4.4]{MR4164719} and since the objects are $\ndN_0$-graded). One-sided coideal subalgebras of pointed braided Hopf algebras are studied in several recent papers focusing on different aspects (e.g. \cite{MR3552907}, \cite{MR3096611}, \cite{MR2179722}, \cite{MR2415067}, \cite{MR3413681}, \cite{heckenberger2022decomposition}).
 
 We determine all Nichols algebras of finite-dimensional Yetter-Drin\-feld modules over groups $G$ such that all its left coideal subalgebras in the category of $\ndN_0$-graded $\fK G$-comodules are generated in degree one as an algebra. Here we confine ourselves to Yetter-Drinfeld modules in which each $G$-homogeneous component is at most one-dimensional. In \cite{MR3552907}, subalgebras of Fomin-Kirillov algebras are studied which are generated in degree one by $\mathbb{S}_n$-homogeneous elements. These subalgebras are by construction left coideal subalgebras in the category of $\ndN_0$-graded $\fK\mathbb{S}_n$-comodules.  
 
 To decide whether a given Nichols algebra contains a left coideal subalgebra that is not generated in degree one we describe and study the braided vector space of the underlying Yetter-Drinfeld module in terms of racks and two-cocycles. It turns out that there are only six indecomposable racks of conjugacy classes and in each case only a few two-cocycles such that all such left coideal subalgebras of the corresponding Nichols algebra are generated in degree one (see Theorem~\ref{thm:mainthm}). In Section~\ref{se:Introductionlcsa} we discuss left coideal subalgebras of Nichols algebras over a Hopf algebra $H$, that is not necessarily a group algebra. There we give some methods for the construction of left coideal subalgebras of $\B(V)$ in the category of $\ndN_0$-graded $H$-comodules. In particular, Theorem~\ref{thm:mulbijnoextension} provides a useful condition which implies that all such left coideal subalgebras of $\B(V)$ are generated in degree one. In Section~\ref{se:different group realization} we study the description of braided vector spaces coming from Yetter-Drinfeld modules over group algebras in terms of racks and two-cocycles. We provide arguments which show that it suffices to answer the question of the existence of not primitively generated left coideal subalgebras for indecomposable racks of conjugacy classes with two-cocycles and their Yetter-Drinfeld realization over the enveloping group of the rack. In Section~\ref{se:Extensionsdeg234} we present a strategy to extend left coideal subalgebras by adding a suitable degree $2$, $3$ or $4$ generator to a smaller left coideal subalgebra. In Sections~\ref{se:Transpos}-\ref{se:cube} we discuss racks of transpositions, the rack associated to the vertices of a tetrahedron and the rack associated to the faces of a cube in detail. We determine all two-cocycles on these racks for which all left coideal subalgebras of the corresponding Nichols algebra are generated in degree one. In Section~\ref{se:rackswithsubracks} we prove that these (and the rack with one element) are the only indecomposable racks of conjugacy classes that admit two-cocycles such that all left coideal subalgebras are generated in degree one.       

\newpage
\section{Preliminaries}
We recall some definitions and facts about Yetter-Drinfeld modules and Nichols algebras. Our main reference is the book \cite{MR4164719}. 

Let $\fK$ be an algebraically closed field with $\operatorname{char}\fK=0$. Let $H$ be a Hopf algebra over $\fK$ with invertible antipode $S$. We use Sweedler notation for the comultiplication, i.e. writing $\Delta(h)=h_{(1)}\ot h_{(2)}$ for all $h\in H$. For the left coaction of a left $H$-comodule $V$ we use the notation ${^H}\delta(v)=v_{(-1)}\ot v_{(0)}$ for all $v\in V$ and we abbreviate ${^H}\delta=\delta$ if no confusion is possible in the context. A \textbf{Yetter-Drinfeld module} over $H$ is a left $H$-module $V$ that is also a left $H$-comodule with structure map $\delta:V\rightarrow H\ot V$ satisfying the compatibility condition \[\delta(h\cdot v)=h_{(1)}v_{(-1)}S(h_{(3)})\ot h_{(2)}\cdot v_{(0)}\] for all $h\in H$, $v\in V$. We denote the category of Yetter-Drinfeld modules over $H$ by ${^H_H}\mathcal{YD}$. Morphisms in ${^H_H}\mathcal{YD}$ are morphisms of $H$-modules and $H$-comodules. The category of Yetter-Drinfeld modules over $H$ is a braided monoidal category (see e.g. \cite[Theorem~3.4.13]{MR4164719}). Let $V, W$ be Yetter-Drinfeld modules over $H$. The monoidal structure is given by diagonal action and diagonal coaction on $V\ot W$. The braiding \[c_{V,W}: V\ot W\rightarrow W\ot V\] is defined by $c_{V,W}(v\ot w)=v_{(-1)}\cdot w\ot v_{(0)}$ for all $v\in V$ and $w\in W$. In particular, $V$ is a braided vector space with braiding $c_V=c_{V,V}$, i.e. $c_V$ is a linear automorphism of $V^{\ot 2}$ satisfying the so-called Yang-Baxter equation \[(c_V\ot\id)(\id\ot c_V)(c_V\ot\id)=(\id\ot c_V)(c_V\ot\id)(\id\ot c_V).\]

The \textbf{Artin braid group} $\mathbb{B}_n$ is the group generated by elements $\sigma_1,\dots,\sigma_{n-1}$ with the \textbf{braid relations}
\begin{align}
&\sigma_i\sigma_{i+1}\sigma_i = \sigma_{i+1}\sigma_{i}\sigma_{i+1} &&\mathrm{for\ all}\ 1 \leq i \leq n - 2,\label{eq:braid121}\\ 
&\sigma_{i}\sigma_j = \sigma_j\sigma_{i} && \mathrm{for\ all}\ 1 \leq i, j \leq n - 1, \left|i - j\right| > 1.\label{eq:braid12}
\end{align}   
Let $\left\{s_1,...,s_{n-1}\right\}$ be the standard generators $s_i=(i \ i+1)$ of the symmetric group $\mathbb{S}_n$. Then $\nu:\mathbb{S}_n\rightarrow\mathbb{B}_n$ defined by mapping an element $s=s_{i_1}s_{i_2}\cdot\cdot\cdot s_{i_m}\in\mathbb{S}_n$ onto  $\sigma=\sigma_{i_1}\sigma_{i_2}\cdot\cdot\cdot \sigma_{i_m}$, where $m$ is the length of the permutation $s$, is a well-defined map, the so-called Matsumoto section. 
 Let $V$ be a braided vector space with braiding $c$. For $1\leq i\leq n-1$ define $c_i\in \mathrm{Aut}(V^{\ot n})$ by applying $c$ at the $i$-th position. Then \[\tau:\mathbb{B}_n \rightarrow\mathrm{Aut}(V^{\ot n}),\quad \sigma_i\mapsto c_i\] is a group homomorphism since the automorphisms $c_i$ satisfy the braid relations. For $n\geq 2$ and $1\leq i\leq n-1$ the \textbf{braided symmetrizer} map $S_n: V^{\ot n}\rightarrow V^{\ot n}$ is defined by \[S_n=\sum\limits_{s\in\mathbb{S}_n} \tau(\nu(s)).\] The \textbf{Nichols algebra} of a braided vector space $(V,c)$ is \[\B(V)=\bigoplus_{n\in\ndN_0}\B(V)(n)=\fK \oplus V \oplus\bigoplus_{n\geq 2} T(V)(n)/\mathrm{ker}(S_n),\] where $T(V)$ is the tensor algebra of $(V,c)$.

If $H$ is a Hopf algebra with bijective antipode and if $V$ is a Yetter-Drinfeld module over $H$ with Yetter-Drinfeld braiding $c_V$, then the Nichols algebra $\B(V)$ becomes a Hopf algebra in the category ${^H_H}\mathcal{YD}$. For details see e.g. \cite[Section~7.1]{MR4164719}. But note that the Nichols algebra as an algebra and a coalgebra only depends on the braided vector space $(V,c_V)$.

For an $\ndN_0$-graded coalgebra $C$ with comultiplication $\Delta$ we define \[\Delta_{1^n}=\pi_1^{\ot n}\Delta^{n-1}:C(n)\rightarrow C(1)^{\ot n}\] for all $n\in\ndN$, where $\pi_1$ is the projection onto degree $1$. As endomorphisms of $V^{\ot n}$, where $V$ is a braided vector space, the braided symmetrizer map $S_n$ equals the map $\Delta_{1^n}$, where $\Delta$ is the comultiplication of the tensor algebra $T(V)$ (see \cite[Corollary~6.4.9]{MR4164719}). 
We define the components of the comultiplication of an $\ndN_0$-graded coalgebra $C$ as 
\begin{align}\label{eq:comulcomp}
\Delta_{i,n-i}=(\pi_i\ot\pi_{n-i})\Delta: C(n)\rightarrow C(i)\ot C(n-i),
\end{align} 
where $n\in\ndN_0$, $0\leq i\leq n$ and $\pi_i$ is the projection onto degree $i$. We will mostly use the components $\Delta_{1,n-1}$ and $\Delta_{n-1,1}$. Proposition~\ref{prop:partialleftcoideal} and Proposition~\ref{pro:minmaxlcsa} show in which way they are useful to construct left coideal subalgebras. The following formulas are very practical to compute them in the tensor algebra $T(V)$ over a braided vector space $(V,c)$. 
\begin{align}
\Delta^{T(V)}_{1,n-1}:\ &T(V)(n)=V^{\ot n}\rightarrow V\ot T(V)(n-1)=V^{\ot n} \nonumber\\
&x\mapsto (1+c_1+c_1c_2+...+c_1c_2\cdots c_{n-1})(x),\label{eq:partial1n-1}\\
\Delta^{T(V)}_{n-1,1}:\ &T(V)(n)=V^{\ot n}\rightarrow T(V)(n-1)\ot V=V^{\ot n}\nonumber\\
&x\mapsto (1+c_{n-1}+c_{n-1}c_{n-2}+...+ c_{n-1}c_{n-2}...c_1)(x).\label{eq:partialn-11}
\end{align}
Informations about how to derive them can be found in \cite[Sections~1.7-1.9]{MR4164719} and in particular in \cite[Theorem~1.9.1]{MR4164719}.

For the kernel of the counit $\varepsilon$ of a coalgebra $C$ we use the symbol $C^+=\ker(\varepsilon\vert C)$. If $C$ is a connected $\ndN_0$-graded coalgebra, then $C^+=\bigoplus_{i\geq 1}C(i)$.

We denote the multiplication in an algebra $A$ by $\mu$ and the subalgebra (with one) generated by a subset $B\subseteq A$ by $\langle B\rangle$.

\section{Introduction to left coideal subalgebras}\label{se:Introductionlcsa}

In this section, let $H$ be a Hopf algebra with bijective antipode $S$ and let $V\in {^H_H}\mathcal{YD}$.

\begin{defi}
A \textbf{left coideal} of $\B(V)$ is a subspace $C\subseteq\B(V)$ such that $\Delta(C)\subseteq\B(V)\ot C$. A \textbf{left coideal subalgebra of $\B(V)$ in the category of $\ndN_0$-graded (left) $H$-comodules} is an $\ndN_0$-graded left $H$-subcomodule of $\B(V)$ which is a subalgebra and a left coideal\footnote{Note that each left coideal of $\B(V)$ contains 1 since $\B(V)$ is connected.}. 
\end{defi}
 
\begin{defi}\label{def:lcsa}
 Let $C\subseteq\B(V)$ be a left coideal subalgebra in the category of $\ndN_0$-graded $H$-comodules. We say that $C$ is \textbf{generated in degree one} if $C=\langle C(1)\rangle$. We say that $C$ can be extended in degree $n$ if there is an element $x\in\B(V)(n)$ such that $x\notin C$ and $\langle C,x\rangle$ is a left coideal subalgebra in the category of $\ndN_0$-graded $H$-comodules. In this case we call $x$ an \textbf{extension} of $C$.
 \end{defi} 

To prove that a given subspace is a left coideal subalgebra, we will often use the following tools.

\begin{lem}\label{le:degreeonepartlc}
Let $0\neq C\subseteq\B(V)$ be a left coideal. If $C\cap V=0$, then $C=\fK 1$.
\end{lem}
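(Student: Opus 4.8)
The plan is to show that a nonzero left coideal $C$ with $C\cap V=0$ contains nothing outside $\fK 1$, by peeling off the top homogeneous degree of an element of $C$ and inducting. Apart from connectedness of $\B(V)$ and the identity $\B(V)(1)=V$, the only structural fact I would use is that $\Delta_{1,n-1}$ is injective on $\B(V)(n)$ for every $n\ge 1$. This is immediate from the excerpt: $\B(V)(n)=T(V)(n)/\ker(S_n)$ and $S_n=\Delta^{T(V)}_{1^n}$ by \cite[Corollary~6.4.9]{MR4164719}, so the induced map $\Delta_{1^n}\colon\B(V)(n)\to V^{\ot n}$ is injective; and since $\Delta_{1^n}=(\id\ot\Delta_{1^{n-1}})\circ\Delta_{1,n-1}$ on $\B(V)(n)$, already $\Delta_{1,n-1}$ is injective on $\B(V)(n)$.

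First I would note that $1\in C$ (this is the remark in the footnote). Given $0\neq c\in C$ of top degree $N$, choose $g\in\B(V)(N)^{\ast}$ with $g(\pi_N(c))\neq 0$ and extend $g$ by zero to $\B(V)$; by connectedness the only contribution to $\Delta(c)$ whose first tensor leg has degree $N$ is $\pi_N(c)\ot 1$, so $(g\ot\id)\Delta(c)=g(\pi_N(c))\cdot 1$, which lies in $C$ because $\Delta(c)\in\B(V)\ot C$. Hence $1\in C$, so $C=\fK 1\oplus(C\cap\B(V)^{+})$ and it suffices to prove $C\cap\B(V)^{+}=0$.

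For the main step I would argue by contradiction. Suppose $C\cap\B(V)^{+}\neq 0$ and choose $0\neq c\in C\cap\B(V)^{+}$ of minimal top degree $N\ge 1$, with homogeneous components $\pi_1(c),\dots,\pi_N(c)$, $\pi_N(c)\neq 0$. If $N=1$ then $c=\pi_1(c)\in C\cap V=0$, a contradiction; so $N\ge 2$. Applying $\pi_1\ot\id$ to $\Delta(c)\in\B(V)\ot C$ and using the graded comultiplication gives
\[
\pi_1(c)\ot 1+\sum_{n=2}^{N}\Delta_{1,n-1}(\pi_n(c))\ \in\ V\ot C .
\]
Writing the left-hand side as $\sum_i v_i\ot e_i$ for a basis $\{v_i\}$ of $V$, each $e_i$ lies in $C$ and in $\bigoplus_{j=0}^{N-1}\B(V)(j)$; subtracting its degree-zero part, which lies in $\fK 1\subseteq C$, leaves an element of $C\cap\B(V)^{+}$ of top degree $<N$, hence $0$ by minimality of $N$. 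Therefore $\sum_{n=2}^{N}\Delta_{1,n-1}(\pi_n(c))=0$, and comparing components of degree $N$ forces $\Delta_{1,N-1}(\pi_N(c))=0$; by injectivity of $\Delta_{1,N-1}$ on $\B(V)(N)$ this gives $\pi_N(c)=0$, the desired contradiction. Hence $C\cap\B(V)^{+}=0$, i.e.\ $C=\fK 1$.

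The point I expect to need the most care is this last argument, precisely because $C$ is not assumed $\ndN_0$-graded: one cannot just pass to homogeneous components, since $\pi_i(c)$ need not lie in $C$ for $c\in C$. What makes the induction work is the double fact that $\pi_1\ot\id$ sends $\B(V)\ot C$ into $V\ot C$ (keeping the \emph{second} tensor factor inside $C$) and that the degree-zero part of any element of $C$ is a scalar multiple of $1$, which has already been shown to lie in $C$; and one must induct on the \emph{top} degree in order to be able to invoke injectivity of $\Delta_{1,n-1}$. Everything else is routine bookkeeping with the graded comultiplication.
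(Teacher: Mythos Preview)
Your proof is correct. The paper itself does not give an argument for this lemma: it simply refers to \cite[Corollary~1.3.11]{MR4164719}, where the statement is proved in greater generality for connected strictly graded coalgebras. Your argument is therefore not a comparison target in the usual sense; rather, you supply a self-contained proof using only facts already recorded in the present paper (connectedness, injectivity of $\Delta_{1,n-1}$ on $\B(V)(n)$, and the decomposition $\B(V)=\fK 1\oplus\B(V)^+$). The idea---pass to the minimal top degree of a putative positive element of $C$, apply $\pi_1\ot\id$ to $\Delta$, and use the left coideal condition to push the resulting second tensor factors back into $C$---is exactly the standard one underlying the cited result, and you handle the one genuine subtlety (that $C$ is not assumed $\ndN_0$-graded) cleanly by first securing $1\in C$ and then stripping off degree-zero parts before invoking minimality.
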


\begin{proof}
A proof in a much more general setting can be found in \cite[Corollary~1.3.11]{MR4164719}.     
\end{proof}

\begin{prop}\label{prop:generatedalgebralcsa}
Let $W\subseteq \B(V)$ be an $\ndN_0$-graded $H$-subcomodule. If $W$ is a left coideal, then the subalgebra $\langle W\rangle\subseteq\B(V)$ is a left coideal subalgebra in the category of $\ndN_0$-graded $H$-comodules. 
\end{prop}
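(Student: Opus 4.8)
The plan is to realize $\langle W\rangle$ as a sum of $k$-fold products of $W$ with itself, to prove that the product of two $\ndN_0$-graded $H$-subcomodules which are left coideals is again of that kind, and then to induct on the number of factors.

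First I would write $\langle W\rangle=\sum_{k\ge 0}W^{k}$, where $W^{0}=\fK 1$ and $W^{k}=\mu(W^{k-1}\ot W)$ is the span of all $k$-fold products of elements of $W$; this is exactly the subalgebra with $1$ generated by $W$, so $\langle W\rangle$ is automatically a subalgebra, and it remains to check that it is an $\ndN_0$-graded $H$-subcomodule and a left coideal. Since arbitrary sums of $\ndN_0$-graded $H$-subcomodules are again of that type, and since $\Delta(\sum_i C_i)\subseteq\sum_i(\B(V)\ot C_i)\subseteq\B(V)\ot\sum_i C_i$ shows the same for sums of left coideals, it suffices to prove that each $W^{k}$ is an $\ndN_0$-graded $H$-subcomodule and a left coideal.

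The key lemma, which I would isolate, is: if $C_{1},C_{2}\subseteq\B(V)$ are $\ndN_0$-graded $H$-subcomodules and left coideals, then so is $C_{1}C_{2}:=\mu(C_{1}\ot C_{2})$. That $C_{1}C_{2}$ is $\ndN_0$-graded is immediate from $\mu(C_1(i)\ot C_2(j))\subseteq\B(V)(i+j)$; that it is an $H$-subcomodule follows because $\B(V)$ is an algebra in ${^H_H}\mathcal{YD}$, so $\mu$ is a morphism of $H$-comodules and sends the subcomodule $C_1\ot C_2$ into an $H$-subcomodule. The left-coideal property is the part that needs care: $\B(V)$ is a Hopf algebra in ${^H_H}\mathcal{YD}$, so $\Delta$ is an algebra map only with respect to the braided tensor product multiplication $(a\ot b)(c\ot d)=a\,(b_{(-1)}\cdot c)\ot b_{(0)}\,d$ on $\B(V)\ot\B(V)$. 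For $x\in C_1$ and $y\in C_2$ I would write $\Delta(x)=x'\ot x''$ with $x''\in C_1$ and $\Delta(y)=y'\ot y''$ with $y''\in C_2$ (using the left-coideal hypotheses), so that
\[\Delta(xy)=\Delta(x)\Delta(y)=x'\,(x''_{(-1)}\cdot y')\ot x''_{(0)}\,y''.\]
Since $C_1$ is an $H$-subcomodule, $x''_{(0)}\in C_1$, whence $x''_{(0)}\,y''\in C_1 C_2$, while the first tensor factor lies in $\B(V)$; thus $\Delta(xy)\in\B(V)\ot C_1 C_2$.

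From here it is routine: $W^{0}=\fK 1$ and $W^{1}=W$ are $\ndN_0$-graded $H$-subcomodules and left coideals (the first trivially, the second by hypothesis), and $W^{k}=W^{k-1}\,W$ inherits these properties from the lemma, so $\langle W\rangle=\sum_{k\ge 0}W^{k}$ is an $\ndN_0$-graded $H$-subcomodule and a left coideal, and being a subalgebra it is a left coideal subalgebra of $\B(V)$ in the category of $\ndN_0$-graded $H$-comodules. The main obstacle is precisely the computation of $\Delta(xy)$: one has to remember that in a braided Hopf algebra the coproduct is multiplicative only for the braided tensor product, and it is exactly the subcomodule hypothesis on $W$ that keeps the braiding term $x''_{(0)}$ inside $C_1$ — everything else is formal manipulation with graded subobjects.
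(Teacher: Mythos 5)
Your proof is correct and takes essentially the same approach as the paper: the paper likewise uses that $\Delta$ is an algebra map for the braided tensor product and that the subcomodule hypothesis forces $c(W\ot\B(V))\subseteq\B(V)\ot W$, concluding $\Delta(WW)\subseteq\B(V)\ot\langle W\rangle$ and leaving the induction over the number of factors implicit. Your explicit two-factor lemma and induction on $W^{k}$ merely make that step formal.
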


\begin{proof}
The subalgebra generated by the $\ndN_0$-graded subspace $W$ is obviously $\ndN_0$-graded by construction. Since $W$ is an $H$-subcomodule by assumption and since multiplication in $\B(V)$ is an $H$-colinear map, the subalgebra $\langle W\rangle\subseteq\B(V)$ is an $H$-subcomodule. Indeed, for all $v_1,\dots,v_n\in W$, \[\delta(v_1\cdots v_n)=v_{1(-1)}\cdots v_{n(-1)}\ot v_{1(0)}\cdots v_{n(0)}\in H\ot\langle W\rangle\] since $\delta(v_i)=v_{i(-1)}\ot v_{i(0)}\in H\ot W$ for all $i\in\{1,\dots,n\}$. Since $W$ is an $H$-subcomodule, \[c(w\ot b)=w_{(-1)}\cdot b\ot w_{(0)}\in\B(V)\ot W\] for all $w\in W$, $b\in\B(V)$ and hence $c(W\ot\B(V))\subseteq\B(V)\ot W$, where $c$ is the Yetter-Drinfeld braiding of $\B(V)$. Since $W$ is a left coideal, we know that $\Delta(W)\subseteq\B(V)\ot W$. It follows that
\begin{align*}
\Delta(WW)&=\Delta(W)\Delta(W)\subseteq(\B(V)\ot W)(\B(V)\ot W)\\
&\subseteq\B(V)c(W\ot\B(V))W\subseteq\B(V)\ot\langle W\rangle
\end{align*}
since $\Delta$ is a morphism of algebras in ${^{H}_{H}}\mathcal{YD}$. Hence $\Delta(\langle W\rangle)\subseteq\B(V)\ot\langle W\rangle$. 
\end{proof}

\begin{cor}\label{cor:degreeonegeneratedlcsa}
Let $W\subseteq V$ be an $H$-subcomodule. Then the subalgebra $\langle W\rangle\subseteq\B(V)$ is a left coideal subalgebra in the category of $\ndN_0$-graded $H$-comodules.  
\end{cor}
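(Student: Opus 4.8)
The plan is to derive Corollary~\ref{cor:degreeonegeneratedlcsa} directly from Proposition~\ref{prop:generatedalgebralcsa} by checking that an $H$-subcomodule $W\subseteq V$ satisfies all the hypotheses needed there. Recall that $V=\B(V)(1)$, so $W$ is in particular an $\ndN_0$-graded $H$-subcomodule of $\B(V)$ (it is concentrated in degree one, hence trivially graded). Thus the only thing left to verify is that $W$ is a left coideal of $\B(V)$, i.e. $\Delta(W)\subseteq\B(V)\ot W$.

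First I would recall that $\B(V)$ is a connected $\ndN_0$-graded coalgebra with $\B(V)(0)=\fK 1$ and $\B(V)(1)=V$, and that every element of degree one is primitive: for $v\in V$ one has $\Delta(v)=v\ot 1+1\ot v$. This is the standard fact that $P(\B(V))=V$ (see e.g.\ \cite[Section~7.2]{MR4164719}), but for the coideal property we only need the easy inclusion $\Delta(v)\in v\ot 1+1\ot v+\B(V)\ot\B(V)^+$; more precisely, since $\Delta$ is graded and $\B(V)(0)=\fK1$, the only components of $\Delta(v)$ for $v\in\B(V)(1)$ are $\Delta_{0,1}(v)\in\B(V)(0)\ot\B(V)(1)$ and $\Delta_{1,0}(v)\in\B(V)(1)\ot\B(V)(0)$, and counitality forces $\Delta(v)=1\ot v+v\ot 1$.

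Then for any $w\in W$ we get $\Delta(w)=1\ot w+w\ot 1\in\fK1\ot W+W\ot\fK1\subseteq\B(V)\ot W$, using $1\in\B(V)$ in the second summand. Hence $W$ is a left coideal. Now I would invoke Proposition~\ref{prop:generatedalgebralcsa} with this $W$: it asserts precisely that $\langle W\rangle\subseteq\B(V)$ is then a left coideal subalgebra in the category of $\ndN_0$-graded $H$-comodules, which is the claim.

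There is no real obstacle here; the statement is essentially a special case of the preceding proposition once one notes that degree-one elements are primitive. The only point requiring a moment's care is making sure that $W$ genuinely counts as an $\ndN_0$-graded $H$-subcomodule — which it does, being homogeneous of degree one — so that Proposition~\ref{prop:generatedalgebralcsa} applies verbatim.
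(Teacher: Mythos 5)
Your overall route is the paper's: verify that the relevant subspace is a graded $H$-subcomodule and a left coideal, then invoke Proposition~\ref{prop:generatedalgebralcsa}. But the coideal verification as written contains a false step. You claim $\Delta(w)=1\ot w+w\ot 1\in\fK 1\ot W+W\ot\fK 1\subseteq\B(V)\ot W$; the second summand $W\ot\fK 1$ is \emph{not} contained in $\B(V)\ot W$, because that would require $1\in W$, and $1\in\B(V)(0)$ while $W\subseteq V=\B(V)(1)$, so $W\cap\fK 1=0$. (The remark ``using $1\in\B(V)$'' addresses the wrong tensor factor: the first factor of $w\ot 1$ is unproblematic; it is the second factor that must land in $W$.) Consequently the assertion ``$W$ is a left coideal'' is false for $W\neq 0$ --- indeed, by the footnote to the definition in Section~\ref{se:Introductionlcsa}, every nonzero left coideal of the connected coalgebra $\B(V)$ contains $1$, so no nonzero subspace of $V$ can be one.

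The repair is exactly what the paper does: work with $W+\fK 1$ instead of $W$. Then $\Delta(w)=1\ot w+w\ot 1\in\B(V)\ot(W+\fK 1)$ and $\Delta(1)=1\ot 1$, so $W+\fK 1$ is a left coideal; it is still an $\ndN_0$-graded $H$-subcomodule (concentrated in degrees $0$ and $1$, with trivial coaction on $\fK 1$), and Proposition~\ref{prop:generatedalgebralcsa} applied to $W+\fK 1$ gives the claim since $\langle W+\fK 1\rangle=\langle W\rangle$. With that one substitution your argument is complete and coincides with the paper's proof.
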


\begin{proof}
Since all elements of $V$ are primitive, we have \[\Delta(w)=1\ot w+w\ot 1\in\B(V)\ot (W+\fK1)\] for all $w\in W$. Hence $W+\fK 1$ is a left coideal in the category of $\ndN_0$-graded $H$-comodules and the claim follows from Proposition~\ref{prop:generatedalgebralcsa}. 
\end{proof}

\begin{lem}\label{le:comultcomponents}
Let $B$ be an $\ndN_0$-graded coalgebra. Then \[(\Delta_{1^i}\ot\id)\Delta_{i,n-i}=(\id^{\ot i-1}\ot\Delta_{1,n-i})\cdots(\id\ot\Delta_{1,n-2})\Delta_{1,n-1}\] as maps from $B(n)$ to $B(1)^{\ot i}\ot B(n-i)$ for all $n\geq 2$, $1<i\leq n$ . 
\end{lem}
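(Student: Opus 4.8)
The plan is to introduce the ``multi-component'' map
\[
\Delta_{1^i,\,n-i}\coloneqq(\pi_1^{\ot i}\ot\pi_{n-i})\,\Delta^i\colon B(n)\longrightarrow B(1)^{\ot i}\ot B(n-i),
\]
where $\Delta^i$ denotes the $i$-fold iterated comultiplication, and to show that both sides of the asserted equality coincide with $\Delta_{1^i,\,n-i}$. The only inputs are coassociativity of $\Delta$ — so that the various bracketings of $\Delta^i$ all agree — and gradedness of $\Delta$, which allows me to insert or delete a projection $\pi_m$ in an intermediate tensor slot at will. The preliminary observation I would record first is that, since $\Delta^{j-1}$ is graded, the composite $\pi_1^{\ot j}\Delta^{j-1}\colon B\to B(1)^{\ot j}$ vanishes off $B(j)$ and therefore equals $\Delta_{1^j}\,\pi_j$.

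For the left-hand side, I would bracket $\Delta^i=(\Delta^{i-1}\ot\id)\Delta$, apply $\pi_1^{\ot i}\ot\pi_{n-i}$, rewrite the first $i$ tensor slots using the preliminary observation as $\Delta_{1^i}\pi_i$, and then pull out $\pi_i\ot\pi_{n-i}$; this yields $\Delta_{1^i,\,n-i}=(\Delta_{1^i}\ot\id)\Delta_{i,n-i}$ directly.

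For the right-hand side, I would instead bracket $\Delta^i=(\id^{\ot i-1}\ot\Delta)\Delta^{i-1}$ and induct on $i$, the case $i=1$ being trivial. In the inductive step, applying $\pi_1$ to the first $i-1$ slots forces the last slot of $\Delta^{i-1}$ to have degree $n-i+1$, which (after inserting the harmless projection $\pi_{n-i+1}$ there) gives the recursion
\[
\Delta_{1^i,\,n-i}=(\id^{\ot i-1}\ot\Delta_{1,n-i})\,\Delta_{1^{i-1},\,n-i+1}.
\]
Unwinding this recursion down to $\Delta_{1^1,\,n-1}=\Delta_{1,n-1}$ reproduces exactly the iterated product on the right-hand side of the lemma, and combining the two descriptions of $\Delta_{1^i,\,n-i}$ finishes the proof.

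I do not expect a genuine obstacle here: the content is entirely coassociativity together with the gradedness of $\Delta$. The one point demanding care is the bookkeeping of the intermediate projections — checking at each insertion that the degrees of the relevant tensor factors are pinned down, so that inserting $\pi_m$ really is the identity on the image — and keeping straight that the two bracketings of $\Delta^i$ used for the two halves of the argument agree by coassociativity.
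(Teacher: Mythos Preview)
Your proposal is correct and follows essentially the same approach as the paper: both arguments show that each side of the claimed identity equals $(\pi_1^{\ot i}\ot\pi_{n-i})\Delta^i$, using coassociativity to rebracket $\Delta^i$ and gradedness to insert or remove the intermediate projections. Your write-up is somewhat more explicit about the inductive unwinding for the right-hand side, but the content is identical to the paper's proof.
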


\begin{proof}
First note that
\begin{equation}\label{eq:a}
\begin{aligned}
    \pi_1^{\ot i}\Delta^{i-1}\vert B(j)=
    \begin{cases}
        \Delta_{1^i}=\pi_1^{\ot i}\Delta^{i-1} &\mathrm{if}\ i=j\\
        0 &\mathrm{if}\ i\neq j,
    \end{cases}
\end{aligned}
\end{equation}
for all $i\geq 1$ and $j\geq 0$ since $\Delta$ is graded. Further,
\begin{equation}\label{eq:b}
\begin{aligned}
    (\pi_1\ot\pi_i)\Delta\vert B(j)=
    \begin{cases}
        \Delta_{1,i} &\mathrm{if}\ 1+i=j\\
        0 &\mathrm{if}\ 1+i\neq j,
    \end{cases}
\end{aligned}
\end{equation}
for all $i,j\geq 0$. Using Equations~(\ref{eq:a}) and (\ref{eq:b}) and the coassociativity of $\Delta$ it follows that both sides of the equation in the claim are equal to $(\pi_1^{\ot i}\ot\pi_{n-i})\Delta^{i}$. 
\end{proof}

\begin{prop}\label{prop:partialleftcoideal}
   Let $C\subseteq\B(V)$ be an $\ndN_0$-graded subspace. Then $C$ is a left coideal of $\B(V)$ if and only if $\Delta_{1,n-1}(C(n))\subseteq V\ot C(n-1)$ for all $n\in\ndN$. 
\end{prop}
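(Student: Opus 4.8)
The plan is to compare the two conditions componentwise through the graded pieces $\Delta_{i,n-i}$ of the comultiplication. The forward implication is a one-line projection argument; all the content lies in the converse, where one must bootstrap from the case $i=1$ to all $i$, and for this the key input is a feature special to Nichols algebras, namely that the iterated comultiplication $\Delta_{1^i}$ is injective on $\B(V)(i)$.

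For ($\Rightarrow$): if $\Delta(C)\subseteq\B(V)\ot C$, then, because $\B(V)$ is $\ndN_0$-graded and $C=\bigoplus_jC(j)$ is a graded subspace, intersecting the grading decomposition of $\Delta(C(n))$ with $\B(V)\ot C$ gives $\Delta(C(n))\subseteq\bigoplus_{i+j=n}\B(V)(i)\ot C(j)$; applying $\pi_1\ot\pi_{n-1}$ yields $\Delta_{1,n-1}(C(n))\subseteq V\ot C(n-1)$.

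For ($\Leftarrow$): assume $\Delta_{1,n-1}(C(n))\subseteq V\ot C(n-1)$ for all $n\in\ndN$. Since everything in sight is graded, it suffices to establish $\Delta_{i,n-i}(C(n))\subseteq\B(V)(i)\ot C(n-i)$ for all $n$ and all $0\le i\le n$, as summing over $i$ then gives $\Delta(C(n))\subseteq\B(V)\ot C$. The case $i=0$ is trivial since $\Delta_{0,n}(x)=1\ot x$, so fix $1\le i\le n$. First I would feed the hypothesis repeatedly into the identity of Lemma~\ref{le:comultcomponents} to obtain
\[(\Delta_{1^i}\ot\id)\bigl(\Delta_{i,n-i}(C(n))\bigr)=(\id^{\ot i-1}\ot\Delta_{1,n-i})\cdots(\id\ot\Delta_{1,n-2})\Delta_{1,n-1}(C(n))\subseteq V^{\ot i}\ot C(n-i),\]
where each factor $\id^{\ot k}\ot\Delta_{1,m-1}$ is absorbed using the hypothesis in degree $m$ (for $i=1$ the displayed identity is trivial with $\Delta_{1^1}=\id$; when $i=n$ the last factor is $\id^{\ot n-1}\ot\Delta_{1,0}$, which uses the $n=1$ instance of the hypothesis). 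Next I would use that $\Delta_{1^i}\colon\B(V)(i)\to V^{\ot i}$ is injective: since $\B(V)(i)=T(V)(i)/\ker S_i$ with quotient map $q_i$, and $\Delta_{1^i}\circ q_i=\Delta^{T(V)}_{1^i}=S_i$, any $\bar x\in\ker\Delta_{1^i}$ lifts to some $x\in\ker S_i$ and hence $\bar x=0$ (this can also be cited from \cite{MR4164719}). Given injectivity, write $\Delta_{i,n-i}(x)=\sum_ja_j\ot b_j$ with the $a_j\in\B(V)(i)$ linearly independent and $b_j\in\B(V)(n-i)$; then the $\Delta_{1^i}(a_j)$ are linearly independent in $V^{\ot i}$, so the containment $\sum_j\Delta_{1^i}(a_j)\ot b_j\in V^{\ot i}\ot C(n-i)$ forces each $b_j\in C(n-i)$ (apply to the first tensor factor a functional dual to the family $\Delta_{1^i}(a_j)$), whence $\Delta_{i,n-i}(x)\in\B(V)(i)\ot C(n-i)$, as desired.

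The main obstacle is precisely this last step of the converse: in a general $\ndN_0$-graded coalgebra the proposition is false, and it is the injectivity of $\Delta_{1^n}$ on $\B(V)(n)$ — the one genuinely Nichols-algebra-specific ingredient — that lets one promote the control of $(\Delta_{1^i}\ot\id)\Delta_{i,n-i}$ obtained from Lemma~\ref{le:comultcomponents} to control of $\Delta_{i,n-i}$ itself.
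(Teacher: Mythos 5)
Your proof is correct and follows essentially the same route as the paper: the forward direction by graded projection, and the converse by induction using Lemma~\ref{le:comultcomponents} together with the injectivity of $\Delta_{1^i}$ on $\B(V)(i)$. The only (harmless) differences are that you prove that injectivity directly from $\Delta_{1^i}\circ q_i=S_i$ rather than citing \cite[Proposition~1.3.14]{MR4164719}, and that you spell out the linear-algebra step showing injectivity of $\Delta_{1^i}$ lets one pass from $(\Delta_{1^i}\ot\id)\Delta_{i,n-i}(C(n))\subseteq V^{\ot i}\ot C(n-i)$ back to $\Delta_{i,n-i}(C(n))\subseteq\B(V)(i)\ot C(n-i)$, which the paper only asserts.
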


\begin{proof}
Since $\B(V)$ is an $\ndN_0$-graded coalgebra, the subspace $C$ is a left coideal if and only if \[\Delta_{i,n-i}(C(n))\subseteq\B(V)(i)\ot C(n-i)\] for all $n\in\ndN$ and $0\leq i\leq n$. In particular, if $C$ is a left coideal, it follows that $\Delta_{1,n-1}(C(n))\subseteq V\ot C(n-1)$ for all $n\in\ndN$. For the other implication let $n\in\ndN$. We prove $\Delta_{i,n-i}(C(n))\subseteq\B(V)(i)\ot C(n-i)$ for all $0\leq i\leq n$. For $i=0$ we have $\Delta_{0,n}(x)=1\ot x$ for all $x\in C(n)$ 
since $\B(V)$ is a connected $\ndN_0$-graded coalgebra. For $i=1$ we have $\Delta_{1,n-1}(C(n))\subseteq V\ot C(n-1)$ by assumption. Let $1<i\leq n$. Since $\B(V)$ is a strictly $\ndN_0$-graded coalgebra, 
the morphism $\Delta_{1^i}$ is injective (see e.g. \cite[Proposition 1.3.14]{MR4164719}). Thus $\Delta_{i,n-i}(C(n))\subseteq\B(V)(i)\ot C(n-i)$ holds if and only if \[(\Delta_{1^i}\ot \id)\Delta_{i,n-i}(C(n))\subseteq V^{\ot i}\ot C(n-i).\] Since $\Delta_{1,n-j}(C(n-j+1))\subseteq V\ot C(n-j)$ for all $1\leq j\leq i$ by assumption and \[(\Delta_{1^i}\ot\id)\Delta_{i,n-i}
=(\id^{\ot i-1}\ot\Delta_{1,n-i})\cdots(\id\ot\Delta_{1,n-2})\Delta_{1,n-1}\] by Lemma~\ref{le:comultcomponents}, the claim follows inductively. 
\end{proof}

\begin{cor}\label{cor:lcsaExt}
Let $W\subseteq V$ be an $H$-subcomodule. Let $n\in\ndN$ and $x\in\B(V)(n)$ such that $\delta(x)\in H\ot x$ and $\Delta_{1,n-1}(x)\in V\ot\langle W\rangle(n-1)$. Then $\langle W+\fK x\rangle$ is a left coideal subalgebra in the category of $\ndN_0$-graded $H$-comodules. 
\end{cor}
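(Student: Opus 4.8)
The plan is to reduce the statement to Proposition~\ref{prop:generatedalgebralcsa} by applying it to the right $H$-subcomodule. First I would set $A:=\langle W\rangle$ and invoke Corollary~\ref{cor:degreeonegeneratedlcsa}, which tells us that $A$ is a left coideal subalgebra in the category of $\ndN_0$-graded $H$-comodules; in particular $A$ is an $\ndN_0$-graded $H$-subcomodule and a left coideal, so $\Delta_{1,m-1}(A(m))\subseteq V\ot A(m-1)$ for all $m\in\ndN$ (directly by the definition of a left coideal, or via Proposition~\ref{prop:partialleftcoideal}). Note that $\langle W+\fK x\rangle=\langle A+\fK x\rangle$, since $\langle W\rangle=A$, so it suffices to treat the latter.

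Next I would consider the subspace $U:=A+\fK x\subseteq\B(V)$. Because $x$ is homogeneous of degree $n$ and $A$ is $\ndN_0$-graded, $U$ is $\ndN_0$-graded with $U(m)=A(m)$ for $m\neq n$ and $U(n)=A(n)+\fK x$. Because $A$ is an $H$-subcomodule and $\delta(x)\in H\ot x$, the sum $U$ is an $H$-subcomodule as well. The remaining point is to show that $U$ is a left coideal, and here I would apply Proposition~\ref{prop:partialleftcoideal}, which reduces this to verifying $\Delta_{1,m-1}(U(m))\subseteq V\ot U(m-1)$ for every $m\in\ndN$. For $m\neq n$ this is immediate from $U(m)=A(m)$ and $\Delta_{1,m-1}(A(m))\subseteq V\ot A(m-1)=V\ot U(m-1)$. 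For $m=n$ one uses $U(n)=A(n)+\fK x$: the summand $A(n)$ is handled as before, and for $x$ the hypothesis yields $\Delta_{1,n-1}(x)\in V\ot\langle W\rangle(n-1)=V\ot A(n-1)=V\ot U(n-1)$, where the last equality holds because $n-1\neq n$.

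Once $U$ is known to be an $\ndN_0$-graded $H$-subcomodule that is a left coideal, Proposition~\ref{prop:generatedalgebralcsa} gives at once that $\langle U\rangle=\langle A+\fK x\rangle=\langle W+\fK x\rangle$ is a left coideal subalgebra in the category of $\ndN_0$-graded $H$-comodules, which is the assertion. I do not expect a genuine obstacle in this argument; the one thing that must be gotten right is the choice of $U$, which has to be $\langle W\rangle+\fK x$ rather than $W+\fK x$: the hypothesis only controls $\Delta_{1,n-1}(x)$ up to elements of $\langle W\rangle(n-1)$, which are products of several elements of $W$ and hence need not lie in $W+\fK x$. The only small check to keep in mind is the identity $U(n-1)=A(n-1)$, valid simply because $n-1\neq n$, which is what makes the hypothesis on $\Delta_{1,n-1}(x)$ deliver the required containment in $V\ot U(n-1)$.
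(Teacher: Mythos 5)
Your argument is correct and follows exactly the paper's route: apply Corollary~\ref{cor:degreeonegeneratedlcsa} to $\langle W\rangle$, verify via Proposition~\ref{prop:partialleftcoideal} that $\langle W\rangle+\fK x$ is a left coideal and an $H$-subcomodule, and then invoke Proposition~\ref{prop:generatedalgebralcsa}. Your remark that the relevant left coideal is $\langle W\rangle+\fK x$ rather than $W+\fK x$ is precisely the point the paper's (much terser) proof relies on.
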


\begin{proof}
    The subalgebra $\langle W\rangle$ is a coideal subalgebra in the category of $\ndN_0$-graded $H$-comodules by Corollary~\ref{cor:degreeonegeneratedlcsa}. Then $\langle W\rangle+\fK x$ is a left coideal by Proposition~\ref{prop:partialleftcoideal} and $\langle W+\fK x\rangle$ is a left coideal subalgebra in the category of $\ndN_0$-graded $H$-comodules by  Proposition~\ref{prop:generatedalgebralcsa}. 
\end{proof}

\begin{lem}\label{le:partial2}
Let $W\subsetneq V$ be an $H$-subcomodule. Let $n\in\mathbb{N}$ and $x\in\B(V)(n)$ such that $\delta(x)\in H\ot x$ and $\Delta_{n-1,1}(x)\in \B(V)\ot W$. Let $L\subseteq \B(V)$ be the smallest left coideal in the category of $\ndN_0$-graded $H$-comodules containing $x$. If $x\notin\langle W\rangle$, then $\langle L\rangle$ is a left coideal subalgebra of $\B(V)$ in the category of $\ndN_0$-graded $H$-comodules that is not generated in degree one. 
\end{lem}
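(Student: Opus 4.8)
The statement has two halves. For the first half --- that $\langle L\rangle$ is a left coideal subalgebra of $\B(V)$ in the category of $\ndN_0$-graded $H$-comodules --- I would simply invoke Proposition~\ref{prop:generatedalgebralcsa}: by its very definition $L$ is an $\ndN_0$-graded $H$-subcomodule of $\B(V)$ that is a left coideal. (That such a smallest object exists is clear, being the intersection of all left coideal subcomodules of $\B(V)$ in the category that contain $x$; this intersection is again one of them and is nonzero since $x\neq 0$.) So the substance is the second half, and I would deduce it from the single inclusion
\[
L(1)\subseteq W.
\]
Granting this, since $\langle L\rangle$ is generated by $L$ and a product of homogeneous elements of $L$ of positive degree has degree at least two, we get $\langle L\rangle(1)=L(1)\subseteq W$, hence $\langle\langle L\rangle(1)\rangle\subseteq\langle W\rangle$; as $x\in L\subseteq\langle L\rangle$ but $x\notin\langle W\rangle$, it follows that $x\notin\langle\langle L\rangle(1)\rangle$, so $\langle L\rangle\neq\langle\langle L\rangle(1)\rangle$, i.e. $\langle L\rangle$ is not generated in degree one in the sense of Definition~\ref{def:lcsa}.

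To prove $L(1)\subseteq W$, by minimality of $L$ it suffices to produce \emph{one} left coideal $N$ of $\B(V)$ in the category of $\ndN_0$-graded $H$-comodules with $x\in N$ and $N(1)\subseteq W$. The plan is to build $N$ from $x$ in two steps. First take $D:=\{(\phi\ot\id)\Delta(x):\phi\in\B(V)^*\}$, the smallest left coideal of $\B(V)$ containing $x$ when the $\ndN_0$-grading and $H$-comodule conditions are dropped: it contains $x=(\varepsilon\ot\id)\Delta(x)$, coassociativity of $\Delta$ gives $\Delta(D)\subseteq\B(V)\ot D$, and $D$ sits inside every left coideal containing $x$. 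Since $x$ is homogeneous of degree $n$ and $\Delta$ is graded, $D$ is $\ndN_0$-graded, and its degree-one part $D(1)$ is precisely the span of the right-hand tensor factors of $\Delta_{n-1,1}(x)$; the hypothesis $\Delta_{n-1,1}(x)\in\B(V)(n-1)\ot W$ then yields $D(1)\subseteq W$. Second, let $N$ be the $H$-subcomodule of $\B(V)$ generated by $D$. Because $D$ is $\ndN_0$-graded and $\delta$ preserves the grading, $N$ is $\ndN_0$-graded with $N(1)$ equal to the subcomodule generated by $D(1)$, which lies in $W$ since $W$ is an $H$-subcomodule; and of course $x\in D\subseteq N$.

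What remains is to check that $N$ is again a left coideal, and this is the one step that genuinely uses the $H$-colinearity of $\Delta$ rather than formal bookkeeping. A spanning element of $N$ has the form $(\phi\ot\id)\delta(d)$ with $\phi\in H^*$ and $d\in D$; colinearity of $\Delta$ with respect to the diagonal coaction lets one rewrite
\[
\Delta\big((\phi\ot\id)\delta(d)\big)=\sum\phi\big((d_{(1)})_{(-1)}(d_{(2)})_{(-1)}\big)\,(d_{(1)})_{(0)}\ot(d_{(2)})_{(0)},
\]
and since $d\in D$ and $D$ is a left coideal the terms $d_{(2)}$ may be taken in $D$, so each $(d_{(2)})_{(0)}$ lies in $N$; by linearity $\Delta(N)\subseteq\B(V)\ot N$. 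Hence $N$ is a left coideal in the category of $\ndN_0$-graded $H$-comodules containing $x$ with $N(1)\subseteq W$, so $L\subseteq N$ and $L(1)\subseteq N(1)\subseteq W$, which completes the argument. The main obstacle is exactly this colinearity computation (together with the routine fact that the $H$-subcomodule generated by a graded subspace is graded and is obtained by a single application of $\delta$); an alternative I would consider is to show directly that $D$ itself is already an $H$-subcomodule, again via $\delta(x)\in H\ot x$ and the colinearity of $\Delta$, in which case $D=L$ outright and $L(1)=D(1)\subseteq W$.
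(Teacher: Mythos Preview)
Your argument is correct and matches the paper's approach: invoke Proposition~\ref{prop:generatedalgebralcsa} for the first half, establish $L(1)\subseteq W$, and conclude from $x\notin\langle W\rangle$. The paper's own proof simply asserts the inclusion $(L\cap\B(V))(1)\subseteq W$ without justification; your construction of $D$ and then $N$ is a clean way to fill that gap, and the colinearity computation showing $\Delta(N)\subseteq\B(V)\ot N$ is carried out correctly (the point being that $\delta(b_i)\in H\ot N$ whenever $b_i\in D$, so the right-hand tensorands land in $N$). Your alternative remark is also valid: since $\delta(x)\in H\ot x$ forces $\delta(x)=h\ot x$ with $h$ group-like, one can check directly that $D$ is already an $H$-subcomodule, so in fact $D=L$. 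A still shorter route (available once Proposition~\ref{pro:minmaxlcsa} is in hand) is to note that the hypothesis $\Delta_{n-1,1}(x)\in\B(V)\ot W$ says exactly $x\in\ker(\id\ot\pi\pi_1)\Delta$, so $L$ is contained in that kernel and $L(1)\subseteq W$ follows immediately.
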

 
\begin{proof}
 By Proposition~\ref{prop:generatedalgebralcsa}, the subalgebra $\langle L\rangle$ is a left coideal subalgebra of $\B(V)$ in the category of $\ndN_0$-graded $H$-comodules. Moreover, the subalgebra $\langle L\rangle$ is not generated in degree one since $(L\cap \B(V))(1)\subseteq W$ and $x\notin\langle W\rangle$.
\end{proof}

\begin{prop} \label{pro:minmaxlcsa}
  Let $W\subseteq V$ be an $H$-subcomodule, and let $\pi:V\to V/W$ be the canonical map.
  \begin{enumerate}
  \item 
      The subspace $\ker (\id \ot \pi \pi_1)\Delta \subseteq \B (V)$ is the largest $\ndN_0$-graded left coideal of $\B (V)$ with degree one part $W$. It is an $\ndN_0$-graded left coideal subalgebra of $\B (V)$ in the category of $H$-comodules.
      \item Let $C$ be an $\ndN_0$-graded left coideal of $\B (V)$ which is a subalgebra of $\B (V)$. If $C(1)=W$, then $\langle W\rangle \subseteq C\subseteq
      \ker (\id \ot \pi \pi_1)\Delta $.
  \end{enumerate}
\end{prop}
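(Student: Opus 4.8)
The plan is to prove the two containments in part (2) separately, and to establish the "largest" and "subalgebra" assertions of part (1) along the way, since all four claims revolve around the single subspace $K := \ker (\id \ot \pi\pi_1)\Delta$.

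First I would unwind the definition of $K$. Because $\B(V)$ is $\ndN_0$-graded and connected, for $x \in \B(V)(n)$ the condition $(\id \ot \pi\pi_1)\Delta(x) = 0$ is a statement only about the component $\Delta_{n-1,1}$ (the only component landing in $\B(V)\ot\B(V)(1)$), so $x \in K$ if and only if $\Delta_{n-1,1}(x) \in \B(V)\ot W$; in degree one this forces $x \in W$, so $K(1) = W$. To see $K$ is a left coideal it is cleanest to show directly that $\Delta_{1,m-1}(K(m)) \subseteq V \ot K(m-1)$ and invoke Proposition~\ref{prop:partialleftcoideal}: using coassociativity to relate $\Delta_{1,m-1}$ and $\Delta_{m-1,1}$ applied in the two tensor slots, one checks that if $\Delta_{m-1,1}$ of an element lies in $\B(V)\ot W$, then the second-to-last tensor factor after also splitting off the first factor still lies in $W$, i.e. the first factor of $\Delta_{1,m-1}(x)$ again lies in $K$. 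Then $K$ is an $H$-subcomodule because $\Delta$ and $\pi_1$ and $\pi$ are all $H$-colinear, so $K$ is the kernel of an $H$-comodule map; by Proposition~\ref{prop:generatedalgebralcsa} (or directly, since $\Delta$ is an algebra map and $W\ot\B(V)$ behaves well under the braiding) $K$ is in fact a subalgebra, giving the second sentence of part (1).

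For the "largest" claim in part~(1), let $C$ be any $\ndN_0$-graded left coideal with $C(1) = W$. Then $\Delta(C) \subseteq \B(V)\ot C$, and projecting the second factor onto degree one gives $(\id \ot \pi_1)\Delta(C) \subseteq \B(V) \ot C(1) = \B(V)\ot W = \ker(\id\ot\pi)$ in the relevant slot, hence $(\id\ot\pi\pi_1)\Delta(C) = 0$, i.e. $C \subseteq K$. This simultaneously proves the right-hand containment $C \subseteq K$ of part~(2): an $\ndN_0$-graded left coideal subalgebra with $C(1) = W$ is in particular an $\ndN_0$-graded left coideal with degree one part $W$. The left-hand containment $\langle W\rangle \subseteq C$ is immediate: $W = C(1) \subseteq C$ and $C$ is a subalgebra.

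The only real subtlety — the step I expect to be the main obstacle — is the verification that $K$ is a left coideal, because it requires turning the one-sided condition "$\Delta_{n-1,1}(K(n)) \subseteq \B(V)\ot W$" into the hypothesis "$\Delta_{1,m-1}(K(m)) \subseteq V \ot K(m-1)$" needed to apply Proposition~\ref{prop:partialleftcoideal}; this is a coassociativity bookkeeping argument, combining Lemma~\ref{le:comultcomponents} with the fact (used already in Proposition~\ref{prop:partialleftcoideal}) that $\Delta_{1^i}$ is injective on a strictly graded coalgebra, so that membership of a multi-factor image can be tested factor by factor. An alternative route that sidesteps this is to instead verify directly that $(\id\ot\pi\pi_1)\Delta(K) = 0$ implies $(\id\ot\pi\pi_1\ot\id)(\Delta\ot\id)\Delta(K) = 0$ via coassociativity and the coideal-of-$\B(V)$ structure of the second tensorand, but the symmetrizer/injectivity argument is the one that matches the paper's established toolkit.
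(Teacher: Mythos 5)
Your treatment of the degree-one part, the $H$-subcomodule property, the ``largest left coideal'' claim, and both containments in part (2) is correct and matches the paper, which establishes the left-coideal property by the same coassociativity bookkeeping you describe (just stated directly rather than routed through Proposition~\ref{prop:partialleftcoideal}).

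The gap is in the subalgebra claim. Proposition~\ref{prop:generatedalgebralcsa} cannot deliver it: applied to the left coideal $K=\ker(\id\ot\pi\pi_1)\Delta$ it only shows that the subalgebra $\langle K\rangle$ \emph{generated by} $K$ is a left coideal subalgebra, whereas what must be proved is precisely that $K$ is already closed under multiplication, i.e.\ $\langle K\rangle=K$; citing that proposition is circular for this purpose. Your parenthetical alternative points in the right direction but omits the actual computation, which is the one nontrivial calculation in the paper's proof: for $x,y\in K$, since $\Delta$ is an algebra map in ${}^H_H\mathcal{YD}$ and only the components of $\Delta(x)$ and $\Delta(y)$ whose second tensor factor has degree $\le 1$ survive the projection $\id\ot\pi_1$, one obtains the braided Leibniz identity
\[ (\id\ot\pi\pi_1)\Delta(xy)=(x\ot 1)(\id\ot\pi\pi_1)\Delta(y)+\big((\id\ot\pi\pi_1)\Delta(x)\big)(y\ot 1)=0. \]
The subtle point, which the paper flags explicitly, is that the second summand requires $W$ to be an $H$-subcomodule: the braided product moves the degree-one second tensor factor of $\Delta(x)$ past $y$ via the $H$-coaction, and only the subcomodule hypothesis guarantees that this factor remains in $W$, so that $\pi$ still annihilates it. Without carrying out this step, the subalgebra assertion of part (1) remains unproved.
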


\begin{proof}
(1) Let $\overline{W}=\ker (\id \ot \pi \pi_1)\Delta \subseteq \B (V)$. Since $\B (V)$ is connected, each left coideal of $\B (V)$ contains $1$. Moreover, each left coideal with degree one part $W$ is contained in $\overline{W}$. It is clear that $\overline{W}$ is $\ndN_0$-graded, an $H$-subcomodule of $\B (V)$, and the degree one part of $\overline{W}$ is $\ker \pi \pi_1=W$.
  The coassociativity of $\Delta $ implies that $\overline{W}$ is a left coideal of $\B (V)$. Finally, $\overline{W}$ is a subalgebra of $\B (V)$: For each $x,y\in \overline{W}$ we obtain that
  \begin{align*}
   (\id \ot \pi \pi_1)&\Delta (xy)=
  (\id \ot \pi \pi_1)\big(\Delta (x)\Delta(y)\big)\\
  &=(x\ot 1)(\id \ot \pi \pi_1)\Delta(y)
  +\big((\id \ot \pi \pi_1)\Delta(x)\big)(y\ot 1).
  \end{align*}
  The last equation uses that $W$ is an $H$-subcomodule of $V$.
  
  (2) follows directly from (1).
\end{proof}

Assume that $V$ is a finite-dimensional object in $^{H}_{H}\mathcal{YD}$. Then the dual vector space $V^*=\mathrm{Hom}(V,\fK)$ is an object in $^{H}_{H}\mathcal{YD}$, where the $H$-module and the $H$-comodule structure are defined by 
\begin{align}
    (h\cdot f)(v)&=f(S(h)\cdot v),\label{eq:dualaction}\\ f_{(-1)}f_{(0)}(v)&=S^{-1}(v_{(-1)})f(v_{(0)})\label{eq:dualcoaction}
\end{align} 
for all $h\in H$, $f\in V^*$, $v\in V$ (see e.g. \cite[Lemm~4.2.2]{MR4164719}). By \cite[Corollary~7.2.8]{MR4164719}, there is a unique non-degenerate bilinear form $\pair:\B(V^*)\ot\B(V)\rightarrow\fK$ extending the evaluation map $V^*\ot V\rightarrow \fK$ satisfying
\begin{align}
\left\langle 1,1\right\rangle&=1,\\
\left\langle \B(V^*)(n),\B(V)(m)\right\rangle&=0 \quad \text{for all $n\neq m$},\label{eq:pairinggrad}
\end{align} 
and for all $f,g\in\B(V^*)$ and $v,w\in\B(V)$ 
\begin{align}
\left\langle fg,v\right\rangle&=\left\langle f,v^{(2)}\right\rangle\left\langle g,v^{(1)}\right\rangle,\label{eq:pairing1}\\ 
\left\langle f,vw\right\rangle&=\left\langle f^{(2)},v\right\rangle\left\langle f^{(1)},w\right\rangle,\label{eq:pairing2}
\end{align}
and for all $h\in H$, $v\in\B(V)$, $f\in\B(V^*)$
\begin{align}
\left\langle h\cdot f,v\right\rangle&=\left\langle f, S(h)\cdot v\right\rangle,\label{eq:YD1} \\ 
f_{(-1)}\left\langle f_{(0)},v \right\rangle&=S^{-1}(v_{(-1)})\left\langle f,v_{(0)}\right\rangle.\label{eq:YD2}
\end{align}
Here we use Sweedler notation for the coproduct in $\B(V)$ ($\B(V^*)$, resp.) in the form $\Delta(v)=v^{(1)}\ot v^{(2)}$ for all $v\in\B(V)$ ($\Delta(f)=f^{(1)}\ot f^{(2)}$ for all $f\in\B(V^*)$, resp.). Note that Equations \eqref{eq:pairing1}-\eqref{eq:YD2} are satisfied if and only if the bilinear form $\pair$ is a Hopf pairing in $^{H}_{H}\mathcal{YD}$, where the $H$-action and the $H$-coaction are defined to be diagonal on $\B(V^*)\ot\B(V)$ and trivial on $\fK$ (see e.g. \cite[Lemma~4.2.]{MR4164719}). For a subset $X\subseteq\B(V)$ we denote its orthogonal complement with respect to the bilinear form $\pair$ by \[X^{\perp}=\{f\in\B(V^*)\mid\langle f,x\rangle=0\ \mathrm{for}\ \mathrm{all}\ x\in X\}.\] 

\begin{prop}\label{prop:complcsa}
Let $V$ be a finite-dimensional object in $^{H}_{H}\mathcal{YD}$. Let $C\subseteq\B(V)$ be a left coideal subalgebra in the category of $\ndN_0$-graded $H$-comodules. Then $(C^+\B(V))^{\perp}\subseteq \B(V^*)$ is a left coideal subalgebra of $\B(V^*)$ in the category of $\ndN_0$-graded $H$-comodules.
\end{prop}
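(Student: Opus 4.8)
The plan is to verify the three required properties — $\ndN_0$-graded $H$-subcomodule, subalgebra, left coideal — for $D:=(C^+\B(V))^{\perp}$ by exploiting that $\pair$ is a Hopf pairing in $^{H}_{H}\mathcal{YD}$ (Equations \eqref{eq:pairing1}--\eqref{eq:YD2}) and that $C^+\B(V)$ is a right ideal that is also a left coideal and $H$-subcomodule. First I would record the structural facts about $I:=C^+\B(V)$: it is $\ndN_0$-graded (since $C$ is and the grading is multiplicative), an $H$-subcomodule (product of $H$-subcomodules $C^+$ and $\B(V)$ under the $H$-colinear multiplication), and a right ideal of $\B(V)$. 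The key coalgebra-side observation is that $I$ is a \emph{coideal}: since $C$ is a left coideal subalgebra, $\Delta(C^+)\subseteq \B(V)\ot C + C\ot \B(V)$ — more precisely $\Delta(C^+)\subseteq C^+\ot C + \B(V)\ot C^+ + 1\ot C^+$, and multiplying by $\Delta(\B(V))\subseteq\B(V)\ot\B(V)$ and using that $C$ is a subalgebra together with the braiding-crossing trick already used in the proof of Proposition~\ref{prop:generatedalgebralcsa}, one gets $\Delta(I)\subseteq \B(V)\ot I + I\ot \B(V)$, i.e. $\bicounit(I)=0$ and $I$ is a coideal. Because $I$ is graded with $I(0)=0$, the grading refinement of Proposition~\ref{prop:partialleftcoideal}-type arguments is not even needed here; $\bicounit(I)=0$ is immediate from gradedness.

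Next I would dualize. Since $\pair$ is non-degenerate and graded \eqref{eq:pairinggrad}, and since $I$ is graded, $D=I^{\perp}$ is an $\ndN_0$-graded subspace, and $D$ is an $H$-subcomodule because $I$ is: this follows from \eqref{eq:YD1}--\eqref{eq:YD2}, exactly as one shows that the orthogonal complement of an $H$-submodule resp. $H$-subcomodule is again one, using that $S$ and $S^{-1}$ are bijective. For the algebra property: if $f,g\in D$ and $v\in I$, then $\langle fg,v\rangle=\langle f,v^{(2)}\rangle\langle g,v^{(1)}\rangle$ by \eqref{eq:pairing1}; since $I$ is a coideal, $\Delta(v)=v^{(1)}\ot v^{(2)}\in \B(V)\ot I+I\ot\B(V)$, so in each term one of the two factors is evaluated against an element of $I$ and hence vanishes — so $\langle fg,v\rangle=0$ and $fg\in D$; also $1\in D$ because $\langle 1,v\rangle=\bicounit(v)=0$ for $v\in I$. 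For the left-coideal property: I must show $\Delta(D)\subseteq\B(V^*)\ot D$, equivalently that for $f\in D$, writing $\Delta(f)=f^{(1)}\ot f^{(2)}$, the right-hand tensor factor $f^{(2)}$ lies in $D$; concretely, for all $w\in I$ and all $u\in\B(V)$ one needs $\langle f^{(1)},u\rangle\langle f^{(2)},w\rangle=\langle f,uw\rangle=0$ by \eqref{eq:pairing2} — and $uw\in I$ since $I$ is a right ideal, so indeed $\langle f,uw\rangle=0$. This says $\Delta(f)$ annihilates $\B(V)\ot I$, which by non-degeneracy and gradedness of $\pair$ on each homogeneous component is exactly the statement $\Delta(f)\in\B(V^*)\ot I^{\perp}=\B(V^*)\ot D$.

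The main obstacle is the bookkeeping in establishing that $I=C^+\B(V)$ is a coideal (equivalently, that $D$ is closed under multiplication): one must be careful that the Sweedler components of $\Delta(c)$ for $c\in C^+$ land in $C^+\ot\B(V)+\B(V)\ot C^+$ using that $C$ is a left coideal subalgebra and $\ndN_0$-graded (so $C(0)=\fK1$ and $C^+=\bigoplus_{i\ge1}C(i)$), and then propagate this through the product $\Delta(c)\Delta(b)$ using that multiplication in $\B(V)$ is $H$-colinear so the braiding moves an element of the $H$-subcomodule $\B(V)$ past $C^+$ into $\B(V)\ot C^+$, as in Proposition~\ref{prop:generatedalgebralcsa}. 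Once that is in hand, everything else reduces to the three short pairing computations above, each of which uses one of \eqref{eq:pairing1}, \eqref{eq:pairing2}, and \eqref{eq:YD1}--\eqref{eq:YD2} together with non-degeneracy and gradedness of $\pair$.
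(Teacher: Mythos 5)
Your overall route is the same as the paper's: set $M=C^+\B(V)$, observe that it is an $\ndN_0$-graded $H$-subcomodule, a right ideal and a coideal of $\B(V)$, and then pass to $M^{\perp}$ via the graded Hopf pairing. Your treatment of the subalgebra property (via \eqref{eq:pairing1} and the coideal property of $M$), of the $H$-subcomodule property (via \eqref{eq:YD2}), and of the gradedness (via \eqref{eq:pairinggrad}) agrees with the paper's proof, and your justification that $\Delta(C^+\B(V))\subseteq M\ot\B(V)+\B(V)\ot M$ using the braiding argument from Proposition~\ref{prop:generatedalgebralcsa} is exactly what the paper does.

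However, your verification of the left-coideal property contains a concrete error. You pair $f$ against $uw$ with $u\in\B(V)$ and $w\in I$ and assert that $uw\in I$ ``since $I$ is a right ideal''. But $uw$ is a \emph{left} multiple of $I$, and $I=C^+\B(V)$ is only a right ideal: nothing forces $\B(V)C^+\subseteq C^+\B(V)$, so $uw\notin I$ in general and the asserted vanishing of $\langle f,uw\rangle$ fails. You have also quoted \eqref{eq:pairing2} with the Sweedler legs interchanged; the paper's convention is $\langle f,vw\rangle=\langle f^{(2)},v\rangle\langle f^{(1)},w\rangle$. The correct move -- and the one the paper makes -- is to pair against $mb$ with $m\in I$ and $b\in\B(V)$ arbitrary: then $mb\in I$ by the right-ideal property, so
\[ 0=\langle f,mb\rangle=\langle f^{(2)},m\rangle\langle f^{(1)},b\rangle \quad\text{for all }b\in\B(V), \]
and non-degeneracy of $\pair$ gives $\langle f^{(2)},m\rangle f^{(1)}=0$ for all $m\in I$, i.e.\ $\Delta(f)\in\B(V^*)\ot I^{\perp}$. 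With this reorientation your proof is complete and coincides with the paper's.
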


\begin{proof}
Let $M=C^+\B(V)$. We first prove that $M^{\perp}$ is a subalgebra of $\B(V^*)$. Clearly, $1\in M^{\perp}$ since $M\subseteq\bigoplus_{i\geq 1}\B(V)$. Let $n\in\ndN$, $a\in C^+(n)$ and $b\in\B(V)$. Since $\B(V)$ is a connected $\ndN_0$-graded coalgebra and $C$ is a left coideal, we have \[\Delta(a)\in a\ot 1+\B(V)\ot C^+.\] Note that $C^+$ is an $H$-subcomodule since $C$ is an $H$-subcomodule and $\varepsilon$ is $H$-colinear. Hence $c(C^+\ot\B(V))\subseteq\B(V)\ot C^+$ and it follows that
\begin{align*}
\Delta(ab)=(ab)^{(1)}\ot(ab)^{(2)}&\in ab^{(1)}\ot b^{(2)}+(\B(V)\ot C^+)(\B(V)\ot\B(V))\\
&\subset ab^{(1)}\ot b^{(2)}+\B(V)\ot M\\
&\subset M\ot\B(V)+\B(V)\ot M.
\end{align*}
Then it follows from Equation~(\ref{eq:pairing1}) that
\begin{align*}
\langle xy,ab\rangle&=\langle x,(ab)^{(2)}\rangle\langle y,(ab)^{(1)}\rangle=0
\end{align*} 
for all $x,y\in M^{\perp}$.

 Now we prove that $M^{\perp}$ is a left coideal. Let $x\in M^{\perp}$ and $m\in M$. Then $\langle x,mb\rangle=0$ for all $b\in\B(V)$ and hence $\langle x^{(2)},m\rangle\langle x^{(1)},b\rangle=0$ for all $b\in\B(V)$ by Equation~(\ref{eq:pairing2}). It follows that $\langle x^{(2)},m\rangle x^{(1)}=0$ since $\pair$ is non-degenerate. Hence $\Delta(x)\in\B(V)\ot M^{\perp}$.
 
 To see that $M^{\perp}$ is an $H$-subcomodule of $\B(V^*)$ note that $M=C^+M$ is an $H$-subcomodule since $C^+$ is an $H$-subcomodule of $\B(V)$ and multiplication in $\B(V)$ is $H$-colinear. Let $x\in M^{\perp}$ and $a\in M$. Then $\delta(a)=a_{(-1)}\ot a_{(0)}\in H\ot M$. Using Equation~(\ref{eq:YD2}) it follows that \[x_{(-1)}\langle x_{(0)},a \rangle=\mathcal{S}^{-1}(a_{(-1)})\langle x,a_{(0)}\rangle=0\] and hence $M^{\perp}$ is an $H$-subcomodule. Finally, $M^{\perp}$ is $\ndN_0$-graded by Equation~(\ref{eq:pairinggrad}) and since $M$ is $\ndN_0$-graded.
 \end{proof}

\begin{thm}\label{thm:mulbijnoextension} 
Let $V$ be a finite-dimensional object in $^{H}_{H}\mathcal{YD}$ and let $V_1,V_2\subseteq V$ be $H$-subcomodules such that $V=V_1\oplus V_2$. Let $N\geq 2$ and assume that \[\dim\bigoplus_{i=0}^N(\langle V_1\rangle\ot\langle V_2\rangle)(i)\geq\dim\bigoplus_{i=0}^N\B(V)(i).\] Assume that $\langle V_2\rangle\cong\langle V_2{}^*\rangle\subseteq\B(V^*)$ as $\ndN_0$-graded vector spaces, where $V_2{}^*$ is the dual $H$-comodule of $V_2$. 
Then $\langle V_1\rangle\subseteq\B(V)$ is a left coideal subalgebra in the category of $\ndN_0$-graded $H$-comodules that cannot be extended in a degree $2\leq n\leq N$. 
\end{thm}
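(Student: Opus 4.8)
The plan is to show that $\langle V_1\rangle$ is a left coideal subalgebra (which is immediate) and then to rule out extensions in low degrees by a dimension‐counting argument that pits the multiplication map against the Hopf pairing. First I would invoke Corollary~\ref{cor:degreeonegeneratedlcsa} with $W=V_1$ to conclude that $\langle V_1\rangle\subseteq\B(V)$ is a left coideal subalgebra in the category of $\ndN_0$-graded $H$-comodules. It remains to show that it admits no extension in any degree $n$ with $2\le n\le N$, i.e. that there is no $H$-colinear $x\in\B(V)(n)$ with $x\notin\langle V_1\rangle$ for which $\langle V_1,x\rangle$ is again such a left coideal subalgebra.

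The key structural input is the multiplication map $m\colon\langle V_1\rangle\ot\langle V_2\rangle\to\B(V)$. Since $V=V_1\oplus V_2$ as $H$-comodules, this map is $\ndN_0$-graded and $H$-colinear, and by a standard triangularity/PBW-type argument (restricting to each graded piece and using that $\B(V)$ is spanned by products of elements of $V_1$ and $V_2$) it is surjective in every degree; combined with the displayed inequality $\dim\bigoplus_{i=0}^N(\langle V_1\rangle\ot\langle V_2\rangle)(i)\ge\dim\bigoplus_{i=0}^N\B(V)(i)$ this forces $m$ to be an isomorphism of $\ndN_0$-graded $H$-comodules in each degree $i\le N$. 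In particular $\B(V)(i)\cong\bigoplus_{j+k=i}\langle V_1\rangle(j)\ot\langle V_2\rangle(k)$ for $i\le N$. Now suppose $C=\langle V_1,x\rangle$ is a left coideal subalgebra strictly larger than $\langle V_1\rangle$, with $x$ of minimal degree $n\le N$ among elements not in $\langle V_1\rangle$; then $C(1)=V_1$, so by Proposition~\ref{pro:minmaxlcsa} we have $\langle V_1\rangle\subsetneq C\subseteq\ker(\id\ot\pi\pi_1)\Delta$, where $\pi\colon V\to V/V_1\cong V_2$. I would then dualize: by Proposition~\ref{prop:complcsa}, $(C^+\B(V))^\perp$ is a left coideal subalgebra of $\B(V^*)$, and since $\langle V_1\rangle\subsetneq C$ one checks $C^+\B(V)\supsetneq\langle V_1\rangle^+\B(V)$, whence $(C^+\B(V))^\perp\subsetneq(\langle V_1\rangle^+\B(V))^\perp$. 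The point is to identify $(\langle V_1\rangle^+\B(V))^\perp$ with $\langle V_2{}^*\rangle\subseteq\B(V^*)$: this follows from the fact that under the nondegenerate Hopf pairing, the orthogonal complement of $\langle V_1\rangle^+\B(V)$ in degree $i$ is exactly the annihilator of the image of $m$ precomposed appropriately, which by the isomorphism above has dimension $\dim\langle V_2\rangle(i)=\dim\langle V_2{}^*\rangle(i)$ for $i\le N$, and one verifies $\langle V_2{}^*\rangle$ itself lies in this orthogonal complement (degree-by-degree, using \eqref{eq:pairing1} and that $V_2{}^*$ pairs trivially with $V_1$).

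Putting this together: in every degree $i\le N$ we get $\langle V_2{}^*\rangle(i)\subseteq(\langle V_1\rangle^+\B(V))^\perp(i)$, and by the dimension count these have equal (finite) dimension, so $(\langle V_1\rangle^+\B(V))^\perp(i)=\langle V_2{}^*\rangle(i)$ for $i\le N$. Hence $(C^+\B(V))^\perp(i)\subsetneq\langle V_2{}^*\rangle(i)$ strictly for some $i\le n$ — but $(C^+\B(V))^\perp$ must contain $1$ and, because $C(1)=V_1$ forces $C^+\B(V)$ to agree with $\langle V_1\rangle^+\B(V)$ in degrees $<n$, the two orthogonal complements coincide below degree $n$; so the strict drop happens in degree $n$ itself, giving $\dim(C^+\B(V))^\perp(n)<\dim\langle V_2{}^*\rangle(n)$. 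Feeding this back through the pairing and the isomorphism $m$ yields $\dim C(n)>\dim\langle V_1\rangle(n)$ while simultaneously $C(n)\subseteq\ker(\id\ot\pi\pi_1)\Delta(n)$; computing the dimension of the latter from the $m$-isomorphism shows it equals $\dim\langle V_1\rangle(n)$, a contradiction. Therefore no such $x$ exists and $\langle V_1\rangle$ cannot be extended in any degree $2\le n\le N$.

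The main obstacle I anticipate is the clean identification of $(\langle V_1\rangle^+\B(V))^\perp$ with $\langle V_2{}^*\rangle$ as graded $H$-comodules in the relevant range of degrees, and correspondingly pinning down $\dim\ker(\id\ot\pi\pi_1)\Delta(n)$ exactly: both require carefully tracking how the multiplication isomorphism $m$ interacts with the coproduct components $\Delta_{1,n-1}$ and with the Hopf pairing relations \eqref{eq:pairing1}--\eqref{eq:pairing2}, and making sure the degree bookkeeping (which graded pieces are genuinely isomorphic, where the inequality becomes an equality) is airtight. Everything else is a formal consequence of the propositions already established.
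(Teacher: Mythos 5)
Your overall strategy --- dualize via the Hopf pairing and Proposition~\ref{prop:complcsa}, then derive a contradiction by dimension counting --- is in the right spirit, but the argument has a genuine gap at its foundation. You assert that the multiplication map $m\colon\langle V_1\rangle\ot\langle V_2\rangle\to\B(V)$ is surjective in every degree ``by a standard triangularity/PBW-type argument''. This is false in general: $\B(V)$ is spanned by arbitrary products of elements of $V_1\cup V_2$, not by ordered products with all $V_1$-factors on the left, and there is no straightening rule in a general Nichols algebra that rewrites $v_2v_1$ ($v_i\in V_i$) inside $\langle V_1\rangle\langle V_2\rangle$. Surjectivity of $m$ is precisely the \emph{additional} hypothesis imposed in Corollary~\ref{cor:noextension}~(1); the theorem itself assumes only the dimension inequality, so you cannot use surjectivity (let alone bijectivity) of $m$ as an input. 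Everything downstream --- the identification $(\langle V_1\rangle^+\B(V))^\perp=\langle V_2{}^*\rangle$ in degrees $\le N$, and the computation of $\dim\ker(\id\ot\pi\pi_1)\Delta(n)$ --- depends on this unavailable isomorphism.

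There is a second, independent gap: even granting the isomorphism $m$, your final step asserts that ``computing the dimension of $\ker(\id\ot\pi\pi_1)\Delta(n)$ from the $m$-isomorphism shows it equals $\dim\langle V_1\rangle(n)$'' with no argument. A vector-space decomposition $\B(V)\cong\langle V_1\rangle\ot\langle V_2\rangle$ by itself says nothing about the coproduct components $\Delta_{1,n-1}$, and identifying the largest left coideal with degree-one part $V_1$ is essentially the statement being proved. The missing structural input is the freeness of $\B(V)$ over a graded left coideal subalgebra: for $K=\langle V_1+\fK x\rangle$ and $M=K^+\B(V)$ there is an $\ndN_0$-graded isomorphism $K\ot\B(V)/M\to\B(V)$ (\cite[Corollary~6.3.10]{MR4164719}). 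The paper combines this with the \emph{inequality} $\dim M^{\perp}(m)\ge\dim\langle V_2{}^*\rangle(m)$ (which needs only that $M^{\perp}$ is a subalgebra with degree-one part $V_2{}^*$, not the exact identification you attempt), the induced pairing $M^{\perp}\ot\B(V)/M\to\fK$, and the strict inequality $\dim K(n)>\dim\langle V_1\rangle(n)$ to contradict the hypothesis. Replacing your surjectivity claim and your kernel computation with this freeness theorem is not a cosmetic fix; it is the core of the proof.
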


\begin{proof}
The subalgebra $\langle V_1\rangle\subseteq\B(V)$ is a left coideal subalgebra in the category of $\ndN_0$-graded $H$-comodules by Corollary~\ref{cor:degreeonegeneratedlcsa}. Let $2\leq n\leq N$ and assume that there is an element $x\in\B(V)(n)$ that extends the left coideal subalgebra $\langle V_1\rangle$ in the category of $\ndN_0$-graded $H$-comodules. Let $K=\langle V_1+\fK x\rangle$. Then $\dim K(l)=\dim \langle V_1\rangle(l)$ for all $l<n$, $\dim K(n)=\dim\langle V_1\rangle(n)+1$ and $\dim K(l)\geq\dim\langle V_1\rangle(l)$ for all $l>n$. Let $M=K^+\B(V)$. Then $M$ is $\ndN_0$-graded and $M(1)=V_1$ since multiplication in $\B(V)$ is $\ndN_0$-graded and $K^+\subseteq\bigoplus_{i\geq 1}\B(V)(i)$. Hence $M^{\perp}(1)=V_2{}^*$. By Proposition~\ref{prop:complcsa} we know that $M^{\perp}\subseteq\B(V^*)$ is a (left coideal) subalgebra in the category of $\ndN_0$-graded $H$-comodules. This implies \[\dim M^{\perp}(m)\geq\dim\langle V_2{}^*\rangle(m)\] for all $m\in\ndN_0$. Moreover, there is an induced non-degenerate pairing of $\ndN_0$-graded left $H$-comodules $\pair': M^{\perp}\ot \B(V)/M\rightarrow\fK$. It follows \[\dim (\B(V)/M)(m)=\dim M^{\perp}(m)\geq\dim\langle V_2{}^*\rangle(m)\] for all $m\in\ndN_0$. From \cite[Corollary~6.3.10]{MR4164719} it is known that there is an $\ndN_0$-graded isomorphism \[K\ot \B(V)/M\rightarrow\B(V).\] It follows that
\begin{equation}\label{eq:dim}
\begin{aligned}
\dim\bigoplus_{i=0}^N\B(V)(i)&=\sum_{i=0}^N\sum_{l=0}^i\dim K(l)\dim(\B(V)/M)(i-l)\\
            &\geq\sum_{i=0}^N\sum_{l=0}^i\dim K(l)\dim\langle V_2{}^*\rangle(i-l)\\
            &=\sum_{i=0}^N\sum_{l=0}^i\dim K(l)\dim\langle V_2\rangle(i-l)\\
            &>\sum_{i=0}^N\sum_{l=0}^i\dim\langle V_1\rangle(l)\dim\langle V_2\rangle(i-l)\\
            &=\dim\bigoplus_{i=0}^N(\langle V_1\rangle\ot\langle V_2\rangle)(i)\\
            &\geq\dim\bigoplus_{i=0}^N\B(V)(i),
\end{aligned}
\end{equation}
where the third equality and the last inequality follow by assumption. Hence we get a contradiction and conclude that the left coideal subalgebra $\langle V_1\rangle$ cannot be extended in a degree $2\leq n\leq N$.
\end{proof}

We will apply Theorem~\ref{thm:mulbijnoextension} as described in the following Corollary. 

\begin{cor}\label{cor:noextension}
Let $V$ be a finite-dimensional object in $^{H}_{H}\mathcal{YD}$. Let $V_1,V_2\subseteq V$ be $H$-subcomodules such that $V=V_1\oplus V_2$.  
\begin{enumerate}
    \item Assume that $\langle V_2\rangle\cong\langle V_2{}^*\rangle\subseteq\B(V^*)$ as $\ndN_0$-graded vector spaces. Let $N\in\ndN_0$ and assume that the restricted multiplication map $\mu:(\langle V_1\rangle\ot\langle V_2\rangle)(n)\rightarrow\B(V)(n)$ is surjective for all $n\leq N$. Then the left coideal subalgebra $\langle V_1\rangle$ cannot be extended in a degree $2\leq n\leq N$. 
    \item Assume that $\B(V)$ is finite-dimensional, $\dim\langle V_2\rangle=\dim\langle V_2{}^*\rangle$ and $\dim\langle V_1\rangle\dim\langle V_2\rangle=\dim\B(V)$. Then the left coideal subalgebra $\langle V_1\rangle$ cannot be extended in a degree $2\leq n$.
\end{enumerate}
\end{cor}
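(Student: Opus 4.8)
The plan is to obtain both parts from the material already set up, with part~(1) being a direct application of Theorem~\ref{thm:mulbijnoextension} and part~(2) a short argument modelled on its proof.

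For part~(1): if $N\leq 1$ there is nothing to prove, so assume $N\geq 2$. First note that $\langle V_1\rangle$ is a left coideal subalgebra in the stated category by Corollary~\ref{cor:degreeonegeneratedlcsa}. Since the tensor product of graded vector spaces satisfies $(\langle V_1\rangle\ot\langle V_2\rangle)(i)=\bigoplus_{l=0}^{i}\langle V_1\rangle(l)\ot\langle V_2\rangle(i-l)$, and since $\mu$ is graded and maps this space onto $\B(V)(i)$ for each $i\leq N$ by assumption, we get $\dim(\langle V_1\rangle\ot\langle V_2\rangle)(i)\geq\dim\B(V)(i)$. Summing over $0\leq i\leq N$ yields precisely the dimension hypothesis of Theorem~\ref{thm:mulbijnoextension}, and together with the assumed graded isomorphism $\langle V_2\rangle\cong\langle V_2{}^*\rangle$ that theorem gives that $\langle V_1\rangle$ cannot be extended in a degree $2\leq n\leq N$.

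For part~(2): again $\langle V_1\rangle$ is a left coideal subalgebra by Corollary~\ref{cor:degreeonegeneratedlcsa}. Suppose, for contradiction, that it can be extended in some degree $n\geq 2$, say $K=\langle V_1+\fK x\rangle$ with $x\in\B(V)(n)$ and $x\notin\langle V_1\rangle$. Then $\langle V_1\rangle\subsetneq K$, so $\dim K>\dim\langle V_1\rangle$ since $\B(V)$, hence $K$, is finite-dimensional; and because $\deg x=n\geq 2$ we have $K(1)=V_1$. Put $M=K^{+}\B(V)$. The graded freeness isomorphism $K\ot\B(V)/M\cong\B(V)$ from \cite[Corollary~6.3.10]{MR4164719} gives $\dim\B(V)=\dim K\cdot\dim(\B(V)/M)$, so
\[
\dim(\B(V)/M)=\frac{\dim\B(V)}{\dim K}<\frac{\dim\B(V)}{\dim\langle V_1\rangle}=\dim\langle V_2\rangle,
\]
using $\dim\langle V_1\rangle\dim\langle V_2\rangle=\dim\B(V)$ in the last step. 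On the other hand $M$ is $\ndN_0$-graded with $M(1)=K(1)\cdot\B(V)(0)=V_1$, hence $M^{\perp}(1)=V_2{}^*$; by Proposition~\ref{prop:complcsa} the space $M^{\perp}\subseteq\B(V^*)$ is a left coideal subalgebra in the category of $\ndN_0$-graded $H$-comodules, and Proposition~\ref{pro:minmaxlcsa}(2) then forces $\langle V_2{}^*\rangle\subseteq M^{\perp}$. Since $\pair$ descends to a non-degenerate $\ndN_0$-graded pairing $M^{\perp}\ot\B(V)/M\to\fK$ (as in the proof of Theorem~\ref{thm:mulbijnoextension}), we get $\dim\langle V_2{}^*\rangle\leq\dim M^{\perp}=\dim(\B(V)/M)<\dim\langle V_2\rangle$, contradicting $\dim\langle V_2\rangle=\dim\langle V_2{}^*\rangle$. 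Hence $\langle V_1\rangle$ cannot be extended in any degree $n\geq 2$.

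Neither argument is long. The one point that deserves care is that part~(2) is not a verbatim instance of Theorem~\ref{thm:mulbijnoextension}: the theorem asks for a graded isomorphism $\langle V_2\rangle\cong\langle V_2{}^*\rangle$, whereas here only the equality of total dimensions is assumed, so rather than trying to upgrade that equality to a graded statement it is cleaner to rerun the dimension bookkeeping of the theorem's proof for the concrete extension $K$ — the extra unit of dimension of $K$ over $\langle V_1\rangle$ is exactly what turns the inequalities strict and produces the contradiction. The remaining points to verify are routine and already isolated above: that $M(1)=V_1$ (this is where $n\geq 2$ enters) and that the induced pairing $M^{\perp}\ot\B(V)/M\to\fK$ is non-degenerate and graded, both of which are recorded in the proof of Theorem~\ref{thm:mulbijnoextension}.
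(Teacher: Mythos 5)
Your proposal is correct and follows essentially the same route as the paper: part~(1) is exactly the paper's "follows directly from Theorem~\ref{thm:mulbijnoextension}" once the surjectivity is translated into the dimension inequality, and part~(2) reruns the theorem's proof with the dimension count adjusted, which is precisely what the paper does (your chain $\dim\langle V_2{}^*\rangle\leq\dim(\B(V)/M)<\dim\langle V_2\rangle$ is the paper's displayed inequality $\dim\B(V)=\dim K\dim(\B(V)/M)\geq\dim K\dim\langle V_2\rangle>\dim\langle V_1\rangle\dim\langle V_2\rangle=\dim\B(V)$ rearranged by division). The points you flag for care --- $M(1)=V_1$ via $n\geq 2$, and the non-degenerate graded pairing $M^{\perp}\ot\B(V)/M\to\fK$ --- are indeed the ones carried over verbatim from the theorem's proof.
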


\begin{proof}
(1) Follows directly from Theorem~\ref{thm:mulbijnoextension}.
(2) The proof is analogous to that of Theorem~\ref{thm:mulbijnoextension} replacing the calculation~(\ref{eq:dim}) with
\begin{align*}
\dim\B(V)&=\dim K\dim(\B(V)/M)\geq\dim K\dim\langle V_2^*\rangle\\
            &=\dim K\dim\langle V_2\rangle>\dim\langle V_1\rangle\dim\langle V_2\rangle=\dim\B(V).
\end{align*}
\end{proof}
 
\section{Yetter-Drinfeld modules over groups and racks}\label{se:different group realization}

From now on we are mainly dealing with Yetter-Drinfeld modules over the group algebra $H=\fK G$ of a group $G$. Due to the category isomorphism between $\fK G$-comodules and $G$-graded vector spaces (see e.g. \cite[Proposition~1.1.17]{MR4164719}) in this case Yetter-Drinfeld modules in ${^H_H}\mathcal{YD}$ can equivalently be defined as $G$-graded vector spaces $V=\bigoplus_{g\in G}V_g$ that are also left $\fK G$-modules such that
\[ g\cdot V_h\subseteq V_{ghg^{-1}}
\]
for all $g,h\in G$. There
\[ V_g=\{v\in V\mid\delta(v)=g\ot v\}=\{v\in V\mid \deg v=g\}
\]
for all $g\in G$. The subset \[\supp V=\{g\in G\mid V_g\neq 0\}\] is called the \textbf{support} of $V$. For objects $V,W\in {^H_H}\mathcal{YD}$ the Yetter-Drinfeld braiding is $c_{V,W}(v\ot w)=g\cdot w\ot v$ for all $v\in V_g$ and $w\in W$. 

For the explicit construction of all Yetter-Drinfeld modules over an arbitrary group $G$ there is a well-known useful equivalence of categories. We briefly describe it here and refer to \cite[Section~1.4]{MR4164719} and particulary to Proposition~1.4.17 there for more details. For an element $g\in G$ we denote its conjugacy class by $\mathcal{O}_g=\{hgh^{-1}\mid h\in G\}$. Let $L\subseteq G$ be a complete set of representatives of (pairwise distinct) conjugacy classes of $G$. Any Yetter-Drinfeld module $M\in {^{G}_{G}}\mathcal{YD}$ has a decomposition \[M=\bigoplus\limits_{l\in L}\bigoplus\limits_{s\in \mathcal{O}_l}M_s\] into a direct sum of Yetter-Drinfeld modules $\bigoplus_{s\in \mathcal{O}_l}M_s$, $l\in L$. Let $g\in G$ and let ${^{G}_{G}}\mathcal{YD}(\mathcal{O}_g)$ be the full subcategory of Yetter-Drinfeld modules containing each $M\in {^{G}_{G}}\mathcal{YD}$ with $M=\bigoplus_{s\in\mathcal{O}_g}M_s$. We denote the centralizer of $g$ by \[G^g=\{h\in G\mid hg=gh\}.\] There is an equivalence of categories between ${^{G}_{G}}\mathcal{YD}(\mathcal{O}_g)$ and the category ${_{\fK G^g}}\mathcal{M}$ of left $\fK G^g$-modules. The first involved functor $F: {_{\fK G^g}}\mathcal{M}\rightarrow {^{G}_{G}}\mathcal{YD}(\mathcal{O}_g)$ maps a $\fK G^g$-module $(\rho,V)$ to \[M(g,\rho)=\fK G\ot_{\fK G^g} V,\] which is a Yetter-Drinfeld module where the $\fK G$-action is multiplication on the first tensor factor and with $G$-grading \[\mathrm{deg}(h\ot v)=hgh^{-1}\] for all $h\in G$, $v\in V$. The decomposition of $M(g,\rho)$ is given by $M(g,\rho)=\bigoplus_{s\in\mathcal{O}_g}M(g,\rho)_s$, where $M(g,\rho)_{hgh^{-1}}=h\ot V$ for all $h\in G$. 
The quasi-inverse functor ${^{G}_{G}}\mathcal{YD}(\mathcal{O}_g)\rightarrow {_{\fK G^g}}\mathcal{M}$ is given by $M\mapsto M_g$.  

If $G$ is a group and $V$ a Yetter-Drinfeld module over $\fK G$ with Yetter-Drinfeld braiding $c_V$, then the Nichols algebra $\B(V)$ is a Hopf algebra in the category ${^{G}_{G}}\mathcal{YD}$. For details see e.g. \cite[Section~7.1]{MR4164719}. But (as mentioned above) the Nichols algebra as an algebra and a coalgebra only depends on the braided vector space $(V,c_V)$ and there may be many Yetter-Drinfeld modules over different groups that yield the same braided vector space. Braided vector spaces arising from Yetter-Drinfeld modules over groups can be described using racks and two-cocycles without referring to a special group.   

A \textbf{rack} is a pair $(X,\tri)$, for short $X$, where $X$ is a non-empty set and $\tri: X\times X\rightarrow X$ is a map denoted by $(x,y)\mapsto x\tri y$ for all $x,y\in X$ such that
\begin{enumerate}
\item the map $\varphi_i:X\rightarrow X$, $x\mapsto i\tri x$ is bijective for all $i\in X$, and
\item $x\tri (y\tri z)=(x\tri y)\tri (x\tri z)$ for all $x,y,z\in X$.
\end{enumerate}
A non-empty subset $Y\subseteq X$ of a rack is a \textbf{subrack} if $(Y,\tri)$ is a rack. A \textbf{quandle} is a rack with $x\tri x=x$ for all $x\in X$. A map $f:(X,\tri)\rightarrow (Y,\tri)$ is a $\textbf{morphism of racks}$ if $f(x\tri y)=f(x)\tri f(y)$ for all $x,y\in X$. We say that $x,y\in X$ \textbf{commute} if $x\tri y=y$.  

\begin{exa}\label{ex:conjugacyrack}
Let $G$ be a group and let $H\subseteq G$ be a subset which is stable under conjugation. Then $H$ is a quandle, where $x\tri y=xyx^{-1}$ for all $x,y\in H$. Moreover, in such a rack $H$ of conjugacy classes for all $x,y\in H$ we have $x\tri y=y$ if and only if $y\tri x=x$. In particular, $\supp V$ is a rack for any Yetter-Drinfeld module over $\fK G$. 
\end{exa}

\begin{defi}
A rack $(X,\tri)$ is called \textbf{braided} if it is a quandle and either $x\tri y=y$ or $x\tri (y\tri x)=y$ for all $x,y\in X$.
\end{defi}    

We will use the important "calculation rules" for braided racks in the following two Lemmata all the time without referring to it.

\begin{lem}\label{le:braidrack}\cite[Lemma 2,\ Lemma 3]{MR2891215}
Let $(X,\tri)$ be a braided rack and let $x,y,z\in X$. Then the following hold.
\begin{enumerate}
    \item If $y\neq z$ and $x\tri y=z$, then $y\tri z=x$ and $z\tri x=y$.
    \item If $y\tri x=x$, then $x\tri y=y$.
    \item If $x\tri (y\tri x)=y$, then $y\tri (x\tri y)=x$.
\end{enumerate}
\end{lem}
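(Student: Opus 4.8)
The plan is to run everything off one elementary identity and one reformulation of the braided axiom. First I would record that in any quandle
$x\tri(y\tri x)=(x\tri y)\tri(x\tri x)=(x\tri y)\tri x$,
where the first equality is self-distributivity applied to the triple $(x,y,x)$ and the second uses $x\tri x=x$. Consequently the braided-rack axiom for a pair $(x,y)$ reads: either $x\tri y=y$, or $(x\tri y)\tri x=y$. From this I extract the following reformulation, call it $(\ast)$: \emph{if $a\tri b=c$ with $b\neq c$, then $c\tri a=b$}. Indeed, when $a\tri b=c\neq b$ the first alternative of the braided axiom for $(a,b)$ fails, so $(a\tri b)\tri a=b$, i.e. $c\tri a=b$. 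All three parts then follow quickly.

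For part (1): given $x\tri y=z$ with $y\neq z$, apply $(\ast)$ with $(a,b,c)=(x,y,z)$ to obtain $z\tri x=y$, which is the second assertion. For the first, note $x\neq y$ — otherwise $z=x\tri y=x\tri x=x=y$, contradicting $y\neq z$ — so $(\ast)$ applies again, this time to the relation $z\tri x=y$ whose last two entries $x$ and $y$ differ, yielding $y\tri z=x$.

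For part (2): assume $y\tri x=x$ and, for contradiction, $x\tri y\neq y$. Then the braided axiom for $(x,y)$ forces $x\tri(y\tri x)=y$; substituting $y\tri x=x$ gives $x\tri x=y$, hence $x=y$ by the quandle property, and then $x\tri y=x\tri x=x=y$, a contradiction. So $x\tri y=y$. For part (3): assume $x\tri(y\tri x)=y$, which by the opening identity means $(x\tri y)\tri x=y$. If $x=y$, both sides of the claimed equality reduce to $x$ using $x\tri x=x$. If $x\neq y$, apply $(\ast)$ to $(x\tri y)\tri x=y$ (legitimate since $x\neq y$) to get $y\tri(x\tri y)=x$.

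I do not expect a serious obstacle here; the proof is short once $(\ast)$ is isolated. The only points requiring care are the bookkeeping of the non-equality hypotheses that license the repeated use of $(\ast)$ (in particular establishing $x\neq y$ inside the proof of (1)) and the separate handling of the degenerate case $x=y$ in part (3).
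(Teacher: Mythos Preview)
Your proof is correct and self-contained. The paper does not actually prove this lemma; it merely cites \cite[Lemma~2, Lemma~3]{MR2891215}, so there is no in-paper argument to compare against. Your isolation of the observation $(\ast)$---that $a\tri b=c\neq b$ forces $c\tri a=b$---together with the quandle identity $x\tri(y\tri x)=(x\tri y)\tri x$ is exactly the engine that drives all three parts, and your handling of the non-equality bookkeeping (deriving $x\neq y$ in part~(1), treating $x=y$ separately in part~(3)) is clean and complete.
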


\begin{lem}\cite[Lemma~4]{MR2891215}\label{le:braidrackequi}
Let $(X,\tri)$ be a quandle. Then the following are equivalent.
\begin{enumerate}
\item $X$ is braided.
\item $x\tri (y\tri x)\in\{x,y\}$ for all $x,y\in X$.
\end{enumerate}
\end{lem}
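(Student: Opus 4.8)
The plan is to prove the two implications separately, relying only on the quandle axioms, the definition of a braided rack, the bijectivity of each $\varphi_i$, and — for the easy direction — the already established Lemma~\ref{le:braidrack}.

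For $(1)\Rightarrow(2)$ I would fix $x,y\in X$ and apply the definition of a braided rack: either $x\tri y=y$ or $x\tri(y\tri x)=y$. In the second case $x\tri(y\tri x)=y\in\{x,y\}$ and there is nothing to do. In the first case, from $x\tri y=y$ Lemma~\ref{le:braidrack}(2), read with the roles of $x$ and $y$ interchanged, gives $y\tri x=x$, so $x\tri(y\tri x)=x\tri x=x\in\{x,y\}$ by the quandle axiom $x\tri x=x$. (If one prefers not to cite Lemma~\ref{le:braidrack}, the same conclusion follows by applying the braided condition to the pair $(y,x)$ and using $x\tri y=y$.)

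For $(2)\Rightarrow(1)$ the starting point is the identity $x\tri(y\tri x)=(x\tri y)\tri(x\tri x)=(x\tri y)\tri x$, valid in every quandle by self-distributivity and $x\tri x=x$. Given $x,y\in X$, hypothesis (2) puts $x\tri(y\tri x)\in\{x,y\}$; if it equals $y$ we are in the second alternative of the braided condition, so I would assume it equals $x$, i.e. $(x\tri y)\tri x=x$. Writing $z=x\tri y$, this reads $z\tri x=x$. Now I feed this back into hypothesis (2), applied to the pair $(z,x)$: self-distributivity together with $z\tri x=x$ and $z\tri z=z$ gives $z\tri(x\tri z)=(z\tri x)\tri(z\tri z)=x\tri z$, so $x\tri z\in\{z,x\}$. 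If $x\tri z=z$, then $x\tri(x\tri y)=x\tri y$, and injectivity of $\varphi_x$ forces $x\tri y=y$; if $x\tri z=x=x\tri x$, then injectivity of $\varphi_x$ forces $z=x$, i.e. $x\tri y=x$, and applying $\varphi_x^{-1}$ to $x\tri y=x=x\tri x$ gives $y=x$, hence again $x\tri y=x=y$. In either case $x\tri y=y$, the first alternative of the braided condition, so $X$ is braided.

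The routine parts are the bookkeeping with the bijections $\varphi_i$ and the repeated use of self-distributivity. The one genuinely delicate point, and what I expect to be the main obstacle, lies in $(2)\Rightarrow(1)$: one must extract $x\tri y=y$ from the derived relation $(x\tri y)\tri x=x$ \emph{without} invoking Lemma~\ref{le:braidrack}, whose proof presupposes that $X$ is braided; the device that avoids this circularity is to re-apply the hypothesis (2) to the new pair $(x\tri y,\,x)$ rather than to some general calculation rule.
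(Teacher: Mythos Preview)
Your proof is correct. Both implications go through as written: the key identity $x\tri(y\tri x)=(x\tri y)\tri x$ (from self-distributivity and the quandle axiom) together with the re-application of hypothesis~(2) to the pair $(x\tri y,\,x)$ is exactly what is needed to avoid circularity in $(2)\Rightarrow(1)$, and your case analysis using injectivity of $\varphi_x$ is clean.

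As for comparison: the paper does not supply its own proof of this lemma but simply cites \cite[Lemma~4]{MR2891215}, so there is no in-paper argument to compare against. Your self-contained proof is therefore a genuine addition relative to what the present paper records.
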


For a rack $(X,\tri)$ the subgroup of the group of permutations $\mathbb{S}_X$ generated by the permutations $\varphi_i$, $i\in X$, is called \textbf{inner group} of $X$, and is denoted by $\mathrm{Inn}(X)$. If $\mathrm{Inn}(X)$ acts transitively on $X$, we say that $X$ is \textbf{indecomposable}. For any rack $(X,\tri)$ we define its \textbf{enveloping group} by \[G_X=\langle g_x\mid x\in X\rangle/(g_xg_y=g_{x\tri y}g_x\ \mathrm{for\ all}\ x,y\in X).\]  A rack is called \textbf{injective} if the map $X\rightarrow G_X$, $x\mapsto g_x$ is injective. 

The canonical map $i:X\rightarrow G_X$, $x\mapsto g_x$ has the following universal property: If $G$ is a group and $f:X\rightarrow G$ a morphism of racks, where $G$ is a rack by conjugation, then there is a unique morphism of groups $h:G_X\rightarrow G$ such that $h(g_x)=f(x)$ for all $x\in X$ (see e.g. \cite[Section~3.1]{MR2799090}). 

There is a canonical group action of $G_X$ on $X$ extending the rack operation $\tri$ such that $g_x\tri y=x\tri y$ for all $x,y\in X$ (see e.g. \cite[Section 1.3]{MR2891215}).
 
\begin{lem}\cite[Lemma~2.17,\ Lemma~2.18]{MR2803792}\label{le:relenvelgroup} Let $X$ be a finite indecomposable rack. Then all permutations $\varphi_x\in\mathbb{S}_X$, where $x\in X$, have the same order $n$. Moreover, in the enveloping group $G_X$ the relations $g_x{}^n=g_y{}^n$ hold and $g_x{}^n$ is a central element of $G_X$ for all $x,y\in X$.
\end{lem}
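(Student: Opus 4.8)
The statement is attributed to \cite{MR2803792}, so the plan is essentially to reconstruct the standard argument. First I would establish that all $\varphi_x$ have the same order. Because $X$ is indecomposable, $\mathrm{Inn}(X)$ acts transitively on $X$, so for any $x,y\in X$ there is $w\in\mathrm{Inn}(X)$ with $w(x)=y$. The rack axiom says that for $i\in X$ the map $\varphi_i$ is an automorphism of $(X,\tri)$, hence every $w\in\mathrm{Inn}(X)$ is an automorphism of $X$; conjugation by such $w$ sends $\varphi_x$ to $\varphi_{w(x)}$. Indeed $w\varphi_x w^{-1}(z)=w(x\tri w^{-1}(z))=w(x)\tri z=\varphi_{w(x)}(z)$ using that $w$ commutes with $\tri$. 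Thus $\varphi_y=w\varphi_x w^{-1}$ is conjugate to $\varphi_x$ in $\mathbb{S}_X$, so they have the same order; call it $n$.

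Next I would transfer this to the enveloping group. The canonical action of $G_X$ on $X$ sends $g_x$ to the permutation $\varphi_x$, giving a group homomorphism $\pi\colon G_X\to\mathrm{Inn}(X)\subseteq\mathbb{S}_X$ with $\pi(g_x)=\varphi_x$. To see that $g_x{}^n$ is central, I would show $g_x{}^n g_y=g_y g_x{}^n$ for all generators $g_y$. Iterating the defining relation $g_xg_y=g_{x\tri y}g_x$ one gets $g_x{}^k g_y=g_{x^{\tri k}\tri y}\,g_x{}^k$ where $x^{\tri k}\tri y$ denotes applying $\varphi_x$ $k$ times to $y$; taking $k=n$ and using $\varphi_x{}^n=\id$ yields $g_x{}^n g_y=g_y g_x{}^n$. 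Hence $g_x{}^n$ commutes with every generator and is central.

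Finally, for the relations $g_x{}^n=g_y{}^n$: it suffices to prove it for $y=\varphi_z(x)=z\tri x$ for each $z$, since indecomposability means any $y$ is obtained from $x$ by a finite sequence of such moves and one can chain the equalities. From $g_zg_x=g_{z\tri x}g_z$ we get $g_{z\tri x}=g_zg_xg_z^{-1}$, so $g_{z\tri x}{}^n=g_z g_x{}^n g_z^{-1}=g_x{}^n$, the last step because $g_x{}^n$ is already known to be central. This closes the argument.

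The main obstacle is purely organizational: one must be careful that the "canonical action of $G_X$ on $X$" is well defined (the defining relations of $G_X$ are respected because $\varphi_z\varphi_x=\varphi_{z\tri x}\varphi_z$ holds in $\mathbb{S}_X$ by the self-distributivity axiom), and that the centrality of $g_x{}^n$ is genuinely used — not circularly — when deducing $g_x{}^n=g_y{}^n$. Both points are routine given the rack axioms, so there is no real difficulty beyond bookkeeping; everything reduces to iterating the single relation $g_xg_y=g_{x\tri y}g_x$ and invoking transitivity of $\mathrm{Inn}(X)$.
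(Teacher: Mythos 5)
Your proof is correct and is essentially the standard argument: the paper itself gives no proof of this lemma, only a citation to \cite[Lemmas~2.17, 2.18]{MR2803792}, and your reconstruction (conjugacy of the $\varphi_x$ under the transitive $\mathrm{Inn}(X)$-action for the common order, the iterated relation $g_x{}^k g_y=g_{\varphi_x^k(y)}g_x{}^k$ for centrality, and then $g_{z\tri x}=g_zg_xg_z^{-1}$ plus centrality for $g_x{}^n=g_y{}^n$) matches the argument in the cited source. The one point you gloss over -- reaching an arbitrary $y$ from $x$ by forward moves $\varphi_z$ only -- is harmless since $X$ is finite, so each $\varphi_z^{-1}$ is a positive power of $\varphi_z$.
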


For a finite indecomposable rack $X$ we define the \textbf{finite enveloping group} of $X$ by $\overline{G_X}=G_X/(g_x{}^n)$, where $x\in X$ and $n=\mathrm{ord}(\varphi_x)$. By  Lemma~\ref{le:relenvelgroup}, the definition is independent from the choice of $x\in X$.

\begin{defi}
Let $(X,\tri)$ be a rack and let \[q: X\times X\rightarrow \fK^{\times},\quad (x,y) \mapsto q_{x,y}\] be a map. Then $q$ is called a scalar valued \textbf{two-cocycle} if \[q_{x\tri y,x\tri z}q_{x,z}=q_{x,y\tri z}q_{y,z}\] for all $x,y,z\in X$.
\end{defi} 

There is also the more general concept with (not scalar valued) two-cocycles, where the target is the automorphism group of a vector space instead of $\fK^{\times}$ (see e.g. \cite[Definition~1.5.11]{MR4164719}). However, we will consider Yetter-Drinfeld modules and their braidings whose $G$-homogeneous components are at most one-dimensional. These require only scalar valued two-cocycles. 

The following Lemma is a special case of \cite[Proposition~1.5.12]{MR4164719}.

\begin{lem}\label{le:rackbraiding}
Let $X$ be a non-empty set and let \[\tri: X\times X\rightarrow X,\ q:X\times X\rightarrow\fK^{\times}\] be maps. Let $c_q: \fK X\ot\fK X\rightarrow\fK X\ot\fK X$ be the linear map with \[c_q(x\ot y)=q_{x,y}(x\tri y)\ot x\] for all $x,y\in X$. Then $(\fK X,c_q)$ is a braided vector space if and only if $(X,\tri)$ is a rack and $q$ is a two-cocycle. 
\end{lem}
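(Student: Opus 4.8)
The statement to prove is Lemma~\ref{le:rackbraiding}: for maps $\tri\colon X\times X\to X$ and $q\colon X\times X\to\fK^\times$, the pair $(\fK X, c_q)$ with $c_q(x\ot y)=q_{x,y}(x\tri y)\ot x$ is a braided vector space if and only if $(X,\tri)$ is a rack and $q$ is a two-cocycle. The plan is to proceed in two halves: first show $c_q$ is bijective iff each $\varphi_i\colon x\mapsto i\tri x$ is bijective, and second show the Yang--Baxter equation for $c_q$ is equivalent to the conjunction of the self-distributivity axiom for $\tri$ and the two-cocycle condition for $q$.

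For the bijectivity part, I would observe that $c_q$ permutes (up to nonzero scalars) the basis $\{x\ot y\}$ of $\fK X\ot\fK X$ according to the set map $(x,y)\mapsto(x\tri y, x)$. Hence $c_q$ is bijective if and only if this set map is bijective, which (fixing the second coordinate and varying the first) holds precisely when every $\varphi_i$ is a bijection of $X$. So $c_q$ bijective $\iff$ axiom (1) of a rack. Since all scalars $q_{x,y}$ lie in $\fK^\times$, they never obstruct bijectivity; this is the easy direction and I would dispatch it quickly.

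For the Yang--Baxter part, I would evaluate both sides of $(c_q\ot\id)(\id\ot c_q)(c_q\ot\id)$ and $(\id\ot c_q)(c_q\ot\id)(\id\ot c_q)$ on a basis element $x\ot y\ot z$ and compare. Tracking the $X$-component: the left-hand side sends $x\ot y\ot z$ to a scalar times $\big((x\tri y)\tri(x\tri z)\big)\ot (x\tri y)\ot x$, while the right-hand side sends it to a scalar times $\big(x\tri(y\tri z)\big)\ot(x\tri y)\ot x$ — wait, I must recompute the second and third slots carefully, but the upshot is that the first tensor factors agree for all $x,y,z$ exactly when $(x\tri y)\tri(x\tri z) = x\tri(y\tri z)$, i.e.\ axiom (2). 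Given that, the remaining content is that the product of the three $q$-scalars picked up along the left-hand composite equals the product picked up along the right-hand composite; writing these out yields $q_{x\tri y,\,x\tri z}\,q_{x,z}\,(\text{common factor}) = q_{x,\,y\tri z}\,q_{y,z}\,(\text{same common factor})$, which is precisely the two-cocycle identity $q_{x\tri y,x\tri z}q_{x,z}=q_{x,y\tri z}q_{y,z}$. Conversely, if $\tri$ is self-distributive and $q$ is a two-cocycle, these computations show both sides agree, so the Yang--Baxter equation holds.

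The main obstacle — really the only place care is needed — is the bookkeeping in the Yang--Baxter computation: one must correctly track, slot by slot, both the permutation of basis vectors and the accumulated $q$-scalars through a threefold composite of $c_q$'s, and then read off that the basis-vector equality isolates axiom~(2) while the scalar equality isolates the cocycle condition, with the two conditions decoupling cleanly because the scalars are central in $\fK$. Since this is explicitly stated to be a special case of \cite[Proposition~1.5.12]{MR4164719}, I would in fact cite that proposition for the general (not-necessarily-scalar) statement and merely indicate how the scalar case follows by specializing the coefficient two-cocycle to act by scalars, keeping the direct computation as a remark rather than grinding through all terms.
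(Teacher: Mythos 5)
Your proposal is correct and follows essentially the same route as the paper: evaluate both sides of the Yang--Baxter equation on a basis vector $x\ot y\ot z$, observe that equality of the first tensor factors is exactly self-distributivity and equality of the accumulated scalars (after cancelling the common factor $q_{x,y}$) is exactly the two-cocycle condition. The only difference is that you also spell out the equivalence between invertibility of $c_q$ and the bijectivity of the maps $\varphi_i$, a point the paper's proof leaves implicit, so if anything your version is slightly more complete.
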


\begin{proof}
Let $x,y,z\in X$. Then 
\begin{align*}
    (c_q)_1(c_q)_2(c_q)_1(x\ot y\ot z)&=q_{x,z}q_{x\tri y,x\tri z}(x\tri y)\tri (x\tri z)\ot q_{x,y}(x\tri y)\ot x\\
    (c_q)_2(c_q)_1(c_q)_2(x\ot y\ot z)&=q_{y,z}q_{x,y\tri z}x\tri (y\tri z)\ot q_{x,y}(x\tri y)\ot x.
\end{align*}
Hence $c_q$ fulfills the Yang-Baxter equation if and only if $(X,\tri)$ is a rack and $q$ is a two-cocycle.
\end{proof}

The following definition is similarly used in \cite{MR2065444}.

\begin{defi}
    Let $X$ be a rack and $q:X\times X\rightarrow\fK^{\times}$ a two-cocycle. Let $(\fK X,c_q)$ be the associated braided vector space (as in Lemma~\ref{le:rackbraiding}). A \textbf{Yetter-Drinfeld realization} of $(\fK X,c_q)$ over a group $G$ is a Yetter-Drinfeld module structure over $\fK G$ on $\fK X$ such that the braiding $c_q$ coincides with the Yetter-Drinfeld braiding in ${^{G}_{G}}\mathcal{YD}$ of $\fK X$ and the elements of $X$ are $G$-homogeneous. We call a Yetter-Drinfeld realization \textbf{faithful} if the map $X\rightarrow G$, $x\mapsto\deg(x)$ is injective. 
\end{defi}

A rack $X$ with a two-cocycle $q$ admits many Yetter-Drinfeld realizations over different groups. Note that the Nichols algebras of all such Yetter-Drinfeld realizations are isomorphic as algebras and coalgebras since the Yetter-Drinfeld modules are isomorphic as braided vector spaces. The following Lemma~\ref{le:YDrealization1} is a variation of \cite[Proposition~1.5.6]{MR4164719}. It shows that any rack with two-cocycle can be realized as Yetter-Drinfeld module over the enveloping group of the rack. We will see in Proposition~\ref{prop:chooseenvelopingrealization} (2) that the existence of left coideal subalgebras of the Nichols algebra $\B(\fK X,q)$ in the category of $\ndN_0$-graded $\fK G$-comodules which are not generated in degree one does not depend on the choice of the group $G$ for the realization. We will mostly use the realization over $\fK G_X$.  

\begin{lem}\label{le:YDrealization1} 
 Let $X$ be a rack and $q:X\times X\rightarrow\fK^{\times}$ a two-cocycle. Let $(\fK X,c_q)$ be the associated braided vector space (as in Lemma~\ref{le:rackbraiding}). Then $\fK X$ is a Yetter-Drinfeld module over the enveloping group $G_X$ with $G_X$-action $g_x\cdot y=q_{x,y} (x\tri y)$  and $G_X$-gradation $\deg{x}=g_x$ for all $x,y\in X$. The braiding $c_q$ coincides with the Yetter-Drinfeld braiding $c_{\fK X}$. 
 \end{lem}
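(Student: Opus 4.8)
The plan is to write down the claimed $\fK G_X$-module and $\fK G_X$-comodule structures on $\fK X$ explicitly, then to verify the Yetter-Drinfeld compatibility in the form available over a group algebra, namely $g\cdot V_h\subseteq V_{ghg^{-1}}$ for all $g,h\in G_X$; once this is done, comparing the resulting Yetter-Drinfeld braiding with $c_q$ is immediate.

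First I would define, for each $x\in X$, the linear map $\psi_x\colon\fK X\to\fK X$ by $\psi_x(y)=q_{x,y}\,(x\tri y)$ for $y\in X$. Since $\varphi_x\colon X\to X$, $y\mapsto x\tri y$, is a bijection and $q_{x,y}\in\fK^{\times}$, each $\psi_x$ lies in $\mathrm{Aut}(\fK X)$. To obtain a $\fK G_X$-module structure in which $g_x$ acts as $\psi_x$ it suffices to check that the assignment $g_x\mapsto\psi_x$ is compatible with the defining relations of $G_X$, i.e. that $\psi_x\psi_y=\psi_{x\tri y}\psi_x$ for all $x,y\in X$. Evaluating on a basis element $z\in X$,
\begin{align*}
\psi_x\psi_y(z)&=q_{y,z}q_{x,y\tri z}\,x\tri(y\tri z),\\
\psi_{x\tri y}\psi_x(z)&=q_{x,z}q_{x\tri y,x\tri z}\,(x\tri y)\tri(x\tri z),
\end{align*}
and the two basis vectors coincide by the self-distributivity axiom of the rack $X$, while the two scalars coincide precisely because $q$ is a two-cocycle. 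Hence $g_x\mapsto\psi_x$ extends to a group homomorphism $G_X\to\mathrm{Aut}(\fK X)$, which endows $\fK X$ with the $\fK G_X$-module structure $g_x\cdot y=q_{x,y}\,(x\tri y)$.

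Next, declaring $\deg x=g_x$ turns $\fK X$ into a $G_X$-graded vector space, i.e. a $\fK G_X$-comodule. For the Yetter-Drinfeld condition it is enough to check $g\cdot(\fK X)_h\subseteq(\fK X)_{ghg^{-1}}$ when $g$ runs through the generating set $\{g_x\mid x\in X\}$, since this property is stable under products and under inversion. For $g=g_x$ and the homogeneous component containing $y$ one computes $g_x\cdot y=q_{x,y}\,(x\tri y)$, which is homogeneous of degree $g_{x\tri y}$; and the defining relation $g_xg_y=g_{x\tri y}g_x$ gives $g_xg_yg_x^{-1}=g_{x\tri y}$, so indeed $g_x\cdot(\fK X)_{g_y}\subseteq(\fK X)_{g_xg_yg_x^{-1}}$. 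Thus $\fK X\in{^{G_X}_{G_X}}\mathcal{YD}$, and its Yetter-Drinfeld braiding is $c_{\fK X}(x\ot y)=g_x\cdot y\ot x=q_{x,y}\,(x\tri y)\ot x=c_q(x\ot y)$, as desired.

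The whole argument is a routine verification; the only step that uses the hypotheses non-formally is the identity $\psi_x\psi_y=\psi_{x\tri y}\psi_x$, which is exactly where the rack axiom and the two-cocycle identity are both needed, so I regard establishing well-definedness of the $G_X$-action as the crux (and the one point where the two-cocycle condition is essential rather than bookkeeping).
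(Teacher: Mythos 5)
Your proposal is correct and follows essentially the same route as the paper: the maps $\psi_x$ are exactly the paper's $f_x$, the well-definedness of the $G_X$-action is checked via the same identity $\psi_x\psi_y=\psi_{x\tri y}\psi_x$ using self-distributivity and the two-cocycle condition, and the Yetter-Drinfeld condition and braiding comparison are verified in the same way. The only (harmless) addition is your explicit remark that the compatibility $g\cdot(\fK X)_h\subseteq(\fK X)_{ghg^{-1}}$ need only be checked on the generators $g_x$.
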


\begin{proof}
 The linear map \[f_x: \fK X\rightarrow \fK X,\ y\mapsto q_{x,y}(x\tri y)\] is a linear automorphism of $\fK X$ for all $x\in X$. Let $x,y,z\in X$. Then
 \begin{align*}
f_xf_y(z)&=q_{x,y\tri z}q_{y,z}(x\tri(y\tri z))\\
         &=q_{x\tri y,x\tri z}q_{x,z}((x\tri y)\tri (x\tri z))=f_{x\tri y}f_x(z),
\end{align*}
 where we used the self-distributivity of $\tri$ and the two-cocycle condition. Hence the group morphism $\langle g_x\mid x\in X\rangle\rightarrow \mathrm{Aut}(\fK X)$, $g_x\mapsto f_x$ induces a group morphism $G_X\rightarrow \mathrm{Aut}(\fK X)$. It follows that $\fK X$ is a $\fK G_X$-module with action $g_x\cdot y=q_{x,y} (x\tri y)$ for all $x,y\in X$. The Yetter-Drinfeld condition $g_x\cdot y\in (\fK X)_{g_xg_yg_x^{-1}}$ for all $x,y\in X$ holds since 
 $\deg(x\tri y)=g_{x\tri y}=g_xg_yg_x^{-1}\in G_X$. Moreover, we have \[c_{\fK X}(x\ot y)=g_x\cdot y\ot x=q_{x,y}(x\tri y)\ot x=c_q(x\ot y)\] for all $x,y\in X$. 
\end{proof}

Note that the Yetter-Drinfeld realization of a rack (with a two-cocycle) over the enveloping group is not faithful if the rack is not injective. If $X$ is an injective rack, then $g_x\neq g_y$ for each $x,y\in X$ with $x\neq y$, hence the Yetter-Drinfeld realization over $\fK G_X$ is faithful and then each $G_X$-homogeneous component of $\fK X$ is at most one-dimensional.


\begin{lem}\label{le:injectiverack}
Let $G$ be a group and $X\subseteq G$ a subset which is stable under conjugation. Then $X$ is an injective rack.
\end{lem}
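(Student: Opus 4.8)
The plan is to reduce everything to the universal property of the enveloping group $G_X$ stated just above. First I would record that $X$, being stable under conjugation in $G$, is a rack under the conjugation operation $x\tri y=xyx^{-1}$ (this is exactly Example~\ref{ex:conjugacyrack}), so the statement "$X$ is a rack" is already in hand and only injectivity remains. Injectivity here means that the canonical map $i:X\to G_X$, $x\mapsto g_x$, is injective.

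Next I would exhibit a suitable target group together with a rack morphism out of $X$. The obvious choice is $G$ itself, viewed as a rack by conjugation, and the inclusion map $\iota:X\hookrightarrow G$. One checks that $\iota$ is a morphism of racks: for $x,y\in X$ we have $\iota(x\tri y)=xyx^{-1}=\iota(x)\iota(y)\iota(x)^{-1}=\iota(x)\tri\iota(y)$. By the universal property of $i:X\to G_X$, there is a (unique) group homomorphism $h:G_X\to G$ with $h(g_x)=\iota(x)=x$ for all $x\in X$.

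Finally I would conclude the injectivity of $i$ from the existence of $h$: if $g_x=g_y$ in $G_X$ for some $x,y\in X$, then applying $h$ gives $x=h(g_x)=h(g_y)=y$. Hence $x\mapsto g_x$ is injective and $X$ is an injective rack. I do not anticipate any real obstacle; the only point requiring a moment's care is the verification that the inclusion is a rack morphism and the invocation of the universal property with the correct target, after which injectivity is immediate because $h$ provides a one-sided inverse to $i$ on the generators.
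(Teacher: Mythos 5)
Your proposal is correct and follows exactly the paper's own argument: the inclusion $X\hookrightarrow G$ is an injective rack morphism, the universal property of $i:X\to G_X$ yields a group homomorphism $h:G_X\to G$ with $h\circ i$ equal to the inclusion, and injectivity of $i$ follows. No gaps; the verification that the inclusion is a rack morphism is exactly the check you describe.
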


\begin{proof}
Let $f: X\rightarrow G$, $x\mapsto x$ be the injective morphism of racks. By the universal property of the canonical map $i:X\rightarrow G_X$, there is a group homomorphism $g:G_X\rightarrow G$ such that $g\circ i=f$. Hence, the map $i$ is injective since $f$ is injective.
\end{proof}

\begin{lem}\label{le:YDrackcocylce}
Let $G$ be a group and $V$ a finite-dimensional Yetter-Drinfeld module over $\fK G$ such that $\dim V_g\leq 1$ for all $g\in G$. Let $X=\supp V$. For each $x\in X$ choose a non-zero element $v_x\in V_x$ and let $B=\{v_x\mid x\in X\}$. 
\begin{enumerate}
    \item For each $x\in X$ there is a map $q'_x:X\rightarrow \fK^{\times}$ such that $x\cdot v_y=q'_x(y)v_{xyx^{-1}}$ for all $y\in X$.
    \item $X$ is a rack with conjugation as rack operation and $q:X\times X\rightarrow \fK^{\times}$, $(x,y)\mapsto q'_x(y)$ is a two-cocycle.
    \item The linear map $f: (V,c_V)\rightarrow (\fK X,c_q)$, $v_x\mapsto x$ is an isomorphism of braided vector spaces. There $c_V$ is the Yetter-Drinfeld braiding of $V$ and $(\fK X,c_q)$ is the braided vector space associated to the rack $X$ and two-cocycle $q$ as in Lemma~\ref{le:rackbraiding}.
\end{enumerate}
\end{lem}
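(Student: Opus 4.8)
The plan is to verify the three claims essentially by unwinding the definitions, since the real content (the two-cocycle identity coming from self-distributivity) has already appeared in Lemma~\ref{le:rackbraiding} and Lemma~\ref{le:YDrealization1}. First I would establish (1): since $V$ is a Yetter-Drinfeld module over $\fK G$, the element $x\in G$ acts on the homogeneous component $V_y$ and, by the compatibility condition $g\cdot V_h\subseteq V_{ghg^{-1}}$ recorded just after the equivalence of categories, we have $x\cdot v_y\in V_{xyx^{-1}}$. Here I must first observe that $xyx^{-1}\in X=\supp V$: indeed $x\cdot v_y\ne 0$ because the action of the group element $x$ is invertible on $V$, so $V_{xyx^{-1}}\ne 0$. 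Since $\dim V_{xyx^{-1}}\le 1$ and $v_{xyx^{-1}}$ is by choice a non-zero element of it, there is a unique scalar $q'_x(y)\in\fK^{\times}$ with $x\cdot v_y=q'_x(y)v_{xyx^{-1}}$; this defines the map $q'_x\colon X\to\fK^{\times}$.

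For (2): the fact that $X$ is a rack with the conjugation operation is immediate from Example~\ref{ex:conjugacyrack}, since $X=\supp V$ is stable under conjugation by elements of $G$ (by the same computation as above, $g\cdot V_h\subseteq V_{ghg^{-1}}$ forces $ghg^{-1}\in X$ whenever $h\in X$ and $g\in X$, in fact for any $g\in G$). To see that $q\colon(x,y)\mapsto q'_x(y)$ is a two-cocycle, I would compare the two ways of computing $(xy)\cdot v_z$ in $V$: on one hand $(xy)\cdot v_z = x\cdot(y\cdot v_z) = q'_x(y\tri z)q'_y(z)\,v_{x\tri(y\tri z)}$, and on the other hand, using the defining relation $g_xg_y=g_{x\tri y}g_x$ of the enveloping group — or directly the group identity $xy=(xyx^{-1})x$ in $G$ — we get $(xy)\cdot v_z = ((x\tri y)x)\cdot v_z = q'_{x\tri y}(x\tri z)q'_x(z)\,v_{(x\tri y)\tri(x\tri z)}$. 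Since $x\tri(y\tri z)=(x\tri y)\tri(x\tri z)$ by self-distributivity, both expressions have the same non-zero basis vector, and comparing scalars gives $q_{x\tri y,x\tri z}q_{x,z}=q_{x,y\tri z}q_{y,z}$, which is exactly the two-cocycle condition.

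For (3): the linear map $f\colon V\to\fK X$, $v_x\mapsto x$, is a bijection because $B=\{v_x\mid x\in X\}$ is a basis of $V$ (each $V_g$ is at most one-dimensional and $X=\supp V$) and $X$ is a basis of $\fK X$. It remains to check that $f$ intertwines the braidings. The Yetter-Drinfeld braiding on $V$ over a group algebra is $c_V(v_x\ot v_y)=x\cdot v_y\ot v_x = q'_x(y)\,v_{x\tri y}\ot v_x$, as recorded in Section~\ref{se:different group realization}, while $c_q(x\ot y)=q_{x,y}(x\tri y)\ot x$; applying $f\ot f$ to the first and comparing with the second gives $(f\ot f)c_V = c_q(f\ot f)$, so $f$ is an isomorphism of braided vector spaces.

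The only mild subtlety — the main "obstacle," though it is a small one — is the bookkeeping in (2): one must be careful that the scalar identity is read off from the coefficient of the \emph{same} basis vector on both sides, which is guaranteed precisely by self-distributivity of $\tri$, and that the group-level identity $xy=(xyx^{-1})x$ is used to rewrite $x\cdot(y\cdot v_z)$; everything else is a direct translation of the Yetter-Drinfeld axioms into the language of racks and cocycles, closely parallel to the computation in the proof of Lemma~\ref{le:YDrealization1}.
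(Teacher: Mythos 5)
Your proposal is correct. Parts (1) and (3) coincide with the paper's argument (for (1) your observation that $xyx^{-1}\in\supp V$ because the action of $x$ is invertible is a clean way to justify what the paper attributes to the Yetter--Drinfeld condition and Example~\ref{ex:conjugacyrack}; for (3) the computation is the same). Where you genuinely diverge is in part (2): the paper does not verify the cocycle identity by hand. Instead it notes that the Yetter--Drinfeld braiding has the form $c_V(v_x\ot v_y)=q_{x,y}v_{x\tri y}\ot v_x$ and automatically satisfies the Yang--Baxter equation, so the ``only if'' direction of Lemma~\ref{le:rackbraiding} immediately yields that $X$ is a rack and $q$ is a two-cocycle. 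You instead compute $(xy)\cdot v_z$ in two ways using associativity of the $G$-action and the group identity $xy=(xyx^{-1})x$, and read off the identity $q_{x\tri y,x\tri z}q_{x,z}=q_{x,y\tri z}q_{y,z}$ from the coefficient of the common basis vector $v_{x\tri(y\tri z)}=v_{(x\tri y)\tri(x\tri z)}$. Both are valid: the paper's route is shorter because Lemma~\ref{le:rackbraiding} already packages the Yang--Baxter computation, while yours is more elementary and self-contained, needing only the module axioms and mirroring (in reverse) the computation in the proof of Lemma~\ref{le:YDrealization1}.
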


\begin{proof}
 (1) Let $x\in X$. By the Yetter-Drinfeld condition we have $x\cdot v_y\in V_{xyx^{-1}}$ for all $y\in X$. Since $\dim V_{xyx^{-1}}=1$ for all $y\in\supp V$ (by assumption and Example~\ref{ex:conjugacyrack}), it follows that for each $y\in X$ there is an element $q_{x,y}\in\fK^{\times}$ such that $x\cdot v_y=q_{x,y}v_{xyx^{-1}}$. Hence the claim follows with $q'_x:X\rightarrow \fK^{\times}$, $y\mapsto q_{x,y}$.

(2) The set $X=\supp V$ is a rack by Example~\ref{ex:conjugacyrack}. Note that the map $X\rightarrow B$, $x\mapsto v_x$ is an isomorphism of racks, where $B$ is a rack with the induced rack operation $v_x\tri v_y=v_{xyx^{-1}}$ for all $x,y\in X$. The Yetter-Drinfeld braiding $c_V$ of $V$ is given by \[c_V(v_x\ot v_y)=x\cdot v_y\ot v_x=q_{x,y}v_{xyx^{-1}}\ot v_x\] for all $x,y\in\supp V$. Hence the map $q$ is a two-cocycle on $B\cong X$ by Lemma~\ref{le:rackbraiding}.

(3) Obviously, the linear map $f$ is a well-defined isomorphism. Moreover,  
\begin{align*}
   (f\ot f)c_V(v_x\ot v_y)&=f(x\cdot v_y)\ot f(v_x)=q'_{x}(y) f(v_{xyx^{-1}})\ot f(v_x)\\
   &=q_{x,y}(x\tri y)\ot x=c_q(x\ot y)  
\end{align*}
for all $x,y\in X$. 
\end{proof}

\begin{prop}\label{prop:chooseenvelopingrealization}
Let $G$ be a group and $V$ a finite-dimensional Yetter-Drinfeld module over $\fK G$ such that $\dim V_g\leq 1$ for all $g\in G$. Let $X=\supp V$. Let $V_{G_X}$ be the Yetter-Drinfeld realization of the underlying braided vector space over $\fK G_X$.  
\begin{enumerate}
    \item The Nichols algebras $\B(V)$ and $\B(V_{G_X})$ are isomorphic as $\ndN_0$-graded braided Hopf algebras.\footnote{Since the isomorphism is induced by the identity map $\id: V=(V,{^{\fK G}}\delta_V)\rightarrow V_{G_X}=(V,{^{\fK G_X}}\delta_V)$, we will identify $\B(V)$ and $\B(V_{G_X})$ as vector spaces without referring to the isomorphism.} In particular, a subspace $C\subseteq\B(V)$ is a left coideal subalgebra if and only if $C\subseteq\B(V_{G_X})$ is a left coideal subalgebra.\footnote{Here we do not require that $C$ is a $\fK G$-comodule ($\fK G_X$-comodule, resp.).}   
    \item The Nichols algebra $\B(V)$ contains a left coideal subalgebra in the category of $\ndN_0$-graded $\fK G$-comodules that is not generated in degree one if and only if $\B(V_{G_X})$ contains a left coideal subalgebra in the category of $\ndN_0$-graded $\fK G_X$-comodules that is not generated in degree one.
\end{enumerate}
\end{prop}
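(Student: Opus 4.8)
The plan is to exploit the universal property of the enveloping group $G_X$ to produce a surjective group homomorphism $\phi\colon G_X\twoheadrightarrow G'$, where $G'$ is the subgroup of $G$ generated by $X=\supp V$, and to show that this homomorphism intertwines the two Yetter-Drinfeld structures on $\fK X$ well enough that the respective notions of "left coideal subalgebra in the category of $\ndN_0$-graded comodules" are comparable.

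For part (1): by Lemma~\ref{le:YDrackcocylce} the braided vector space $(V,c_V)$ is isomorphic (via $v_x\mapsto x$) to $(\fK X,c_q)$ for the rack $X$ with two-cocycle $q$, and by Lemma~\ref{le:YDrealization1} the realization $V_{G_X}$ has underlying braided vector space $(\fK X,c_q)$ as well. Since the Nichols algebra as an $\ndN_0$-graded algebra and coalgebra depends only on the braided vector space (as recalled after the definition of $\B(V)$), the identity map on $\fK X$ induces an $\ndN_0$-graded isomorphism $\B(V)\cong\B(V_{G_X})$ of braided Hopf algebras. The statement about left coideal subalgebras (in the weaker sense of the footnote, i.e.\ without the comodule requirement) is then immediate, since "subalgebra" and "left coideal" are notions about the graded algebra and coalgebra structure alone.

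For part (2): the non-trivial direction is to transfer a left coideal subalgebra that is additionally a graded comodule. First I would restrict attention to $G':=\langle X\rangle\subseteq G$; since $\supp V\subseteq G'$, the module $V$ is already a Yetter-Drinfeld module over $\fK G'$, and a subspace of $\B(V)$ is a graded $\fK G$-subcomodule iff it is a graded $\fK G'$-subcomodule (the $G$-grading on $\B(V)$ takes values in $G'$). By the universal property of $G_X$ applied to the rack morphism $X\to G'$, $x\mapsto x$, there is a surjective group homomorphism $\phi\colon G_X\twoheadrightarrow G'$ with $\phi(g_x)=x$. This $\phi$ is compatible with the two module structures: $g_x\cdot y=q_{x,y}(x\tri y)=x\cdot v_y$ under the identification, so the $\fK G_X$-action on $\fK X$ factors through $\phi$. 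Consequently the $G_X$-grading on $\B(V_{G_X})$ is the pullback along $\phi$ of the $G'$-grading on $\B(V)$: for a homogeneous element of $G'$-degree $w\in G'$, its $G_X$-degrees are exactly the fiber $\phi^{-1}(w)$, refining the grading. Hence every graded $\fK G_X$-subcomodule is in particular a graded $\fK G'$-subcomodule (coarsening the grading via $\phi$), and conversely—this is the point requiring care—every $\fK G'$-subcomodule which is also a subalgebra and left coideal is automatically a $\fK G_X$-subcomodule, because the $G_X$-homogeneous components of $\B(V)$ are sums of the $G'$-homogeneous components lying over a common fiber, and a subalgebra generated by $G'$-homogeneous elements that happens to be closed under the coarse grading is automatically closed under the finer one here since each $G'$-homogeneous component of $\B(V)$ is a single $G_X$-homogeneous component (this needs the observation that the multiplication and comultiplication of $\B(V)$ are graded for the finer $G_X$-grading, which holds because $V_{G_X}$ is a genuine Yetter-Drinfeld module). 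Thus the two categories of graded comodule left coideal subalgebras coincide, and the "generated in degree one" condition is visibly preserved, giving the equivalence.

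The main obstacle I anticipate is the last comparison: showing that a left coideal subalgebra which is a graded $\fK G'$-comodule is automatically a graded $\fK G_X$-comodule. The cleanest way around it is probably to argue that $\B(V)$ carries the finer $G_X$-grading as an $\ndN_0$-graded braided Hopf algebra (pulled back through $\phi$ on the module side, and directly via $\deg x=g_x$ on the generators), that every homogeneous subspace for the coarse grading decomposes as a direct sum of homogeneous subspaces for the fine grading, and that being a subalgebra and a left coideal is detected degree-by-degree in the $\ndN_0$-grading—so a subspace closed under all the structure maps and graded for the coarse $G'$-grading is, piece by piece, a sum of fine-graded pieces and hence fine-graded, provided one knows the fine $G_X$-grading is also compatible with $\mu$ and $\Delta$; and that compatibility is exactly the content of $V_{G_X}$ being a Yetter-Drinfeld module, which is Lemma~\ref{le:YDrealization1}. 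Once this is in place, the equivalence of the two conditions, together with the identification of the "degree one" parts, completes the proof.
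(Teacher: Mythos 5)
Your part (1) is correct and is essentially the paper's argument: the two realizations have the same underlying braided vector space, hence the same Nichols algebra as an $\ndN_0$-graded algebra and coalgebra, and the notions in the first footnote only see that structure. Part (2), however, contains a genuine gap at the decisive step. After passing to $G'=\langle X\rangle$ and the surjection $\phi\colon G_X\to G'$ you assert that each $G'$-homogeneous component of $\B(V)$ is a single $G_X$-homogeneous component, and conclude that a left coideal subalgebra which is a graded $\fK G'$-comodule is automatically a graded $\fK G_X$-comodule. That assertion is false: the $G_X$-grading strictly refines the $G'$-grading in general, even within a fixed $\ndN_0$-degree. For example, take $G=\ndZ/2\times\ndZ/2$ and $X$ the set of the two standard generators $a,b$ with the (trivial) conjugation rack and a two-cocycle with $q_{a,a}=q_{b,b}=1$; then $v_a{}^2$ and $v_b{}^2$ are nonzero elements of $\B(V)(2)$ lying in the same $G$-homogeneous component (of degree $a^2=b^2=e$), but their $G_X$-degrees $g_a{}^2$ and $g_b{}^2$ are distinct in $G_X\cong\ndZ^2$. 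So the two families of graded-comodule left coideal subalgebras cannot be identified by your grading argument, and indeed the proposition does not claim that they coincide — only that the two existence statements are equivalent.

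The paper sidesteps any comparison of gradings in degree $\geq 2$ by reducing the existence question to degree one via Proposition~\ref{pro:minmaxlcsa}: a non-primitively-generated left coideal subalgebra in the category of $\ndN_0$-graded comodules exists if and only if there is a degree-one subcomodule $W\subseteq V$ with $\langle W\rangle \subsetneq \ker (\id \ot \pi \pi_1)\Delta$, because the latter kernel is the largest such left coideal subalgebra with degree-one part $W$ and is automatically a comodule in either category, while any graded-comodule left coideal subalgebra with $C(1)=W$ is squeezed between the two. Both $\langle W\rangle$ and $\ker (\id \ot \pi \pi_1)\Delta$ depend only on the underlying graded algebra and coalgebra, which agree by part (1); and the collections of degree-one subcomodules of $V$ and of $V_{G_X}$ coincide as subspaces because both realizations are faithful in degree one — this is exactly where Lemma~\ref{le:injectiverack} (a conjugation-stable subset of a group is an injective rack, so $x\mapsto g_x$ is injective) is needed. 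Replacing your refinement-of-gradings argument by this reduction to degree one repairs the proof.
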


\begin{proof}
(1) The identity map $\id: V=(V,{^{\fK G}}\delta_V)\rightarrow V_{G_X}=(V,{^{\fK G_X}}\delta_V)$ is an isomorphism of braided vector spaces by construction (see Lem\-ma~\ref{le:YDrackcocylce}~(3) and Lemma~\ref{le:YDrealization1}). The claim follows directly from the construction of the Nichols algebra of a braided vector space. (More explicit arguments can be found in \cite[Remark~7.1.4]{MR4164719}.)    

(2) By Lemma~\ref{le:injectiverack}, the rack $X$ is injective. This implies that the Yetter-Drinfeld realization $V_{G_X}$ over the enveloping group is faithful. Hence $\fK G$-subcomodules of $V$ are (as vector subspaces of $V$) the same as the $\fK G_X$-subcomodules of $V_{G_X}$ and the claim follows directly from Proposition~\ref{pro:minmaxlcsa} and (1).

\end{proof}

\begin{prop}\label{prop:reducibleext}
Let $G$ be a group, let $V_1,V_2\in {^{G}_{G}}\mathcal{YD}$ be finite-dimensional and $V=V_1\oplus V_2$. Let $c=c_V$ be the Yetter-Drinfeld braiding and assume that $c{}^2\vert V_2\ot V_1\neq\id_{V_2\ot V_1}$. Then there is a left coideal subalgebra of $\B(V)$ in the category of $\ndN_0$-graded $\fK G$-comodules that is not generated in degree one.   
\end{prop}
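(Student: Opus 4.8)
The plan is to use Lemma~\ref{le:partial2}: I want to find a homogeneous element $x\in\B(V)(n)$, for some $n\geq 2$, which is $\fK G$-homogeneous (i.e. $\delta(x)\in\fK G\ot x$), which lies in $\B(V)\ot V_2$ under $\Delta_{n-1,1}$, and which is not in $\langle V_2\rangle$ — then taking $L$ the smallest graded $\fK G$-comodule left coideal containing $x$ produces a left coideal subalgebra $\langle L\rangle$ not generated in degree one, since $(L\cap\B(V))(1)\subseteq V_2$ while $x\notin\langle V_2\rangle$. The assumption $c^2\vert_{V_2\ot V_1}\neq\id$ is exactly what should allow such an $x$ to exist in degree two.

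First I would unpack the hypothesis. Since $V_2$ is an $\fK G$-comodule, it decomposes into $G$-homogeneous pieces; pick $G$-homogeneous $v\in V_2$ of degree $g$ and a $G$-homogeneous $w\in V_1$ with $c^2(v\ot w)\neq v\ot w$. Recall $c(v\ot w)=g\cdot w\ot v$ and then $c^2(v\ot w)=c(g\cdot w\ot v)$; writing $g\cdot w\in V_1$ as a sum of $G$-homogeneous elements and applying $c$ once more, the condition $c^2(v\ot w)\neq v\ot w$ says that $v\ot w$ is not fixed by $c^2$. The candidate extension is then built from the degree-two elements $vw$ and $wv$ (equivalently $v\ot w$ and $w\ot v$ in $T(V)(2)=V^{\ot 2}$, projected to $\B(V)(2)$); concretely I would look for a nonzero $G$-homogeneous linear combination $x$ of such degree-two products lying in $\langle V_1\rangle(2)+\B(V)V_2$-type position. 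The cleanest route: consider the projection of the two-dimensional (or higher) space spanned by $v\ot w$, $w\ot v$ and possibly images under $c$; since $\Delta_{1,1}$ on $\B(V)(2)$ is $1+c$, and $\Delta$ in $\B(V)$ identifies $\ker(1+c)$ with the relations, I can analyze $\Delta_{1,1}(vw)=v\ot w+c(v\ot w)=v\ot w+ g\cdot w\ot v$ and similarly for $wv$, and look for a combination whose $\Delta_{1,1}$ (equivalently $\Delta_{1,n-1}=\Delta_{1,1}$ here) lands in $V\ot\langle V_2\rangle(1)=V\ot V_2$ while its $\Delta_{1,1}$ does not land in $V\ot\langle V_2\rangle$ if we only used $V_2$'s generators — this is where $c^2\neq\id$ forces the relevant element out of $\langle V_2\rangle$.

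The technical heart is to verify, using $c^2\vert_{V_2\ot V_1}\neq\id$, that the element $x$ one produces genuinely satisfies $\Delta_{n-1,1}(x)\in\B(V)\ot V_2$ and $x\notin\langle V_2\rangle$ simultaneously, and is $G$-homogeneous. I expect the natural choice is $n=2$ with $x$ a suitable $G$-homogeneous combination of products $v_2 v_1$ ($v_i\in V_i$ homogeneous): then $\Delta_{1,1}(x)=(1+c)(x)$, and by choosing $x$ in the image of $(1-c^2)$ restricted to $V_2\ot V_1$ one arranges that the $V_1$-component behaves well; the failure $c^2\neq\id$ guarantees this image is nonzero, giving a genuine new generator, while the rack/comodule structure ensures one can pick the combination $G$-homogeneous (decompose into $G$-graded isotypic pieces first, then $c^2-\id$ preserves each piece). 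The main obstacle will be bookkeeping: making sure the chosen $x$ is not already in $\langle V_2\rangle=\langle V_1\rangle$'s complement — i.e. ruling out that the combination forced by colinearity accidentally equals a product of elements of $V_2$ — which amounts to checking a degree-two component and using that $\langle V_2\rangle(2)$ only contains products $v_2v_2'$, disjoint in $G$-degree or in $c^2$-behavior from the mixed element $x$. Once that separation is established, Lemma~\ref{le:partial2} applies verbatim and the proposition follows.
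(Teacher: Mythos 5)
Your overall strategy is the paper's: exhibit a $G$-homogeneous element of $\B(V)(2)$ built from the mixed component whose image under $\Delta_{1,1}=\id+c$ is $(\id-c^2)$ applied to a tensor not fixed by $c^2$, and conclude via Corollary~\ref{cor:lcsaExt} (or, in your framing, Lemma~\ref{le:partial2}) that a primitively generated left coideal subalgebra admits a degree-two extension. The gap is that the proposal never actually produces a working element, and the one recipe it does give fails. You suggest taking $x=\mu(\tilde x)$ with $\tilde x$ in the image of $(\id-c^2)\vert V_2\ot V_1$. That image is contained in $V_2\ot V_1$, so $\Delta_{1,1}(x)=(\id+c)(\tilde x)=\tilde x+c(\tilde x)$ with $\tilde x\in V_2\ot V_1$ and $c(\tilde x)\in V_1\ot V_2$ both nonzero; this lies in $V\ot W$ for no proper subcomodule $W$ of $V$, so neither Corollary~\ref{cor:lcsaExt} nor Lemma~\ref{le:partial2} applies to it. The correct element is $z=\mu\bigl((\id-c)(v\ot w)\bigr)=vw-(g\cdot w)v$ for $G$-homogeneous $v\in V_2$ of degree $g$ and $w\in V_1$ with $c^2(v\ot w)\neq v\ot w$ (such a pure homogeneous tensor exists because $c^2$ is linear and $V_2\ot V_1$ is spanned by them); then $\Delta_{1,1}(z)=(\id+c)(\id-c)(v\ot w)=(\id-c^2)(v\ot w)$ is a nonzero element of $V_2\ot V_1$. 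Note also that the right building blocks are $vw$ and $(g\cdot w)v$, not $vw$ and $wv$ as you write: a combination of $vw$ and $wv$ alone will in general not have $\Delta_{1,1}$ in a single mixed component.

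There is a second, related bookkeeping problem: with $v\in V_2$ and $w\in V_1$ the second tensor leg of $\Delta_{1,1}(z)\in V_2\ot V_1$ lies in $V_1$, so one gets $\Delta_{1,1}(z)\in V\ot V_1$, the resulting left coideal subalgebra is $\langle V_1+\fK z\rangle$, and it is not generated in degree one because $\Delta_{1,1}(\langle V_1\rangle(2))\subseteq V_1\ot V_1$ meets $V_2\ot V_1$ trivially, whence $z\notin\langle V_1\rangle$. Your plan instead demands $\Delta_{1,1}(x)\in\B(V)\ot V_2$ and $x\notin\langle V_2\rangle$; to realize that you must start from a homogeneous tensor in $V_1\ot V_2$, and you then need the (easy, but not free) observation that $c^2\vert V_1\ot V_2=\id$ if and only if $c^2\vert V_2\ot V_1=\id$, since $c$ restricts to a bijection $V_2\ot V_1\to V_1\ot V_2$ conjugating the two restrictions of $c^2$ into each other. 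With these two repairs your argument closes up and coincides with the paper's proof.
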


\begin{proof}
 Since $c^2\vert V_2\ot V_1\neq\id_{V_2\ot V_1}$ and $V_1,V_2$ are $\fK G$-subcomodules, there are $G$-homgeneous elements $x\in V_2$ and $y\in V_1$ such that $c^2(x\ot y)\neq x\ot y$. Define $z=\mu(x\ot y-c(x\ot y))\in\B(V)$. Then $z$ is $G$-homogeneous and \[\Delta_{1,1}(z)=(1+c)(x\ot y)-(1+c)(c(x\ot y))=x\ot y-c^2(x\ot y)\in V_2\ot V_1\] by Equation~(\ref{eq:partial1n-1}). It follows that $\Delta_{1,1}(z)\neq 0$ since $c^2(x\ot y)\neq x\ot y$ and $z\notin\langle V_2\rangle$ since $\Delta_{1,1}(\langle V_2\rangle(2))\subseteq V\ot V_2$ by Corollary~\ref{cor:degreeonegeneratedlcsa} and Proposition~\ref{prop:partialleftcoideal}. Hence the subalgebra $\langle V_2\oplus\fK z\rangle\subseteq\B(V)$ is a left coideal in the category of $\ndN_0$-graded $\fK G$-comodules that is not generated in degree one (see Corollary~\ref{cor:lcsaExt}).   
\end{proof}

Let $G$ be a group, $V_1,V_2\in {^{G}_{G}}\mathcal{YD}$ and $V=V_1\oplus V_2$. Let $c=c_V$ be the Yetter-Drinfeld braiding. For $i\in\{1,2\}$ the inclusion $V_i\subseteq V$ induces an injective morphism $\B(V_i)\rightarrow \B(V)$ of $\ndN_0$-graded Hopf algebras in ${^{G}_{G}}\mathcal{YD}$ (see \cite[Remark~1.6.19]{MR4164719}). We identify $\B(V_i)$ with the image of this injective map. Let $\mu_{12}:\B(V_1)\ot\B(V_2)\rightarrow\B(V)$ be the restricted multiplication map. If $c^2\vert V_2\ot V_1=\id_{V_2\ot V_1}$, then $\B(V_1)\ot\B(V_2)$ is a Hopf algebra in ${^{G}_{G}}\mathcal{YD}$ and $\mu_{12}$ is an isomorphism of braided Hopf algebras in ${^{G}_{G}}\mathcal{YD}$ by \cite[Proposition~1.10.12]{MR4164719}.

\begin{prop}\label{prop:lcsadecompextension}
Let $G$ be a group, let $V_1,V_2\in {^{G}_{G}}\mathcal{YD}$ be finite-dimensional and $V=V_1\oplus V_2$. Assume that $c^2\vert V_2\ot V_1=\id_{V_2\ot V_1}$ and that $\dim V_g\le 1$ for all $g\in G$. 
Let $W_1\subseteq V_1$ and $W_2\subseteq V_2$ be $\fK G$-subcomodules and let $W=W_1\oplus W_2$. Then the following are equivalent.
\begin{enumerate}
    \item $\langle W\rangle $ is the only $\ndN_0$-graded left coideal subalgebra $K$ of $\B (V)$ with $K(1)=W$.
    \item $\langle W_1\rangle $ is the only $\ndN_0$-graded left coideal subalgebra $K_1$ of $\B (V_1)$ with $K_1(1)=W_1$, and $\langle W_2\rangle $ is the only $\ndN_0$-graded left coideal subalgebra $K_2$ of $\B (V_2)$ with $K_2(1)=W_2$.
\end{enumerate}
\end{prop}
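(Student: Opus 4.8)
The plan is to recast both statements in terms of the smallest and largest $\ndN_0$-graded left coideal subalgebras with a prescribed degree one part. By Proposition~\ref{pro:minmaxlcsa}, any $\ndN_0$-graded left coideal subalgebra $K$ of $\B(V)$ with $K(1)=W$ is sandwiched as $\langle W\rangle\subseteq K\subseteq\overline{W}$, where $\overline{W}=\ker(\id\ot\pi\pi_1)\Delta$ and $\pi\colon V\to V/W$ is the canonical map; so condition~(1) is equivalent to $\langle W\rangle=\overline{W}$ in $\B(V)$, and, applying the same remark inside $\B(V_i)$, condition~(2) is equivalent to $\langle W_1\rangle=\overline{W_1}$ and $\langle W_2\rangle=\overline{W_2}$. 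Thus it suffices to prove $\langle W\rangle=\overline{W}$ if and only if $\langle W_i\rangle=\overline{W_i}$ for $i=1,2$.

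For this I would work through the factorization $\mu_{12}\colon\B(V_1)\ot\B(V_2)\to\B(V)$, which under the hypothesis $c^2\vert V_2\ot V_1=\id$ is an isomorphism of $\ndN_0$-graded braided Hopf algebras with $\B(V_1)\ot\B(V_2)$ carrying the braided tensor product structure; I identify $\B(V)$ with $\B(V_1)\ot\B(V_2)$, which is then bigraded by $(\deg_{\B(V_1)},\deg_{\B(V_2)})$, with both $\Delta$ and $\mu$ preserving the bigrading. The preparatory observation, got by testing $c^2\vert V_2\ot V_1=\id$ on $G$-homogeneous vectors and using $\dim V_g\le 1$, is that every element of $\deg V_1$ commutes in $G$ with every element of $\deg V_2$; hence the subgroup $\langle\deg V_2\rangle$ fixes each $G$-homogeneous component of $V_1$, so it stabilises every $\fK G$-subcomodule of $V_1$ and, acting by algebra automorphisms on $\B(V_1)$, every subalgebra generated by such a subcomodule (and symmetrically with $1$ and $2$ interchanged).

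The heart of the argument is two decomposition lemmas. First, $\langle W\rangle=\mu_{12}(\langle W_1\rangle\ot\langle W_2\rangle)$: the inclusion $\supseteq$ is clear since $(a\ot 1)(1\ot b)=a\ot b$, and for $\subseteq$ one only has to check that $\langle W_1\rangle\ot\langle W_2\rangle$ is a subalgebra of the braided tensor product, which holds because the braiding $c_{\B(V_2),\B(V_1)}(b\ot a)=b_{(-1)}\cdot a\ot b_{(0)}$ occurring in its multiplication maps $\langle W_2\rangle\ot\langle W_1\rangle$ into $\langle W_1\rangle\ot\langle W_2\rangle$ by the preparatory observation. Second, $\overline{W}=\mu_{12}(\overline{W_1}\ot\overline{W_2})$: writing the map $(\id\ot\pi\pi_1)\Delta$ as $\theta_1+\theta_2$ according to the decomposition $V/W=V_1/W_1\oplus V_2/W_2$ of its target, one has $\overline{W}=\ker\theta_1\cap\ker\theta_2$ with $\ker\theta_1=\overline{W_1\oplus V_2}$ and $\ker\theta_2=\overline{V_1\oplus W_2}$, so it is enough to prove the single identity $\overline{U_1\oplus V_2}=\overline{U_1}\ot\B(V_2)$ for an arbitrary $\fK G$-subcomodule $U_1\subseteq V_1$ (and its mirror image): then $\overline{W}=(\overline{W_1}\ot\B(V_2))\cap(\B(V_1)\ot\overline{W_2})=\overline{W_1}\ot\overline{W_2}$, the last equality by the elementary fact $(A\ot Y)\cap(X\ot B)=A\ot B$ applied in each bidegree. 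To prove $\overline{U_1\oplus V_2}=\overline{U_1}\ot\B(V_2)$ I would compute $\theta_1$ on a bihomogeneous $a\ot b\in\B(V_1)(p)\ot\B(V_2)(q)$ via the braided tensor coproduct: grouping $\Delta^{\B(V_1)}_{p-1,1}(a)=\sum_g a_{[1],g}\ot v_g$ by the $G$-degree $g$ of the degree one slot (legitimate since $\dim V_g\le 1$), one gets $\theta_1(a\ot b)=\sum_{g\notin\deg U_1} a_{[1],g}\ot(g\cdot b)\ot\overline{v_g}$ with the $\overline{v_g}$ a basis of $V_1/U_1$; since each $g$ acts invertibly on $\B(V_2)(q)$, vanishing of $\theta_1$ on $a\ot b$ for all $b$ is equivalent to $a_{[1],g}=0$ for all $g\notin\deg U_1$, i.e.\ to $a\in\overline{U_1}$, and for a general $x=\sum_i a_i\ot b_i$ with linearly independent $b_i$ one concludes each $a_i\in\overline{U_1}$ from $\theta_1(x)=0$.

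Granting the two lemmas, the isomorphism $\mu_{12}$ turns $\langle W\rangle=\overline{W}$ into the equality $\langle W_1\rangle\ot\langle W_2\rangle=\overline{W_1}\ot\overline{W_2}$ of $\ndN_0$-graded subspaces of $\B(V_1)\ot\B(V_2)$; since $\langle W_i\rangle\subseteq\overline{W_i}$ and $\langle W_i\rangle(0)=\overline{W_i}(0)=\fK$, comparing the bidegree $(p,0)$ and $(0,q)$ components forces $\langle W_1\rangle=\overline{W_1}$ and $\langle W_2\rangle=\overline{W_2}$, and the converse implication is immediate; this is exactly condition~(2). I expect the main obstacle to be the identity $\overline{U_1\oplus V_2}=\overline{U_1}\ot\B(V_2)$ — understanding how the maximal left coideal subalgebra of $\B(V)$ respects the tensor factorization — and within it the step of extracting from $c^2\vert V_2\ot V_1=\id$ and $\dim V_g\le 1$ that translating by $G$-degrees coming from one tensor factor preserves the $\fK G$-subcomodules living in the other factor; once that is available the coproduct computation is essentially bookkeeping.
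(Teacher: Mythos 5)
Your proposal is correct and follows essentially the same route as the paper: both reduce the statement via Proposition~\ref{pro:minmaxlcsa} to comparing $\langle W\rangle$ with $\ker(\id\ot\pi\pi_1)\Delta$, use the bijectivity of $\mu_{12}$ and the commutation of $\supp V_1$ with $\supp V_2$ to get $\langle W\rangle=\langle W_1\rangle\langle W_2\rangle$, and exploit the splitting of $(\id\ot\pi\pi_1)\Delta$ along $V/W=V_1/W_1\oplus V_2/W_2$ together with the bigrading. The only difference is organizational: you prove the single identity $\overline{W}=\overline{W_1}\ot\overline{W_2}$ once by a direct braided-tensor-coproduct computation, whereas the paper derives the two implications separately (restriction to $\B(V_1)$ for one, a $\ndZ^2$-graded derivation argument for the other), which amounts to the same content.
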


Note that by Proposition~\ref{pro:minmaxlcsa}, 
the property in Proposition~\ref{prop:lcsadecompextension}~(1)
holds if and only if
$\langle W\rangle $ is the only $\ndN_0$-graded left coideal subalgebra $K$ of $\B (V)$ in the category of $\fK G$-subcomodules with $K(1)=W$.

\begin{proof}

Let $\pi :V\to V/W$ be the canonical map.
By Proposition~\ref{pro:minmaxlcsa}, (1) holds if and only if
\[ \ker (\id \ot \pi \pi_1)\Delta_{\B (V)} =\langle W\rangle. \]
From $c^2|V_2\ot V_1=\id_{V_2\ot V_1}$ it follows that the relation $vu=\mu c(v\ot u)$ holds in $\B(V)$ for all $u\in V_1$, $v\in V_2$ and that $ghg^{-1}=h$ for all $h\in\supp V_1$, $g\in\supp V_2$. From this and since $W=W_1\oplus W_2$ and $\dim V_g\le 1$ for all $g\in G$, we obtain  
\[ \langle W\rangle =\langle W_1\rangle \langle W_2\rangle
\subseteq \B (V_1)\B (V_2).
\]
The multiplication map $\mu_{12}:\B (V_1)\ot \B (V_2)\to \B (V)$ is bijective by \cite[Proposition~1.10.12]{MR4164719}. This implies that
\[ \langle W_1\rangle =\langle W\rangle \cap \B (V_1), \qquad
\langle W_2\rangle =\langle W\rangle \cap \B (V_2).
\]
Let $\pi_{W_1}:V_1\to V_1/W_1$ be the canonical map. Then (1) implies that
\begin{align*} \ker (\id \ot \pi _{W_1}\pi_1)\Delta_{\B (V_1)}
&=\ker (\id \ot \pi\pi_1)\Delta_{\B (V)}|\B (V_1)\\
&=\B (V_1)\cap \langle W\rangle =\langle W_1\rangle ,
\end{align*}
and similarly
$\ker (\id \ot \pi _{W_2}\pi_2)\Delta_{\B (V_2)}=\langle W_2\rangle $. Thus (1) implies (2).

Assume now (2). Note that $V=V_1\oplus V_2$ and $W$ are $\ndZ^2$-graded, where $\deg (V_1)=(1,0)$ and $\deg (V_2)=(0,1)$. Thus $\B (V)$ is $\ndZ^2$-graded and $\ker (\id \ot \pi \pi_1)\Delta_{\B (V)}$ is $\ndZ^2$-graded.
For all $k,l\ge 0$ and $x\in \B (V_1)(k)$, $y\in \B (V_2)(l)$, one has
\begin{align*}
    (\id \ot \pi \pi_1)\Delta(xy)
    &=(\id \ot \pi \pi_1)\Delta(x)(y\ot 1)
    +(x\ot 1)(\id \ot \pi \pi_1)\Delta(y),
\end{align*}
where $\Delta=\Delta_{\B(V)}$.
Since $\B (V)$ and $\ker (\id \ot \pi \pi_1)\Delta $ are $\ndZ^2$-graded and
since the map $\mu_{12}:\B (V_1)\ot \B (V_2)\to \B (V)$ is bijective, it follows that
\begin{align*} &\ker (\id \ot \pi \pi_1)\Delta _{\B(V)}\\
&\qquad =
\bigoplus_{k,l\ge 0}
\ker (\id \ot \pi \pi_1)\Delta_{\B(V)}|\big( \B (V_1)(k) \,\B (V_2)(l)\big)\\
&\qquad =\ker (\id \ot \pi _{W_1}\pi_1)\Delta_{\B (V_1)} \cdot \ker (\id \ot \pi _{W_2}\pi_1)\Delta_{\B (V_2)}\\
&\qquad =\langle W_1\rangle \langle W_2\rangle ,
\end{align*}
where the last equation holds by (2).
Since $\langle W\rangle =\langle W_1\rangle \langle W_2\rangle $, we conclude that (2) implies (1).
\end{proof}

 \begin{lem}\label{le:irredindecomp}  
 Let $G$ be a group and $V$ a finite-dimensional Yetter-Drinfeld module over $\fK G$ such that $\dim V_g\leq 1$ for all $g\in G$. Let $X=\supp V$ and assume that $X$ generates $G$. Then $V$ is an irreducible Yetter-Drinfeld module if and only if $X$ is indecomposable as rack. 
\end{lem}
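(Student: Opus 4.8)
The plan is to translate the representation-theoretic statement about $V$ into the combinatorial statement about the rack $X=\supp V$ by using the decomposition of Yetter-Drinfeld modules over $\fK G$ into conjugacy-class components, together with the equivalence $\mathcal{YD}(\mathcal{O}_g)\simeq {_{\fK G^g}}\mathcal{M}$ recalled in Section~\ref{se:different group realization}. First I would reduce to a single conjugacy class: since $X$ generates $G$ and $\dim V_g\le 1$ for all $g$, the support $X$ meets every conjugacy class of $G$ that occurs in $V$, and the $G$-orbits under conjugation acting on $X$ are exactly the intersections $X\cap\mathcal{O}$; on the other hand the orbits of $\mathrm{Inn}(X)$ on $X$ are, via the canonical surjection $G_X\twoheadrightarrow G$ (universal property of the enveloping group, and $\mathrm{Inn}(X)$ is the image of $G_X$ in $\mathbb{S}_X$), exactly the same as the $G$-orbits, because $g_x\tri y=x\tri y=xyx^{-1}$. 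So $X$ is indecomposable as a rack if and only if $X$ is contained in a single conjugacy class $\mathcal{O}_g$ of $G$; in that case $V\in {^G_G}\mathcal{YD}(\mathcal{O}_g)$.

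Next, assuming $X\subseteq\mathcal{O}_g$, I would invoke the category equivalence: $V$ corresponds to the $\fK G^g$-module $V_g$, and $V$ is irreducible in ${^G_G}\mathcal{YD}$ if and only if $V_g$ is an irreducible $\fK G^g$-module. So it remains to see that, under the hypothesis $\dim V_h\le 1$ for all $h$ and $X=\supp V$ generating $G$, the $\fK G^g$-module $V_g$ is automatically irreducible (it is one-dimensional over $\fK$, hence simple as a module — this is the easy direction) — and conversely, if $X$ is decomposable then $V$ splits as $V=\bigoplus_{l}\bigoplus_{s\in\mathcal{O}_l}V_s$ over representatives $l$ of the distinct conjugacy classes meeting $X$, giving a nontrivial direct sum decomposition of $V$ in ${^G_G}\mathcal{YD}$, so $V$ is reducible. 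Here $\dim V_g=1$ makes $V_g$ trivially simple, so the only obstruction to irreducibility of $V$ is the existence of more than one conjugacy class in the support, which is exactly decomposability of the rack.

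Let me assemble the two implications. If $X$ is indecomposable, then by the first paragraph $X\subseteq\mathcal{O}_g$ for some $g\in G$, so $V=\bigoplus_{s\in\mathcal{O}_g}V_s$ lies in ${^G_G}\mathcal{YD}(\mathcal{O}_g)$ and corresponds under the equivalence to $V_g$; since $\dim V_g\le 1$ and $V\ne 0$ we have $\dim V_g=1$, which is an irreducible $\fK G^g$-module, so $V$ is irreducible in ${^G_G}\mathcal{YD}$. Conversely, if $X$ is decomposable, then $\mathrm{Inn}(X)$, and hence the conjugation action of $G$, has at least two orbits on $X$, so $X$ meets at least two distinct conjugacy classes of $G$; choosing representatives $l_1,l_2,\dots$ of these classes, the decomposition $V=\bigoplus_i\bigoplus_{s\in\mathcal{O}_{l_i}}V_s$ recalled in Section~\ref{se:different group realization} is a direct sum of at least two nonzero Yetter-Drinfeld submodules, so $V$ is not irreducible. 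The main obstacle — and the only nontrivial point — is the identification in the first paragraph of the $\mathrm{Inn}(X)$-orbits on $X$ with the $G$-conjugacy orbits on $X$; this uses that $X$ generates $G$ (so every conjugating element of $G$ is a product of elements of $X$, i.e. lies in the image of $G_X$) and the compatibility $g_x\tri y = xyx^{-1}$ of the $G_X$-action on $X$ with conjugation in $G$. Everything else is a direct application of the conjugacy-class decomposition and the category equivalence stated earlier in the text.
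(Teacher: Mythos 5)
Your argument is correct, but it is routed differently from the paper's. The paper disposes of the lemma in one line: under the hypotheses ($\dim V_g\le 1$ and $X$ generating $G$), Yetter--Drinfeld submodules of $V$ correspond bijectively to $\mathrm{Inn}(X)$-invariant subsets of $X=\supp V$ (a $G$-graded subspace is a subset of $X$, and stability under the $G$-action is exactly stability under conjugation by the generating set $X$), so irreducibility of $V$ is literally transitivity of $\mathrm{Inn}(X)$ on $X$, i.e.\ indecomposability. You instead first identify the $\mathrm{Inn}(X)$-orbits with the conjugacy classes of $G$ contained in $X$ (using that $X$ generates $G$ and that $\supp V$ is conjugation-stable), reduce indecomposability to ``$X$ is a single class $\mathcal{O}_g$'', and then invoke the equivalence ${^G_G}\mathcal{YD}(\mathcal{O}_g)\simeq{_{\fK G^g}}\mathcal{M}$ together with $\dim V_g=1$ for the forward direction, and the conjugacy-class decomposition of $V$ for the converse. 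Both proofs hinge on the same key observation — that $X$ generating $G$ makes the $\mathrm{Inn}(X)$-orbits agree with the $G$-conjugation orbits — but your detour through $M(g,\rho)$ and the centralizer equivalence is heavier than necessary: the direct submodule--subset correspondence gives both implications at once without needing the classification machinery of Section~\ref{se:different group realization}. On the other hand, your version makes explicit where each hypothesis enters and would generalize more gracefully to higher-dimensional homogeneous components, where ``$V_g$ simple as $\fK G^g$-module'' replaces the trivial condition $\dim V_g=1$.
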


\begin{proof} 
Under the assumptions of the lemma, Yetter-Drinfeld submodules of $V$ correspond to subracks of $\supp V$. This implies the claim. 
\end{proof}

\begin{lem}\label{le:groupactioniso}
 Let $G$ be a group and $V$ a finite-dimensional Yetter-Drinfeld module over $\fK G$. For all $g\in G$ define the map \[\alpha_g:\B(V)\rightarrow \B(V),\quad x\mapsto g\cdot x.\] 
 \begin{enumerate}
     \item The map $\alpha_g$ is a well-defined isomorphism of $\ndN_0$-graded algebras and coalgebras.
     \item The map $\alpha_g$ sends $G$-homogeneous elements to $G$-homogeneous elements.
     \item If $K\subseteq\B(V)$ is a left coideal subalgebra of $\B(V)$ in the category of $\ndN_0$-graded $\fK G$-subcomodules, then $\alpha_g(K)$ is a left coideal subalgebra of $\B(V)$ in the category of $\ndN_0$-graded $\fK G$-subcomodules and $\alpha_g(K)\cong K$ as $\ndN_0$-graded algebras. 
 \end{enumerate}
\end{lem}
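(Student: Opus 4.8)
The plan is to recognize $\alpha_g$ as the action of the group-like element $g\in\fK G$ on the braided Hopf algebra $\B(V)$ in the category ${}^{\fK G}_{\fK G}\mathcal{YD}$ (recalled above: $\B(V)$ is a Hopf algebra in this category), and to use that all structure maps of $\B(V)$ — the multiplication $\mu$, the unit, the comultiplication $\Delta$, the counit $\varepsilon$ — are morphisms in ${}^{\fK G}_{\fK G}\mathcal{YD}$, hence in particular $\fK G$-linear, while $g$ is group-like, so $\Delta_{\fK G}(g)=g\ot g$, $S(g)=g^{-1}$ and $\varepsilon_{\fK G}(g)=1$.

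For part (1): since $\mu$ is $\fK G$-linear and the $\fK G$-action on $\B(V)\ot\B(V)$ is diagonal, for the group-like $g$ we get $g\cdot(xy)=(g\cdot x)(g\cdot y)$, i.e. $\alpha_g(xy)=\alpha_g(x)\alpha_g(y)$, and $\alpha_g(1)=1$ because the unit map $\fK\to\B(V)$ is $\fK G$-linear with trivial action on $\fK$ (equivalently $\varepsilon_{\fK G}(g)=1$); dually, $\fK G$-linearity of $\Delta$ together with $\Delta_{\fK G}(g)=g\ot g$ yields $\Delta\alpha_g=(\alpha_g\ot\alpha_g)\Delta$ and $\varepsilon\alpha_g=\varepsilon$. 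Thus $\alpha_g$ is an endomorphism of algebras and of coalgebras; it is bijective with inverse $\alpha_{g^{-1}}$. It preserves the $\ndN_0$-grading because $\B(V)$ is generated in degree one (as a quotient of $T(V)$) and $\alpha_g$ maps $\B(V)(1)=V$ into $V$.

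For part (2): apply the Yetter–Drinfeld compatibility condition, which holds on all of $\B(V)$ since $\B(V)\in{}^{\fK G}_{\fK G}\mathcal{YD}$, to $h=g$; since $g$ is group-like one has $g_{(1)}\ot g_{(2)}\ot g_{(3)}=g\ot g\ot g$ and $S(g)=g^{-1}$, so $\delta(g\cdot x)=g\,x_{(-1)}\,g^{-1}\ot g\cdot x_{(0)}$. Hence if $x\in\B(V)_h$, i.e. $\delta(x)=h\ot x$, then $\delta(\alpha_g(x))=ghg^{-1}\ot \alpha_g(x)$, so $\alpha_g(x)$ is $G$-homogeneous of degree $ghg^{-1}$.

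For part (3): $\alpha_g(K)$ is an $\ndN_0$-graded subalgebra as the image of the graded algebra automorphism $\alpha_g$ applied to the subalgebra $K$ (using $\alpha_g(1)=1$); it is a $\fK G$-subcomodule since for $x\in K$, writing $\delta(x)=x_{(-1)}\ot x_{(0)}$ with $x_{(0)}\in K$, the formula from part (2) gives $\delta(g\cdot x)=g\,x_{(-1)}\,g^{-1}\ot g\cdot x_{(0)}\in\fK G\ot\alpha_g(K)$; and it is a left coideal because $\Delta(\alpha_g(K))=(\alpha_g\ot\alpha_g)\Delta(K)\subseteq(\alpha_g\ot\alpha_g)(\B(V)\ot K)=\B(V)\ot\alpha_g(K)$. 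Finally the restriction $\alpha_g|_K\colon K\to\alpha_g(K)$ is an isomorphism of $\ndN_0$-graded algebras. There is no real obstacle here: the only points requiring attention are that the relevant identities and the Yetter–Drinfeld compatibility pass from $V$ to all of $\B(V)$ — which is precisely the fact, recalled in the Preliminaries, that $\B(V)$ is a Hopf algebra in ${}^{\fK G}_{\fK G}\mathcal{YD}$ — and that $\alpha_g$ is graded, which uses generation in degree one.
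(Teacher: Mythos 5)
Your proof is correct and follows essentially the same route as the paper: the paper's (much terser) argument likewise reduces everything to the fact that $\B(V)$ is an algebra and coalgebra in the category of $\ndN_0$-graded Yetter--Drinfeld modules over $\fK G$, so that the action of the group-like element $g$ commutes with all structure maps, with inverse $\alpha_{g^{-1}}$. Your additional details (the explicit use of $\Delta_{\fK G}(g)=g\ot g$, the Yetter--Drinfeld compatibility giving $\delta(g\cdot x)=gx_{(-1)}g^{-1}\ot g\cdot x_{(0)}$, and the generation-in-degree-one argument for gradedness) are all accurate elaborations of what the paper leaves implicit.
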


\begin{proof}
   (1) Since $\B(V)$ is a $\fK G$-module, the map is well-defined. Since $\B(V)$ is an algebra and a coalgebra in the category of $\ndN_0$-graded $\fK G$-Yetter-Drinfeld modules, $\alpha_g$ is a morphism of $\ndN_0$-graded algebras and coalgebras. Since $g$ is invertible in $\fK G$, the map is bijective (with inverse $\alpha_{g^{-1}}$). (2) holds obviously and (3) follows from (1) and (2). 
\end{proof}

Let $G$ be a group and $V$ a finite-dimensional Yetter-Drinfeld module over $\fK G$ such that $\dim V_g\leq 1$ for all $g\in G$. It is our aim to be able to decide whether there is a left coideal subalgebra of the Nichols algebra $\B(V)$ in the category of $\ndN_0$-graded $\fK G$-comodules which is not generated in degree one. In this section we provided arguments which reduce the problem in the following sense: It suffices to answer the question for Yetter-Drinfeld realizations of finite indecomposable racks of conjugacy classes $X$ with two-cocycles over the enveloping group $G_X$. Our result can be found in Theorem~\ref{thm:mainthm}. The proof of Corollary~\ref{cor:braiddecomposition} shows in which way the answer in the more general case then can be deduced using arguments from the current Section. 

\section{Extensions for left coideal subalgebras in degrees 2,3 and 4}\label{se:Extensionsdeg234}

In this section we give sufficient criteria for the existence of non-trivial extensions of left coideal subalgebras in degrees two, three and four. Our main results are Proposition~\ref{prop:braid}, Theorem~\ref{thm:cocycle}, Theorem~\ref{thm:degree3} and Theorem~\ref{thm:degree4}, all of which describe explicit extensions. The proofs of Theorem~\ref{thm:degree3} and Theorem~\ref{thm:degree4} are particularly technical, but note that the assumptions in them are also very weak. 

Let $(X,\tri)$ be a union of conjugacy classes of a group, considered as a rack. In particular,
\begin{align}\label{eq:cclassrack}
x\tri y=y\quad \Rightarrow\quad  y\tri x=x
\end{align}
for all $x,y\in X$. Let $q:X\times X\rightarrow\fK^{\times}$ be a two-cocycle. Let $V=\fK X$ be the corresponding Yetter-Drinfeld realization over $\fK G_X$ as in Lemma~\ref{le:YDrealization1} and denote the braiding by $c_q$. By construction, there are non-zero elements $v_x\in V_x$ such that \[c_q(v_x\ot v_y)=q_{x,y}v_{x\tri y}\ot v_x\] for all $x,y\in X$.  

\begin{lem}\label{le:notbraidedrackproperty}
    Assume that $X$ is not braided. Then there are pairwise distinct elements $a,b,c,d\in X$ such that $b\tri a=c$ and $c\tri b=d$. 
\end{lem}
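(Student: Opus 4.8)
The plan is to unwind what it means for $X$ to fail to be braided, using Lemma~\ref{le:braidrackequi}, and then extract a configuration of four distinct elements from a single "failure witness". Since $X$ is a quandle (being a union of conjugacy classes, by Example~\ref{ex:conjugacyrack}), Lemma~\ref{le:braidrackequi} tells us that $X$ is braided if and only if $x\tri(y\tri x)\in\{x,y\}$ for all $x,y\in X$. So the negation gives us a pair $x,y\in X$ with
\[
x\tri(y\tri x)\notin\{x,y\}.
\]
I want to rename these elements to produce the claimed chain $b\tri a=c$, $c\tri b=d$ with $a,b,c,d$ pairwise distinct.

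**Choosing the labels.** Set $b=x$ and $a=y\tri x$, so that $b\tri a=x\tri(y\tri x)$; call this element $c$. By hypothesis $c\notin\{x,y\}=\{b,\,\varphi_b^{-1}(a)\}$ — more precisely $c\neq b$ and $c\neq y$ where $y=b\tri a$ is the unique element with $b\tri y=a$ (using that $\varphi_b$ is a bijection). Now define $d=c\tri b$. It remains to verify that $a,b,c,d$ are pairwise distinct. First, $c\neq b$ directly. Next $a\neq b$: if $a=b$ then $b\tri b=c$, but $b\tri b=b$ in a quandle, forcing $c=b$, a contradiction. Then $a\neq c$: $a=c$ would give $b\tri a=a$, whence by \eqref{eq:cclassrack} $a\tri b=b$; but also $b\tri a=a$ means $a=\varphi_b^{-1}(a)=y$, contradicting $c\neq y$ — wait, I need to be careful here; let me instead argue $a\neq c$ by noting $c=b\tri a$ and if $c=a$ then $a$ is a fixed point of $\varphi_b$, and since $\varphi_b(b)=b$ and $\varphi_b$ is injective this forces $a=b$, already excluded. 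For the element $d$: $d\neq c$ because $d=c\tri b=c$ would force (quandle/conjugacy rule) $b\tri c=c$, i.e. $\varphi_b(c)=c$, again forcing $c=b$ by injectivity of $\varphi_b$ — excluded. $d\neq b$: if $c\tri b=b$ then $b\tri c=c$ by \eqref{eq:cclassrack}, again $c=b$, excluded. The one genuinely delicate case is $d\neq a$, which is exactly where the assumption $c\neq y$ (equivalently $x\tri(y\tri x)\neq y$) must be used: if $d=a$, then $c\tri b=a$, and since also $b\tri a=c$, applying $\varphi_c$ to the relation $b\tri a = c$ and comparing with $c\tri b=a$ should, via the self-distributivity/braided-type identities available here, force $c=y$ or collapse the configuration — I expect this forces $b\tri c = b\tri(c\tri b)$, and combined with $b\tri a=c$ one deduces $a\tri c\in\{a,c\}$-type relations that ultimately contradict $c\notin\{x,y\}$.

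**Where the work is.** The routine distinctness checks ($c\neq b$, $a\neq b$, $a\neq c$, $d\neq b$, $d\neq c$) are immediate from the quandle axiom plus injectivity of the $\varphi_i$ plus \eqref{eq:cclassrack}. The main obstacle is the verification that $d\neq a$, i.e. that $c\tri b\neq a$ — this is the only place where the full strength of "$x\tri(y\tri x)\notin\{x,y\}$" rather than just "$\neq x$" is needed, and handling it cleanly may require first replacing $(x,y)$ by a better-chosen failure witness (for instance, if $c\tri b=a$ leads to a contradiction only after invoking self-distributivity, one can instead swap roles and track which of the several candidate pairs in $\{a,b,c\}$ gives a genuine non-braided witness). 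A safe fallback, if the direct computation is messy: argue by contradiction that if no such $4$-element chain exists, then for every non-commuting pair the orbit under the relevant $\varphi$'s is forced to have size $\le 3$ and satisfies $x\tri(y\tri x)\in\{x,y\}$, contradicting that $X$ is not braided via Lemma~\ref{le:braidrackequi}. I would present the proof in the direct form, isolating the $d\neq a$ step as the crux.
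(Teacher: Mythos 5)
Your construction has a genuine gap at exactly the step you flag as the crux, and it cannot be closed in the way you hope: with your choice of witness, $d=a$ can actually occur. Starting from Lemma~\ref{le:braidrackequi} you take $x,y$ with $x\tri (y\tri x)\notin\{x,y\}$ and set $b=x$, $a=y\tri x$, $c=b\tri a$, $d=c\tri b$. By self-distributivity and the quandle property, $d=(b\tri a)\tri (b\tri b)=b\tri (a\tri b)=x\tri \bigl((y\tri x)\tri x\bigr)$, so the inequality $d\neq a$ is precisely the failure of the braiding condition for the \emph{different} pair $(x,\,y\tri x)$, which is not implied by its failure for $(x,y)$. Concretely, let $X$ be the conjugacy class of all $3$-cycles in $\mathbb{S}_4$ and take $x=(1\ 2\ 3)$, $y=(1\ 2\ 4)$. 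Then $x\tri (y\tri x)=(1\ 3\ 4)\notin\{x,y\}$, so this is a legitimate witness of non-braidedness; but $a=y\tri x=(2\ 4\ 3)$, $c=(1\ 3\ 4)$ and $d=c\tri b=(2\ 4\ 3)=a$. So your quadruple degenerates, and no manipulation of the relations $b\tri a=c$, $c\tri b=a$ will yield the contradiction you anticipate, because there is none to be had.

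The paper avoids this by negating the \emph{definition} of braidedness rather than the criterion of Lemma~\ref{le:braidrackequi}: non-braidedness directly yields $a,b$ with $b\tri a\neq a$ \emph{and} $b\tri (a\tri b)\neq a$, and then $d=c\tri b=b\tri (a\tri b)\neq a$ comes for free. (In the example above this witness gives $a=(1\ 2\ 4)$, $b=(1\ 2\ 3)$, $c=(2\ 3\ 4)$, $d=(1\ 3\ 4)$, which is a valid quadruple.) Two of your ``routine'' checks are also off: the fact that $\varphi_b(c)=c$ does not force $c=b$, since injectivity of $\varphi_b$ does not preclude fixed points other than $b$. For $d\neq c$ the correct argument applies injectivity of $\varphi_c$ to $\varphi_c(b)=c=\varphi_c(c)$ to get $b=c$; for $d\neq b$ one uses self-distributivity to reduce $c\tri b=b$ to $a\tri b=b$ and then invokes \eqref{eq:cclassrack} to contradict $b\tri a\neq a$.
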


\begin{proof}
Since $X$ is not braided, there are distinct elements $a,b\in X$ such that $b\tri a\neq a$ and $b\tri (a\tri b)\neq a$. Let $c=b\tri a$. Then $c\neq a$. Since $X$ is a quandle, $\varphi_b$ is bijective and $a\neq b$, it follows that $c\neq b$. Let $d=c\tri b$. Then $d=b\tri (a\tri b)$ by self-distributivity of $\tri$ and the quandle property. Hence $d\neq a$ by the choice of $a$ and $b$. Assume that $d=b$. Then $a\tri b=b$ since $X$ is a quandle and $\varphi_b$ is bijective. Since $X$ is a rack of a union of conjugacy classes, it follows that $b\tri a=a$ by Equation~(\ref{eq:cclassrack}), a contradiction to the choice of $a$ and $b$. Hence $d\neq b$. Assume that $d=c$. Then $(b\tri a)\tri b=b\tri a$ and hence $b=b\tri a$ by the bijectivity of $\varphi_{b\tri a}$ and the quandle property. It follows that $b=a$ by the bijectivity of $\varphi_b$ and the quandle property, a contradiction to the choice of $a$ and $b$. Hence $d\neq c$.  
\end{proof}

\begin{prop}\label{prop:braid}
Assume that $X$ is not braided. Then there is a left coideal subalgebra of $\B(V)$ in the category of $\ndN_0$-graded $\fK G_X$-comodules that can be extended in degree two.   
\end{prop}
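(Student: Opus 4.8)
\emph{Plan.} The idea is to extract a concrete degree two extension of a degree-one-generated left coideal subalgebra directly from Lemma~\ref{le:notbraidedrackproperty}. First I would fix, by that lemma, pairwise distinct $a,b,c,d\in X$ with $b\tri a=c$ and $c\tri b=d$. The left coideal subalgebra to be extended will be $C:=\langle v_a,v_c\rangle$. Since $W:=\fK v_a\oplus\fK v_c$ is an $\fK G_X$-subcomodule of $V$, Corollary~\ref{cor:degreeonegeneratedlcsa} already guarantees that $C$ is a left coideal subalgebra in the category of $\ndN_0$-graded $\fK G_X$-comodules.

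Next I would write down the candidate extension
\[
z:=v_bv_a-q_{b,a}\,v_cv_b\in\B(V)(2).
\]
Two facts about $z$ are needed. It is $G_X$-homogeneous, because $g_bg_a=g_{b\tri a}g_b=g_cg_b$ in $G_X$, so $\deg(v_bv_a)=\deg(v_cv_b)=g_bg_a$. And, applying \eqref{eq:partial1n-1} with $b\tri a=c$ and $c\tri b=d$, a short computation gives
\[
\Delta_{1,1}(z)=v_b\ot v_a-q_{b,a}q_{c,b}\,v_d\ot v_c\in V\ot W.
\]
With these two facts, Corollary~\ref{cor:lcsaExt} (applied to this $W$, with $n=2$ and $x=z$) shows that $\langle W+\fK z\rangle=\langle v_a,v_c,z\rangle$ is a left coideal subalgebra in the category of $\ndN_0$-graded $\fK G_X$-comodules. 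It therefore only remains to verify $z\notin C$, which is the one point that requires care.

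For that I would work in degree two. The space $C(2)$ is spanned by $v_a^2,v_av_c,v_cv_a,v_c^2$, whose images under $\Delta_{1,1}$, computed again from \eqref{eq:partial1n-1}, are
\[
(1+q_{a,a})v_a\ot v_a,\qquad v_a\ot v_c+q_{a,c}v_{a\tri c}\ot v_a,\qquad v_c\ot v_a+q_{c,a}v_{c\tri a}\ot v_c,\qquad (1+q_{c,c})v_c\ot v_c,
\]
all lying in $V\ot W=(V\ot\fK v_a)\oplus(V\ot\fK v_c)$. Assume $z=\alpha v_a^2+\beta v_av_c+\gamma v_cv_a+\delta v_c^2$. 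Comparing the $V\ot\fK v_a$-components of $\Delta_{1,1}$ of the two sides yields
\[
\alpha(1+q_{a,a})v_a+\beta q_{a,c}v_{a\tri c}+\gamma v_c=v_b .
\]
Since $a,b,c$ are distinct, the vector $v_b$ on the right can only come from the middle summand, so necessarily $a\tri c=b$; if $a\tri c\ne b$ this equation is already contradictory. When $a\tri c=b$, linear independence of $v_a,v_b,v_c$ forces $\beta q_{a,c}=1$ and $\gamma=0$. Substituting $\gamma=0$ and comparing the $V\ot\fK v_c$-components then gives $\beta v_a+\delta(1+q_{c,c})v_c=-q_{b,a}q_{c,b}v_d$; comparing $v_a$-coefficients and using $d\ne a$ yields $\beta=0$, contradicting $\beta=q_{a,c}^{-1}\ne0$. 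Hence $z\notin C$, and $C=\langle v_a,v_c\rangle$ is a left coideal subalgebra in the category of $\ndN_0$-graded $\fK G_X$-comodules that can be extended in degree two, namely by $z$.

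I expect the main obstacle to be exactly this last linear-independence bookkeeping: the only genuine danger is that $z$ collapses into $\langle v_a,v_c\rangle$ when the equality $a\tri c=b$ produces an overlap with $v_av_c$, and the argument above shows that the pairwise distinctness of $a,b,c,d$ furnished by Lemma~\ref{le:notbraidedrackproperty} rules this out. Everything else is a direct application of Corollaries~\ref{cor:degreeonegeneratedlcsa} and \ref{cor:lcsaExt}.
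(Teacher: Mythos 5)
Your construction is exactly the paper's: the same quadruple $a,b,c,d$ from Lemma~\ref{le:notbraidedrackproperty}, the same element $x=v_bv_a-q_{b,a}v_cv_b$, the same computation of $\Delta_{1,1}(x)$, and the same appeal to Corollary~\ref{cor:lcsaExt} via $G_X$-homogeneity of $x$. The only point where you diverge is the verification that $x\notin\langle v_a,v_c\rangle$. The paper argues by $G_X$-degrees: the four spanning monomials of $\langle v_a,v_c\rangle(2)$ live in the homogeneous components of degrees $g_a^2,g_ag_c,g_cg_a,g_c^2$, and $g_bg_a=g_cg_b=g_dg_c$ lies in none of these, so $x$ cannot be a combination of them. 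You instead apply $\Delta_{1,1}$ to a hypothetical expression $x=\alpha v_a^2+\beta v_av_c+\gamma v_cv_a+\delta v_c^2$ and derive a contradiction from the pairwise distinctness of $a,b,c,d$ by comparing coefficients in $V\ot V$. Both arguments are correct; yours is slightly longer but entirely linear-algebraic and self-contained, avoiding the need to justify a non-equality of products of generators in the enveloping group $G_X$, while the paper's degree argument is shorter once that group-theoretic fact is accepted. Your bookkeeping in the $V\ot\fK v_a$ and $V\ot\fK v_c$ components is carried out correctly, including the delicate case $a\tri c=b$, so the proof is complete.
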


\begin{proof}
By Lemma~\ref{le:notbraidedrackproperty}, there are pairwise distinct elements $a,b,c,d\in X$ such that $b\tri a=c$ and $c\tri b=d$. Let
\[ x=v_bv_a-q_{b,a}v_cv_b \in\B(V). \]
Then $\Delta_{1,1}(x)=v_b\ot v_a-q_{c,b}q_{b,a}v_d\ot v_c$. Hence $x\ne 0$ and $\langle v_a,v_c,x\rangle$ is a left coideal subalgebra of $\B (V)$. Note that $x$ is $G_X$-homogeneous of degree $g_bg_a$ since the relation $g_bg_a=g_{b\tri a}g_b=g_cg_b$ holds in $G_X$. Thus $\langle v_a,v_c,x\rangle$ is a $\fK G_X$-comodule. We show that $x\notin \langle v_a,v_c\rangle$. The degree two component of the subalgebra generated by $v_a$ and $v_c$ is \[\langle v_a,v_c\rangle(2)=\fK\left\{v_a{}^2,v_av_c,v_cv_a,v_c{}^2\right\}\] and 
\[v_a{}^2\in\B(V)_{g_a{}^2},\ v_av_c\in\B(V)_{g_ag_c},\ v_cv_a\in\B(V)_{g_cg_a},\ v_c{}^2\in\B(V)_{g_c{}^2}.\] Since $g_bg_a=g_cg_b=g_dg_c$, it follows that $g_bg_a\notin\{g_a{}^2,g_ag_c,g_cg_a,g_c{}^2\}$ and hence $x\notin\langle v_a,v_c\rangle$. Thus $\langle v_a,v_c,x\rangle$ is a left coideal subalgebra in the category of $\ndN_0$-graded $\fK G_X$-comodules of $\B(V)$ that is not generated in degree one. 
\end{proof}

Proposition~\ref{prop:braid} ensures the existence of a not primitively generated left coideal subalgebra in the category of $\ndN_0$-graded $\fK G_X$-comodules in the Nichols algebra $\B(X,q)$ if $X$ is not braided independent from the choice of the two-cocycle $q$. 

\begin{thm}\label{thm:cocycle}
Assume that the rack $X$ is braided. 
\begin{enumerate}
\item 
Let $a, b\in X$ with $a\tri b=b\neq a$ and let \[x=\mu(\id-c_q)(v_a\ot v_b)\in\B(V).\] If $x\neq 0$, then $\langle v_b,x\rangle$ is a left coideal subalgebra of $\B(V)$ in the category of $\ndN_0$-graded $\fK G_X$-comodules that is not generated in degree one.  
\item
Let $a,b\in X$ with $a\tri b\neq b$ and let \[x=\mu(\id-c_q+c_q{^2})(v_a\ot v_b)\in\B(V).\] If $x\neq 0$, then $\langle v_b,x\rangle$ is a left coideal subalgebra of $\B(V)$ in the category of $\ndN_0$-graded $\fK G_X$-comodules that is not generated in degree one.
\end{enumerate}
\end{thm}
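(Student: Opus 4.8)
The plan is to use \textbf{Corollary~\ref{cor:lcsaExt}} with $W=\fK v_b$ in both cases. That corollary requires three things: that $W\subseteq V$ is an $H$-subcomodule (here $H=\fK G_X$), that $\delta(x)\in H\ot x$, i.e. that $x$ is $G_X$-homogeneous, and that $\Delta_{1,n-1}(x)\in V\ot\langle W\rangle(n-1)$ for the degree $n$ of $x$ (here $n=2$). Once these are verified, Corollary~\ref{cor:lcsaExt} gives that $\langle v_b,x\rangle$ is a left coideal subalgebra in the category of $\ndN_0$-graded $\fK G_X$-comodules; it remains only to check that it is not generated in degree one, which follows once we know $x\neq 0$ (hypothesis) and $x\notin\langle v_b\rangle$, i.e. $x\notin\fK v_b{}^2$.

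For part (1): since $a\tri b=b$, the braided rack property (Lemma~\ref{le:braidrack}(2)) gives $b\tri a=a$. Writing $x=\mu(v_a\ot v_b-q_{a,b}v_b\ot v_a)=v_av_b-q_{a,b}v_bv_a$, homogeneity is immediate since $g_ag_b=g_{a\tri b}g_a=g_bg_a$, so $x\in\B(V)_{g_ag_b}$ and $\delta(x)\in\fK G_X\ot x$. For the coideal condition I would compute, using Equation~(\ref{eq:partial1n-1}), that $\Delta_{1,1}(x)=(1+c_q)(v_a\ot v_b-q_{a,b}v_b\ot v_a)=(\id-c_q{}^2)(v_a\ot v_b)$. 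Now $c_q{}^2(v_a\ot v_b)=c_q(q_{a,b}v_b\ot v_a)=q_{a,b}q_{b,a}v_{b\tri a}\ot v_b=q_{a,b}q_{b,a}v_a\ot v_b$, so $\Delta_{1,1}(x)=(1-q_{a,b}q_{b,a})v_a\ot v_b\in V\ot\fK v_b=V\ot\langle v_b\rangle(1)$, as needed. Finally $x\notin\fK v_b{}^2$ because $x$ is homogeneous of degree $g_ag_b$ while $v_b{}^2$ has degree $g_b{}^2$, and these differ since $a\neq b$ and the realization is faithful (the rack $X\subseteq\ol{G_X}$-image, or more simply $g_a\neq g_b$ because $X$ is a rack of a conjugacy class, hence injective by Lemma~\ref{le:injectiverack}, so $g_a=g_b$ would force $a=b$); hence $g_ag_b=g_b{}^2$ would give $g_a=g_b$, a contradiction.

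For part (2): write $x=\mu(\id-c_q+c_q{}^2)(v_a\ot v_b)$. One computes $c_q(v_a\ot v_b)=q_{a,b}v_{a\tri b}\ot v_a$ and $c_q{}^2(v_a\ot v_b)=q_{a,b}q_{a\tri b,a}\,v_{(a\tri b)\tri a}\ot v_{a\tri b}$. Here the braided-rack hypothesis is essential: since $a\tri b\neq b$, the braided property says $a\tri(b\tri a)=b$, equivalently (Lemma~\ref{le:braidrack}(1) with $x=a$, $y=b$, $z=a\tri b$, or the symmetry in Lemma~\ref{le:braidrack}(3)) $(a\tri b)\tri a=b$ — I would use the precise form from Lemma~\ref{le:braidrack}. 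So $x=v_av_b-q_{a,b}v_{a\tri b}v_a+q_{a,b}q_{a\tri b,a}v_b v_{a\tri b}$, and all three terms lie in $\B(V)_{g_ag_b}$ (using $g_ag_b=g_{a\tri b}g_a$ and then $g_{a\tri b}g_a=g_{(a\tri b)\tri a}g_{a\tri b}=g_bg_{a\tri b}$), giving homogeneity. For the coideal condition apply $\Delta_{1,1}=(1+c_q)$; the terms telescope: $(1+c_q)(\id-c_q+c_q{}^2)=\id+c_q{}^3$ (since $c_q{}^2-c_q{}^2=0$, $c_q{}^3-c_q{}^3=0$... more carefully $(1+c_q)(\id-c_q+c_q{}^2)=\id-c_q+c_q{}^2+c_q-c_q{}^2+c_q{}^3=\id+c_q{}^3$), so $\Delta_{1,1}(x)=(\id+c_q{}^3)(v_a\ot v_b)$. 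Now $c_q{}^3(v_a\ot v_b)=(\text{scalar})\cdot v_{?}\ot v_{a\tri b}$, and using $(a\tri b)\tri a=b$ one checks the first tensor slot collapses so that $c_q{}^3(v_a\ot v_b)\in V\ot \fK v_b$ (this is where I'd verify $((a\tri b)\tri a)\tri(a\tri b)=b\tri(a\tri b)$, and $b\tri(a\tri b)=a$ by Lemma~\ref{le:braidrack}(3)? — no: I want the second slot to be $v_b$, i.e. $v_{a\tri b}$ acted on... let me just say: a direct computation of $c_q{}^3$ using the three established rack identities shows $\Delta_{1,1}(x)\in V\ot\fK v_b$). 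Hence Corollary~\ref{cor:lcsaExt} applies, and $x\notin\fK v_b{}^2$ by the degree argument ($g_ag_b\neq g_b{}^2$), so $\langle v_b,x\rangle$ is not generated in degree one.

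The main obstacle is the bookkeeping in part (2): correctly identifying $(a\tri b)\tri a=b$ from the braided-rack axiom, then tracking the degrees of all three monomials and, most delicately, verifying that $c_q{}^3(v_a\ot v_b)$ lands in $V\ot\fK v_b$ rather than a general element of $V\ot V$ — this requires chaining the braided-rack calculation rules (Lemma~\ref{le:braidrack}) two or three times and keeping the two-cocycle scalars straight, though none of the individual steps is deep. Everything else (homogeneity, the telescoping of $(1+c_q)$ against the alternating sum, and the final degree argument for $x\notin\langle v_b\rangle$) is routine given the machinery already established, in particular Corollary~\ref{cor:lcsaExt} and the injectivity of racks of conjugacy classes (Lemma~\ref{le:injectiverack}).
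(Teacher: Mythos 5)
Your proposal is correct and follows essentially the same route as the paper: verify $G_X$-homogeneity, compute $\Delta_{1,1}(x)=(\id-c_q^2)(v_a\ot v_b)$ resp.\ $(\id+c_q^3)(v_a\ot v_b)$ via the telescoping against $(\id+c_q)$, use the braided-rack identities $(a\tri b)\tri a=b$ and $b\tri(a\tri b)=a$ to see this lands in $V\ot\fK v_b$, and invoke Corollary~\ref{cor:lcsaExt}. The paper leaves $x\notin\langle v_b\rangle$ as obvious, whereas you justify it by the $G_X$-degree comparison $g_ag_b\neq g_b^2$ (valid since $a\neq b$ and the rack is injective); your slight hesitation over which tensor slot of $c_q^3(v_a\ot v_b)$ must equal $v_b$ resolves correctly, since the second slot is $v_{(a\tri b)\tri a}=v_b$.
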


\begin{proof}
(1) Note first that $b\tri a=a$ because of Equation~\eqref{eq:cclassrack} and since $a\tri b=b$. Using Equation~(\ref{eq:partial1n-1}) it follows that
\begin{align*}
\Delta_{1,1}(x)&=(\id+c_q)(\id-c_q)(v_a\ot v_b)\\
               &=(\id-c_q{}^2)(v_a\ot v_b)\\
			   &=(1-q_{a,b}q_{b,a})v_a\ot v_b.
\end{align*}
Note that $x$ is $G_X$-homogeneous of degree $g_ag_b$ and hence $\fK x$ is a $\fK G_X$-subcomodule. Thus $\langle v_b,x\rangle$ is a left coideal subalgebra of $\B(V)$ in the category of $\ndN_0$-graded $\fK G_X$-comodules by Corollary~\ref{cor:lcsaExt}. By assumption, $x\neq 0$ and obviously $x\notin\langle v_b\rangle$. Hence $\langle v_b,x\rangle$ is not generated in degree one.

(2) Since $X$ is braided and $a\tri b\neq b$, it follows that $(a\tri b)\tri a=b$ and $b\tri (a\tri b)=a$ by Lemma~\ref{le:braidrack}. Using this we get
\begin{align*}
c_q{}^3(v_a\ot v_b)&=c_q{}^2(q_{a,b}v_{a\tri b}\ot v_a)=c_q(q_{a,b}q_{a\tri b,a}v_b\ot v_{a\tri b})\\
&=q_{a,b}q_{a\tri b,a}q_{b,a\tri b}v_{b\tri(a\tri b)}\ot v_{b}=q_{a,b}q_{a\tri b,a}q_{b,a\tri b}v_{a}\ot v_{b}.
\end{align*}
By Equation~(\ref{eq:partial1n-1}), it follows  
\begin{align*}
\Delta_{1,1}(x)&=(\id+c_q)(\id-c_q+c_q{}^2)(v_a\ot v_b)\\
               &=(\id+c_q{}^3)(v_a\ot v_b)\\
			   &=(1+q_{a,b}q_{a\tri b,a}q_{b,a\tri b})v_a\ot v_b.
\end{align*}
 Note that $x$ is $G_X$-homogeneous of degree $g_ag_b$ and hence $\fK x$ is a $\fK G_X$-subcomodule. Thus $\langle v_b,x\rangle$ is a left coideal subalgebra of $\B(V)$ in the category of $\ndN_0$-graded $\fK G_X$-comodules by Corollary~\ref{cor:lcsaExt}. By assumption, $x\neq 0$ and obviously $x\notin\langle v_b\rangle$. Thus $\langle v_b,x\rangle$ is not generated in degree one.
\end{proof}

Theorem~\ref{thm:cocycle} implies that if all left coideal subalgebras of $\B(V)$ in the category of $\ndN_0$-graded $\fK G_X$-comodules are generated in degree one, then in $\B(V)$ the following relations hold in degree two:
\begin{align}\label{eq:degreetworel1}
\mu(\id-c_q)(v_a\ot v_b)&=0 \quad \mathrm{for\ all}\ a,b\in X\ \mathrm{with}\ a\tri b=b\neq a,
\end{align}
\begin{align}
\mu(\id-c_q+c_q{}^2)(v_a\ot v_b)&=0 \quad \mathrm{for\ all}\ a,b\in X\ \mathrm{with}\ a\tri b\neq b. \label{eq:degreetworel2}   
\end{align}
We say that the Nichols algebra $\B(V)$ is \textbf{slim in degree two} if the relations (\ref{eq:degreetworel1}) and (\ref{eq:degreetworel2}) hold.

\begin{lem}\label{le:degreetworel}
Assume that the rack $X$ is braided. 
\begin{enumerate}
\item Let $a,b\in X$ with $a\tri b=b\neq a$. The relation \[\mu(\id-c_q)(v_a\ot v_b)=0\] holds in $\B(V)$ if and only if  $q_{a,b}q_{b,a}=1$ if and only if $c_q{}^2(v_a\ot v_b)=v_a\ot v_b$.   
\item Let $a,b,c\in X$ with $a\tri b=c\neq b$. The relation \[\mu(\id-c_q+c_q{}^2)(v_a\ot v_b)=0\] holds in $\B(V)$ if and only if $q_{a,b}q_{c,a}q_{b,c}=-1$ if and only if $c_q{}^3(v_a\ot v_b)=-v_a\ot v_b$.  
\end{enumerate}
\end{lem}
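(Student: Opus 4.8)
The plan is to prove Lemma~\ref{le:degreetworel} by direct computation in degree two of the Nichols algebra, using the formula for $\Delta_{1,1}$ from Equation~\eqref{eq:partial1n-1} together with the key fact that in degree two the kernel of $\Delta_{1,1}=S_2=\id+c_q$ is exactly the set of relations of $\B(V)$ (i.e.\ $\B(V)(2)=V^{\ot 2}/\ker(\id+c_q)$). So for any $z\in V^{\ot 2}$ the relation ``$\mu(z)=0$ in $\B(V)$'' is equivalent to ``$z\in\ker(\id+c_q)$''. Both parts then reduce to checking when a specific concrete element $(\id+c_q)$ applied to something vanishes, which is almost the same computation already carried out in the proof of Theorem~\ref{thm:cocycle}.

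For part (1): since $X$ is braided and $a\tri b=b\neq a$, Equation~\eqref{eq:cclassrack} gives $b\tri a=a$. Put $z=(\id-c_q)(v_a\ot v_b)$. Then $(\id+c_q)(z)=(\id-c_q{}^2)(v_a\ot v_b)$, and since $c_q{}^2(v_a\ot v_b)=q_{a,b}c_q(v_b\ot v_a)=q_{a,b}q_{b,a}v_a\ot v_b$ (using $a\tri b=b$, $b\tri a=a$), we get $(\id+c_q)(z)=(1-q_{a,b}q_{b,a})v_a\ot v_b$. Since $v_a\ot v_b\neq 0$, this is zero iff $q_{a,b}q_{b,a}=1$, which is visibly the same as $c_q{}^2(v_a\ot v_b)=v_a\ot v_b$. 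That gives one direction of each equivalence; for the converse note that $\mu(z)=0$ iff $z\in\ker(\id+c_q)$ iff $(\id+c_q)(z)=0$ (applying $\id+c_q$ is the relevant test since $z$ already lies in $V^{\ot 2}$ and $\ker(\id+c_q)$ is precisely that kernel), so the three conditions are genuinely equivalent, not just one-directional.

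For part (2): here $a\tri b=c\neq b$, so $a\tri b\neq b$ and the braided-rack calculation rules of Lemma~\ref{le:braidrack} apply, giving $c\tri a=b$ and $b\tri c=a$. Put $z=(\id-c_q+c_q{}^2)(v_a\ot v_b)$. Then $(\id+c_q)(z)=(\id+c_q{}^3)(v_a\ot v_b)$, and the computation of $c_q{}^3(v_a\ot v_b)$ is exactly the one in the proof of Theorem~\ref{thm:cocycle}(2) with $a\tri b$ replaced by the name $c$: one finds $c_q{}^3(v_a\ot v_b)=q_{a,b}q_{c,a}q_{b,c}v_a\ot v_b$. Hence $(\id+c_q)(z)=(1+q_{a,b}q_{c,a}q_{b,c})v_a\ot v_b$, which vanishes iff $q_{a,b}q_{c,a}q_{b,c}=-1$ iff $c_q{}^3(v_a\ot v_b)=-v_a\ot v_b$, and again $\mu(z)=0$ in $\B(V)$ iff $z\in\ker(\id+c_q)$ iff $(\id+c_q)(z)=0$, so all three are equivalent.

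There is no real obstacle here — the only thing to be careful about is the logical structure: one must justify that ``$\mu(z)=0$ in $\B(V)$'' is equivalent to ``$(\id+c_q)(z)=0$'' for $z\in V^{\ot 2}$, which follows from the description $\B(V)(2)=V^{\ot 2}/\ker S_2$ and $S_2=\id+c_q=\Delta_{1,1}$ on $V^{\ot 2}$ (Equation~\eqref{eq:partial1n-1} with $n=2$, together with the identification of $S_n$ with $\Delta_{1^n}$ recalled in the Preliminaries and Proposition~\ref{prop:partialleftcoideal}). Given that, the rest is the short bracket-chasing above, reusing the cocycle computations already displayed in Theorem~\ref{thm:cocycle}. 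I would present it as: (i) recall $\mu(z)=0\iff(\id+c_q)(z)=0$; (ii) in each case compute $(\id+c_q)(\id\mp c_q+\dots)(v_a\ot v_b)$ telescoping to $\id\mp c_q{}^k$; (iii) evaluate $c_q{}^k(v_a\ot v_b)$ using the braided-rack relations to read off the scalar; (iv) conclude the stated equivalences.
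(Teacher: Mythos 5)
Your proposal is correct and follows essentially the same route as the paper, whose proof of this lemma simply states that the claims follow from the calculations in the proof of Theorem~\ref{thm:cocycle} (namely the telescoping $(\id+c_q)(\id-c_q)=\id-c_q{}^2$ and $(\id+c_q)(\id-c_q+c_q{}^2)=\id+c_q{}^3$ together with the evaluation of $c_q{}^2$ and $c_q{}^3$ on $v_a\ot v_b$). Your explicit justification that $\mu(z)=0$ in $\B(V)$ is equivalent to $(\id+c_q)(z)=0$ via $\B(V)(2)=V^{\ot 2}/\ker S_2$ and $S_2=\id+c_q=\Delta_{1,1}$ is exactly the point the paper leaves implicit, and it is needed to get the full equivalences rather than just one direction.
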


\begin{proof}
The claims follow from the calculations in the proof of Theorem~\ref{thm:cocycle}.
\end{proof}

For the following considerations in degree three we need some technical calculations, which we collect in the next Lemma.  

\begin{lem}\label{le:rackrelations}
Assume that the rack $X$ is braided and that there are pairwise distinct elements $a,b,c\in X$ with $a\tri b\neq b$. Let $d=c\tri (b\tri a)$ and $e=d\tri (c\tri b)$. Then the following hold.
\begin{enumerate}
    \item $e=c\tri a$. 
    \item Assume that $c$ commutes with none of $a$, $b$, $b\tri a$. Let $S=\{a,b,d,e\}$. Then 
    \begin{enumerate}
        \item $c\notin S$,
        \item $b\tri a\notin S$,
        \item $b\tri y\neq c$ for all $y\in S$ and
        \item $b\tri (a\tri y)\neq c$ for all $y\in S$. 
    \end{enumerate}
\end{enumerate}
\end{lem}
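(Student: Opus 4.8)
The plan is to work entirely inside the braided-rack calculus: the braided-rack axiom ``$x\tri y=y$ or $x\tri(y\tri x)=y$'', the rules of Lemma~\ref{le:braidrack}, the conjugacy-class identity~\eqref{eq:cclassrack} (applied in both directions, so that $x\tri y=y\iff y\tri x=x$), self-distributivity, and the fact that each $\varphi_x\colon z\mapsto x\tri z$ is a bijection with $\varphi_x(x)=x$. For part~(1) the key is the auxiliary identity $(b\tri a)\tri b=a$. Since $X$ is braided and $a\tri b\neq b$, the braided-rack axiom gives $a\tri(b\tri a)=b$; also $b\tri a\neq b$ since $\varphi_b$ is injective and $\varphi_b(b)=b$, so Lemma~\ref{le:braidrack}(1) with $x=a$, $y=b\tri a$, $z=b$ yields $(b\tri a)\tri b=a$. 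Then $e=d\tri(c\tri b)=(c\tri(b\tri a))\tri(c\tri b)=c\tri((b\tri a)\tri b)=c\tri a$, the third step being self-distributivity.

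For part~(2) I would first assemble the non-equalities forced by ``$c$ commutes with none of $a,b,b\tri a$'': directly $c\tri a\neq a$, $c\tri b\neq b$, $c\tri(b\tri a)\neq b\tri a$, hence by~\eqref{eq:cclassrack} also $a\tri c\neq c$ and $b\tri c\neq c$, and applying $\varphi_a$ once more, $(a\tri c)\tri a\neq a$; from $a\tri b\neq b$ we get $b\tri a\neq a$ (by~\eqref{eq:cclassrack}) and $b\tri a\neq b$ (injectivity of $\varphi_b$), and $b\tri a\neq c$ because $b\tri a=c$ would give $c\tri(b\tri a)=c\tri c=c=b\tri a$, contradicting the hypothesis. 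With this list, (a) and (b) are immediate: $d=c\tri(b\tri a)$ and $e=c\tri a$ differ from $c$ by injectivity of $\varphi_c$ (using $b\tri a\neq c$ and $a\neq c$), while $b\tri a$ differs from $a,b,d$ by the list, and $b\tri a\neq e=c\tri a$ follows by applying Lemma~\ref{le:braidrack}(1) to both sides of the supposed equality $b\tri a=c\tri a$ and comparing. For (c), assuming $b\tri y=c$ with $y\in\{d,e\}$ (the cases $y=a,b$ being on the list), Lemma~\ref{le:braidrack}(1) gives $c\tri b=y\in\{c\tri(b\tri a),\,c\tri a\}$, and cancelling $\varphi_c$ forces $b\tri a=b$ or $a=b$, contradictions.

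The bulk of the proof is (d): $b\tri(a\tri y)\neq c$ for $y\in S=\{a,b,d,e\}$. For $y=a$ this is $b\tri a\neq c$; for $y=b$, Lemma~\ref{le:braidrack}(3) applied to $a\tri(b\tri a)=b$ gives $b\tri(a\tri b)=a\neq c$. For $y=d,e$ I would first reduce, via self-distributivity and $a\tri(b\tri a)=b$, to $a\tri d=(a\tri c)\tri b$ and $a\tri e=(a\tri c)\tri a$. Now suppose $b\tri(a\tri y)=c$. If $a\tri y=c$, then $b\tri(a\tri y)=b\tri c\neq c$, a contradiction; otherwise Lemma~\ref{le:braidrack}(1) gives $c\tri b=a\tri y$, i.e.\ $c\tri b=(a\tri c)\tri b$ (case $y=d$) or $c\tri b=(a\tri c)\tri a$ (case $y=e$). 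Writing $p=a\tri c\neq c$ and $q=c\tri b$, one applies Lemma~\ref{le:braidrack}(1) to both descriptions of $q$ — its distinctness hypothesis being exactly $q\neq b$ (that is, $c\tri b\neq b$) and $q\neq a$ (that is, $(a\tri c)\tri a\neq a$) from the list — and cancels the appropriate bijection to obtain $p=c$ (case $y=d$) or $q=c$, hence $b=c$ (case $y=e$): contradictions in both cases.

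The only genuine difficulty is bookkeeping: throughout part~(2) one must keep track of which bijection $\varphi_x$ is being inverted and verify the distinctness hypothesis of Lemma~\ref{le:braidrack}(1) before each use, which is precisely why the list of non-equalities must be built first; beyond that, every step is self-distributivity together with cancellation of a $\varphi_x$.
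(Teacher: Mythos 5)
Your proof is correct, and its overall architecture matches the paper's: part (1) is the same computation (establish $(b\tri a)\tri b=a$ from braidedness and then use self-distributivity), and the preliminary list of non-commutation facts together with the arguments for (2)(a) and (2)(b) coincide with the paper's (including the key step $b\tri a=c\tri a\Rightarrow b=c$ via Lemma~\ref{le:braidrack}(1)). The one place where you genuinely diverge is (2)(c): the paper first proves the identity $b\tri(c\tri(b\tri a))=c\tri(b\tri(c\tri a))$, reduces both cases to $b\tri(c\tri a)=c$, and then invokes Lemma~\ref{le:braidrackequi}; you instead apply Lemma~\ref{le:braidrack}(1) once to convert $b\tri y=c$ into $c\tri b=y$ and cancel the injective map $\varphi_c$ against $d=c\tri(b\tri a)$ or $e=c\tri a$. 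Your route is shorter and avoids the paper's auxiliary identity entirely. For (2)(d) you use the same reductions $a\tri d=(a\tri c)\tri b$ and $a\tri e=(a\tri c)\tri a$ as the paper; your case $y=e$ is more roundabout than necessary, since $(a\tri c)\tri a=c$ by Lemma~\ref{le:braidrack}(1) (as the paper uses), so $b\tri(a\tri e)=b\tri c\neq c$ immediately -- but your longer derivation is still sound. The only cosmetic weakness is the phrase ``applying $\varphi_a$ once more'' for $(a\tri c)\tri a\neq a$; the actual justification is that $(a\tri c)\tri a=a$ would force $a\tri(a\tri c)=a\tri c$ by \eqref{eq:cclassrack} and hence $a\tri c=c$ by injectivity of $\varphi_a$, which you should spell out.
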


\begin{proof}
(1) Since $a\tri b\neq b$ and since $X$ is braided, we have \[e=d\tri (c\tri b)=(c\tri (b\tri a))\tri (c\tri b)=c\tri ((b\tri a)\tri b))=c\tri a.\]

(2)(a) By assumption, $c\notin\{a,b\}$ and hence $c\neq e$ by (1). Moreover, $c\neq d$ since $c\neq b\tri a$.

(2)(b)  By assumption, $b\tri a\notin\{a,b\}$. Moreover, $b\tri a\neq d$ since $c$ does not commute with $b\tri a$. Finally, $b\tri a\neq e$ since otherwise $b\tri a=c\tri a$ by (1). Since $a$ commutes with none of $b$, $c$ and since $X$ is braided, this would imply $b=c$.

(2)(c) Obviously, $b\tri a\neq c$ and $b\tri b=b\neq c$. Assume that $b\tri d=c$ or $b\tri e=c$. By (1) and since
\[ b\tri d=b\tri (c\tri (b\tri a))=c\tri (b\tri (c\tri a)),
\]
it follows that $b\tri (c\tri a)=c$. Hence $(c\tri a) \tri (b\tri (c\tri a))=a$. Since $X$ is braided, it follows that $b=a$ or $c\tri a=a$ by Lemma~\ref{le:braidrackequi}. Since $b\neq a$ and $c\tri a\neq a$ by assumption, it follows that $b\tri d\neq c$ and $b\tri e\neq c$. 

(2)(d)  We have $b\tri (a\tri a)=b\tri a\neq c$ and $b\tri (a\tri b)=a \neq c$. If $c=b\tri (a\tri d)=b\tri ((a\tri c)\tri b)$, it would follow $c=b$ or $c=a\tri c$ by Lemma~\ref{le:braidrackequi}. If $c=b\tri (a\tri e)$, it would follow $c=b\tri (a\tri (c\tri a))=b\tri c$. 
\end{proof}

Recall that we have defined linear maps 
\begin{align*}
    c_i:\ &V^{\ot n}\rightarrow V^{\ot n}\\
         &v_1\ot\dots\ot v_n\mapsto v_1\ot\dots\ot v_{i-1}\ot c_q(v_i\ot v_{i+1})\ot v_{i+2}\ot\dots\ot v_{n}     
\end{align*}
for all $n\in\ndN$ and $1\leq i\leq n-1$. Note that the maps $c_i$ depend on the two-cocycle although the symbols do not indicate it. Moreover, the maps are $G_X$-graded (that is, $\fK G_X$-comodule maps) since $c_q$ is. The maps $c_i$ fulfill the braid relations 
\begin{align}
&c_ic_{i+1}c_i= c_{i+1}c_ic_{i+1} &&\mathrm{for\ all}\ 1 \leq i \leq n - 2,\label{eq:cbraid121}\\ 
&c_{i}c_j= c_jc_{i} && \mathrm{for\ all}\ 1 \leq i, j \leq n - 1, \left|i - j\right| > 1,\label{eq:cbraid12}
\end{align}  
where the first relation holds since $c_q$ fulfills the Yang-Baxter equation and the second by definition of $c_i$.

\begin{thm} \label{thm:degree3}
Assume that the rack $X$ is braided, that there are pairwise distinct elements $a,b,c\in X$ with $a\tri b\neq b$ and that $\B(V)$ is slim in degree two. Let $d=c\tri (b\tri a)$, $e=d\tri (c\tri b)$ and $S=\{v_a,v_b,v_d,v_e\}$. Define   
\[Z(a,b,c)=\mu(\id-c_1c_2+(c_1c_2)^2)(v_c\ot v_b\ot v_a) \in\B(V),\] 
and let $C$ be the subalgebra of $\B(V)$ generated by $S\cup\{Z\}$. Then the following hold.
\begin{enumerate}
\item The subalgebra $C$ is a left coideal of $\B(V)$ in the category of $\ndN_0$-graded $\fK G_X$-comodules. 
\item Assume that $c$ commutes with none of $a$, $b$, $b\tri a$. If $a\tri b\neq c$ or $a\tri c\neq b\tri a$, then $Z\notin\langle S\rangle$ and hence $C$ is not generated in degree one.
\end{enumerate}
\end{thm}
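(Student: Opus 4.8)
The plan is to treat the two parts essentially separately, each reducing to the formulas~\eqref{eq:partial1n-1} for $\Delta_{1,n-1}$ on the tensor algebra together with the slimness relations~\eqref{eq:degreetworel1}--\eqref{eq:degreetworel2} and the combinatorics of Lemma~\ref{le:rackrelations}.

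For part (1), the strategy is to show that the $\fK G_X$-subcomodule $W=\fK S + \fK Z$ is a left coideal by checking $\Delta_{1,1}(v) \in V\ot W$ for $v\in S$ (trivial: elements of $V$ are primitive, so $\Delta_{1,1}(v)=0$) and $\Delta_{1,2}(Z)\in V\ot W(2)$, that is $\Delta_{1,2}(Z)\in V\ot\langle S\rangle(2)$; then invoke Corollary~\ref{cor:lcsaExt} (or Proposition~\ref{prop:generatedalgebralcsa} together with Proposition~\ref{prop:partialleftcoideal}) to conclude that $C=\langle W\rangle$ is a left coideal subalgebra in the category of $\ndN_0$-graded $\fK G_X$-comodules. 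First I would note that $Z$ is $G_X$-homogeneous of degree $g_cg_bg_a = g_dg_cg_b = g_eg_dg_c$, so $\fK Z$ is indeed a subcomodule; and that the definition of $Z$ via the idempotent-like element $\id-c_1c_2+(c_1c_2)^2$ is chosen precisely so that applying $\Delta^{T(V)}_{1,2}=1+c_1+c_1c_2$ telescopes. The key computation is
\[
(1+c_1+c_1c_2)(\id-c_1c_2+(c_1c_2)^2) = 1 + c_1 + (c_1c_2)^3 \cdot(\text{something landing in }V\ot V^{\ot 2}),
\]
more precisely one should track that $\Delta_{1,2}(Z)$ is a $\fK$-combination of terms $v_x\ot(\text{degree-two monomial})$ where, using Lemma~\ref{le:rackrelations}(1) ($e=c\tri a$) and the identities $b\tri y\neq c$, $b\tri(a\tri y)\neq c$ for $y\in S$ from Lemma~\ref{le:rackrelations}(2)(c)--(d), every degree-two monomial that appears has both of its tensor factors among $\{v_a,v_b,v_d,v_e\}$ — equivalently, $c_q$ and its iterates never produce a $v_c$ when applied to the relevant tensors. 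Slimness in degree two is then used to rewrite the degree-two factors inside $\langle S\rangle(2)$, so that $\Delta_{1,2}(Z)\in V\ot\langle S\rangle(2)$ as required.

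For part (2), the aim is to show $Z\notin\langle S\rangle$ by a grading argument. Since $\dim V_g\le 1$ and the realization over $\fK G_X$ is faithful (the rack $X$ is injective, being a union of conjugacy classes, by Lemma~\ref{le:injectiverack}), the $G_X$-homogeneous component $\B(V)_{g_cg_bg_a}$ is spanned by monomials $v_{x_1}v_{x_2}v_{x_3}$ with $g_{x_1}g_{x_2}g_{x_3}=g_cg_bg_a$; inside $\langle S\rangle(3)$ only monomials in $v_a,v_b,v_d,v_e$ occur, so it suffices to show that no product $g_{x_1}g_{x_2}g_{x_3}$ with $x_i\in\{a,b,d,e\}$ equals $g_cg_bg_a$, OR that even if such products exist, the corresponding coordinate of $Z$ is nonzero. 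The cleaner route is the second: compute the coefficient of $Z$ along one specific monomial not in $\langle S\rangle$. Concretely, in the expansion of $Z=\mu(\id-c_1c_2+(c_1c_2)^2)(v_c\ot v_b\ot v_a)$ the leading term $v_cv_bv_a$ appears with coefficient $1$, and $c$ commuting with none of $a,b,b\tri a$ (part (2)'s hypothesis) together with Lemma~\ref{le:rackrelations}(2)(a)--(b) ($c\notin S$, $b\tri a\notin S$) should force that the $v_cv_bv_a$-isotypic piece, or some closely related piece involving an index $c$ or $b\tri a$, cannot be cancelled by anything in $\langle S\rangle$ — UNLESS a degree-two slimness relation collapses it, which is exactly where the hypothesis ``$a\tri b\neq c$ or $a\tri c\neq b\tri a$'' enters: it guarantees that the relevant would-be relation among the monomials is not actually imposed in $\B(V)$, so $Z$ keeps a genuine $v_c$- or $v_{b\tri a}$-component. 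Having shown $Z\notin\langle S\rangle$, the subalgebra $C=\langle S\cup\{Z\}\rangle$ has $C(1)\subseteq\fK S\subseteq V$ but $C\neq\langle C(1)\rangle$, so it is not generated in degree one.

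The main obstacle is part (2), or more precisely the bookkeeping needed to pin down exactly which $G_X$-homogeneous line $Z$ has a nonzero component on that is missing from $\langle S\rangle$. Part (1) is a telescoping identity plus the rack lemmas already proved; it is technical but mechanical. In part (2) one must carefully expand the six terms of $(\id-c_1c_2+(c_1c_2)^2)(v_c\ot v_b\ot v_a)$, identify their degrees in $G_X$, and use the two separate sub-cases of the hypothesis ($a\tri b\neq c$ versus $a\tri c\neq b\tri a$) — in each sub-case a different monomial survives — while invoking the slimness relations only in the ``allowed'' direction. Getting the case split to line up with which coefficient is provably nonzero is the delicate point; the braided-rack calculus of Lemma~\ref{le:braidrack} and Lemma~\ref{le:braidrackequi} and the non-membership facts in Lemma~\ref{le:rackrelations}(2) are the tools that make it go through.
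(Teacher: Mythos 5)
Your overall strategy is the same as the paper's: for (1), observe that $Z$ is $G_X$-homogeneous, compute $\Delta_{1,2}(Z)$ from the product $(\id+c_1+c_1c_2)(\id-c_1c_2+(c_1c_2)^2)$ and conclude via Corollary~\ref{cor:lcsaExt}; for (2), evaluate the functional that picks out the coefficient of $v_c\ot v_b\ot v_a$ in $\Delta_{1^3}$, showing it is $1$ on $Z$ and $0$ on every monomial $v_xv_yv_z$ with $x,y,z\in\{a,b,d,e\}$, using Lemma~\ref{le:rackrelations}(2). However, two steps as you describe them would fail.

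In part (1), the product expands to $\id+c_1-c_1^2c_2+c_1(c_1c_2)^2+(c_1c_2)^3$, and the terms $c_1$ and $c_1(c_1c_2)^2$ applied to $v_c\ot v_b\ot v_a$ yield, up to scalars, $v_{c\tri b}\ot v_cv_a$ and $v_{e\tri d}\ot v_ev_c$ after multiplying the last two factors. So your claim that ``$c_q$ and its iterates never produce a $v_c$'' in the degree-two factor is false for the raw expansion, and those two monomials are not in $\langle S\rangle$. Moreover, Lemma~\ref{le:rackrelations}(2)(c)--(d) cannot be invoked here: those parts require that $c$ commutes with none of $a$, $b$, $b\tri a$, a hypothesis available only in part (2), whereas part (1) must hold without it (the paper explicitly treats the case $c\tri a=a$). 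The correct route is to combine $c_1+c_1(c_1c_2)^2=(\id-c_2)c_1$ using the braid relation together with $c_2^3(v_c\ot v_b\ot v_a)=-v_c\ot v_b\ot v_a$ (slimness and $b\tri a\neq a$), and then apply \eqref{eq:degreetworel1} when $c\tri a=a$ (the term vanishes) or \eqref{eq:degreetworel2} and $e=c\tri a$ when $c\tri a\neq a$ (the term becomes a multiple of $v_{c\tri b}\ot v_av_e$). In part (2), the hypothesis ``$a\tri b\neq c$ or $a\tri c\neq b\tri a$'' is not about a slimness relation failing to be imposed; it is used to show $e\neq b$ (respectively $d\neq a$), so that the summand $k_2\,v_{e\tri(d\tri c)}\ot v_e\ot v_d$ of $\Delta_{1^3}(Z)$ cannot sit on the coordinate $v_c\ot v_b\ot v_a$ and cancel the leading coefficient $1$ coming from $v_c\ot v_bv_a$. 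With these two corrections your plan coincides with the paper's proof.
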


\begin{proof}
(1) Since $Z$ is $G_X$-homogeneous of degree $g_cg_bg_a$, the subalgebra $C$ is a $\fK G_X$-subcomodule. Thus
 by Corollary~\ref{cor:lcsaExt}
 it suffices to show that $\Delta_{1,2}(Z)\in V\ot\langle S\rangle$. Using Equation~(\ref{eq:partial1n-1}) we get
\begin{align*}
\Delta_{1,2}(Z)=&(\id\ot\mu)(\id+c_1+c_1c_2)(\id-c_1c_2+(c_1c_2)^2)(v_c\ot v_b\ot v_a) \\
               =&(\id\ot\mu)(\id-c_1^2c_2+(c_1c_2)^3+c_1+c_1(c_1c_2)^2)(v_c\ot v_b\ot v_a).
\end{align*}
By the braid equation~(\ref{eq:cbraid121}), \[c_1(c_1c_2)^2=(c_1c_2)^2c_2=c_2c_1c_2^3.\] Moreover, one has $c_2^3(v_c\ot v_b\ot v_a)=-v_c\ot v_b\ot v_a$ since $b\tri a\neq a$, and $\B(V)$ is slim in degree two (see Lemma~\ref{le:degreeonepartlc} (2)). Thus
\[ (c_1+c_1(c_1c_2)^2)(v_c\ot v_b\ot v_a)=(\id-c_2)c_1(v_c\ot v_b\ot v_a)
\]
and it follows that
\begin{align*}
\Delta_{1,2}(Z)=&(\id\ot\mu)(\id-c_1^2c_2+(c_1c_2)^3)(v_c\ot v_b\ot v_a)\\
			    &+(\id\ot\mu)(\id-c_2)c_1(v_c\ot v_b\ot v_a)\\
			   =&v_c\ot v_bv_a- k_1v_{d\tri c}\ot v_dv_b +k_2v_{e\tri(d\tri c)}\ot v_ev_d \\
				&+(\id\ot\mu)(\id-c_2)c_1(v_c\ot v_b\ot v_a),								
\end{align*}
where $k_1,k_2\in\fK$ depending on the two-cocycle. To reason $(\id\ot\mu)(\id-c_2)c_1(v_c\ot v_b\ot v_a)\in V\ot\langle S\rangle$ we distinguish two cases and in both cases we use that $\B(V)$ is slim in degree two.

Case (a). Let $c\tri a=a$. Then \[(\id\ot\mu)(\id-c_2)c_1(v_c\ot v_b\ot v_a)=(\id\ot\mu)(\id-c_2)(q_{c,b}v_{c\tri b}\ot v_c\ot v_a)=0\] since Equation~(\ref{eq:degreetworel1}) holds. 

Case (b). Let $c\tri a\neq a$. Then 
\begin{align*}
(\id\ot\mu)(\id-c_2)c_1(v_c\ot v_b\ot v_a)&=(\id\ot\mu)(\id-c_2)(q_{c,b}v_{c\tri b}\ot v_c\ot v_a)\\
&=(\id\ot\mu)(-c_2{}^2)(q_{c,b}v_{c\tri b}\ot v_c\ot v_a)\\
&=k_3v_{c\tri b}\ot v_av_e\in V\ot\langle S\rangle
\end{align*}
for a $k_3\in\fK$, where we used Equation~(\ref{eq:degreetworel2}) in the second equality and Lemma~\ref{le:rackrelations} (1) in the last step. 

In both cases it follows that $\Delta_{1,2}(Z)\in V\ot\langle S\rangle$, which proves the claim.
 
(2) Let $F$ be the linear functional on $V^{\ot 3}$ with $F(v_c\ot v_b\ot v_a)= 1$ and $F(v_i\ot v_j\ot v_k)=0$ for all $(i,j,k)\in X^3\setminus\{(c,b,a)\}$. We prove that $F(\Delta_{1^3}(Z))=1$ but $F(\Delta_{1^3}(Y))=0$ for all $Y\in\langle S\rangle(3)$. It is well-known, and follows by similar arguments as in Lemma~\ref{le:comultcomponents} that $\Delta_{1^3}=(\id\ot\Delta_{1,1})\Delta_{1,2}$. With the result of our calculation of $\Delta_{1,2}(Z)$ in part (1) of the proof (in the case $c\tri a\neq a$) we get 
\begin{align*}
\Delta_{1^3}(Z)=&(\id\ot\Delta_{1,1})\Delta_{1,2}(Z)\\
               =&(\id\ot\Delta_{1,1})(v_c\ot v_bv_a- k_1v_{d\tri c}\ot v_dv_b +k_2v_{e\tri(d\tri c)}\ot v_ev_d \\
							  &+k_3v_{c\tri b}\ot v_av_e),
\end{align*} 
where $k_i\in\fK$. Since $b\neq a$, we obtain $F((\id\ot\Delta_{1,1})(v_c\ot v_bv_a))=1$. Since $X$ is braided, the assumption $c\tri (b\tri a)\neq b\tri a$ implies $d\tri c\neq c$ and hence $F((\id\ot\Delta_{1,1})(v_{d\tri c}\ot v_dv_b)=0$. Now consider the summand 
\begin{align*}
    F((\id\ot\Delta_{1,1})&(v_{e\tri(d\tri c)}\ot v_ev_d))\\
    &=F(v_{e\tri(d\tri c)}\ot v_e\ot v_d)+F(v_{e\tri(d\tri c)}\ot q_{e,d}v_{e\tri d}\ot v_e).
\end{align*} 
Since $c$ does not commute with $a$, it follows that $e\neq a$ by Lemma~\ref{le:rackrelations}~(1) and hence $F(v_{e\tri(d\tri c)}\ot q_{e,d}v_{e\tri d}\ot v_e)=0$. By assumption, we either have $a\tri b\neq c$ or $a\tri c\neq b\tri a$. If $a\tri b\neq c$, then $b=(a\tri b)\tri a\neq c\tri a=e$ and thus $F(v_{e\tri(d\tri c)}\ot v_e\ot v_d)=0$. If $a\tri c\neq b\tri a$, then $d\neq a$ since otherwise $c\tri (b\tri a)=a$ and $c\tri (a\tri c)=a$ and hence $b\tri a=a\tri c$. It follows that $F(v_{e\tri(d\tri c)}\ot v_e\ot v_d)=0$. Finally, we get $F((\id\ot\Delta_{1,1})(v_{c\tri b}\ot v_av_e))=0$ since $c\tri b\neq c$. We conclude $F(\Delta_{1^3}(Z))=1$. 

To prove $F(\Delta_{1^3}(\langle S\rangle(3)))=\{0\}$ we show $F(\Delta_{1^3}(v_xv_yv_z))=0$ for all $x,y,z\in\{a,b,d,e\}$. Using Equation~(\ref{eq:partial1n-1}) we get
\begin{align*}
\Delta_{1^3}(v_xv_yv_z)=&(\id\ot\Delta_{1,1})\Delta_{1,2}(v_xv_yv_z)\\
                 =&(\id\ot\Delta_{1,1}\mu)(\id+c_1+c_1c_2)(v_x\ot v_y\ot v_z)\\
                 =&v_x\ot v_y\ot v_z +l_1v_x\ot v_{y\tri z}\ot v_y\\
                  &+l_2v_{x\tri y}\ot v_x\ot v_z + l_3v_{x\tri y}\ot v_{x\tri z}\ot v_x\\
                  &+l_4v_{x\tri(y\tri z)}\ot v_x\ot v_y +l_5v_{x\tri (y\tri z)}\ot v_{x\tri y}\ot v_x,
\end{align*}
where $l_i\in \fK$ depending on the two-cocycle. Since $c\notin\{a,b,d,e\}$ by Lemma~\ref{le:rackrelations} (2)(a), $v_x$ can not be a multiple of $v_c$ and it follows that \[F(v_x\ot v_y\ot v_z +l_1v_x\ot v_{y\tri z}\ot v_y)=0.\] 
The equation 
\[F(v_{x\tri y}\ot v_x\ot v_z)=0\] 
holds since otherwise ($x=b$ and) $b\tri y=c$, but $b\tri y\neq c$ for all $y\in\{a,b,d,e\}$ by Lemma~\ref{le:rackrelations} (2)(c). 
The equation \[F(v_{x\tri y}\ot v_{x\tri z}\ot v_x)=0\] holds since otherwise ($x=a$ and) $a\tri z=b$ and hence $z=b\tri a$, but $b\tri a\notin\{a,b,d,e\}$ by Lemma~\ref{le:rackrelations} (2)(b). 
The equation 
\[F(v_{x\tri(y\tri z)}\ot v_x\ot v_y)=0\] 
holds since otherwise ($x=b$, $y=a$ and) $b\tri (a\tri z)=c$, but $b\tri (a\tri z)\neq c$ for all $z\in \{a,b,d,e\}$ by Lemma~\ref{le:rackrelations} (2)(d). Finally,
\[F(v_{x\tri (y\tri z)}\ot v_{x\tri y}\ot v_x)=0\] 
since otherwise $a\tri y=b$ and hence $y=b\tri a$, but $b\tri a\notin\{a,b,d,e\}$ by Lemma~\ref{le:rackrelations} (2)(b). We conclude $F(\Delta_{1^3}(v_xv_yv_z))=0$.  
\end{proof}

\begin{lem}\label{le:lyinginS}
Assume that the rack $X$ is braided, that there are pairwise distinct elements $a,b,c\in X$ with $a\tri b\neq b$ and that $\B(V)$ is slim in degree two. Let $d=c\tri (b\tri a)$, $e=d\tri (c\tri b)$ and $S=\{v_a,v_b,v_d,v_e\}$. Define   
\[Z=\mu(\id-c_1c_2+(c_1c_2)^2)(v_c\ot v_b\ot v_a)\in\B(V),\]
and let $C$ be the subalgebra of $\B(V)$ generated by $S\cup\{Z\}$. Then the following hold.
\begin{enumerate}
\item If $c\tri b=b$ and $c\tri a\neq a$, then $Z\in\langle S\rangle$.
\item If $c\tri a=a$ and $c\tri b\neq b$, then $Z\in\langle S\rangle$.
\item If $c\tri (b\tri a)=b\tri a$ and $c\tri a\neq a$, then $Z\in\langle S\rangle$.
\end{enumerate}
\end{lem}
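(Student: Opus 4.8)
The plan is to expand $Z$ explicitly and then use the quadratic relations coming from slimness in degree two to push every factor $v_c$ out of the resulting monomials. Iterating $c_q(v_x\ot v_y)=q_{x,y}v_{x\tri y}\ot v_x$ one finds
\[
c_1c_2(v_c\ot v_b\ot v_a)=q_{b,a}q_{c,b\tri a}\,v_d\ot v_c\ot v_b,\qquad
(c_1c_2)^2(v_c\ot v_b\ot v_a)=\lambda\,v_e\ot v_d\ot v_c
\]
with $\lambda=q_{b,a}q_{c,b\tri a}q_{c,b}q_{d,c\tri b}\in\fK^{\times}$ and $e=d\tri(c\tri b)$, so that in $\B(V)$
\[
Z=v_cv_bv_a-q_{b,a}q_{c,b\tri a}\,v_dv_cv_b+\lambda\,v_ev_dv_c .
\]
Recall from Lemma~\ref{le:rackrelations}~(1) that $e=c\tri a$. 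As a preliminary consistency check, the $G_X$-degree of $Z$ is $g_cg_bg_a$, and in each of the three cases the commutativity hypothesis on $c$ (together, when $c\tri a\neq a$, with $e\tri c=a$ from Lemma~\ref{le:braidrack}~(1)) lets one rewrite this product of group elements as a product of degrees of elements of $S$; this already shows that, if $Z$ lies in $\langle S\rangle$ at all, it must be a $\fK$-linear combination of products $v_xv_yv_z$ with $x,y,z\in\{a,b,d,e\}$.

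To actually produce such an expression, the idea in each case is to move $v_c$ rightward through the three monomials above. In Case~(1) we have $c\tri b=b\neq c$, hence $v_cv_b=q_{c,b}v_bv_c$ by Equation~\eqref{eq:degreetworel1}; applied to the first two monomials this brings $v_c$ adjacent to $v_a$, and since $c\tri a=e\neq a$ Equation~\eqref{eq:degreetworel2} rewrites $v_cv_a$ as a combination of $v_ev_c$ and $v_av_e$, after which a further use of the same relations (with $e\tri c=a$) removes the remaining $v_c$; the third monomial $v_ev_dv_c$ is handled the same way. Case~(2): here $e=c\tri a=a$, so $v_e=v_a$ and $S=\{v_a,v_b,v_d\}$, the relation $v_cv_a=q_{c,a}v_av_c$ holds, and because $c\tri b\neq b$ Equation~\eqref{eq:degreetworel2} gives a relation on $v_cv_b$ (with $(c\tri b)\tri c=b$) that drives $v_c$ to the right. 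Case~(3): $c$ commutes with $b\tri a$, so $d=c\tri(b\tri a)=b\tri a$ and $v_dv_c=q_{d,c}v_cv_d$ (or $d=c$, in which case $v_d=v_c$ and $Z$ is visibly a combination of products of elements of $S$); combined with the relation on $v_cv_a$ coming from $c\tri a=e\neq a$, the same straightening applies. In every case the $v_c$'s cancel and one is left with an explicit element of $\langle S\rangle$, which must then equal $Z$.

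An alternative that avoids part of the bookkeeping is to compute $\Delta_{1,2}(Z)$ exactly as in the proof of Theorem~\ref{thm:degree3}~(1), simplify it in the case at hand (using $d\tri c=c$, $e\tri c=a$, etc.), write down the obvious candidate $Z'\in\langle S\rangle(3)$ built as an analogous alternating sum from elements of $S$, check that $\Delta_{1,2}(Z')=\Delta_{1,2}(Z)$, and conclude $Z=Z'$ from the injectivity of $\Delta_{1^3}=(\id\ot\Delta_{1,1})\Delta_{1,2}$ on $\B(V)(3)$ (the Nichols algebra being strictly graded). Either way the only genuine obstacle is the case distinction: one must know, for each relevant pair of rack elements, whether they commute, in order to apply the correct form of the slim-degree-two relation (Equation~\eqref{eq:degreetworel1} versus Equation~\eqref{eq:degreetworel2}), and one must repeatedly invoke Lemma~\ref{le:braidrack}~(1) to identify the rack elements produced along the way and to keep track of all the two-cocycle scalars. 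The $G_X$-degree identity noted above guarantees in advance that the $v_c$'s must cancel, which is what makes the computation go through.
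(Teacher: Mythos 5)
Your first route has a genuine gap at its central step. The slim-degree-two relations do not let you ``push $v_c$ out'' of a monomial: for a non-commuting pair such as $(c,a)$ with $c\tri a=e$, relation~\eqref{eq:degreetworel2} reads $v_cv_a=q_{c,a}v_ev_c-q_{c,a}q_{e,c}v_av_e$, which still contains a monomial ending in $v_c$; rewriting that one in turn via $e\tri c=a$ and substituting back, the condition $q_{c,a}q_{e,c}q_{a,e}=-1$ from Lemma~\ref{le:degreetworel}~(2) makes the two $v_c$-terms reproduce the original monomial exactly, so the straightening cycles and produces nothing. Concretely, in Case~(1) the individual monomial $v_cv_bv_a$ is \emph{not} in $\langle S\rangle$; only the full combination $Z$ is, and this requires a genuinely new degree-three identity rather than iterated degree-two rewriting. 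The appeal to the $G_X$-degree of $Z$ to ``guarantee in advance that the $v_c$'s must cancel'' is a non sequitur: equality of $G_X$-degrees is necessary but never sufficient for membership in $\langle S\rangle$ (Theorem~\ref{thm:degree3}~(2) exhibits, under different commutation hypotheses, a $Z$ of the same degree type with $Z\notin\langle S\rangle$).

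Your alternative route is in fact the paper's method, but you have deferred precisely the part that constitutes the proof. The ``obvious candidate'' $Z'\in\langle S\rangle(3)$ is not an alternating sum analogous to $Z$: in each case it is a \emph{single} monomial, namely a scalar multiple of $v_bv_av_e$, $v_bv_av_d$, $v_av_ev_d$ respectively, obtained as $\mu(c_1c_2)^3(v_c\ot v_b\ot v_a)$ (equivalently, in Case~(1), from the vanishing of the four-term sum $\mu(\id-c_1c_2+(c_1c_2)^2-(c_1c_2)^3)(v_b\ot v_a\ot v_e)$, using $b\tri(a\tri e)=c$ to see that $c_1c_2(v_b\ot v_a\ot v_e)$ is a multiple of $v_c\ot v_b\ot v_a$). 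Identifying these elements and verifying $\Delta_{1,2}(Z-Z')=0$ -- via the braid relations \eqref{eq:cbraid121}, the slimness relations through Lemma~\ref{le:degreetworel}, and the rack identities such as $e\tri c=a$ and $(b\tri a)\tri e=e$ -- is the entire content of the argument, and injectivity of $\Delta_{1,2}$ on $\B(V)(3)$ then closes it. As written, the proposal names the correct machinery but does not supply the computation or the correct target element, and its primary route would fail if carried out.
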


\begin{proof}
(1) Our strategy is to prove that in $\B(V)$ the relation
\begin{align}\label{eq:relationdeg3a}
  \mu(\id-c_1c_2+(c_1c_2)^2-(c_1c_2)^3)(v_b\ot v_a\ot v_e)=0 
\end{align}
holds. Note that $c_1c_2(v_b\ot v_a\ot v_e)\in\fK v_c\ot v_b\ot v_a$ since \[b\tri (a\tri e)=b\tri (a\tri (c\tri a))=b\tri c=c,\] where we used $e=c\tri a$ from Lemma~\ref{le:rackrelations} (1). Thus the claim of the Lemma follows from relation~(\ref{eq:relationdeg3a}) since it implies \[v_bv_av_e-q\mu(\id-c_1c_2+(c_1c_2)^2)(v_c\ot v_b\ot v_a)=0\in\B(V)\] and hence $v_bv_av_e=qZ$ in $\B(V)$ for a $q\in\fK$ and $v_bv_av_e\in\langle S\rangle$. For the proof of Equation~(\ref{eq:relationdeg3a}) we use Equation~(\ref{eq:partial1n-1}) and compute 
\begin{align*}
&\Delta_{1,2}\mu(\id-c_1c_2+(c_1c_2)^2-(c_1c_2)^3)(v_b\ot v_a\ot v_e)\\
=&(\id\ot\mu)(\id+c_1-c_1^2c_2+c_1(c_1c_2)^2-c_1(c_1c_2)^3-(c_1c_2)^4)(v_b\ot v_a\ot v_e)\\
=&(\id\ot\mu)(\id-c_1^2c_2-c_1(c_1c_2)^3)(v_b\ot v_a\ot v_e)\\
 &+(\id\ot\mu)(c_1+c_1(c_1c_2)^2-(c_1c_2)^4)(v_b\ot v_a\ot v_e).
\end{align*}
By Lemma~\ref{le:rackrelations} (1) and since $X$ is braided, we have $a\tri e=c$ and hence $b$ commutes with $a\tri e$ by assumption. Since $\B(V)$ is slim in degree two, we obtain $c_1^2c_2(v_b\ot v_a\ot v_e)=c_2(v_b\ot v_a\ot v_e)$ by Lemma~\ref{le:degreetworel} (1). Using the braid relation~(\ref{eq:cbraid121}) repeatedly we get $c_1(c_1c_2)^3=c_2c_1^2c_2c_1^3$. Note that $a\tri e=c$ hence $e\neq c$ by assumption and $(b\tri a)\tri e=e$ since $a\tri ((b\tri a)\tri e)=b\tri c=c$. Thus \[c_2c_1^2c_2c_1^3(v_b\ot v_a\ot v_e)=-c_2c_1^2c_2(v_b\ot v_a\ot v_e)=-c_2^2(v_b\ot v_a\ot v_e),\] where we used that $\B(V)$ is slim in degree two and Lemma~\ref{le:degreetworel} (2) for the first and (1) for the second equality. It follows that
\begin{align*}
  &(\id\ot\mu)(\id-c_1^2c_2-c_1(c_1c_2)^3)(v_b\ot v_a\ot v_e)\\
  =&(\id\ot\mu)(\id-c_2+c_2^2)(v_b\ot v_a\ot v_e)=0, 
\end{align*}
where we used again that $\B(V)$ is slim in degree two and $a\tri e\neq e$. Using the braid Equation~(\ref{eq:cbraid121}) we obtain $c_1(c_1c_2)^2=c_2c_1c_2^3$ and $c_2c_1c_2^3(v_b\ot v_a\ot v_e)=-c_2c_1(v_b\ot v_a\ot v_e)$ by Lemma~\ref{le:degreetworel}~(2) since $a\tri e\neq e$. The braid relation (\ref{eq:cbraid121}) leads to $(c_1c_2)^4=c_1^2c_2^2c_1c_2^3$ and we have \[c_1^2c_2^2c_1c_2^3(v_b\ot v_a\ot v_e)=-c_1^2c_2^2c_1(v_b\ot v_a\ot v_e)=-c_2^2c_1(v_b\ot v_a\ot v_e),\] where the first equation holds again since $a\tri e\neq e$ and by Lemma~\ref{le:degreetworel}~(2) and the second equation holds since $(b\tri a)\tri e=e$ and by Lemma~\ref{le:degreetworel}~(1) since $\B(V)$ is slim in degree two. Note that \[b\tri e=b\tri (c\tri a)=c\tri (b\tri a)=d\] and $e=c\tri a\neq c\tri (b\tri a)=d$ since $a\neq b\tri a$. Hence $b\tri e\neq e$ and it follows that
\begin{align*}
&(\id\ot\mu)(c_1+c_1(c_1c_2)^2-(c_1c_2)^4)(v_b\ot v_a\ot v_e)\\
=&(\id\ot\mu)(\id-c_2+c_2^2)c_1(v_b\ot v_a\ot v_e)=0,
\end{align*}
by Lemma~\ref{le:degreetworel}~(2). We conclude \[\Delta_{1,2}\mu(\id-c_1c_2+(c_1c_2)^2-(c_1c_2)^3)(v_b\ot v_a\ot v_e)=0,\] which implies $\mu(\id-c_1c_2+(c_1c_2)^2-(c_1c_2)^3)(v_b\ot v_a\ot v_e)=0$ since $\Delta_{1,2}$ is injective. 

(2) Let $Z'=Z-\mu(c_1c_2)^3(v_c\ot v_b\ot v_a)$. We claim $Z'=0\in\B(V)$, which can be shown by proving $\Delta_{1,2}(Z')=0$ with similar arguments as in (1). We only describe how the assertion follows from the claim. By the braid relation~(\ref{eq:cbraid121}), the equation $(c_1c_2)^3=(c_2^2c_1)^2$ holds. Since $c\tri a=a$ and since $\B(V)$ is slim in degree two, we have \[(c_2^2c_1)^2(v_c\ot v_b\ot v_a)=c_2^2c_1^2(v_c\ot v_b\ot v_a)\] by Lemma~\ref{le:degreetworel}~(1). Moreover, \[c_2^2c_1^2(v_c\ot v_b\ot v_a)\in\fK c_2^2(v_b\ot v_{c\tri b}\ot v_a)\] since $(c\tri b)\tri c=b$ (since $X$ is braided) and \[c_2^2(v_b\ot v_{c\tri b}\ot v_a)\in\fK v_b\ot v_{((c\tri b)\tri a)\tri (c\tri b)}\ot v_{(c\tri b)\tri a}.\] Since $a$ commutes with $c$ but not with $b$, it does not commute with $c\ot b$ and $((c\tri b)\tri a)\tri (c\tri b))=a$ since $X$ is braided. Hence \[\mu(c_1c_2)^3(v_c\ot v_b\ot v_a)\in\fK v_bv_av_d\in\langle S\rangle.\] Thus $Z'=0$ implies $0=v_bv_av_d-kZ$ in $\B(V)$ for a $k\in\fK$ and thus $Z\in\langle S\rangle$. 

(3) Let $Z'= Z-\mu(c_1c_2)^3(v_c\ot v_b\ot v_a)$. We claim $Z'=0 \in\B(V)$, which can be shown by proving $\Delta_{1,2}(Z')=0$ with similar arguments as in (1). We only describe how the assertion follows from the claim. Note that $(c_1c_2)^3=(c_1^2c_2)^2$ by the braid relation~(\ref{eq:cbraid121}). Since $c\tri (b\tri a)=b\tri a$ and since $\B(V)$ is slim in degree two, we have \[(c_1^2c_2)^2(v_c\ot v_b\ot v_a)=c_1^2c_2^2(v_c\ot v_b\ot v_a)\] by Lemma~\ref{le:degreetworel}~(1). Moreover, \[c_1^2c_2^2(v_c\ot v_b\ot v_a)\in\fK (v_{(c\tri a)\tri c}\ot v_{c\tri a}\ot v_{b\tri a})=\fK (v_a\ot v_e\ot v_d)\] and hence $\mu(c_1c_2)^3(v_c\ot v_b\ot v_a)\in\fK v_av_ev_d\subset\langle S\rangle$. Thus $Z'=0$ implies $Z\in\langle S\rangle$.
\end{proof}

In the next Lemma and the following Theorem we use abbreviating notations. For a pair of elements $x,y\in X$ we write $x\square y$ if $x\tri y=y$ but $x\neq y$. For elements $x_1,x_2,...,x_n,y\in X$ we abbreviate \[x_1x_2\cdots x_n\tri y=x_1\tri (x_2\tri (...\tri(x_n\tri y))).\] 

As e.g. in \cite[Section~1.3]{MR2891215} for $n\in\ndN$ we define the \textbf{Hurwitz action} of the braid group $\mathbb{B}_n$ on $X^n$ by \[\sigma_i(x_1,\ldots,x_n)=(x_1,\ldots,x_{i-1},x_i\tri x_{i+1},x_i,x_{i+2},\ldots,x_n),\] for all $x_1,\ldots,x_n\in X$, $i\in\{1,\ldots,n-1\}$. We denote the orbit of a tuple $(x_1,\ldots,x_n)$ with respect to the Hurwitz action by $\mathcal{O}(x_1,\ldots,x_n)$.

\begin{lem}\label{le:hurwitzorbit}
Assume that $X$ is braided and let $a,b,c,d\in X$ be pairwise not commuting elements except $a\square c$. Assume that $b\square (c\tri d)$ and $a\square (b\tri d)$. Then the Hurwitz orbit $\mathcal{O}(a,b,c,d)$ consists of the following four types of tuples  
\begin{itemize}
    \item[(x)] $(x_1,x_2,x_3,x_4)$ such that $x_1\square x_3$, $x_2\square (x_3\tri x_4)$ and $x_1\square (x_2\tri x_4)$.  
    \item[(y)] $(y_1,y_2,y_3,y_4)$ such that $y_2\square y_3$, $y_1\square y_4$ and $y_1\square (y_2\tri(y_3\tri y_4))$.
    \item[(z)] $(z_1,z_2,z_3,z_4)$ such that $z_2\square z_4$, $z_1\square (z_3\tri z_4)$ and $z_1\square (z_2\tri z_3)$.
    \item[(u)] $(u_1,u_2,u_3,u_4)$ such that $u_1\square u_2$, $u_3\square u_4$ and $u_1\square (u_2\tri (u_3\tri u_4))$.
\end{itemize}
The unmentioned components of the tuples do not commute.
\end{lem}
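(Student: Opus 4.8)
The plan is to compute the Hurwitz orbit $\mathcal{O}(a,b,c,d)$ by generating it from the starting tuple and checking at each step that the commutation pattern claimed in the statement is preserved. Since the braid group $\mathbb{B}_4$ is generated by $\sigma_1,\sigma_2,\sigma_3$ and their inverses, I would first observe that it suffices to show: starting from a tuple of type (x), applying $\sigma_i^{\pm1}$ lands in a tuple of one of the four types (x)–(u), and conversely each of (y), (z), (u) maps under the generators back into $\{(x),(y),(z),(u)\}$; and that the starting tuple $(a,b,c,d)$ is of type (x) (which is exactly the hypothesis, with $x_1=a$, $x_2=b$, $x_3=c$, $x_4=d$). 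Then the orbit is contained in the union of the four types, and one checks transitions actually realize all four, so the orbit is precisely that union.

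The core is a finite case analysis on the generators. For a type (x) tuple $(x_1,x_2,x_3,x_4)$ with $x_1\square x_3$, $x_2\square(x_3\tri x_4)$, $x_1\square(x_2\tri x_4)$: apply $\sigma_1$ to get $(x_1\tri x_2, x_1, x_3, x_4)$; using $x_1\square x_3$ one has $x_1\tri x_2 = $ some element, and since $X$ is braided with $x_1$ not commuting with $x_2$, by Lemma~\ref{le:braidrack} one gets $x_2\tri(x_1\tri x_2)=x_1$, etc.; the point is to recognize this as a tuple of type (z) or (u) after reading off which pairs commute. Apply $\sigma_2$ to get $(x_1, x_2\tri x_3, x_2, x_4)$; since $x_2\square(x_3\tri x_4)$ translates (via braidedness and Lemma~\ref{le:braidrackequi}) into a relation between $x_2$ and $x_3$, $x_4$, one tracks the new commutation data. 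Apply $\sigma_3$ to get $(x_1,x_2,x_3\tri x_4, x_3)$; here $x_1\square x_3$ becomes $x_1\square(x_3\tri x_4)$-type information after using $x_1\square(x_2\tri x_4)$. In each case the essential tool is Example~\ref{ex:conjugacyrack}/Equation~\eqref{eq:cclassrack} (symmetry of commuting) together with Lemma~\ref{le:braidrack} parts (1)--(3) and Lemma~\ref{le:braidrackequi}, to convert a statement like $u\tri v = v$ into statements about $u\tri w$, $v\tri u$ for related $w$. The hypotheses $b\square(c\tri d)$ and $a\square(b\tri d)$ are precisely what make the defining relations of each type self-consistent under the moves, so they must be used at the transitions between types.

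I would organize the bookkeeping by noting that the Hurwitz action permutes the ``product'' $x_1\tri x_2\tri x_3\tri x_4$ only up to the global conjugation structure, so the multiset of conjugacy classes (here: the orbit under $\mathrm{Inn}(X)$ of each entry) is controlled; in particular one entry is always conjugate to $a$, one to $b$, one to $c$, one to $d$, which cuts down the possible identifications. Then a direct check — there are finitely many generator images of each of the four types, say at most $6$ per type — completes the classification. The transitions also show the orbit has the stated size and that all four types genuinely occur, giving the ``consists of the following four types'' claim rather than merely ``is contained in''.

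\textbf{Main obstacle.} The hard part will be the pure combinatorial bookkeeping: for each of the four types and each of the six generators $\sigma_1^{\pm1},\sigma_2^{\pm1},\sigma_3^{\pm1}$ one must correctly identify the image type and verify every commutation/non-commutation assertion, repeatedly invoking Equation~\eqref{eq:cclassrack} and Lemma~\ref{le:braidrack} to rewrite expressions like $x_i\tri(x_j\tri x_k)$. The subtlety is that ``the unmentioned components do not commute'' is an assertion that must be checked too, and it is where braidedness (via Lemma~\ref{le:braidrackequi}: $x\tri(y\tri x)\in\{x,y\}$, so non-commuting forces $x\tri(y\tri x)=y$) does the real work; one must be careful that no accidental coincidence of entries (e.g. two entries becoming equal) is allowed by the hypotheses, which is exactly what pairwise-non-commuting-except-$a\square c$ rules out.
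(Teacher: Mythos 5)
Your plan coincides with the paper's own proof: the paper likewise verifies, by a finite case analysis using Equation~\eqref{eq:cclassrack}, Lemma~\ref{le:braidrack} and Lemma~\ref{le:braidrackequi}, that applying each generator $\sigma_1,\sigma_2,\sigma_3$ to a tuple of type (x), (y), (z) or (u) again yields a tuple of one of these four types, carrying out one of the twelve cases in detail and recording the remaining transitions in a diagram. Your extra attention to the inverse generators $\sigma_i^{-1}$ is not made explicit in the paper (which only treats the forward action), but is recovered there implicitly since each $\sigma_i$ acts bijectively; otherwise the two arguments are essentially identical.
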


\begin{proof}
We only roughly sketch the proof here because it is lengthy but straightforward using the definition of a braided rack. The tuple $(a,b,c,d)\in\mathcal{O}(a,b,c,d)$ is of type $(x)$ by assumption. As an example we show that the action of $\sigma_1$ on a tuple of type $(x)$ leads to a tuple of type $(y)$. If we apply $\sigma_1$ on a tuple of the form $(x)$, we get a tuple $y=(y_1,y_2,y_3,y_4)=(x_1\tri x_2,x_1,x_3,x_4)$. From $x_1\square x_3$ it follows directly that $y_2\square y_3$. Since $x_1\square (x_2\tri x_4)$, we have $x_1\tri (x_2\tri x_4)=x_2\tri x_4$. Applying $x_4\tri$ on that equation we get $(x_4\tri x_1)\tri x_2=x_2$ since $X$ is braided and $x_4\tri x_2\neq x_2$. Hence \[y_4\tri y_1=x_4\tri (x_1\tri x_2)=x_1\tri((x_4\tri x_1)\tri x_2)=x_1\tri x_2=y_1.\] Moreover, $y_1\neq y_4$ since otherwise $x_1\tri x_2=x_4$ and hence $x_1=x_2\tri x_4$. It follows that $y_1\square y_4$. The relation $x_2\square (x_3\tri x_4)$ implies $(x_1\tri (x_2\tri x_1))\square (x_3\tri x_4)$ and $(y_1\tri y_2)\square (y_3\tri y_4),$ what is equivalent to $y_1\square (y_2y_3\tri y_4)$. It is easy to check that other components of $y$ do not commute. 
In this way we proved in the other $11$ cases that applying $\sigma_1,\sigma_2$ or $\sigma_3$ on a tuple of the form $(x),(y),(z)$ or $(u)$ always again yields a tuple of the form $(x),(y),(z)$ or $(u)$. Figure~\ref{di:hurwitzorbit} shows the action of the generators of the braid group on the four types of tuples.

\begin{figure}[h]
\begin{tikzcd}[row sep=large, column sep=large]
        & (x) \ar[d,"{\sigma_1, \sigma_3}"]  \ar[ddl,"\sigma_2"] & \\
        & (y) \ar[dl,"{\sigma_1, \sigma_3}"] \ar[loop right,"\sigma_2"]& \\
      (z) \ar[rr,"\sigma_2"] \ar[uur, bend left,"{\sigma_1, \sigma_3}"] &   & (u) \ar[uul, bend right,"\sigma_2", labels=swap] \ar[loop right, "{\sigma_1, \sigma_3}"]    
\end{tikzcd}
\caption{Hurwitz action on quadruples}
\label{di:hurwitzorbit}
\end{figure}
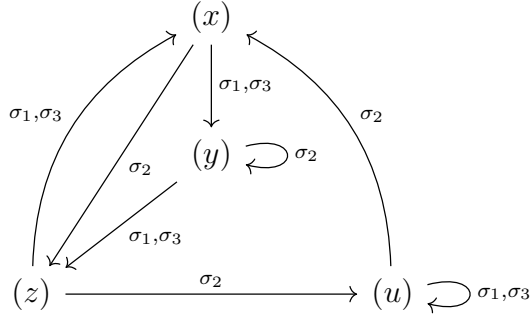
\end{proof}

\begin{lem}\label{le:Scommute}
Assume that $X$ is braided and let $a,b,c,d\in X$ be pairwise not commuting elements except $a\square c$. Assume that $b\square (c\tri d)$ and $a\square (b\tri d)$. Let $e=abc\tri d$, $f=eab\tri c$, $g=fea\tri b$ and $S=\{b,c,d,e,f,g\}$. Then two distinct elements in $S$ do not commute, at best $d\tri e=e$, $c\tri f=f$ and $b\tri g=g$.
\end{lem}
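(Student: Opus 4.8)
The plan is to exploit Lemma~\ref{le:hurwitzorbit} as much as possible. First I observe that the elements $e,f,g$ appearing here are exactly the "Hurwitz-evolved" components along the chain $(a,b,c,d)\mapsto\cdots$ that Lemma~\ref{le:hurwitzorbit} tracks: applying $\sigma_3$ turns $(a,b,c,d)$ into a tuple whose third slot involves $c\tri d$, then $\sigma_2$ after that produces a slot with $abc\tri d=e$, and iterating produces $f=eab\tri c$ and $g=fea\tri b$. Concretely, a suitable braid word $w\in\mathbb{B}_4$ sends $(a,b,c,d)$ to the $4$-tuple $(g,f,e,?)$ (or a permutation thereof), and along the way the intermediate tuples are precisely of types $(x),(y),(z),(u)$. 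So the first step is to identify, by direct computation with the Hurwitz action, which tuples in $\mathcal{O}(a,b,c,d)$ have their components lying in $S=\{b,c,d,e,f,g\}$, and to record which pairs of components occur together in a common tuple.

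The second step is the key logical move: if $x,y\in X$ commute and both appear (in some order, adjacent or not) as components of a single tuple in the Hurwitz orbit, then Lemma~\ref{le:hurwitzorbit} forces that tuple to be of one of the four listed types, and in each type the only commuting pairs of components are the explicitly marked ones ($x_1\square x_3$; $y_2\square y_3$ and $y_1\square y_4$; $z_2\square z_4$; $u_1\square u_2$ and $u_3\square u_4$), together with the relations $\square(\cdot\tri\cdot)$ which are not plain commutations. Since "the unmentioned components do not commute" is part of the conclusion of Lemma~\ref{le:hurwitzorbit}, I can read off exactly which pairs among $\{b,c,d,e,f,g\}$ are allowed to commute. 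The claim is that the only possibilities surviving this analysis are $d\tri e=e$, $c\tri f=f$, $b\tri g=g$ (and of course the symmetric partners $e\tri d=d$ etc., by Equation~\eqref{eq:cclassrack}), matching the three marked commutations $x_1\square x_3$, $y_2\square y_3$, $u_1\square u_2$ (resp. $u_3\square u_4$) in the three tuple-types that actually contain two elements of $S$. For every other pair in $S$ I argue that no tuple of the orbit contains both, or that if one did it would be in a type whose marked commuting pair is not that pair, hence they do not commute; and I must also check the "at best" wording, i.e. that even these three need not be genuine commutations but cannot be more than that.

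The third step is bookkeeping: I need to verify that $b,c,d,e,f,g$ are genuinely the components appearing, i.e. track the braid word explicitly. Here the self-distributivity identity and the braided-rack rules (Lemma~\ref{le:braidrack}) are used to rewrite expressions like $eab\tri c$ into the normal form in which a given tuple of the orbit is presented; for instance one checks $e=abc\tri d$ equals the relevant slot of $\sigma_2\sigma_3(a,b,c,d)$ up to the $\varphi$-action, and similarly for $f,g$. I would also use Lemma~\ref{le:hurwitzorbit}'s diagram (Figure~\ref{di:hurwitzorbit}) to see that the orbit is finite and that a single pass through types $(x)\to(y)\to(z)\to(u)$ and back suffices to see every component of $S$.

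I expect the main obstacle to be the third step — the explicit identification of $e,f,g$ with specific components of specific tuples in $\mathcal{O}(a,b,c,d)$ — because it requires carefully applying the Hurwitz generators in the right order and then normalizing the resulting rack words using self-distributivity, and an off-by-one in the braid word would misidentify which "type" tuple a given pair of $S$-elements lands in, and hence which commutation is permitted. Once the dictionary between $S$ and the orbit is pinned down correctly, the exclusion of all other commuting pairs is immediate from the "unmentioned components do not commute" clause of Lemma~\ref{le:hurwitzorbit}, so the conceptual content is light but the combinatorial care is heavy; I would organize it as a short table of the relevant orbit tuples rather than grinding each case in prose.
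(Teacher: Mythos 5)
Your route is genuinely different from the paper's. The paper's proof never touches the Hurwitz orbit: it first normalizes $e=ac\tri d$, $f=b\tri d$, $g=a\tri d$ using the hypotheses and self-distributivity, and then kills each of the twelve relevant pairs by a short direct braided-rack computation (Lemma~\ref{le:braidrack} and Equation~\eqref{eq:cclassrack}); the orbit classification of Lemma~\ref{le:hurwitzorbit} is only used afterwards, in Lemma~\ref{le:hurwitz}. Your idea can in fact be pushed through: the tuples $(\sigma_1\sigma_2\sigma_3)^k(a,b,c,d)$ for $0\le k\le 7$ alternate between types $(x)$ and $(z)$, their entries run through $(a,b,c,d),(e,a,b,c),(f,e,a,b),(g,f,e,a),\dots,(d,abcd\tri a,g,f),(c,d,abcd\tri a,g),(b,c,d,abcd\tri a)$, and the pairs of $S$-elements occurring together in these tuples are exactly the twelve pairs the lemma asserts do not commute, each time at unmentioned positions. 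So the approach buys a uniform mechanism at the price of tracking the whole period of $\sigma_1\sigma_2\sigma_3$ on the orbit.

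As written, though, your central logical step has a genuine gap. You propose, for a pair in $S$, to ``argue that no tuple of the orbit contains both, or that if one did it would be in a type whose marked commuting pair is not that pair, hence they do not commute.'' The first branch proves nothing: if no orbit tuple contains both $x$ and $y$, Lemma~\ref{le:hurwitzorbit} is simply silent on whether $x\tri y=y$, so non-commutation does not follow. To make the argument work you must actually exhibit, for each of the twelve pairs, an orbit tuple containing both elements at unmentioned positions; for $\{c,g\}$, $\{d,f\}$ and $\{d,g\}$ this forces you four to six applications of $\sigma_1\sigma_2\sigma_3$ past the obvious chain ending at $(g,f,e,a)$, and to recognize $e,f,g$ inside those later tuples you need precisely the identities $f=b\tri d$, $g=a\tri d$ that the paper's proof begins with --- so the rack-word bookkeeping you defer as the ``main obstacle'' is unavoidable and is most of the content. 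Finally, note that the three exceptional pairs $\{d,e\},\{c,f\},\{b,g\}$ require no argument at all: the lemma only says they \emph{may} commute (they never occur together in the tuples above), so there is nothing to ``match'' against the marked positions $x_1\square x_3$, $y_2\square y_3$, $u_1\square u_2$.
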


\begin{proof}
In $X$ the relations
\begin{align*}
e&=abc\tri d=ac\tri d, \\
f&=eab\tri c=a\tri((c\tri d)\tri(b\tri c))=a\tri (b\tri d)=b\tri d, \\
g&=fea\tri b=(b\tri d)\tri((a\tri(c\tri d))\tri (a\tri b))=(b\tri d)\tri (a\tri b)= a\tri d
\end{align*}
hold. Since $b\tri a\neq a$, $bc\tri d=c\tri d$ and $ac\tri d\neq c\tri d$, it follows that $b\tri e=b\tri(ac\tri d)\neq ac\tri d=e$. Since $b\tri d\neq d$, it follows that $b\tri f=bb\tri d\neq b\tri d=f$. 

From $(c\tri d)\tri c=d\neq c$, $c\tri a=a$ and $ac\tri d\neq c\tri d$, it follows that $c\tri e=cac\tri d\neq ac\tri d=e$. Since $c\tri d\neq d$, $c\tri a=a$ and $a\tri d\neq d$, it follows that $c\tri g=ca\tri d\neq a\tri d=g$. 

The relation $d\tri f\neq f$ follows from $d\tri b\neq b$, the relation $d\tri g\neq g$ from $d\tri a\neq a$. 

The equation $g\tri e=e$ would imply $c\tri d=d$ and hence $d=c$, a contradiction. The equation $f\tri g=g$ would imply the contradiction $b\tri a=a$. 

Finally, $f\tri e\neq e$ since otherwise $a\tri((b\tri d)\tri(c\tri d))=ac\tri d$ and hence $(b\tri d)\tri (c\tri d)=c\tri d$ which would imply $b\tri c=c$. 
\end{proof}

\begin{lem}\label{le:hurwitz}
Assume that $X$ is braided and let $a,b,c,d\in X$ be pairwise not commuting elements except $a\square c$. Assume that $b\square (c\tri d)$ and $a\square (b\tri d)$. Let $e=abc\tri d$, $f=eab\tri c$, $g=fea\tri b$ and $S=\{b,c,d,e,f,g\}$. Then $S^4\cap\mathcal{O}(a,b,c,d)=\emptyset$. 
\end{lem}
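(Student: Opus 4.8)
The plan is to combine the structural description of the Hurwitz orbit $\mathcal{O}(a,b,c,d)$ obtained in Lemma~\ref{le:hurwitzorbit} with the non-commutativity information collected in Lemma~\ref{le:Scommute}. By Lemma~\ref{le:hurwitzorbit}, every tuple in $\mathcal{O}(a,b,c,d)$ is of one of the four types (x), (y), (z), (u), and in each type exactly three of the six pairs among the four components are $\square$-related while the remaining pairs do not commute. On the other hand, Lemma~\ref{le:Scommute} tells us that inside $S=\{b,c,d,e,f,g\}$ the \emph{only} possible commuting (indeed $\square$-related) pairs are $\{d,e\}$, $\{c,f\}$ and $\{b,g\}$ — all other pairs of distinct elements of $S$ fail to commute. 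So a $4$-element sub-tuple drawn from $S$ can contain at most three commuting pairs, and those can only come from the list $\{d,e\},\{c,f\},\{b,g\}$; the three pairs are pairwise disjoint, so a quadruple realizing all three would have to be (a permutation of) $(b,c,d,e,f,g)$ restricted to four slots — impossible, since three disjoint pairs already involve six distinct elements. Hence a quadruple with all components in $S$ can realize \emph{at most two} $\square$-relations among its components.

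First I would make precise the "at most three $\square$-pairs, coming only from $\{d,e\},\{c,f\},\{b,g\}$" statement: any quadruple $(s_1,s_2,s_3,s_4)\in S^4$ has components among $\{b,c,d,e,f,g\}$, and by Lemma~\ref{le:Scommute} two entries $s_i,s_j$ with $s_i\tri s_j=s_j$ and $s_i\neq s_j$ force $\{s_i,s_j\}\in\{\{d,e\},\{c,f\},\{b,g\}\}$ (with a fixed orientation: $d\tri e=e$, $c\tri f=f$, $b\tri g=g$). Since these three pairs are pairwise disjoint and a $4$-tuple has only $4$ slots, at most two of the three pairs can occur, so the quadruple carries at most two ordered $\square$-relations (and one also has to be slightly careful that repeated entries, e.g.\ $s_i=s_j$, never give a $\square$-relation since $x\tri x=x$ in a quandle but $x\neq x$ fails).

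Next I would invoke Lemma~\ref{le:hurwitzorbit}: every element of $\mathcal{O}(a,b,c,d)$ is of type (x), (y), (z) or (u), and in each of these types there are \emph{exactly three} ordered $\square$-relations among the four components (listed explicitly in the statement of that lemma). Comparing: a tuple in $\mathcal{O}(a,b,c,d)$ has three $\square$-relations among its components, but a tuple in $S^4$ has at most two. Therefore $S^4\cap\mathcal{O}(a,b,c,d)=\emptyset$, which is the claim.

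The only genuine obstacle is the bookkeeping in the second step — one must check that the three $\square$-relations guaranteed for types (x)–(u) really are relations \emph{between components of the tuple} (they are, by the very formulation of Lemma~\ref{le:hurwitzorbit}), and that no degenerate phenomenon (equal components, or a $\square$-relation in the "wrong" direction) lets a quadruple from $S^4$ sneak in a third $\square$-pair. Both are handled by the disjointness of $\{d,e\},\{c,f\},\{b,g\}$ and the quandle identity $x\tri x=x$; everything else is a direct comparison of counts, so no further computation is needed.
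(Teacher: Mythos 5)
There is a genuine gap: your reading of Lemma~\ref{le:hurwitzorbit} is incorrect. The three $\square$-relations listed for each of the types (x), (y), (z), (u) are \emph{not} all relations between components of the tuple. For type (x), for instance, only $x_1\square x_3$ involves two components; the other two conditions, $x_2\square(x_3\tri x_4)$ and $x_1\square(x_2\tri x_4)$, involve the derived elements $x_3\tri x_4$ and $x_2\tri x_4$, which need not lie among $\{x_1,\dots,x_4\}$. Concretely, the tuple $(a,b,c,d)$ itself is of type (x) and has exactly one commuting pair among its components, namely $\{a,c\}$. Counting commuting component-pairs, the four types guarantee only one such pair (types (x) and (z)) or two (types (y) and (u)), never three. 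Since a quadruple in $S^4$ can certainly realize one or two of the disjoint pairs $\{d,e\},\{c,f\},\{b,g\}$ --- for example $(d,e,c,f)$ has exactly the two commuting pairs $\{d,e\}$ and $\{c,f\}$ among its components and all other pairs non-commuting, which matches the component-level commutation pattern of type (u) --- your comparison ``at most two versus exactly three'' excludes nothing, and the argument collapses.

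The paper's proof uses an entirely different, dynamical mechanism. It first computes that $abcd\tri$ maps $S$ to itself as the involution exchanging $b\leftrightarrow g$, $c\leftrightarrow f$, $d\leftrightarrow e$, so that $\sigma_1\sigma_2\sigma_3$, which sends $(t_1,t_2,t_3,t_4)$ to $(abcd\tri t_4,t_1,t_2,t_3)$, preserves $S^4\cap\mathcal{O}(a,b,c,d)$. A hypothetical tuple $t$ in this intersection must, by Lemma~\ref{le:hurwitzorbit} combined with Lemma~\ref{le:Scommute}, contain some $h\in\{b,c,d\}$ together with $abcd\tri h$ among its entries; iterating $\sigma_1\sigma_2\sigma_3$ then produces a tuple in the orbit with two equal entries, contradicting the fact that tuples of types (x)--(u) have pairwise distinct components. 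To repair your proof you would need some such additional input (either the $\sigma_1\sigma_2\sigma_3$-invariance, or a verification of the third, derived-element condition for each candidate quadruple); the static count of commuting pairs in a single tuple is not sufficient.
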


\begin{proof}
First we prove that $abcd\tri x\in S$ and $abcdabcd\tri x=x$ for all $x\in S$. Indeed, using $e=ac\tri d$, $f=b\tri d$ and $g=a\tri d$ (from the proof of Lemma~\ref{le:Scommute}) it follows that
\begin{align*}
abcd\tri b&=ab\tri((c\tri d)\tri (c\tri b)))=a\tri ((c\tri d)\tri c)=a\tri d=g,\\
abcd\tri c&=ab\tri d=b\tri d=f,\\
abcd\tri d&=abc\tri d=ac\tri d=e,\\
abcd\tri e&=abcd\tri(ac\tri d)=abc\tri ((d\tri c)\tri a),\\
           &=ab\tri (d\tri a)=(b\tri d)\tri b=d,\\
abcd\tri f&=abcd\tri (b\tri d)=abc\tri b=a\tri c=c\ \text{and}\\
abcd\tri g&=abcd\tri (a\tri d)=abc\tri a=ab\tri a=b.
\end{align*} 
Since $\tri$ is left self-distributive, \[\sigma_1\sigma_2\sigma_3(t_1,t_2,t_3,t_4)=(abcd\tri t_4,t_1,t_2,t_3)\] for all $t=(t_1,t_2,t_3,t_4)\in\mathcal{O}(a,b,c,d)$. It follows that $S^4\cap\mathcal{O}$ is invariant under the action of $\sigma_1\sigma_2\sigma_3$. Now assume that there is a tuple $t\in S^4\cap\mathcal{O}$. By Lemma~\ref{le:hurwitzorbit}, the tuple $t$ is of the form $(x),(y),(z)$ or $(u)$ and hence at least two entries of $t$ commute. Thus $t$ has an entry $h\in\{b,c,d\}$ and an entry $abcd\tri h$ by our calculations at the beginning of the proof and Lemma~\ref{le:Scommute}. But then multiple application of $\sigma_1\sigma_2\sigma_3$ on $t$ yields a tuple $t'\in S^4\cap\mathcal{O}$ with two components equal to $h$ or two components equal to $abcd\tri h$. This is a contradiction since tuples of the form $(x),(y),(z),(u)$ do not have two equal entries and there are no other tuples in $\mathcal{O}(a,b,c,d)$ by Lemma~\ref{le:hurwitzorbit}. We conclude $S^4\cap\mathcal{O}=\emptyset$. 
 \end{proof}

\begin{thm}\label{thm:degree4}
Assume that $X$ is braided. Let $a,b,c,d\in X$ pairwise not commuting elements except $a\square c$ and such that $b\square (c\tri d)$ and $a\square (b\tri d)$. Let $e=abc\tri d$, $f=eab\tri c$ and $g=fea\tri b$. Assume that $\B(V)$ is slim in degree two. Define the element \[Z(a,b,c,d)=\mu(\id-c_1c_2c_3+(c_1c_2c_3)^2-(c_1c_2c_3)^3)(v_a\ot v_b\ot v_c\ot v_d)\] of $\B(V)$. Let $S=\{v_b,v_c,v_d,v_e,v_f,v_g\}$ and let $C$ be the subalgebra of $\B(V)$ generated by $S\cup\{Z\}$. Then the following hold.
\begin{enumerate}
\item The subalgebra $C$ is a left coideal subalgebra of $\B(V)$ in the category of $\ndN_0$-graded $\fK G_X$-comodules. 
\item If $e\neq d$, $f\neq c$ or $g\neq b$, then $C$ is not generated in degree one.
\end{enumerate}
\end{thm}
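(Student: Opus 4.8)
The plan is to mirror the structure of the proof of Theorem~\ref{thm:degree3}, replacing the degree-three element $Z(a,b,c)$ by the degree-four element $Z(a,b,c,d)$ and the alternating sum of powers of $c_1c_2$ by the alternating sum of powers of $\delta:=c_1c_2c_3$. For part (1), since $Z$ is $G_X$-homogeneous of degree $g_ag_bg_cg_d$, the subalgebra $C$ is automatically a $\fK G_X$-subcomodule, so by Corollary~\ref{cor:lcsaExt} it suffices to verify that $\Delta_{1,3}(Z)\in V\ot\langle S\rangle$. First I would compute $\Delta_{1,3}(Z)$ using Equation~\eqref{eq:partial1n-1}: applying $(\id+c_1+c_1c_2+c_1c_2c_3)$ to $(\id-\delta+\delta^2-\delta^3)(v_a\ot v_b\ot v_c\ot v_d)$, one gets a telescoping part $(\id\ot\mu\mu)(\id-c_1^3c_2c_3+\dots+\delta^4)$ plus correction terms of the shape $c_1(\text{powers of }\delta)$, $c_1c_2(\text{powers of }\delta)$. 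Using the braid relations \eqref{eq:cbraid121}, \eqref{eq:cbraid12} one rewrites each such term so that the innermost factor acting on $v_a\ot v_b\ot v_c\ot v_d$ is a power $c_i^2$ or $c_i^3$ on a pair of neighbouring letters, and then invokes slimness in degree two (Lemma~\ref{le:degreetworel}) to collapse $c_i^2$ to $\id$ when the relevant letters commute and $c_i^3$ to $-\id$ when they do not. Iterating this "push the high power to the boundary, then kill it" procedure, exactly as in the two displayed case distinctions of Theorem~\ref{thm:degree3}, one is left with an expression of the form $v_a\ot(\text{product of two }v\text{'s from }S)+\dots$, i.e. lying in $V\ot\langle S\rangle$. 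The bookkeeping here is longer than in degree three (there are more monomials and more braid-relation rewrites), but it is the same mechanism; the relations $e=abc\tri d$, $f=eab\tri c$, $g=fea\tri b$ are precisely what guarantees the degree-three factors that survive have their second and third letters in $S$.

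For part (2) I would use the same functional trick as in Theorem~\ref{thm:degree3}: let $F$ be the linear functional on $V^{\ot 4}$ dual to the basis vector $v_a\ot v_b\ot v_c\ot v_d$, and show $F(\Delta_{1^4}(Z))\neq 0$ while $F(\Delta_{1^4}(Y))=0$ for every $Y\in\langle S\rangle(4)$. Since $\Delta_{1^4}=(\id\ot\Delta_{1^3})\Delta_{1,3}$ and we have already computed $\Delta_{1,3}(Z)$ in part (1), the value $F(\Delta_{1^4}(Z))$ reduces to extracting the coefficient of $v_a\ot v_b\ot v_c\ot v_d$ from the leading term $v_a\ot v_bv_cv_d$ after applying $\Delta_{1^3}$; one checks that under the hypotheses ($a,b,c,d$ pairwise non-commuting except $a\square c$, and the $\square$-relations) no other summand of $\Delta_{1,3}(Z)$ can contribute that monomial, so the coefficient is a nonzero product of two-cocycle values — this is where the hypothesis "$e\neq d$ or $f\neq c$ or $g\neq b$" enters, exactly as "$a\tri b\neq c$ or $a\tri c\neq b\tri a$" entered in Theorem~\ref{thm:degree3}. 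For the vanishing on $\langle S\rangle(4)$, I would expand $\Delta_{1^4}(v_{x_1}v_{x_2}v_{x_3}v_{x_4})$ for $x_1,x_2,x_3,x_4\in\{b,c,d,e,f,g\}$ via \eqref{eq:partial1n-1} as a sum of terms $v_{w_1}\ot v_{w_2}\ot v_{w_3}\ot v_{w_4}$ whose tuples $(w_1,w_2,w_3,w_4)$ all lie in the Hurwitz orbit $\mathcal O(x_1,x_2,x_3,x_4)\subseteq S^4$ (here one uses that $\Delta_{1^n}$ on the tensor algebra is $S_n=\sum_{s}\tau(\nu(s))$, and the braided symmetrizer only produces rearrangements coming from the Hurwitz action on letters). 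By Lemma~\ref{le:hurwitz}, $S^4\cap\mathcal O(a,b,c,d)=\emptyset$, so none of these tuples can equal $(a,b,c,d)$, forcing $F$ to annihilate every such term. Hence $Z\notin\langle S\rangle$ and $C$ is not generated in degree one.

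The main obstacle I expect is part (1): organizing the degree-four braid computation so that every correction term is provably driven to the boundary and annihilated by slimness, without an explosion of cases. The correct way to tame it is to set up the reduction as a small number of invariant claims — e.g. "$\delta^k(v_a\ot v_b\ot v_c\ot v_d)$ is, up to scalar, $v_a\ot v_b\ot v_c\ot v_d$ for $k\equiv 0$ and a specific permuted basis vector otherwise", proved once using braidedness of $X$ and Lemma~\ref{le:hurwitzorbit}/Lemma~\ref{le:Scommute} — and then feed these into the telescoping formula, rather than expanding blindly. I would also prepare, as in Lemma~\ref{le:rackrelations} and Lemma~\ref{le:lyinginS}, an auxiliary lemma recording the precise rack identities among $a,b,c,d,e,f,g$ (several of which, $e=ac\tri d$, $f=b\tri d$, $g=a\tri d$, are already isolated in the proof of Lemma~\ref{le:Scommute}) so that the main proof can cite them rather than re-derive them inline.
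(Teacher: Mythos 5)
Your part (2) is essentially the paper's argument: the functional $F$ dual to $v_a\ot v_b\ot v_c\ot v_d$, the verification $F(\Delta_{1^4}(Z))\neq 0$ (where the hypothesis $e\neq d$, $f\neq c$ or $g\neq b$ indeed enters, in the paper via the monomial $v_fv_ev_av_b$), and the vanishing of $F$ on $\Delta_{1^4}(\langle S\rangle(4))$ via Lemma~\ref{le:hurwitz}. One small correction: the Hurwitz orbit of a tuple in $S^4$ need not be contained in $S^4$; the argument is rather that if $(a,b,c,d)$ occurred among the tuples produced by $\Delta_{1^4}(v_{x_1}v_{x_2}v_{x_3}v_{x_4})$, then $(x_1,x_2,x_3,x_4)$ would lie in $S^4\cap\mathcal{O}(a,b,c,d)$, which Lemma~\ref{le:hurwitz} forbids.

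In part (1), however, there is a genuine gap. After splitting $\Delta_{1,3}(Z)$ into the telescoping part and the two correction parts coming from $c_1$ and $c_1c_2$, and after the braid-relation and slimness reductions you describe, the correction parts do \emph{not} collapse to products of elements of $S$: they come out as scalar multiples of $v_{a\tri b}\ot\mu(\id-c_1c_2+(c_1c_2)^2)(v_a\ot v_c\ot v_d)$ and of $v_{a\tri(b\tri c)}\ot\mu(\id-c_1c_2+(c_1c_2)^2)(v_a\ot v_b\ot v_d)$. These second factors are degree-three alternating sums whose leading letter is $a$, and $v_a\notin S$; slimness in degree two gives no further reduction of them. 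The missing ingredient is Lemma~\ref{le:lyinginS}, which shows that under the commutation hypotheses in force ($a\tri c=c$, $a\tri d\neq d$, $ab\tri d=b\tri d$, etc.) such a degree-three element actually lies in the subalgebra generated by suitable degree-one elements --- here $\langle v_d,v_c,v_e,v_g\rangle$ and $\langle v_d,v_b,v_f,v_g\rangle$, both contained in $\langle S\rangle$. That lemma is proved by establishing a separate degree-three relation of the form $Z'=0$ in $\B(V)$ and is not a consequence of the ``push the high power to the boundary and kill it'' mechanism. You cite Lemma~\ref{le:lyinginS} only as a model for collecting rack identities; it is in fact the key structural input without which the membership $\Delta_{1,3}(Z)\in V\ot\langle S\rangle$, and hence part (1), does not close.
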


\begin{proof}
(1) Since $Z$ is $G_X$-homogeneous of degree $g_ag_bg_cg_d$, the subalgebra $C$ is a $\fK G_X$-subcomodule. By Corollary~\ref{cor:lcsaExt}, it suffices to prove $\Delta_{1,3}(Z)\in V\ot\langle S\rangle$. For this we use Equation~(\ref{eq:partial1n-1}) and get \[\Delta_{1,3}(Z)=s_1+s_2+s_3,\] where
\begin{align*}
s_1=&(\id\ot\mu)(\id-c_1(c_1c_2c_3)+c_1c_2(c_1c_2c_3)^2\\
     &-(c_1c_2c_3)^4)(v_a\ot v_b\ot v_c\ot v_d),\\
s_2=&(\id\ot\mu)(c_1-c_1c_2c_1c_2c_3-c_1(c_1c_2c_3)^3)(v_a\ot v_b\ot v_c\ot v_d),\\
s_3=&(\id\ot\mu)(c_1c_2+c_1(c_1c_2c_3)^2-c_1c_2(c_1c_2c_3)^3)(v_a\ot v_b\ot v_c\ot v_d).	 
\end{align*}
The first summand $s_1$ lies in $V\ot\langle S\rangle$ since 
\begin{align*}
s_1=&v_a\ot v_bv_cv_d +k_1v_{e\tri a}\ot v_ev_bv_c+k_2 v_{f\tri(e\tri a)}\ot v_fv_ev_b\\
    &+k_3 v_{g\tri(f\tri(e\tri a))}\ot v_gv_fv_e,
\end{align*} 
where $k_i\in\fK$ depending on the two-cocycle. 

Next we consider the second summand $s_2$ of $\Delta_{1,3}(Z)$. Using the braid Equation~(\ref{eq:cbraid121}) we get $c_1c_2c_1c_2c_3=c_2c_1c_2^2c_3$ and \[c_2c_1c_2^2c_3(v_a\ot v_b\ot v_c\ot v_d)=c_2c_1c_3(v_a\ot v_b\ot v_c\ot v_d),\] by Lemma~\ref{le:degreetworel} (1), where we use that $b\square (c\tri d)$ and that $\B(V)$ is slim in degree two. Using the braid Equation~(\ref{eq:cbraid12}) we conclude \[c_1c_2c_1c_2c_3(v_a\ot v_b\ot v_c\ot v_d)= c_2c_3c_1(v_a\ot v_b\ot v_c\ot v_d).\] From the braid equations we obtain $c_1(c_1c_2c_3)^3=c_2c_3c_2c_1c_2^2c_3c_2^3$ and from Lemma~\ref{le:degreetworel} it follows that \[c_2c_3c_2c_1c_2^2c_3c_2^3(v_a\ot v_b\ot v_c\ot v_d)=-c_2c_3c_2c_1c_3(v_a\ot v_b\ot v_c\ot v_d)\] since $b\tri c\neq c$, $b\square (c\tri d)$ and $\B(V)$ is slim in degree two. Using again the braid equation we conclude \[c_1(c_1c_2c_3)^3(v_a\ot v_b\ot v_c\ot v_d)=(c_2c_3)^2c_1(v_a\ot v_b\ot v_c\ot v_d).\] Hence for the second summand $s_2$ of $\Delta_{1,3}(Z)$ above we get 
\begin{align*}
s_2=&(\id\ot\mu)(\id-c_2c_3+(c_2c_3)^2)c_1(v_a\ot v_b\ot v_c\ot v_d)\\
=&(\id\ot\mu)(k_4v_{a\tri b}\ot (\id-c_1c_2+(c_1c_2)^2))(v_a\ot v_c\ot v_d).
\end{align*} 
Since $d\tri c\neq c,$ $a\tri c=c$ and $a\tri d\neq d$, we have \[\mu(\id-c_1c_2+(c_1c_2)^2)(v_a\ot v_c\ot v_d)\in\langle v_d,v_c,v_{c\tri g},v_g\rangle\subset\langle S\rangle\] by Lemma~\ref{le:lyinginS} (3). There we used $c\tri g=e$ what follows from $g=a\tri d$ and $e=ac\tri d$ (see the proof of Lemma~\ref{le:Scommute}). Hence $s_2\in V\ot\langle S\rangle$. 

Next we consider the third summand $s_3$ of $\Delta_{1,3}(Z)$ above. Using the braid equations we get $c_1(c_1c_2c_3)^2=c_2c_1c_3c_2c_3^3$ and \[c_2c_1c_3c_2c_3^3(v_a\ot v_b\ot v_c\ot v_d)=-c_2c_1c_3c_2(v_a\ot v_b\ot v_c\ot v_d)\] since $c\tri d\neq d$ and since $\B(V)$ is slim in degree two. Thus finally \[c_1(c_1c_2c_3)^2(v_a\ot v_b\ot v_c\ot v_d)=-c_2c_3c_1c_2(v_a\ot v_b\ot v_c\ot v_d).\] From the braid equations we obtain \[c_1c_2(c_1c_2c_3)^3=c_1c_2c_1^2c_2c_3c_2c_1c_2^2c_3=(c_2c_3)^2c_1c_2c_3^2c_2^2c_3\] and 
\begin{align*}
(c_2c_3)^2c_1c_2c_3^2c_2^2c_3(v_a\ot v_b\ot v_c\ot v_d)&=(c_2c_3)^2c_1c_2c_3^3(v_a\ot v_b\ot v_c\ot v_d)\\
&=-(c_2c_3)^2c_1c_2(v_a\ot v_b\ot v_c\ot v_d)
\end{align*} 
since $b\square (c\tri d)$ and $c\tri d\neq d$. Thus we obtain
\begin{align*}
s_2=&(\id\ot\mu)(\id-c_2c_3+(c_2c_3)^2)c_1c_2(v_a\ot v_b\ot v_c\ot v_d)\\
=&(\id\ot\mu)(k_5v_{a\tri (b\tri c)}\ot (\id-c_1c_2+(c_1c_2)^2))(v_a\ot v_b\ot v_d). 
\end{align*} 
Since $ab\tri d=b\tri d$, $a\tri d\neq d$, we have \[\mu(\id-c_1c_2+(c_1c_2)^2)(v_a\ot v_b\ot v_d)\in\langle v_d,v_b,v_f,v_g\rangle\subset\langle S\rangle\] by Lemma~\ref{le:lyinginS} (3). It follows that $s_3\in V\ot\langle S\rangle.$  

(2) First we prove that $Z\neq 0$. Let $F$ be the linear functional on $V^{\ot 4}$ with $F(v_a\ot v_b\ot v_c\ot v_d)= 1$ and $F(v_i\ot v_j\ot v_k\ot v_l)=0$ for all $(i,j,k,l)\in X^4\setminus\{(a,b,c,d)\}$. We prove that $F(\Delta_{1^4}(Z))=1$. It is well-known, and follows by similar arguments as in Lemma~\ref{le:comultcomponents} that \[\Delta_{1^4}=(\Delta_{1,1}\ot\id^{\ot 2})(\Delta_{2,1}\ot\id)\Delta_{3,1}.\] For the calculation of the components $\Delta_{n,1}$, $1\leq n\leq 3$, we will use Equation~(\ref{eq:partialn-11}). By definition, the element $Z$ is a linear combination of the elements $v_av_bv_cv_d$, $v_ev_av_bv_c$, $v_fv_ev_av_b$, $v_gv_fv_ev_a$ of $\B(V)$. 
Since $a,b,c,d$ are pairwise distinct, the only summand of \[\Delta_{3,1}(v_av_bv_cv_d)=(\mu\ot\id)(\id+c_3+c_3c_2+c_3c_2c_1)(v_a\ot v_b\ot v_c\ot v_d)\] such that the fourth tensor factor lies in $\fK^{\times} v_d$ is a multiple of $v_av_bv_c\ot v_d$. With similar arguments for the third, second and first tensor factor of the summands of $(\Delta_{1,1}\ot\id^{\ot 2})(\Delta_{2,1}\ot\id)(v_av_bv_c\ot v_d)$ it follows that $F(\Delta_{1^4}(v_av_bv_cv_d))=1$. 

Now we prove $F(\Delta_{1^4}(v_ev_av_bv_c))=0$. Since $a$, $b$ and $c$ are not equal to $d$, only the summand \[(\mu\ot\id)(c_3c_2c_1)(v_e\ot v_a\ot v_b\ot v_c)\] of $\Delta_{3,1}(v_ev_av_bv_c)$ can yield a non-zero element of $\fK v_a\ot v_b\ot v_c\ot v_d$ and it is only possible if $e=d$ and if $(\Delta_{1,1}\ot\id)\Delta_{2,1}(v_{d\tri a}v_{d\tri b}v_{d\tri c})$ supplies a non-zero multiple of $v_a\ot v_b\ot v_c$ as summand. But from the assumptions we have $d\tri a\neq c$, $d\tri b\neq c$ and $d\tri c\neq c$ and thus the last requirement can not be fulfilled. 

Now we prove $F(\Delta_{1^4}(v_fv_ev_av_b))=0$. Since $a$, $b$ are not equal to $d$ and $f=b\tri d\neq d$, only the summand \[(\mu\ot\id)(c_3c_2)(v_f\ot v_e\ot v_a\ot v_b)\] of $\Delta_{3,1}(v_fv_ev_av_b)$ can yield a non-zero multiple of $v_a\ot v_b\ot v_c\ot v_d$ and only if $e=d$ and if $(\Delta_{1,1}\ot\id)\Delta_{2,1}(v_fv_{d\tri a}v_{d\tri b})$ supplies a non-zero multiple of $v_a\ot v_b\ot v_c$ as summand. Since $d\tri a\neq c$ and $d\tri b\neq c$, the last condition can only be fulfilled if $f=c$. In this case we would have $c=f=b\tri d$ and it would follow that $d=c\tri b$. From $d=e=ac\tri d$ we then would get $c\tri b=d=ca\tri d$, which would imply $b=a\tri d=g$. Taken all together it follows $F(\Delta_{1^4}(v_fv_ev_av_b))=0$ since $d\neq e$, $c\neq f$ or $b\neq g$ by assumption.

Using $g\neq c$, similar arguments show that $F(\Delta_{1^4}(v_gv_fv_ev_a))= 0$. We conclude that $F(\Delta_{1^4}(Z))=1$ and thus $Z\neq 0$. By Lemma \ref{le:hurwitz}, we know $S^4\cap\mathcal{O}(a,b,c,d)=\emptyset$. Thus $Z\notin\langle S\rangle(4)$. 
\end{proof}

\section{Racks $\mathbb{T}_n$ of transpositions of $\mathbb{S}_n$}\label{se:Transpos}

We start with a general description of the action and coaction of the dual Yetter-Drinfeld module.

\begin{lem}\label{le:dualYD}
Let $G$ be a group and $V\in {^{G}_{G}}\mathcal{YD}$ a finite-dimensional Yetter-Drinfeld module. Assume that $\dim V_g\leq 1$ for all $g\in G$ and let $X=\supp V$. For each $x\in X$ let $v_x\in V_x$ be a non-zero element and let $q_{x,y}\in\fK^{\times}$ such that $x\cdot v_y=q_{x,y}v_{x\tri y}$, for all $x,y\in X$, where $x\tri y=xyx^{-1}$ (see Lemma~\ref{le:YDrackcocylce} (1)). Then the following explicit formulas for the $\fK G$-coaction and $\fK G$-action on the left dual Yetter-Drinfeld module $V^*$ hold. 
\begin{enumerate}
    \item $\delta_{V^*}(v_x{}^*)=x^{-1}\ot v_x{}^*$ for all $x\in X$. 
    \item $x\cdot v_y{}^*=q_{x,y}{}^{-1}v_{x\tri y}{}^*$ for all $x,y\in X$. 
    \item $x^{-1}\cdot v_y{}^*=q_{x,a}v_a{}^*$ for all $x,y\in X$, where $x\tri a=y$.
    \item If $X$ is braided (as rack), then 
\begin{align*}
x^{-1}\cdot v_y{}^*=
\begin{cases}
q_{x,y}v_y{}^* & \text{if } x\tri y=y\\
q_{x,y\tri x}v_{y\tri x}{}^* & \text{if } x\tri y\neq y
\end{cases}
\end{align*}
for all $x,y\in X$.
\end{enumerate}
\end{lem}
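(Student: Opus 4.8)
The plan is to compute each of the four formulas directly from the defining equations \eqref{eq:dualaction} and \eqref{eq:dualcoaction} of the dual Yetter-Drinfeld module, using the hypothesis $\dim V_g\le 1$ and the basis $\{v_x^*\mid x\in X\}$ of $V^*$ dual to $\{v_x\mid x\in X\}$, together with the relation $x\cdot v_y=q_{x,y}v_{x\tri y}$ from Lemma~\ref{le:YDrackcocylce}~(1). First I would note that since $\fK G$ is cocommutative the antipode satisfies $S(g)=g^{-1}=S^{-1}(g)$ for all $g\in G$, so both \eqref{eq:dualaction} and \eqref{eq:dualcoaction} simplify: $(g\cdot f)(v)=f(g^{-1}\cdot v)$ and $f_{(-1)}f_{(0)}(v)=v_{(-1)}^{-1}f(v_{(0)})$.

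For part (1): apply \eqref{eq:dualcoaction} to $f=v_x^*$ and $v=v_y$. Since $v_{y(-1)}=y$ and $v_{y(0)}=v_y$, the right-hand side is $y^{-1}\,v_x^*(v_y)=\delta_{x,y}y^{-1}$. Writing $\delta_{V^*}(v_x^*)=\sum_{g}g\ot f_g$ in the homogeneous decomposition, evaluating on $v_y$ picks out the component that is nonzero only when $y=x$, giving $\delta_{V^*}(v_x^*)=x^{-1}\ot v_x^*$. For part (2): apply \eqref{eq:dualaction}. We have $(x\cdot v_y^*)(v_z)=v_y^*(x^{-1}\cdot v_z)$; since $x^{-1}\cdot v_z\in V_{x^{-1}zx}$ is a scalar multiple of $v_{x^{-1}\tri z}$ and, writing $z=x\tri w$, one gets $x^{-1}\cdot v_{x\tri w}=q_{x,w}^{-1}v_w$ (because $x\cdot v_w=q_{x,w}v_{x\tri w}$ and $\varphi_x$ is bijective), the pairing $v_y^*(x^{-1}\cdot v_z)$ is nonzero exactly when $w=y$, i.e. $z=x\tri y$, and equals $q_{x,y}^{-1}$. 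Hence $x\cdot v_y^*=q_{x,y}^{-1}v_{x\tri y}^*$. Part (3) is the same computation with $x$ replaced by $x^{-1}$: writing $a$ for the unique element with $x\tri a=y$, so that $x^{-1}\tri y=a$, one has $x\cdot v_a=q_{x,a}v_y$, and the argument of part (2) applied to $x^{-1}$ (or directly: $(x^{-1}\cdot v_y^*)(v_z)=v_y^*(x\cdot v_z)=v_y^*(q_{x,z}v_{x\tri z})$, nonzero iff $x\tri z=y$ iff $z=a$) gives $x^{-1}\cdot v_y^*=q_{x,a}v_a^*$.

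Part (4) is then just a translation of part (3) into the language of braided racks. If $x\tri y=y$ then $a=x^{-1}\tri y=y$ by bijectivity of $\varphi_x$, and $q_{x,a}=q_{x,y}$, so $x^{-1}\cdot v_y^*=q_{x,y}v_y^*$. If $x\tri y\neq y$, then $X$ braided (hence, by Example~\ref{ex:conjugacyrack}, a rack of a union of conjugacy classes that is braided) forces $x\tri(y\tri x)=y$ by Lemma~\ref{le:braidrackequi}, so the unique $a$ with $x\tri a=y$ is $a=y\tri x$, and $q_{x,a}=q_{x,y\tri x}$, giving $x^{-1}\cdot v_y^*=q_{x,y\tri x}v_{y\tri x}^*$. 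There is no real obstacle here; the only point requiring a little care is keeping the direction of $\varphi_x^{-1}$ straight and correctly identifying the unique preimage $a$ in part (3), which then makes part (4) immediate from $c$lass/braidedness properties already recorded in Lemma~\ref{le:braidrack} and Lemma~\ref{le:braidrackequi}.
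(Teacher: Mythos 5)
Your proposal is correct and follows essentially the same route as the paper: direct evaluation of the defining formulas \eqref{eq:dualaction} and \eqref{eq:dualcoaction} on the dual basis, using $S(g)=S^{-1}(g)=g^{-1}$ and the one-dimensionality of the homogeneous components; the only cosmetic difference is that for (3) you evaluate $(x^{-1}\cdot v_y^*)(v_z)=v_y^*(x\cdot v_z)$ directly whereas the paper deduces it from (2) via $x\cdot(x^{-1}\cdot v_y^*)=v_y^*$, and in (4) the identification $a=y\tri x$ when $x\tri y\neq y$ follows immediately from the definition of a braided rack rather than from Lemma~\ref{le:braidrackequi}.
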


\begin{proof}
(1) Let $x\in X$. As in Equation~(\ref{eq:dualcoaction}) defined the $\fK G$-coaction on $V^*$ is given by  
\begin{align*}
    v_x{}^{*}{}_{(-1)}v_x{}^{*}{}_{(0)}(v_y)&=S^{-1}(v_y{}_{(-1)}) v_x{}^{*}(v_y{}_{(0)})\\
    &=S^{-1}(y)v_x{}^{*}(v_y)\\
    &=
    \begin{cases}
    x^{-1} & \text{if } x=y\\
    0 & \text{if } x\neq y
    \end{cases}
\end{align*}  
for all $y\in X$. Hence $v_x{}^{*}{}_{(-1)}\ot v_x{}^{*}{}_{(0)}=x^{-1}\ot v_x{}^{*}$. 

(2) Let $x,y\in X$. As in Equation~(\ref{eq:dualaction}) defined the $\fK G$-action on $V^*$ is given by \[(x\cdot v_y{}^*)(v_k)=v_y{}^*(S(x)\cdot v_k)=v_y{}^*(x^{-1}\cdot v_k)\] for all $k\in X$. From $x\cdot(x^{-1}\cdot v_k)=v_k$ it follows that $x^{-1}\cdot v_k=q_{x,h}{}^{-1}v_h$, where $x\tri h=k$. 
Hence 
\begin{align*}
    (x\cdot v_y{}^*)(v_k)&=v_y{}^*(q_{x,h}{}^{-1}v_h)\\
    &=\begin{cases}
    q_{x,y}{}^{-1} & \text{if } x\tri y=k\\
    0 & \text{else } 
    \end{cases}
\end{align*} 
and $x\cdot v_y{}^*=q_{x,y}{}^{-1}v_{x\tri y}{}^*$.

(3) From $x\cdot(x^{-1}\cdot v_y{}^*)=v_y{}^*$ and since the $G$-homogeneous parts of $V^*$ are one-dimensional (by assumption and part (1)) and by part (2), it follows that $x^{-1}\cdot v_y{}^*=q_{x,a}v_a{}^*$, where $x\tri a=y$.

(4) The claim follows from (3) and Lemma~\ref{le:braidrack}.
\end{proof}

\begin{defi}
Let $n\in\ndN$. We denote the rack of transpositions of $\mathbb{S}_n$ with conjugation as rack operation by $\mathbb{T}_n$ and the generators of the enveloping group $G_{\mathbb{T}_n}$ by $g_{(i\ j)}$ for all $1\leq i<j\leq n$. 
\end{defi}

\begin{rema}
Note that $\mathbb{T}_n$ is a braided rack for all $n\in\ndN$. Indeed, if $(i\ j), (k\ l)\in\mathbb{T}_n$ such that $(i\ j)\tri (k\ l)\neq (k\ l)$, then $(i\ j)\tri (k\ l)=(k\ l)\tri (i\ j)$ and hence $(i\ j)\tri ((k\ l)\tri (i\ j))=(k\ l)$. Moreover, $\mathbb{T}_n$ is indecomposable and generated by $(1\ 2), (1\ 3),\dots, (1\ n)$.
\end{rema}

\begin{lem}\label{le:transp_reps}
\begin{enumerate}
    \item Let $C$ be the subgroup of $G_{\mathbb{T}_n}$ generated by \[\{g_{(1\, 2)},g_{(i\, j)}\mid 2<i<j\leq n\}.\] Then $G_{\mathbb{T}_n}{}^{g_{(1\, 2)}}=C$, where $G_{\mathbb{T}_n}{}^{g_{(1\, 2)}}$ is the centralizer of $g_{(1\, 2)}\in G_{\mathbb{T}_n}$.
    \item Let $\rho$ be a one-dimensional representation of the centralizer $G_{\mathbb{T}_n}{}^{g_{(1\, 2)}}$. Then there are an element $t\in\fK^{\times}$ and a square root of unity $\lambda$ such that $\rho$ is isomorphic to the representation 
    \begin{align*}
        \rho_{t,\lambda}:\ &G_{\mathbb{T}_n}{}^{(1\, 2)}\rightarrow\mathrm{Aut}(\fK v)\\
        &g_{(1\, 2)}\mapsto (v\mapsto tv)\\
        &g_{(i\, j)}\mapsto (v \mapsto\lambda tv), \quad \text{for all $2<i<j\leq n$.}
    \end{align*}
\end{enumerate}    
\end{lem}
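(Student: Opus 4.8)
The plan is to prove (1) by transferring information between $G_{\mathbb{T}_n}$ and $\mathbb{S}_n$ along the canonical projection, and then to deduce (2) by abelianization. Write $p\colon G_{\mathbb{T}_n}\to\mathbb{S}_n$, $g_\tau\mapsto\tau$, for the canonical surjection and $\ell\colon G_{\mathbb{T}_n}\to\mathbb{Z}$, $g_\tau\mapsto 1$, for the degree homomorphism (well defined since the relation $g_\sigma g_\tau=g_{\sigma\tri\tau}g_\sigma$ becomes $1+1=1+1$); in particular every $g_\tau$ has infinite order. One inclusion of (1) is immediate: for $2<i<j\le n$ the transpositions $(1\,2)$ and $(i\,j)$ are disjoint, so $(1\,2)\tri(i\,j)=(i\,j)$ and the defining relation gives $g_{(1\,2)}g_{(i\,j)}=g_{(i\,j)}g_{(1\,2)}$; hence $C\subseteq G_{\mathbb{T}_n}{}^{g_{(1\,2)}}$. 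For the converse I would first record the formula $wg_\tau w^{-1}=g_{p(w)\,\tau\,p(w)^{-1}}$, valid for all $w\in G_{\mathbb{T}_n}$ and $\tau\in\mathbb{T}_n$ (it holds on generators by the defining relation and is multiplicative in $w$). Since $\mathbb{T}_n$ is an injective rack (Lemma~\ref{le:injectiverack}), the equation $wg_{(1\,2)}w^{-1}=g_{(1\,2)}$ is equivalent to $p(w)\in\mathbb{S}_n{}^{(1\,2)}=\langle(1\,2)\rangle\times\mathbb{S}_{\{3,\dots,n\}}$, so $G_{\mathbb{T}_n}{}^{g_{(1\,2)}}=p^{-1}(\mathbb{S}_n{}^{(1\,2)})$.

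Now $\mathbb{S}_n{}^{(1\,2)}$ is generated by $(1\,2)$ together with the $(i\,j)$ with $2<i<j\le n$, i.e.\ by the $p$-images of the generators of $C$, so $p(C)=\mathbb{S}_n{}^{(1\,2)}$ and therefore $p^{-1}(\mathbb{S}_n{}^{(1\,2)})=C\cdot\ker p$. Thus (1) reduces to $\ker p\subseteq C$, and I claim $\ker p=\langle g_{(1\,2)}{}^{2}\rangle$. By Lemma~\ref{le:relenvelgroup} (each $\varphi_\tau$ has order $2$ when $n\ge3$) the element $z:=g_{(1\,2)}{}^{2}$ equals $g_\tau{}^{2}$ for every $\tau$ and is central, and $p$ factors through the finite enveloping group $\overline{G_{\mathbb{T}_n}}=G_{\mathbb{T}_n}/\langle z\rangle$ because $p(z)=e$. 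It remains to see that the induced surjection $\bar p\colon\overline{G_{\mathbb{T}_n}}\to\mathbb{S}_n$ is an isomorphism: the classes $\gamma_i$ of $g_{(i\,i+1)}$ generate $\overline{G_{\mathbb{T}_n}}$ (every $g_\tau$ is a $G_{\mathbb{T}_n}$-conjugate of some $g_{(i\,i+1)}$ by the formula above, since all transpositions are conjugate), and they satisfy $\gamma_i{}^{2}=1$, $\gamma_i\gamma_j=\gamma_j\gamma_i$ for $|i-j|>1$, and $\gamma_i\gamma_{i+1}\gamma_i=g_{(i\,i+2)}=\gamma_{i+1}\gamma_i\gamma_{i+1}$; whence the Coxeter presentation of $\mathbb{S}_n$ yields a surjection $q\colon\mathbb{S}_n\to\overline{G_{\mathbb{T}_n}}$, $s_i\mapsto\gamma_i$, with $\bar p\circ q=\mathrm{id}$. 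So $q$, and hence $\bar p$, is an isomorphism, giving $\ker p=\langle z\rangle\subseteq C$ and $G_{\mathbb{T}_n}{}^{g_{(1\,2)}}=C$. I expect this identification $\overline{G_{\mathbb{T}_n}}\cong\mathbb{S}_n$ (equivalently $\ker p=\langle g_{(1\,2)}{}^{2}\rangle$) to be the only genuinely delicate point; the rest is bookkeeping, and the small cases $n\le3$ are trivial.

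For (2) it then suffices to examine characters of $C$. A one-dimensional representation $\rho$ of $C$ is determined by its values on the generators $g_{(1\,2)}$ and $g_{(i\,j)}$ ($2<i<j\le n$). Set $t:=\rho(g_{(1\,2)})\in\fK^{\times}$. Any two $g_{(i\,j)}$ with $i,j>2$ are conjugate inside $C$ (write the permutation conjugating $(i\,j)$ to $(i'\,j')$ within $\mathbb{S}_{\{3,\dots,n\}}$ as a product of transpositions, lift it to an element of $\langle g_{(i\,j)}\mid 2<i<j\le n\rangle\subseteq C$, and apply the conjugation formula), so $\rho$ takes a common value $\lambda t$ on all of them, with $\lambda\in\fK^{\times}$. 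Applying $\rho$ to the relation $g_{(1\,2)}{}^{2}=g_{(3\,4)}{}^{2}$ (Lemma~\ref{le:relenvelgroup}) gives $t^{2}=(\lambda t)^{2}$, i.e.\ $\lambda^{2}=1$. Hence $\rho=\rho_{t,\lambda}$ with $\lambda$ a square root of unity. (For $n\le3$ there is no generator $g_{(i\,j)}$ with $2<i<j\le n$, so $\rho$ is determined by $t=\rho(g_{(1\,2)})$ alone and equals $\rho_{t,\lambda}$ for any square root of unity $\lambda$.) That conversely every $\rho_{t,\lambda}$ with $\lambda^{2}=1$ occurs can be read off from $C\cong\bigl(\langle g_{(1\,2)}\rangle\times\langle g_{(i\,j)}\mid 2<i<j\le n\rangle\bigr)/\langle(z,z^{-1})\rangle$, which yields $C^{\mathrm{ab}}\cong\mathbb{Z}\times\mathbb{Z}/2$.
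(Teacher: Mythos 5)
Your proof is correct and follows essentially the same route as the paper: both identify $G_{\mathbb{T}_n}{}^{g_{(1\,2)}}$ as the preimage of $\mathbb{S}_n{}^{(1\,2)}$ under the canonical projection $p:G_{\mathbb{T}_n}\to\mathbb{S}_n$ and deduce (2) from the relation $g_{(1\,2)}{}^2=g_{(3\,4)}{}^2$ of Lemma~\ref{le:relenvelgroup}. The one step the paper merely asserts, namely $\ker p\subseteq C$, you actually prove by showing $\ker p=\langle g_{(1\,2)}{}^2\rangle$ via the identification $\overline{G_{\mathbb{T}_n}}\cong\mathbb{S}_n$ through the Coxeter presentation, which is a correct and useful addition (and consistent with the paper's later use of this isomorphism for $n=6$).
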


\begin{proof}
(1) 
By the universal property of the enveloping group, there is a group homomorphism $h: G_{\mathbb{T}_n}\rightarrow\mathbb{S}_n$ with $h(g_{(i\ j)})=(i\ j)$ for all $1\leq i<j\leq n$. The map $h$ is surjective since $\mathbb{S}_n$ is generated by the set of transpositions. Moreover, the restriction of $h$ on the conjugacy class of $g_{(1\ 2)}$ in $G_{\mathbb{T}_n}$ is injective. Hence $h^{-1}(\mathbb{S}_n{}^{h(g_{(1\ 2)})})=G_{\mathbb{T}_n}{}^{g_{(1\, 2)}}$ (see e.g. \cite[Remark~2.21]{MR2803792}). Since $\mathbb{S}_n{}^{(1\ 2)}=\{(1\, 2), (i\, j)\mid 2<i<j\leq n\}$ and $\ker h\subseteq C$, the claim follows.
    
(2) This follows since $g_{(i\, j)}{}^2=g_{(k\, l)}{}^2\in G_{\mathbb{T}_n}$ for all $1\leq i<j\leq n$, $1\leq k<l\leq n$ by Lemma~\ref{le:relenvelgroup}. 
\end{proof}

\begin{lem}\label{le:dualisotranspos} 
Let $V$ be a Yetter-Drinfeld module over $\fK G_{\mathbb{T}_n}$ such that $\supp V=\mathbb{T}_n$ and $\dim V_{g_x}=1$ for all $x\in\mathbb{T}_n$. Let $v_x\in V_{g_x}$ be non-zero elements and $q_{x,y}\in\fK^{\times}$ such that $g_x\cdot v_y=q_{x,y}v_{x\tri y}$ for all $x,y\in\mathbb{T}_n$. Assume that $q_{x,y}=q_{x,y\tri x}$ for all $x,y\in\mathbb{T}_n$ with $x\tri y\neq y$. Then the linear map $f: V\rightarrow V^*$ with $f(v_x)=v_x{}^*$ for all $x\in\mathbb{T}_n$ is an isomorphism of braided vector spaces with respect to the Yetter-Drinfeld braidings $c_V$ and $c_{V^*}$, respectively. 
\end{lem}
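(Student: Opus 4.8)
The plan is to verify directly that $f$ intertwines the Yetter-Drinfeld braidings $c_V$ and $c_{V^*}$. Since $f$ sends the basis $\{v_x\mid x\in\mathbb{T}_n\}$ of $V$ to the dual basis $\{v_x{}^*\mid x\in\mathbb{T}_n\}$ of $V^*$, it is automatically bijective, so the entire content of the lemma is the identity $(f\ot f)c_V=c_{V^*}(f\ot f)$ on $V\ot V$.

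First I would evaluate both sides on a basis element $v_x\ot v_y$. Using $g_x\cdot v_y=q_{x,y}v_{x\tri y}$ one gets $(f\ot f)c_V(v_x\ot v_y)=(f\ot f)(q_{x,y}v_{x\tri y}\ot v_x)=q_{x,y}\,v_{x\tri y}{}^*\ot v_x{}^*$. For the other side, Lemma~\ref{le:dualYD}~(1) gives $\delta_{V^*}(v_x{}^*)=g_x{}^{-1}\ot v_x{}^*$, so $c_{V^*}(v_x{}^*\ot v_y{}^*)=g_x{}^{-1}\cdot v_y{}^*\ot v_x{}^*$. Here one applies Lemma~\ref{le:dualYD}~(4), which is available because $\mathbb{T}_n$ is a braided rack (identified, via the injective map $x\mapsto g_x$, with the conjugacy class of $g_{(1\,2)}$ in $G_{\mathbb{T}_n}$): it yields $g_x{}^{-1}\cdot v_y{}^*=q_{x,y}v_y{}^*$ when $x\tri y=y$, and $g_x{}^{-1}\cdot v_y{}^*=q_{x,y\tri x}v_{y\tri x}{}^*$ when $x\tri y\neq y$.

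It then remains to compare the two expressions in the two cases. If $x\tri y=y$, both sides equal $q_{x,y}\,v_y{}^*\ot v_x{}^*$. If $x\tri y\neq y$, then $x$ and $y$ are non-commuting transpositions, hence $x\tri y=y\tri x$ (as already observed for $\mathbb{T}_n$), so $v_{x\tri y}{}^*=v_{y\tri x}{}^*$; combining this with the hypothesis $q_{x,y}=q_{x,y\tri x}$ shows $q_{x,y}\,v_{x\tri y}{}^*\ot v_x{}^*=q_{x,y\tri x}\,v_{y\tri x}{}^*\ot v_x{}^*$, so the two sides agree again. Thus $f$ is a morphism, and being bijective an isomorphism, of braided vector spaces. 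There is no genuine obstacle here; the only points requiring care are the translation between the abstract rack $\mathbb{T}_n$ and its realization inside $G_{\mathbb{T}_n}$ when quoting Lemma~\ref{le:dualYD}~(4), and the observation that the hypothesis $q_{x,y}=q_{x,y\tri x}$ is used precisely in the non-commuting case.
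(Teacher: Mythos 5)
Your proposal is correct and follows essentially the same route as the paper: both verify $(f\ot f)c_V=c_{V^*}(f\ot f)$ on basis elements, using Lemma~\ref{le:dualYD}~(1) and (4) for the dual coaction and action, and then split into the cases $x\tri y=y$ and $x\tri y\neq y$, invoking $x\tri y=y\tri x$ and the hypothesis $q_{x,y}=q_{x,y\tri x}$ in the latter. No discrepancies to report.
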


\begin{proof}
For all $x,y\in\mathbb{T}_n$ we have \[(f\ot f)c_V(v_x\ot v_y)=q_{x,y}v_{x\tri y}{}^*\ot v_x{}^*.\] Using Lemma~\ref{le:dualYD} (1) and Lemma~\ref{le:dualYD} (4) we get
\begin{align*}
c_{V^*}(f\ot f)(v_x\ot v_y)&=g_x^{-1}\cdot v_y{}^*\ot v_x{}^*\\
&=\begin{cases}
q_{x,y}v_y{}^*\ot v_x{}^* & \text{if } x\tri y=y\\
q_{x,y\tri x}v_{y\tri x}{}^*\ot v_x{}^* & \text{if } x\tri y\neq y
\end{cases} \\
&=q_{x,y}v_{x\tri y}{}^*\ot v_x{}^*,
\end{align*} 
where the last equation follows since for all $x,y\in\mathbb{T}_n$ either $x\tri y=y$ or $x\tri y=y\tri x$ hold and for all $x,y\in\mathbb{T}_n$ with $x\tri y\neq y$ we have $q_{x,y}=q_{x,y\tri x}$ by assumption. Hence the linear map $f$ is an isomorphism of braided vector spaces.
\end{proof}

\subsection{The rack $\mathbb{T}_3$}

As described at the beginning of Section~\ref{se:different group realization} a Yetter-Drinfeld module over $\fK G_{\mathbb{T}_3}$ such that $\supp V=\mathbb{T}_3$ and $\dim V_{g_x}=1$ for all $x\in\mathbb{T}_3$ is determined by a one-dimensional representation $\rho$ of the centralizer $G_{\mathbb{T}_3}{}^{g_{(1\ 2)}}$. Let $t\in\fK^{\times}$ and let $\rho_t$ be the one-dimensional representation of the centralizer $G_{\mathbb{T}_3}{}^{g_{(1\ 2)}}=\langle g_{(1\ 2)}\rangle$ as in Lemma~\ref{le:transp_reps}. Let $V=M(g_{(1\ 2)},\rho_t)$ and 
\begin{equation}\label{eq:T3basis}
\begin{aligned}
v_a&\in V_{g_{(1\ 2)}}\setminus\{0\},\\ 
v_b&=t^{-1}g_{(2\ 3)}\cdot v_a\in V_{g_{(1\ 3)}}\\ v_c&=t^{-1}g_{(1\ 3)}\cdot v_a\in V_{g_{(2\ 3)}}.
\end{aligned}
\end{equation}
Then the $\fK G_{\mathbb{T}_3}$-module structure on $V$ is given in Table~\ref{tab:twococycleS3}.
\begin{table}[h]
\[\begin{array}{c|ccc}
\cdot & v_a & v_b & v_c \\
\hline
g_{(1\ 2)} &tv_a &tv_c &tv_b \\
g_{(1\ 3)} &tv_c &tv_b &tv_a \\
g_{(2\ 3)} &tv_b &tv_a &tv_c\\
\end{array}\]
\caption{$\fK G_{\mathbb{T}_3}$-module structure on $V$}
\label{tab:twococycleS3}
\end{table}
 
\begin{lem}\label{le:S3mulbij}
Assume that $t=-1$ or $t\in\fK$ such that $t^2-t+1=0$. 
Let $V=M(g_{(1\ 2)},\rho_t)$ and $v_a$, $v_b$ and $v_c$ as in Equation~(\ref{eq:T3basis}). Let $V_1,V_2\subseteq V$ be $\fK G_{\mathbb{T}_3}$-subcomodules such that $V=V_1\oplus V_2$. Then $\mu: \langle V_1\rangle\ot\langle V_2\rangle\rightarrow \B(V)$ is surjective, where $\mu$ is the (restricted) multiplication in $\B(V)$.  
\end{lem}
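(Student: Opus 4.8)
The statement to prove is that for $t=-1$ or $t$ a primitive sixth root of unity (i.e. $t^2-t+1=0$), and for any decomposition $V=V_1\oplus V_2$ into $\fK G_{\mathbb{T}_3}$-subcomodules, the restricted multiplication map $\mu\colon\langle V_1\rangle\ot\langle V_2\rangle\to\B(V)$ is surjective. The plan is to argue by a dimension/Hilbert-series count combined with an explicit list of the possible subcomodules $V_1,V_2$. First I would recall that under these two hypotheses on $t$ the Nichols algebra $\B(V)$ is one of the well-known small Nichols algebras of rank three coming from $\mathbb{T}_3$: for $t=-1$ it is the Fomin--Kirillov-type algebra (the Nichols algebra associated to $\mathbb{S}_3$ with the sign cocycle, of dimension $12$, with Hilbert series $(1+u)^2(1+u+u^2)$), and for $t^2-t+1=0$ it is the corresponding algebra of Cartan type $A_2$ (dimension $27$? or $8$?)—in any case a finite-dimensional Nichols algebra whose Hilbert series is known explicitly. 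So the first concrete step is to fix, for each of the two cases, the Hilbert series of $\B(V)$ and the degree-one relations (the relations forced by ``slim in degree two'', i.e. $v_x^2$-type and the quantum-Serre-type relations), which determine $\B(V)$ completely here.

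Next I would enumerate the $\fK G_{\mathbb{T}_3}$-subcomodules of $V$. Since $V$ is $3$-dimensional with one-dimensional homogeneous components $V_{g_{(1\,2)}},V_{g_{(1\,3)}},V_{g_{(2\,3)}}$ in pairwise distinct degrees, \emph{every} subspace spanned by a subset of $\{v_a,v_b,v_c\}$ is a subcomodule, and these are the only subcomodules. Hence a decomposition $V=V_1\oplus V_2$ with both summands nonzero is, up to swapping $V_1\leftrightarrow V_2$ and up to the $G_{\mathbb{T}_3}$-symmetry of $\mathbb{T}_3$ (which acts transitively on the three transpositions via the maps $\alpha_g$ of Lemma~\ref{le:groupactioniso}, permuting $v_a,v_b,v_c$ up to scalars), a single case: $V_1=\fK v_b\oplus\fK v_c$ (two of the three basis vectors) and $V_2=\fK v_a$. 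So it suffices to prove surjectivity of $\mu$ for this one pair. By Lemma~\ref{le:groupactioniso}, surjectivity for one choice transports to all of them.

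For the chosen pair, I would compute $\langle V_2\rangle=\langle v_a\rangle$ and $\langle V_1\rangle=\langle v_b,v_c\rangle$ inside $\B(V)$. The subalgebra $\langle v_a\rangle$ is the span of $1$ and $v_a$ (since $v_a^2=0$ in $\B(V)$ under both hypotheses on $t$—this is exactly relation~\eqref{eq:degreetworel1} with $a\tri a=a$, i.e. the quandle-diagonal relation, valid because $q_{a,a}=t$ and $t^2=$ ... ; for $t=-1$, $v_a^2=0$; for $t$ a sixth root of unity one has to check the right vanishing statement, which follows because $c_q(v_a\ot v_a)=t\,v_a\ot v_a$ and the braided factorial $(1+t)$ or its analogue vanishes when $t=-1$, while for the $A_2$ case $v_a^2\ne0$ and instead $v_a^3=0$—so I'd adjust: in that case $\langle v_a\rangle$ has Hilbert series $1+u+u^2$). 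The subalgebra $\langle v_b,v_c\rangle$ is a rank-two Nichols-type subalgebra; identify it with $\B(W)$ for $W=\fK v_b\oplus\fK v_c$ (using Corollary~\ref{cor:degreeonegeneratedlcsa}, or directly), and read off its Hilbert series. Then one checks that the product of the two Hilbert series equals the Hilbert series of $\B(V)$:
\begin{align*}
\mathrm{Hilb}(\langle V_1\rangle)\cdot\mathrm{Hilb}(\langle V_2\rangle)&=\mathrm{Hilb}(\B(V)).
\end{align*}
Given this equality of Hilbert series, surjectivity of $\mu$ is equivalent to injectivity, and both follow at once: since $\mu$ is a graded map and the source and target have the same (finite) graded dimensions, it suffices to show $\mu$ is surjective in each degree, which I would get from the fact that $\B(V)$ is generated in degree one by $v_a,v_b,v_c$ together with the straightening relations that let one move every $v_a$ to the left (or right) past $v_b,v_c$—these are precisely the degree-two relations of $\B(V)$ (the ``slim'' relations~\eqref{eq:degreetworel1},~\eqref{eq:degreetworel2}), which express $v_av_b$, $v_av_c$ as linear combinations of $v_bv_a,v_cv_a,v_b v_c$, etc.; iterating, every monomial in $v_a,v_b,v_c$ rewrites as a sum of monomials with all $v_b,v_c$-factors to the left of all $v_a$-factors, i.e. lies in $\langle V_1\rangle\langle V_2\rangle=\mu(\langle V_1\rangle\ot\langle V_2\rangle)$.

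**Main obstacle.** The genuinely delicate point is verifying the Hilbert-series factorization (equivalently, that the straightening rewriting terminates without collapsing, i.e. that $\mu$ is not merely surjective but the dimensions match so that one can even run the termination/PBW argument cleanly). This is where the two special values of $t$ enter essentially: for generic $t$ the Nichols algebra is infinite-dimensional and the factorization fails, so one must invoke the specific structure of $\B(V)$ at $t=-1$ and at $t^2-t+1=0$—either by citing the known classification of these rank-three Nichols algebras over $\mathbb{T}_3$, or by computing a Gröbner/PBW basis of $\B(V)$ from the degree-two relations and checking the leading terms factor as (monomials in $v_b,v_c$)$\cdot$(monomials in $v_a$). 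I expect the cleanest route is the latter: exhibit a PBW basis of $\B(V)$ in the ordered form $v_c^{i}v_b^{j}\cdots v_a^{k}$ adapted to the decomposition, from which surjectivity of $\mu$ is immediate. The remaining cases of $V_1,V_2$ reduce to this one by the symmetry argument, so no further work is needed there.
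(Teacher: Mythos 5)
The core of your argument --- rewriting every monomial by means of the degree-two relations until all generators of $V_1$ stand to the left of all generators of $V_2$ --- is exactly the paper's proof. Since $t^3=-1$ in both cases, Lemma~\ref{le:degreetworel}~(2) yields $v_xv_y=tv_{x\tri y}v_x-t^2v_yv_{x\tri y}$ for all distinct $x,y\in\mathbb{T}_3$ (with $x\tri y$ the third transposition), and each application of such a relation either eliminates an out-of-place generator or moves it one step in the desired direction, so the rewriting terminates and surjectivity follows for an arbitrary partition of $\{v_a,v_b,v_c\}$. That part of your proposal is correct, complete, and needs none of the Hilbert-series scaffolding you build around it; the lemma asserts only surjectivity, so there is nothing to gain from matching dimensions.

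The scaffolding itself contains genuine errors that you should excise rather than repair. First, for $t^2-t+1=0$ the Nichols algebra $\B(V)$ is \emph{infinite}-dimensional (see Remark~\ref{rema:12dim}, citing \cite{heckenberger2023simple}), so the premise that you are dealing with a known finite-dimensional Nichols algebra fails, and with it the plan of deducing surjectivity from a dimension count and the claim that surjectivity would be ``equivalent to injectivity''; your description of $\langle v_a\rangle$ in that case is also off (here $t$ is a primitive sixth root of unity, so $v_a^5\neq 0$ and $v_a^6=0$, giving $\dim\langle v_a\rangle=6$). Second, the reduction ``up to swapping $V_1\leftrightarrow V_2$'' is not legitimate as stated: $\mu:\langle V_1\rangle\ot\langle V_2\rangle\to\B(V)$ has image $\langle V_1\rangle\langle V_2\rangle$, which is not symmetric in $V_1$ and $V_2$, and the maps $\alpha_g$ of Lemma~\ref{le:groupactioniso} do not interchange the two tensor factors; so the case $\dim V_1=1$, $\dim V_2=2$ is not covered by the case you treat. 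This is harmless only because the same relations also move $v_a$ to the left (e.g.\ $v_bv_a=tv_cv_b-t^2v_av_c$), i.e.\ the straightening works in either order --- which is how the paper argues, with no case reduction at all.
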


\begin{proof}
Since $t^3=-1$, in $\B(V)$ the degree two relations 
\begin{align*}
    v_av_b-tv_cv_a+t^2v_bv_c&=0\\
    v_av_c-tv_bv_a+t^2v_cv_b&=0
\end{align*} 
hold by Lemma~\ref{le:degreetworel} (2). Let $S\subseteq\{v_a,v_b,v_c\}$ such that $V_1=\fK S$. Such an $S$ exists since $V_1$ is a $\fK G_{\mathbb{T}_3}$-subcomodule and $v_a,v_b,v_c$ are homogeneous of different $G_{\mathbb{T}_3}$-degrees. Then $V_2=\fK S^c$, where $S^c=\{v_a,v_b,v_c\}\setminus S$. Using the degree two relations it is possible to write any monomial in $\B(V)$ as a linear combination of monomials which are ordered with respect to $S$ and $S^c$, in the sense that each monomial is a product $st$, where $s\in\langle\fK S\rangle$ and $t\in\langle\fK S^c\rangle$. Thus $\mu:\langle\fK S\rangle\ot\langle\fK S^c\rangle\rightarrow \B(V)$ is surjective.
\end{proof}

\begin{thm}\label{thm:3elrack}
Let $t\in\fK^{\times}$, $V=M(g_{(1\ 2)},\rho_t)$ and $v_a$, $ v_b$ and $v_c$ as in Equation~ (\ref{eq:T3basis}).
\begin{enumerate}
\item If $t=-1$ or $t^2-t+1=0$, then all left coideal subalgebras of $\B(V)$ in the category of $\ndN_0$-graded $\fK G_{\mathbb{T}_3}$-comodules are generated in degree one.
\item If $t\neq -1$ and $t^2-t+1\neq 0$, then $\langle v_b,v_av_b-tv_cv_a+t^2v_bv_c\rangle$ is a left coideal subalgebra of $\B(V)$ in the category of $\ndN_0$-graded $\fK G_{\mathbb{T}_3}$-comodules, that is not generated in degree one. 
\end{enumerate}  
\end{thm}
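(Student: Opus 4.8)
The plan is to observe first that the two-cocycle here is the constant one: from Table~\ref{tab:twococycleS3} one reads off $q_{x,y}=t$ for all $x,y\in\mathbb{T}_3$ (and $\mathbb{T}_3$ is braided, so the machinery of Section~\ref{se:Extensionsdeg234} applies). Via the factorisation $t^3+1=(t+1)(t^2-t+1)$, the case distinction in the statement is precisely the distinction between $t^3=-1$ and $t^3\ne-1$. Part~(2) will be a direct application of Theorem~\ref{thm:cocycle}~(2), and part~(1) an assembly of Lemma~\ref{le:S3mulbij}, Lemma~\ref{le:dualisotranspos} and Corollary~\ref{cor:noextension}~(1).

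For~(2) I would take $a=(1\ 2)$ and $b=(1\ 3)$, so that $v_a,v_b$ are the basis vectors of Equation~\eqref{eq:T3basis} and $a\tri b=(2\ 3)\ne b$, whence $v_{a\tri b}=v_c$. Using $q\equiv t$ and the rack identity $(2\ 3)\tri(1\ 2)=(1\ 3)$, a short computation gives
\[ \mu(\id-c_q+c_q{}^2)(v_a\ot v_b)=v_av_b-tv_cv_a+t^2v_bv_c. \]
By Lemma~\ref{le:degreetworel}~(2), with $c=a\tri b=(2\ 3)$, this element vanishes in $\B(V)$ exactly when $q_{a,b}q_{c,a}q_{b,c}=-1$, i.e. when $t^3=-1$; under the hypotheses $t\ne-1$, $t^2-t+1\ne0$ we have $t^3\ne-1$, so it is non-zero, and Theorem~\ref{thm:cocycle}~(2) yields exactly the left coideal subalgebra in the statement together with the fact that it is not generated in degree one.

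For~(1) I would first record that $\fK v_a,\fK v_b,\fK v_c$ are homogeneous of pairwise distinct $G_{\mathbb{T}_3}$-degrees, so every $\fK G_{\mathbb{T}_3}$-subcomodule $W\subseteq V$ is the span of a subset of $\{v_a,v_b,v_c\}$ and has a subcomodule complement $W'$ with $V=W\oplus W'$. Since $q_{x,y}=t=q_{x,y\tri x}$ for all $x,y$, Lemma~\ref{le:dualisotranspos} provides an isomorphism $f\colon V\to V^*$ of braided vector spaces with $f(v_x)=v_x{}^*$; it maps $W'$ onto the subcomodule $(W')^*\subseteq V^*$ of functionals vanishing on $W$, so the induced isomorphism $\B(V)\cong\B(V^*)$ of $\ndN_0$-graded algebras carries $\langle W'\rangle$ onto $\langle(W')^*\rangle$, whence $\langle W'\rangle\cong\langle(W')^*\rangle$ as $\ndN_0$-graded vector spaces. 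Under the hypotheses of~(1) we have $t^3+1=(t+1)(t^2-t+1)=0$, so Lemma~\ref{le:S3mulbij} applies: $\mu\colon\langle W\rangle\ot\langle W'\rangle\to\B(V)$ is surjective in every degree. Corollary~\ref{cor:noextension}~(1), used with $N$ arbitrarily large, then shows that $\langle W\rangle$ cannot be extended in any degree $n\ge2$.

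It remains to pass from ``no extensions exist'' to ``every left coideal subalgebra is generated in degree one''. Let $C$ be a left coideal subalgebra of $\B(V)$ in the category of $\ndN_0$-graded $\fK G_{\mathbb{T}_3}$-comodules, put $W=C(1)$, and let $\overline{W}=\ker(\id\ot\pi\pi_1)\Delta$ for the canonical map $\pi\colon V\to V/W$. By Proposition~\ref{pro:minmaxlcsa}, $\langle W\rangle\subseteq C\subseteq\overline{W}$ and $\overline{W}$ is itself a left coideal subalgebra in the category with degree one part $W$. If $\overline{W}\ne\langle W\rangle$, choose $n\ge2$ least with $\overline{W}(n)\ne\langle W\rangle(n)$ and a $G_{\mathbb{T}_3}$-homogeneous $x\in\overline{W}(n)\setminus\langle W\rangle(n)$; by Proposition~\ref{prop:partialleftcoideal} and minimality, $\Delta_{1,n-1}(x)\in V\ot\overline{W}(n-1)=V\ot\langle W\rangle(n-1)$, and $\delta(x)\in\fK G_{\mathbb{T}_3}\ot x$, so Corollary~\ref{cor:lcsaExt} makes $\langle W+\fK x\rangle$ a left coideal subalgebra in the category with $x\notin\langle W\rangle$ --- contradicting the previous paragraph. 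Hence $\overline{W}=\langle W\rangle$, so $C=\langle C(1)\rangle$. The genuinely non-trivial ingredients here are Lemma~\ref{le:S3mulbij} (whose monomial reordering needs $t^3=-1$) and Lemma~\ref{le:dualisotranspos} (self-duality of $V$); the remaining points --- matching the abstract extension element of Theorem~\ref{thm:cocycle}~(2) with the one in the statement, and producing subcomodule complements --- are routine, and this is where I would expect only minor bookkeeping rather than a real obstacle.
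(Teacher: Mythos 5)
Your proposal is correct and follows essentially the same route as the paper: part (2) is obtained from Lemma~\ref{le:degreetworel}~(2) together with Theorem~\ref{thm:cocycle}~(2), and part (1) from Lemma~\ref{le:S3mulbij}, Lemma~\ref{le:dualisotranspos} and Corollary~\ref{cor:noextension}~(1). Your closing paragraph, which uses Proposition~\ref{pro:minmaxlcsa} and Corollary~\ref{cor:lcsaExt} to turn ``no extensions in any degree'' into $C=\langle C(1)\rangle$, correctly spells out a step the paper leaves implicit.
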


\begin{proof}
(1) Let $C\subseteq\B(V)$ be a (non-trivial, i.e. $C\neq\fK 1$) left coideal subalgebra in the category of $\ndN_0$-graded $\fK G_{\mathbb{T}_3}$-comodules. Then $V\cap C\neq 0$ by Lemma~\ref{le:degreeonepartlc} and $V_1=V\cap C$ is a $\fK G_{\mathbb{T}_3}$-subcomodule. Since $\fK G_{\mathbb{T}_3}$ is cosemisimple, there exists a $\fK G_{\mathbb{T}_3}$-subcomodule $V_2$ such that $V=V_1\oplus V_2$. The restricted multiplication map $\mu: \langle V_1\rangle\ot\langle V_2\rangle\rightarrow\B(V)$ is surjective by Lemma~\ref{le:S3mulbij}. The linear map $f:V\rightarrow V^*$ with $f(v_i)=v_i{}^*$ for all $i\in\mathbb{T}_3$ is an isomorphism of braided vector spaces by Lemma~\ref{le:dualisotranspos} and it induces an isomorphism $\B(V)\rightarrow\B(V^*)$ of $\ndN_0$-graded algebras. In particular, $\langle V_2\rangle\cong\langle V_2{}^*\rangle$ as $\ndN_0$-graded algebras and the claim follows from Corollary~\ref{cor:noextension} (1).

(2) The claim follows from Lemma~\ref{le:degreetworel} (2) and Theorem~\ref{thm:cocycle} (2) since $(1\ 2)\tri (1\ 3)\neq (1\ 3)$ in ${\mathbb{T}_3}$ and $t^3\neq -1$ by assumption. 
\end{proof}

\begin{rema}\label{rema:12dim}
The Nichols algebra $\B(M(g_{(1\ 2)},\rho_{-1}))$ associated to the rack $\mathbb{T}_3$ with constant two-cocycle $-1$ considered in Theorem~\ref{thm:3elrack} (1) is well-known and has dimension $12$ (see e.g. \cite[Proposition~5.4]{MR2803792}). It is isomorphic to the Fomin-Kirillov algebra  $\mathcal{E}_3$. Its subalgebras are studied in \cite{MR3552907} and our result of Lemma~\ref{le:S3mulbij} about the surjectivity of the multiplication map restricted on the subalgebras generated by complementary $\fK G_{\mathbb{T}_3}$-subcomodules of $M(g_{(1\ 2)},\rho_{-1})$ is proved in \cite[Corollary~4.11]{MR3552907}. In the case where $t$ is a primitive $6$-th root of unity and the field $\fK$ has characteristic $0$, the Nichols algebra $\B(V)$ is infinite dimensional by \cite{heckenberger2023simple}.  
\end{rema}

\subsection{The racks $\mathbb{T}_4$, $\mathbb{T}_5$ and $\mathbb{T}_6$}

Let $n\geq 4$, $t\in\fK$ and let $\lambda$ be a square root of unity. Let $\rho_{t,\lambda}$ be a one-dimensional representation of the centralizer $G_{\mathbb{T}_n}{}^{g_{(1\ 2)}}$ as in Lemma~\ref{le:transp_reps}. Let $V(n,t,\lambda)=M(g_{(1\, 2)},\rho_{t,\lambda})$ and for all $1\leq i< j\leq n$ let $v_{(i\ j)}\in V(n,t,\lambda)_{g_{(i\, j)}}$ be non-zero elements. For all $1\leq i< j\leq n$ and $1\leq k< l\leq n$ define $q_{(i\ j),(k\ l)}\in\fK^{\times}$ by \[g_{(i\ j)}\cdot v_{(k\ l)}=q_{(i\ j),(k\ l)}v_{(i\ j)\tri (k\ l)}\] (see Lemma~\ref{le:YDrackcocylce}). 

\begin{lem}\label{le:transpostwococyc}
The following hold.
\begin{enumerate}
    \item $q_{(1\ 2),(1\ 3)}q_{(2\ 3),(1\ 2)}q_{(1\ 3),(2\ 3)}=t^3$
    \item $q_{(1\ 2),(3\ 4)}q_{(3\ 4),(1\ 2)}=t^2$
\end{enumerate}
\end{lem}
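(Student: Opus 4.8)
The plan is to compute each factor $q_{x,y}$ directly inside the Yetter--Drinfeld module $V(n,t,\lambda)=M(g_{(1\ 2)},\rho_{t,\lambda})=\fK G_{\mathbb{T}_n}\ot_{\fK G_{\mathbb{T}_n}{}^{g_{(1\ 2)}}}\fK v$, using that its homogeneous component of degree $hg_{(1\ 2)}h^{-1}$ equals $h\ot\fK v$ and that $wg_yw^{-1}=g_{w\tri y}$ for all $w\in G_{\mathbb{T}_n}$, $y\in\mathbb{T}_n$ (the defining relation $g_ag_b=g_{a\tri b}g_a$ gives this for $w=g_a$, and the general case follows by induction on word length). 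Two preliminary observations will be used. First, $q_{x,x}=t$ for every $x\in\mathbb{T}_n$: writing a nonzero $v_x\in V(n,t,\lambda)_{g_x}$ as $h\ot v'$ with $hg_{(1\ 2)}h^{-1}=g_x$, one gets $g_x\cdot(h\ot v')=(g_xh)\ot v'=(hg_{(1\ 2)})\ot v'=h\ot(\rho_{t,\lambda}(g_{(1\ 2)})v')=t\,h\ot v'$, since $g_{(1\ 2)}$ lies in the centralizer and $\rho_{t,\lambda}(g_{(1\ 2)})=t$. Second, if $y\in\mathbb{T}_n$ commutes with $(1\ 2)$, then $g_y\in G_{\mathbb{T}_n}{}^{g_{(1\ 2)}}$ acts on $V(n,t,\lambda)_{g_{(1\ 2)}}=1\ot\fK v$ by the scalar $\rho_{t,\lambda}(g_y)$; for $y=(3\ 4)$ this scalar is $\lambda t$ by Lemma~\ref{le:transp_reps}.

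For part (1) I would work with the element $g_{(1\ 2)}g_{(2\ 3)}g_{(1\ 3)}\in G_{\mathbb{T}_n}$. Since $(2\ 3)\tri(1\ 3)=(1\ 2)$, the relation $g_ag_b=g_{a\tri b}g_a$ gives $g_{(2\ 3)}g_{(1\ 3)}=g_{(1\ 2)}g_{(2\ 3)}$, whence
\[g_{(1\ 2)}g_{(2\ 3)}g_{(1\ 3)}=g_{(1\ 2)}{}^2g_{(2\ 3)}=g_{(2\ 3)}{}^3,\]
the last equality holding because, by Lemma~\ref{le:relenvelgroup}, $\mathbb{T}_n$ is finite indecomposable with $\mathrm{ord}(\varphi_x)=2$ for all $x$, so $g_{(1\ 2)}{}^2=g_{(2\ 3)}{}^2$ is central in $G_{\mathbb{T}_n}$. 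Applying both sides to $v_{(2\ 3)}$ and following the successive actions of $g_{(1\ 3)}$, $g_{(2\ 3)}$, $g_{(1\ 2)}$ (which move the degree through $(2\ 3)\to(1\ 2)\to(1\ 3)\to(2\ 3)$) yields
\[q_{(1\ 2),(1\ 3)}q_{(2\ 3),(1\ 2)}q_{(1\ 3),(2\ 3)}\,v_{(2\ 3)}=g_{(2\ 3)}{}^3\cdot v_{(2\ 3)}=q_{(2\ 3),(2\ 3)}{}^3\,v_{(2\ 3)}=t^3v_{(2\ 3)},\]
which is the claim.

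For part (2), the transpositions $(1\ 2)$ and $(3\ 4)$ commute in $\mathbb{T}_n$, so $q_{(1\ 2),(3\ 4)}$ and $q_{(3\ 4),(1\ 2)}$ are just the scalars by which $g_{(1\ 2)}$ acts on $V(n,t,\lambda)_{g_{(3\ 4)}}$ and $g_{(3\ 4)}$ acts on $V(n,t,\lambda)_{g_{(1\ 2)}}$, respectively. The second observation gives $q_{(3\ 4),(1\ 2)}=\lambda t$ at once. For the other factor I would take $h=g_{(1\ 3)}g_{(2\ 4)}$; a short conjugation check in $\mathbb{S}_n$, together with the fact that $\varphi_{(1\ 3)}$ and $\varphi_{(2\ 4)}$ are involutions, gives $h\tri(1\ 2)=(3\ 4)$ and $h^{-1}\tri(1\ 2)=(3\ 4)$, hence $V(n,t,\lambda)_{g_{(3\ 4)}}=h\ot\fK v$ and $g_{(1\ 2)}h=hg_{(3\ 4)}$. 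Therefore $g_{(1\ 2)}\cdot(h\ot v)=(hg_{(3\ 4)})\ot v=h\ot(\rho_{t,\lambda}(g_{(3\ 4)})v)=\lambda t\,(h\ot v)$, using that $g_{(3\ 4)}$ centralizes $g_{(1\ 2)}$, so $q_{(1\ 2),(3\ 4)}=\lambda t$. Multiplying and using $\lambda^2=1$ gives $q_{(1\ 2),(3\ 4)}q_{(3\ 4),(1\ 2)}=\lambda^2t^2=t^2$.

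The only mildly delicate part is the elementary bookkeeping in $\mathbb{T}_n$ and $G_{\mathbb{T}_n}$: verifying $(2\ 3)\tri(1\ 3)=(1\ 2)$, checking $h\tri(1\ 2)=(3\ 4)=h^{-1}\tri(1\ 2)$ for $h=g_{(1\ 3)}g_{(2\ 4)}$, and keeping straight which transposition each application of $\tri$ produces. None of this is conceptually hard — a couple of lines of conjugation in $\mathbb{S}_n$ suffice — so I expect no real obstacle.
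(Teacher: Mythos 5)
Your proof is correct and works in the same framework as the paper's: both arguments compute inside the induced module $M(g_{(1\ 2)},\rho_{t,\lambda})$ using the defining relation $g_xg_y=g_{x\tri y}g_x$, the centrality of $g_{(1\ 2)}{}^2=g_{(2\ 3)}{}^2=g_{(3\ 4)}{}^2$, and the values of $\rho_{t,\lambda}$ on the centralizer (your conjugating element $g_{(1\ 3)}g_{(2\ 4)}$ in part (2) is exactly the one the paper uses). The only difference is bookkeeping: the paper computes each factor $q_{x,y}$ separately in terms of auxiliary normalization scalars $\lambda_{(1\ 3)},\lambda_{(2\ 3)},\lambda_{(3\ 4)}$ that cancel in the product, whereas you observe that the cyclic product is basis-independent and equals the scalar by which $g_{(1\ 2)}g_{(2\ 3)}g_{(1\ 3)}=g_{(2\ 3)}{}^3$ acts on the one-dimensional component $V_{g_{(2\ 3)}}$ --- a slightly cleaner route to the same computation (your value $q_{(1\ 2),(3\ 4)}=\lambda t$ versus the paper's $t\lambda^{-1}$ is no discrepancy, since $\lambda^2=1$).
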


\begin{proof}
(1)  There are elements $\lambda_{(1\ 3)}, \lambda_{(2\ 3)} \in\fK^{\times}$ such that 
\[v_{(1\ 3)}=\lambda_{(1\ 3)}g_{(2\ 3)}\cdot v_{(1\ 2)} \text{ and } v_{(2\ 3)}=\lambda_{(2\ 3)}g_{(1\ 3)}\cdot v_{(1\ 2)}\] since $g_{(2\ 3)}g_{(1\ 2)}g_{(2\ 3)}{}^{-1}=g_{(1\ 3)}$ and $g_{(1\ 3)}g_{(1\ 2)}g_{(1\ 3)}{}^{-1}=g_{(2\ 3)}$ in $G_{\mathbb{T}_n}$. By definition of $M(g_{(1\, 2)},\rho_{t,\lambda})$, we have 
\begin{align*}
g_{(1\ 2)}\cdot v_{(1\ 3)}&=\lambda_{(1\ 3)}g_{(1\ 2)}g_{(2\ 3)}\cdot v_{(1\ 2)}=t\lambda_{(1\ 3)}g_{(1\ 3)}\cdot v_{(1\ 2)}\\
&=t\lambda_{(1\ 3)}\lambda_{(2\ 3)}{}^{-1}v_{(2\ 3)}\\
g_{(2\ 3)}\cdot v_{(1\ 2)}&=\lambda_{(1\ 3)}{}^{-1}v_{(1\ 3)}\\
g_{(1\ 3)}\cdot v_{(2\ 3)}&=\lambda_{(2\ 3)}g_{(1\ 3)}{}^2\cdot v_{(1\ 2)}=\lambda_{(2\ 3)}g_{(1\ 2)}{}^2\cdot v_{(1\ 2)}=t^2\lambda_{(2\ 3)}v_{(1\ 2)}.
\end{align*}
Hence 
\begin{align*} 
&q_{(1\ 2),(1\ 3)}=t\lambda_{(1\ 3)}\lambda_{(2\ 3)}{}^{-1}, \quad q_{(2\ 3),(1\ 2)}=\lambda_{(1\ 3)}{}^{-1},\\ 
&q_{(1\ 3),(2\ 3)}=t^2\lambda_{(2\ 3)}
\end{align*}
and $q_{(1\ 2),(1\ 3)}q_{(2\ 3),(1\ 2)}q_{(1\ 3),(2\ 3)}=t^3$.

(2) There is an element $\lambda_{(3\ 4)}\in\fK^{\times}$ such that \[v_{(3\ 4)}=\lambda_{(3\ 4)}g_{(1\ 3)}g_{(2\ 4)}\cdot v_{(1\ 2)}\] since $g_{(1\ 3)}g_{(2\ 4)}g_{(1\ 2)}(g_{(1\ 3)}g_{(2\ 4)}){}^{-1}=g_{(3\ 4)}$ in $G_{\mathbb{T}_n}$. By definition of $M(g_{(1\, 2)},\rho_{t,\lambda})$, we have  
\begin{align*}
g_{(1\ 2)}\cdot v_{(3\ 4)}&=\lambda_{(3\ 4)}g_{(1\ 2)}g_{(1\ 3)}g_{(2\ 4)}\cdot v_{(1\ 2)}\\
&=\lambda_{(3\ 4)}g_{(1\ 3)}g_{(2\ 4)}g_{(1\ 2)}^2g_{(3\ 4)}^{-1}\cdot v_{(1\ 2)}\\
&=t\lambda^{-1}\lambda_{(3\ 4)}g_{(1\ 3)}g_{(2\ 4)}\cdot v_{(1\ 2)}=t\lambda^{-1}v_{(3\ 4)}\ \text{ and }\\
g_{(3\ 4)}\cdot v_{(1\ 2)}&=t\lambda v_{(1\ 2)}.
\end{align*} 
Hence 
\begin{align*} 
q_{(1\ 2),(3\ 4)}=t\lambda^{-1}, \quad q_{(3\ 4),(1\ 2)}=t\lambda
\end{align*}
and $q_{(1\ 2),(3\ 4)}q_{(3\ 4),(1\ 2)}=t^2$.
\end{proof}

\begin{thm}\label{thm:generalexttranspos}
Assume that $t\neq -1$. Then there is a left coideal subalgebra of $\B(V(n,t,\lambda))$ in the category of $\ndN_0$-graded $\fK G_{\mathbb{T}_n}$-comodules that is not generated in degree one.
\end{thm}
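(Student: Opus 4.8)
The plan is to reduce the statement to an application of one of the degree-two, degree-three or degree-four extension theorems of Section~\ref{se:Extensionsdeg234}, by exhibiting concrete transpositions in $\mathbb{T}_n$ whose associated two-cocycle values violate the ``slim'' conditions of Equations~\eqref{eq:degreetworel1}--\eqref{eq:degreetworel2}. Since $\mathbb{T}_n$ is braided (see the Remark after the definition of $\mathbb{T}_n$), Proposition~\ref{prop:braid} does not apply, so the dichotomy we exploit is the one between the two cocycle-conditions in Lemma~\ref{le:degreetworel}. First I would record, using Lemma~\ref{le:transpostwococyc}, that the relevant products of cocycle values are powers of $t$: for the triple $(1\,2),(1\,3),(2\,3)$ of pairwise non-commuting transpositions we have $q_{(1\,2),(1\,3)}q_{(2\,3),(1\,2)}q_{(1\,3),(2\,3)}=t^3$, and for the commuting pair $(1\,2),(3\,4)$ we have $q_{(1\,2),(3\,4)}q_{(3\,4),(1\,2)}=t^2$.

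The main case-split is on whether $\B(V(n,t,\lambda))$ is slim in degree two. If it is \emph{not}, then either Equation~\eqref{eq:degreetworel1} or Equation~\eqref{eq:degreetworel2} fails for some pair $a,b\in\mathbb{T}_n$, and the corresponding part of Theorem~\ref{thm:cocycle} produces directly a left coideal subalgebra $\langle v_b,x\rangle$ of $\B(V(n,t,\lambda))$ in the category of $\ndN_0$-graded $\fK G_{\mathbb{T}_n}$-comodules that is not generated in degree one; this is exactly the conclusion we want. So we may assume $\B(V(n,t,\lambda))$ \emph{is} slim in degree two. Then Lemma~\ref{le:degreetworel} forces, for every pair of \emph{commuting} transpositions $a\,\square\,b$, the relation $q_{a,b}q_{b,a}=1$, and for every pair with $a\tri b=c\neq b$ (i.e. non-commuting transpositions, where $c=b\tri a$ as well) the relation $q_{a,b}q_{c,a}q_{b,c}=-1$. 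Applying the latter to $a=(1\,2),b=(1\,3),c=(2\,3)$ and combining with Lemma~\ref{le:transpostwococyc}~(1) gives $t^3=-1$. But $t^3=-1$ together with $t\in\fK$ and $\operatorname{char}\fK=0$ means $t=-1$ or $t^2-t+1=0$; and the hypothesis of the theorem is $t\neq-1$. So in the slim-in-degree-two case we are left with $t$ a primitive $6$-th root of unity, i.e. $t^2-t+1=0$, hence $t^3=-1$ but $t^2=t-1\neq\pm1$, so that $t^2\neq 1$.

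Now I would use the commuting-pair constraint to derive a contradiction: with $t$ as above, slimness in degree two forces $q_{(1\,2),(3\,4)}q_{(3\,4),(1\,2)}=1$ (because $(1\,2)$ and $(3\,4)$ commute and Lemma~\ref{le:degreetworel}~(1) applies for a union of conjugacy classes, using Equation~\eqref{eq:cclassrack}), whereas Lemma~\ref{le:transpostwococyc}~(2) gives $q_{(1\,2),(3\,4)}q_{(3\,4),(1\,2)}=t^2$. Hence $t^2=1$, contradicting $t^2=t-1\neq 1$. (Here we are using $n\geq 4$, which is available since $\mathbb{T}_n$ with $n\geq 4$ is the subject of this subsection; for $n=3$ the statement is already Theorem~\ref{thm:3elrack}~(2).) Therefore the slim-in-degree-two case is vacuous under the hypothesis $t\neq -1$, and we are always in the first case, where Theorem~\ref{thm:cocycle} supplies the desired left coideal subalgebra.

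The step I expect to be the real content --- rather than a routine obstacle --- is confirming that the two numerical identities in Lemma~\ref{le:transpostwococyc} combined with the two slimness constraints from Lemma~\ref{le:degreetworel} genuinely leave \emph{no} admissible value of $t$ other than those with $t=-1$: one must be careful that the commuting-pair relation of Lemma~\ref{le:degreetworel}~(1) is being invoked with the hypothesis $a\tri b = b \ne a$ actually satisfied (it is, for $(1\,2)$ and $(3\,4)$), and that the non-commuting relation really gives $t^3=-1$ and not some weaker constraint. Once the arithmetic $\{t : t^3=-1,\ t^2=1\}=\{1\}$ and $1\neq -1$ is in place, the reduction to Theorem~\ref{thm:cocycle} is immediate and the proof is short.
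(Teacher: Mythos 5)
Your argument is correct and is essentially the paper's own proof read contrapositively: the paper observes directly that $t\neq -1$ forces $t^2\neq 1$ or $t^3\neq -1$ and then applies Lemma~\ref{le:transpostwococyc}, Lemma~\ref{le:degreetworel} and Theorem~\ref{thm:cocycle} to the same two pairs $(1\ 2),(1\ 3)$ and $(1\ 2),(3\ 4)$ that you use, so the content is identical. The only blemish is the closing recap, where $\{t: t^3=-1,\ t^2=1\}$ equals $\{-1\}$ rather than $\{1\}$; the body of your argument already handles this case correctly, so nothing is affected.
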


\begin{proof}
If $t\neq -1$, then either $t^2\neq 1$ or $t^3\neq -1$. From Lemma~\ref{le:transpostwococyc} it follows that either 
\begin{align*}
q_{(1\ 2),(1\ 3)}q_{(2\ 3),(1\ 2)}q_{(1\ 3),(2\ 3)}=t^3\neq -1 \text{ or } q_{(1\ 2),(3\ 4)}q_{(3\ 4),(1\ 2)}=t^2\neq 1.
\end{align*}
Hence either \[x=\mu(\id-c+c^2)(v_{(1\ 2)}\ot v_{(1\ 3)})\neq 0\] or \[y=\mu(\id-c)(v_{(1\ 2)}\ot v_{(3\ 4)})\neq 0\] in $\B(V(n,t,\lambda))$ by Lemma~\ref{le:degreetworel}, where $c$ denotes the Yetter-Drinfeld braiding of $V(n,t,\lambda)$. It follows that either $\langle v_{(1\ 3)},x\rangle$ or $\langle v_{(3\ 4)},y\rangle$ is a left coideal subalgebra of $\B(V(n,t,\lambda))$ in the category of $\ndN_0$-graded $\fK G_{\mathbb{T}_n}$-comodules that is not generated in degree one by Theorem~\ref{thm:cocycle}.
\end{proof}

\begin{rema}\label{re:twisteq}
For each $n>3$ besides isomorphism there are the two Yetter-Drinfeld modules
\[V(n,-1,1)=M(g_{(1\, 2)},\rho_{-1,1}) \text{ and } V(n,-1,-1)=M(g_{(1\, 2)},\rho_{-1,-1})\]
over $\fK G_{\mathbb{T}_n}$ associated to the rack $\mathbb{T}_n$ and a one-dimensional representation $\rho_{t,\lambda}$ as in Lemma~\ref{le:transp_reps} with $t=-1$ since $\lambda\in\{1,-1\}$. Both (braided vector spaces) can also be realized over $\mathbb{S}_n$ since $t^2=1$. They are the only irreducible Yetter-Drinfeld modules over $\fK\mathbb{S}_n$ such that the corresponding Nichols algebra might be finite-dimensional (for $n=4$ see \cite[Theorem~2.8]{MR2317945}, for $n\geq 5$ see \cite[Theorem~1.1]{MR2786171}). 
In \cite{MR2944712} the two Yetter-Drinfeld modules $V(n,-1,1)$ and $V(n,-1,-1)$ are described in terms of two-cocycles on the rack $\mathbb{T}_n$. One is the constant two-cocycle $-1$, which corresponds to $V(n,-1,1)$. The other two-cocycle is 
\begin{align}\label{eq:SntwococycleChi}
    \chi(\sigma,\tau)=
    \begin{cases}
    1 &\text{if} \ \sigma(i)<\sigma(j)\\
    -1 &\text{if} \ \sigma(i)>\sigma(j) 
    \end{cases} 
\end{align}
for all transpositions $\sigma$ and $\tau=(i\ j)$ with $i<j$, which corresponds to $V(n,-1,-1)$. In \cite[Theorem~3.8]{MR2944712} it is shown that the two-cocycles $-1$ and $\chi$ are equivalent by twist. By \cite[Theorem~3.8]{MR2799090}, it follows that the identity on the underlying vector spaces (and comodules) $V(n,-1,1)\rightarrow V(n,-1,-1)$ is a morphism of braided vector spaces. In particular, it follows that $\B(V(n,-1,1))$ and $\B(V(n,-1,-1))$ are isomorphic as $\ndN_0$-graded algebras and coalgebras. Moreover, the Nichols algebra $\B(V(n,-1,1))$ contains a left coideal subalgebra in the category of $\ndN_0$-graded $\fK G_{\mathbb{T}_n}$-comodules which is not generated in degree one if and only if $\B(V(n,-1,1))$ contains a left coideal subalgebra in the category of $\ndN_0$-graded $\fK G_{\mathbb{T}_n}$-comodules which is not generated in degree one. For $n\in\{4,5\}$ it is known that both Nichols algebras are finite dimensional with 
\begin{align*}
    &\dim\B(V(4,-1,1))=\dim\B(V(4,-1,-1))=576 \\ &\dim\B(V(5,-1,1))=\dim\B(V(5,-1,-1))=8294400 
\end{align*}
(see e.g. \cite[Example~2.3,\ Example~2.4]{MR2944712}). Moreover, they are isomorphic to the Fomin-Kirillov algebra $\mathcal{E}_4$ ($\mathcal{E}_5$, resp.) by \cite{MR1800714}.
\end{rema}

\begin{thm}\label{thm:Tnrack}
Assume that $n\in\{4,5\}$, $t=-1$ and $\lambda\in\{1,-1\}$. Then all left coideal subalgebras of $\B(V(n,t,\lambda))$ in the category of $\ndN_0$-graded $\fK G_{\mathbb{T}_n}$-comodules are generated in degree one. 
\end{thm}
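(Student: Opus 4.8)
The plan is to mimic the structure of the proof of Theorem~\ref{thm:3elrack}~(1), using the ``no extension'' machinery from Section~\ref{se:Introductionlcsa} together with the self-duality of the Yetter-Drinfeld module. Concretely, let $V=V(n,t,\lambda)$ with $t=-1$. Let $C\subseteq\B(V)$ be a non-trivial left coideal subalgebra in the category of $\ndN_0$-graded $\fK G_{\mathbb{T}_n}$-comodules. By Lemma~\ref{le:degreeonepartlc}, $V_1:=V\cap C\neq 0$, and $V_1$ is a $\fK G_{\mathbb{T}_n}$-subcomodule. Since $\fK G_{\mathbb{T}_n}$ is cosemisimple, choose a complementary subcomodule $V_2$ with $V=V_1\oplus V_2$. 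By Proposition~\ref{pro:minmaxlcsa}~(2), it suffices to show that $\langle V_1\rangle$ is the only $\ndN_0$-graded left coideal subalgebra with degree one part $V_1$; equivalently, that $\langle V_1\rangle$ cannot be extended in any degree $n'\ge 2$. So it is enough to apply Corollary~\ref{cor:noextension}~(2): one needs (a) $\B(V)$ finite-dimensional, (b) $\dim\langle V_2\rangle=\dim\langle V_2{}^*\rangle$, and (c) $\dim\langle V_1\rangle\,\dim\langle V_2\rangle=\dim\B(V)$.

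For (a), I would invoke Remark~\ref{re:twisteq}: for $n\in\{4,5\}$ the Nichols algebra $\B(V(n,-1,1))=\B(V(n,-1,-1))$ is finite-dimensional (it is the Fomin--Kirillov algebra $\mathcal{E}_4$, resp.\ $\mathcal{E}_5$). For (b), the key is self-duality: by Lemma~\ref{le:transpostwococyc} (or a direct computation from Table-type formulas and Lemma~\ref{le:dualYD}), for $t=-1$ the two-cocycle satisfies $q_{x,y}=q_{x,y\tri x}$ for all $x,y\in\mathbb{T}_n$ with $x\tri y\neq y$, so Lemma~\ref{le:dualisotranspos} gives an isomorphism $f:V\to V^*$ of braided vector spaces, hence an $\ndN_0$-graded algebra isomorphism $\B(V)\to\B(V^*)$. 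Since $V_2$ is spanned by a subrack (more precisely, by $\fK G_{\mathbb{T}_n}$-homogeneous basis vectors, as $V_2$ is a subcomodule and the homogeneous components are one-dimensional), $f$ restricts to an isomorphism $\langle V_2\rangle\cong\langle V_2{}^*\rangle$, and in particular $\dim\langle V_2\rangle=\dim\langle V_2{}^*\rangle$.

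The remaining and genuinely hard ingredient is (c): the ``factorization'' $\dim\B(V)=\dim\langle V_1\rangle\,\dim\langle V_2\rangle$ for every choice of subcomodule $V_1\subseteq V$ with complement $V_2$. Equivalently, one wants the restricted multiplication map $\mu:\langle V_1\rangle\ot\langle V_2\rangle\to\B(V)$ to be surjective (it is automatically injective on a suitable associated graded, or one argues via \cite[Corollary~6.3.10]{MR4164719} as in Theorem~\ref{thm:mulbijnoextension} that $\dim\langle V_1\rangle\,\dim\langle V_2\rangle\le\dim\B(V)$ and conversely surjectivity forces equality). This is the analogue of Lemma~\ref{le:S3mulbij} for $n=3$, where the single degree-two relation $v_av_b=tv_cv_a-t^2v_bv_c$ sufficed to reorder monomials. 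Here I expect one must case-split on which subrack $Y\subseteq\mathbb{T}_n$ spans $V_1$ (up to the symmetry afforded by Lemma~\ref{le:groupactioniso}, the indecomposable subracks of $\mathbb{T}_n$ are few: $\mathbb{T}_2$'s, $\mathbb{T}_3$'s inside $\mathbb{T}_4$, the full rack, and decomposable unions thereof), and for each case exhibit an explicit set of straightening relations in $\B(V)$ — built from the degree-two relations of Lemma~\ref{le:degreetworel} — that lets one move every generator in $V_2$ past every generator in $V_1$. For $n=4$ this can plausibly be checked by hand or by computer algebra on $\mathcal{E}_4$; for $n=5$ the same argument applies provided the relevant degree-two straightening still generates enough of the ideal, which again reduces (via the subrack classification) to a bounded number of configurations. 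The main obstacle is thus the combinatorial bookkeeping needed to verify surjectivity of $\mu$ for all subcomodules $V_1$, and I would organize it by first listing the subracks of $\mathbb{T}_n$ up to symmetry and then handling each as in Lemma~\ref{le:S3mulbij}.
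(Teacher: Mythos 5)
Your overall architecture is exactly the paper's: reduce to the constant cocycle via twist equivalence, use the self-duality isomorphism $f:V\to V^*$ from Lemma~\ref{le:dualisotranspos} to get $\langle V_2\rangle\cong\langle V_2{}^*\rangle$, and conclude via Corollary~\ref{cor:noextension} once the dimension factorization $\dim\langle V_1\rangle\,\dim\langle V_2\rangle=\dim\B(V)$ is in hand. (Two small points: Lemma~\ref{le:transpostwococyc} does not assert $q_{x,y}=q_{x,y\tri x}$; the clean way is to first pass to the constant cocycle $-1$ via Remark~\ref{re:twisteq}, where that condition is trivial. And the paper uses part (1) of Corollary~\ref{cor:noextension} with surjectivity of $\mu$ in all degrees rather than part (2); either works once the key input is available.)

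The genuine gap is your ingredient (c). You correctly identify it as the hard part, but you only propose a plan — a case analysis on subracks together with degree-two straightening relations à la Lemma~\ref{le:S3mulbij} — and do not carry it out. The paper does not prove this step either; it imports it wholesale from \cite[Corollary~4.11]{MR3552907} (Blasiak--Liu--M\'esz\'aros), which establishes that for $n\le 5$ the restricted multiplication $\mu:\langle V_1\rangle\ot\langle V_2\rangle\to\mathcal{E}_n$ is bijective for every complementary pair of homogeneous subspaces. Moreover, your proposed route is unlikely to succeed as described: the degree-two relations of $\B(V(n,-1,1))$ are ``local'' to pairs and triples of transpositions and look identical inside $\mathbb{T}_6$, yet Theorem~\ref{thm:T6rack} shows the factorization fails there in degree $7$ for a suitable decomposition. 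So surjectivity of $\mu$ for $n\in\{4,5\}$ cannot follow from a purely local rewriting procedure driven by the quadratic relations; it requires global information about $\mathcal{E}_4$ and $\mathcal{E}_5$ (Hilbert series, explicit bases, or the BLM theorem). Without supplying that input, your proof is incomplete at its decisive step.
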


\begin{proof}
By Remark~\ref{re:twisteq}, it suffices to prove the claim for $V(n,-1,1)$. Moreover, we can choose the non-zero elements $v_{(i\ j)}\in V(n,-1,1)_{g_{(i\, j)}}$ such that \[g_{(i\ j)}\cdot v_{(k\ l)}=-v_{(i\ j)\tri (k\ l)}\] for all $1\leq i< j\leq n$ and $1\leq k< l\leq n$, i.e. the corresponding two-cocycle is constant $-1$ (Remark~\ref{re:twisteq}). Let $V_1,V_2\subseteq V(n,-1,1)$ be $\fK G_{\mathbb{T}_n}$-subcomodules such that $V(n,-1,1)=V_1\oplus V_2$. Then the restricted multiplication map \[\mu:\langle V_1\rangle\ot\langle V_2\rangle\rightarrow\B(V(n,-1,1))\] is bijective by \cite[Corollary 4.11]{MR3552907}, where we used that $\B(V(n,-1,1))$ is isomorphic to the Fomin-Kirillov algebra $\mathcal{E}_n$ for $n\in\{4,5\}$ (see Remark~\ref{re:twisteq}). The linear map $f:V(n,-1,1)\rightarrow V(n,-1,1){}^*$ with $f(v_i)=v_i{}^*$ for all $i\in\mathbb{T}_n$ is an isomorphism of braided vector spaces by Lemma~\ref{le:dualisotranspos} and it induces an isomorphism \[\B(V(n,-1,1))\rightarrow\B(V(n,-1,1){}^*)\] of $\ndN_0$-graded algebras. Hence $\langle V_2\rangle\cong\langle V_2{}^*\rangle$ as $\ndN_0$-graded algebras and the claim follows from Corollary~\ref{cor:noextension} (1) since $\B(V(n,-1,1))$ is finite dimensional (see Remark~\ref{re:twisteq}). 
\end{proof}

\begin{thm}\label{thm:T6rack}
Assume that $n=6$, $t\in \fK^{\times}$ and $\lambda\in\fK$ such that $\lambda^2=1$. Then there is a left coideal subalgebra of $\B(V(n,t,\lambda))$ in the category of $\ndN_0$-graded $\fK G_{\mathbb{T}_6}$-comodules that is not generated in degree one. 
\end{thm}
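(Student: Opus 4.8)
The plan is to reduce to the constant two-cocycle $-1$ and then exhibit an explicit left coideal subalgebra of the corresponding Nichols algebra that is not generated in degree one.

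If $t\neq -1$ the assertion is Theorem~\ref{thm:generalexttranspos}, so I may assume $t=-1$; then $\lambda\in\{1,-1\}$. By Remark~\ref{re:twisteq} the Nichols algebras $\B(V(6,-1,1))$ and $\B(V(6,-1,-1))$ are isomorphic as $\ndN_0$-graded algebras and coalgebras, and $\B(V(6,-1,1))$ has a left coideal subalgebra in the category of $\ndN_0$-graded $\fK G_{\mathbb{T}_6}$-comodules that is not generated in degree one if and only if $\B(V(6,-1,-1))$ does. Hence it suffices to treat $V:=V(6,-1,1)$, where I choose the non-zero elements $v_x\in V_{g_x}$ ($x\in\mathbb{T}_6$) so that $g_x\cdot v_y=-v_{x\tri y}$ for all $x,y$, i.e. the two-cocycle is constant $-1$. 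Since then $q_{x,y}q_{y,x}=1$ whenever $x\tri y=y\neq x$ and $q_{x,y}q_{x\tri y,x}q_{y,x\tri y}=-1$ whenever $x\tri y\neq y$, Lemma~\ref{le:degreetworel} shows that $\B(V)$ is slim in degree two.

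It remains to produce a non-primitively generated left coideal subalgebra of $\B(V)$, and this is the core of the argument. The generic extension theorems of Section~\ref{se:Extensionsdeg234} do not apply directly: in a rack of transpositions $x$ and $y$ commute exactly when their supporting $2$-subsets are disjoint, so for $a\tri b\neq b$ the three elements $a,b,b\tri a$ are supported on the same three points; this makes the commutation hypotheses of Theorem~\ref{thm:degree3}~(2) and of Theorem~\ref{thm:degree4} impossible to satisfy, and (using Lemma~\ref{le:lyinginS} together with the degree-two relations for the subrack $\mathbb{T}_3$) one sees that the degree-three element $Z(a,b,c)$ always lies in $\langle S\rangle$. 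Instead I would construct the extension by hand: choose a proper $\fK G_{\mathbb{T}_6}$-subcomodule $W\subsetneq V$ (for instance the span of the subrack $\mathbb{T}_5\subseteq\mathbb{T}_6$, or of a smaller $\mathbb{S}_6$-homogeneous family of transpositions) together with a $G_{\mathbb{T}_6}$-homogeneous element $Z\in\B(V)(n)$, $n\in\{3,4\}$, built from the $v_x$ using products and the braiding $c_q$, such that $\Delta_{1,n-1}(Z)\in V\ot\langle W\rangle(n-1)$. By Corollary~\ref{cor:lcsaExt} the subalgebra $\langle W+\fK Z\rangle$ is then a left coideal subalgebra of $\B(V)$ in the category of $\ndN_0$-graded $\fK G_{\mathbb{T}_6}$-comodules, and it is not generated in degree one provided $Z\notin\langle W\rangle$; equivalently one may work with $\Delta_{n-1,1}$ and invoke Lemma~\ref{le:partial2}.

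The hard part is choosing $W$ and $Z$ and verifying these two properties, and it is exactly here that the hypothesis $n=6$ is used: for $n<6$ the algebra $\B(V(n,-1,1))$ is a finite-dimensional Fomin--Kirillov algebra whose left coideal subalgebras are all generated in degree one (Theorem~\ref{thm:3elrack}, Theorem~\ref{thm:Tnrack}), so no such $Z$ exists there, whereas for $n=6$ there are enough points to place $Z$ and $W$ in incompatible positions. I expect the identity $\Delta_{1,n-1}(Z)\in V\ot\langle W\rangle$ to come out of a direct computation via formula~\eqref{eq:partial1n-1}, the braid relations~\eqref{eq:cbraid121}--\eqref{eq:cbraid12} for the maps $c_i$, and the slimness relations~\eqref{eq:degreetworel1}--\eqref{eq:degreetworel2}, along the lines of the proofs of Theorems~\ref{thm:degree3} and~\ref{thm:degree4}; and the non-membership $Z\notin\langle W\rangle$ to follow either from a $G_{\mathbb{T}_6}$-degree obstruction (the homogeneous component of $\B(V)$ containing $Z$ meets $\langle W\rangle$ trivially) or, failing that, from evaluating a linear functional $F$ on $\Delta_{1^n}(Z)$ that vanishes on $\Delta_{1^n}(\langle W\rangle(n))$, again as in Section~\ref{se:Extensionsdeg234}. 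Carrying out this computation completes the case $t=-1$, and hence the theorem.
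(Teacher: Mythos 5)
Your reduction steps are fine and match the paper: for $t\neq -1$ you correctly invoke Theorem~\ref{thm:generalexttranspos}, and for $t=-1$ the passage to $V(6,-1,1)$ with constant two-cocycle $-1$ via Remark~\ref{re:twisteq} is exactly what the paper does. You also correctly observe that the commutation hypotheses of Theorem~\ref{thm:degree3}~(2) and Theorem~\ref{thm:degree4} cannot be satisfied inside a rack of transpositions, so the generic low-degree extension machinery of Section~\ref{se:Extensionsdeg234} does not apply.

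However, after that point your argument stops being a proof: the existence of a suitable pair $(W,Z)$ is precisely the content of the theorem, and you only describe a template for how one would verify such a pair once found, together with the expectation that it exists. No candidate $W$, no candidate $Z$, and no verification are given, so the core claim is not established. Worse, the degree range you propose, $n\in\{3,4\}$, is almost certainly the wrong place to look: the paper's extension lives in degree $7$. Concretely, the paper takes $C_1$ to be the span of a specific set of seven transpositions (with complement $C_2$ of eight), computes with GAP/GBNP that $\dim\bigoplus_{i=0}^{6}(\langle C_1\rangle\ot\langle C_2\rangle)(i)=\dim\bigoplus_{i=0}^{6}\B(V)(i)$ while the dimensions differ by one in degree $7$, concludes via Theorem~\ref{thm:mulbijnoextension} (with Lemma~\ref{le:dualisotranspos}) that $\langle C_1\rangle$ admits no extension in degrees $2$ through $6$, and then exhibits a degree-$7$ extension of $\overline{G_{\mathbb{T}_6}}$-degree $g_{(1\,6)}g_{(2\,5)}g_{(3\,4)}$, guided by the analysis of the Fomin--Kirillov algebra $\mathcal{E}_6$ in \cite[Proposition~4.11]{MR3552907}. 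Your heuristic that ``for $n=6$ there are enough points'' does not substitute for this computation, and a hand construction in degree $3$ or $4$ along the lines you sketch would have to overcome the fact that the known degree-$3$ and degree-$4$ mechanisms provably fail for $\mathbb{T}_6$.
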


\begin{proof}
For $t\neq -1$ the claim is proved in Theorem~\ref{thm:generalexttranspos}. Let $t=-1$. By Remark~\ref{re:twisteq}, it suffices to prove the claim for $V(6,-1,1)$. Moreover, we can choose the basis $v_{(i\ j)}\in V(6,-1,1)_{g_{(i\, j)}}$ such that $q_{(i\ j),(k\ l)}=-1$ for all $1\leq i< j\leq 6$ and $1\leq k< l\leq 6$, i.e. the corresponding two-cocycle is constant $-1$ (Remark~\ref{re:twisteq}). Let 
\begin{align*}
    C_1&=\{v_{(1\ 2)}, v_{(2\ 3)}, v_{(2\ 4)}, v_{(2\ 5)}, v_{(3\ 5)}, v_{(4\ 5)}, v_{(5\ 6)}\}\\
    C_2&=\{v_{(1\ 3)}, v_{(1\ 4)}, v_{(1\ 5)}, v_{(1\ 6)}, v_{(2\ 6)}, v_{(3\ 4)}, v_{(3\ 6)}, v_{(4\ 6)}\}.
\end{align*}
The left coideal subalgebra \[C=\langle C_1\rangle\subset\B(V(6,-1,1))\] can be extended by an element of degree $7$. We found this using GAP and the package GBNP for computing Gröbner bases (\cite{GAP4}, \cite{GBNP1.0.5}). There we used the realization of the braided vector space $V(6,-1,1)$ over the finite enveloping group $\overline{G_{\mathbb{T}_6}}=G_{\mathbb{T}_6}/\langle g_{(1\ 2)}{}^2\rangle$, which is isomorphic to the symmetric group $\mathbb{S}_6$ (using Proposition~\ref{prop:chooseenvelopingrealization} (2))\footnote{The braided vector space $V(6,-1,1)$ can be realized as Yetter-Drinfeld module over the quotient $\overline{G_{\mathbb{T}_6}}$ since $g_{(1\ 2)}{}^2$ acts trivially in the representation $\rho_{-1,1}$.}. The extension of $C$ in degree $7$ has $\overline{G_{\mathbb{T}_6}}$-degree \[g_{(1\ 6)}g_{(2\ 5)}g_{(3\ 4)}.\] The idea to consider the subalgebra $C$ and to look for an extension in degree $7$ we got from \cite[Proposition~4.11]{MR3552907}. There the Fomin-Kirillov algebra $\mathcal{E}_6$ is considered, which has the same relations as $\B(V(6,-1,1))$ up to degree $7$.  
In \cite[Proposition~4.11]{MR3552907} it is shown that \[\dim\bigoplus_{i=0}^7(\langle C_1\rangle\ot\langle C_2\rangle)(i)\neq\dim\bigoplus_{i=0}^7\B(V(6,-1,1))(i).\] With GAP we calculated \[\dim\bigoplus_{i=0}^6(\langle C_1\rangle\ot\langle C_2\rangle)(i)=85774=\dim\bigoplus_{i=0}^6\B(V(6,-1,1))(i)\] but \[\dim(\langle C_1\rangle\ot\langle C_2\rangle)(7)=228854<\dim\B(V(6,-1,1))(7)=228855.\] From this we knew that there can not be an extension in degree smaller than $7$ by Theorem~\ref{thm:mulbijnoextension} using Lemma~\ref{le:dualisotranspos}.
\end{proof}

\section{The rack $\mathcal{T}$ associated with vertices of the tetrahedron}

Let $\mathcal{T}=\{a,b,c,d\}$ be the rack associated to the conjugacy class of $(2 \ 3 \ 4)$ in the alternating group $\mathbb{A}_4,$ where $a=(2\ 3\ 4)$, $b=(1\ 4\ 3)$, $c=(1\ 2\ 4)$ and $d=(1\ 3\ 2)$ (see e.g. \cite[7.2]{MR2891215}). The rack structure is given explicitly in Table~\ref{tab:rackTetrahedron}.  
\begin{table}[H]
\[\begin{array}{c|cccc}
\tri & a & b & c & d  \\
\hline
a & a & c & d & b \\
b & d & b & a & c \\
c & b & d & c & a \\
d & c & a & b & d \\
\end{array}\]
\caption{The rack $\mathcal{T}$}
\label{tab:rackTetrahedron}
\end{table}
In \cite[Examples~1.3]{MR1994219} the rack $\mathcal{T}$ is described as the rack of a tetrahedron with vertices $\{a,b,c,d\}\subset\mathbb{R}^3$ with center $0$: The rack operation of an element $x\in\mathcal{T}$ is the linear map which fixes $x$ and rotates the orthogonal plane by an angle of $2\pi/3$ with a fixed orientation (right-hand rule pointing the thumb to $x$).  

\begin{rema}
The rack $\mathcal{T}$ is braided (see \cite[Example~5]{MR2891215}), indecomposable and generated by each subset with two elements.
\end{rema}

\begin{lem}\label{le:centr_tetra}
\begin{enumerate}
    \item  \cite[Lemma 33]{MR2891215} The centralizer $G_{\mathcal{T}}{}^{g_a}$ of $g_a\in G_{\mathcal{T}}$ is the subgroup generated by $\{g_a,g_bg_d\}$. Moreover, the relation $g_a^4=(g_bg_d)^2$ holds in $G_{\mathcal{T}}$.
    \item Let $\rho$ be a one-dimensional representation of the centralizer ${G_{\mathcal{T}}}^{g_a}$. Then there are an element $t\in\fK^{\times}$ and an element $\lambda\in\fK^{\times}$ with $\lambda^2=t^4$ such that $\rho$ is isomorphic to the representation 
    \begin{align*}
        \rho_{t,\lambda}:\ &G_{\mathcal{T}}{}^{g_a}\rightarrow\mathrm{Aut}(\fK v)\\
        &g_a\mapsto (v\mapsto tv)\\
        &g_bg_d\mapsto (v \mapsto\lambda v).
    \end{align*}
\end{enumerate}    
\end{lem}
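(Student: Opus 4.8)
The plan is to analyze the one-dimensional representations of the centralizer $G_{\mathcal{T}}{}^{g_a}$ using the generators and relations given in part (1). By Lemma~\ref{le:centr_tetra}~(1), the centralizer $G_{\mathcal{T}}{}^{g_a}$ is generated by $g_a$ and $g_bg_d$, with the relation $g_a^4=(g_bg_d)^2$. Since $\rho$ is one-dimensional, it is a group homomorphism $G_{\mathcal{T}}{}^{g_a}\to\fK^{\times}$, so $\fK^{\times}$ being abelian means $\rho$ factors through the abelianization; but more concretely, I would simply set $t=\rho(g_a)\in\fK^{\times}$ and $\lambda=\rho(g_bg_d)\in\fK^{\times}$, both non-zero since $\rho$ takes values in units.

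First I would note that $t$ and $\lambda$ are well-defined elements of $\fK^{\times}$ by applying $\rho$ to the generators. Then I would observe that the only relation among the generators (beyond those they already satisfy as elements of $G_{\mathcal{T}}{}^{g_a}$, which is in fact a free-abelian-type quotient once we mod out by the defining relation) that constrains $(t,\lambda)$ is $g_a^4=(g_bg_d)^2$. Applying $\rho$ to this relation yields $t^4=\lambda^2$. Conversely, given any $t\in\fK^{\times}$ and $\lambda\in\fK^{\times}$ with $\lambda^2=t^4$, the assignment $g_a\mapsto t$, $g_bg_d\mapsto\lambda$ respects the relation $g_a^4=(g_bg_d)^2$ and hence extends (uniquely, by the universal property of the group presentation of the centralizer) to a well-defined one-dimensional representation $\rho_{t,\lambda}$. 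This shows $\rho\cong\rho_{t,\lambda}$ for $t=\rho(g_a)$, $\lambda=\rho(g_bg_d)$.

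The step requiring the most care is verifying that $G_{\mathcal{T}}{}^{g_a}$ is \emph{generated} by $g_a$ and $g_bg_d$ with \emph{only} the relation $g_a^4=(g_bg_d)^2$ relevant for the extension problem — i.e., that there is no hidden relation forcing an additional constraint on $(t,\lambda)$. This is already handled by Lemma~\ref{le:centr_tetra}~(1), which (citing \cite[Lemma~33]{MR2891215}) identifies the centralizer precisely as $\langle g_a, g_bg_d\rangle$ with the stated central relation; in fact this presentation shows the centralizer is a quotient of a free product amalgamating the single relation, and in particular an abelian-by-nothing argument is not even needed since two one-dimensional characters agree iff they agree on generators. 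So the essential content is: (a) read off $t,\lambda$ from $\rho$, (b) deduce $\lambda^2=t^4$ from the defining relation, and (c) conversely build $\rho_{t,\lambda}$ from any such pair. The main (minor) obstacle is just making sure the converse direction is phrased correctly — that the relation $\lambda^2=t^4$ is exactly what is needed for $\rho_{t,\lambda}$ to be well-defined on the presented group, which follows directly from part (1).

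I would write the proof compactly as follows. Part (1) is quoted. For part (2): let $\rho:G_{\mathcal{T}}{}^{g_a}\to\mathrm{Aut}(\fK v)\cong\fK^{\times}$ be a one-dimensional representation. Since $G_{\mathcal{T}}{}^{g_a}=\langle g_a,g_bg_d\rangle$ by part (1), $\rho$ is determined by $t:=\rho(g_a)\in\fK^{\times}$ and $\lambda:=\rho(g_bg_d)\in\fK^{\times}$. Applying $\rho$ to the relation $g_a^4=(g_bg_d)^2$ from part (1) gives $t^4=\lambda^2$. Conversely, for any $t,\lambda\in\fK^{\times}$ with $\lambda^2=t^4$, the assignment $g_a\mapsto t$, $g_bg_d\mapsto\lambda$ is compatible with the relation $g_a^4=(g_bg_d)^2$ and thus defines a one-dimensional representation $\rho_{t,\lambda}$ of $G_{\mathcal{T}}{}^{g_a}$. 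Hence $\rho\cong\rho_{t,\lambda}$ with $t=\rho(g_a)$ and $\lambda=\rho(g_bg_d)$, which proves the claim.
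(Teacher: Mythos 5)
The paper does not actually contain a proof of this lemma: part (1) is quoted from \cite[Lemma~33]{MR2891215} and part (2) is stated without any argument, so there is no in-paper proof to compare against. Your argument is the natural one and it does establish the stated claim: setting $t=\rho(g_a)$ and $\lambda=\rho(g_bg_d)$, the fact that $G_{\mathcal{T}}{}^{g_a}=\langle g_a,g_bg_d\rangle$ shows that the character $\rho$ is determined by the pair $(t,\lambda)$, hence coincides with $\rho_{t,\lambda}$, and applying $\rho$ to the relation $g_a^4=(g_bg_d)^2$ yields $t^4=\lambda^2$. That is everything the lemma asserts, and for this particular pair the well-definedness of $\rho_{t,\lambda}$ is automatic, since $\rho_{t,\lambda}$ is $\rho$ itself.

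One caveat concerns your converse step: you assert that for an \emph{arbitrary} pair $(t,\lambda)$ with $\lambda^2=t^4$ the assignment extends to a representation ``by the universal property of the group presentation of the centralizer.'' Part (1) does not give a presentation; it only states that the centralizer is generated by $g_a$ and $g_bg_d$ and that the relation $g_a^4=(g_bg_d)^2$ holds there. A priori there could be further relations cutting down the admissible pairs, and excluding them requires the finer structure of $G_{\mathcal{T}}{}^{g_a}$ from the cited lemma (compare Lemma~\ref{le:centralizerEnvGroupCubicRack}~(2), where for the cube rack the paper records the explicit isomorphism type of the centralizer for exactly this reason). This does not affect the implication you were asked to prove, but it is the one place where your write-up claims more than part (1) alone justifies, and the converse is in fact used later when the paper takes $\rho_{t,\lambda}$ for arbitrary admissible $(t,\lambda)$.
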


Let $t\in\fK^{\times}$, $\lambda\in\fK^{\times}$ with $\lambda^2=t^4$ and let $\rho_{t,\lambda}$ be a one-dimensional representation of $G_{\mathcal{T}}{}^{g_a}$ as in Lemma~\ref{le:centr_tetra}. Let $V=M(g_a,\rho_{t,\lambda})$ and 
\begin{equation}\label{eq:Tbasis}
\begin{aligned}
v_a&\in V_{g_a}\setminus\{0\},\\ 
v_b&=g_c\cdot v_a\in V_{g_b}\\ v_c&=g_d\cdot v_a\in V_{g_c}\\
v_d&=g_b\cdot v_a\in V_{g_d}.
\end{aligned}
\end{equation}
Then the $\fK G_{\mathcal{T}}$-module structure on $V$ is given in Table~\ref{tab:twococycleT}.
\begin{table}[h]
\[\begin{array}{c|cccc }
\cdot & v_a & v_b & v_c & v_d \\
\hline
g_a &tv_a &tv_c &tv_d &tv_b \\
g_b &v_d &tv_b &\lambda v_a &\frac{\lambda}{t} v_c \\
g_c &v_b &\frac{\lambda}{t} v_d &tv_c &\lambda v_a \\
g_d &v_c &\lambda v_a &\frac{\lambda}{t} v_b &tv_d  \\ 
\end{array}\]
\caption{$\fK G_{\mathcal{T}}$-module structure on $V$}
\label{tab:twococycleT}
\end{table}
  
\begin{lem}\label{le:Tsubsetpaths}
    Let $x,y,z\in\mathcal{T}$ be pairwise distinct elements. Then there are elements $g_1, g_2, g_3\in G_{\mathcal{T}}$ such that \[\{g_1\tri a\}=\{x\},\ \{g_2\tri a, g_2\tri b\}=\{x, y\} \text{ and } \{g_3\tri a, g_3\tri b, g_3\tri c\}=\{x, y, z\}.\] There $g\tri $ denotes the canonical action of $G_{\mathcal{T}}$ on $X$ by conjugation for all $g\in G_{\mathcal{T}}$. 
\end{lem}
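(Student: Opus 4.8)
The plan is to exploit the fact that $\mathcal{T}$ is indecomposable and that the inner group $\mathrm{Inn}(\mathcal{T})$ — equivalently the image of $G_{\mathcal{T}}$ acting on $\mathcal{T}$ — acts transitively on $\mathcal{T}$, and in fact acts as the full symmetry group of the ordered configuration in small cases. More concretely, I would first establish the $n=1$ case: since $\mathrm{Inn}(\mathcal{T})$ acts transitively on $\mathcal{T}$ (indecomposability), there is $g\in G_{\mathcal{T}}$ with $g\tri a=x$ for any prescribed $x$, giving $g_1$.

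For the pair $\{x,y\}$, I would argue that the stabilizer of $a$ in the group $G_{\mathcal{T}}$ (acting by $\tri$) still acts transitively on $\mathcal{T}\setminus\{a\}$. This follows because $\mathcal{T}$ has only four elements and $\varphi_a$ (the action of $g_a$) is a $3$-cycle on $\{b,c,d\}$ fixing $a$ (read off from Table~\ref{tab:rackTetrahedron}: $a\tri b=c$, $a\tri c=d$, $a\tri d=b$). Hence, given a pair $\{x,y\}$, first pick $g'\in G_{\mathcal{T}}$ with $g'\tri a=x$; then $\{g'\tri b, g'\tri c, g'\tri d\}=\mathcal{T}\setminus\{x\}$, so $g'\tri z=y$ for some $z\in\{b,c,d\}$. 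Composing $g'$ with a suitable power of $g_x$ (which fixes $x$ and cyclically permutes the other three) moves that $z$ into the slot $b$ while keeping $a\mapsto x$; this yields $g_2$ with $\{g_2\tri a,g_2\tri b\}=\{x,y\}$. The same idea handles the triple: after arranging $g\tri a=x$ and $g\tri b=y$, the remaining slot $g\tri c$ is forced to be the unique element of $\mathcal{T}\setminus\{x,y,g\tri d\}$; one checks using the explicit permutation action that the only freedom left is whether $g\tri c=z$ or $g\tri d=z$, and these two are interchanged by the element $g_x g$ or by an element of the centralizer of $x$ — so one of the two choices gives $g_3$ with $\{g_3\tri a,g_3\tri b,g_3\tri c\}=\{x,y,z\}$.

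Alternatively, and perhaps more cleanly, I would pass to the geometric description given just before the lemma: $\mathcal{T}$ is the set of vertices of a regular tetrahedron and the rack action realizes $\mathrm{Inn}(\mathcal{T})$ as the rotation group of the tetrahedron, which is $\mathbb{A}_4$ acting on the four vertices. Since $\mathbb{A}_4$ acts $2$-transitively on a $4$-element set (indeed it contains all even permutations, and for $4$ points the even permutations already act transitively on ordered pairs and on ordered triples up to the parity constraint, which here is no constraint since any triple of the four vertices can be completed by the fourth in exactly one way), every ordered pair and every ordered triple of distinct vertices lies in a single orbit. Translating back: there exist $g_2,g_3\in G_{\mathcal{T}}$ mapping the ordered pair $(a,b)$ to $(x,y)$ and the ordered triple $(a,b,c)$ to $(x,y,z)$ respectively; discarding the ordering gives exactly the claimed statement, with $g_1$ coming from transitivity (which is the orbit count for ordered $1$-tuples).

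The main obstacle is the triple case: one must confirm that $\mathbb{A}_4$ is transitive on ordered triples of distinct elements of the $4$-set, i.e. that $|\mathbb{A}_4|=12=4\cdot 3\cdot 1$ matches the number of ordered triples, which it does precisely because the fourth point is determined — so the stabilizer of an ordered triple is trivial and the orbit has size $12$. I would make this explicit by exhibiting the relevant group elements as words in the $g_x$ (using $\varphi_a=(b\,c\,d)$ etc. from Table~\ref{tab:rackTetrahedron}) so that the proof is self-contained and does not rely on the geometric picture beyond motivation. The case $n=2$ and $n=1$ are then immediate corollaries of the $n=3$ construction by simply forgetting $g_3\tri c$, respectively $g_3\tri b$ and $g_3\tri c$.
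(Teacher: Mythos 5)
Your treatment of the singleton and pair cases is correct and matches the paper's proof: transitivity of the $G_{\mathcal{T}}$-action comes from indecomposability, and composing with powers of $g_x$ (whose action fixes $x$ and is a $3$-cycle on $\mathcal{T}\setminus\{x\}$) handles the pair. The triple case, however, is where both of your justifications break. The ordered version of the statement is actually false: the image of $G_{\mathcal{T}}$ in $\mathbb{S}_{\mathcal{T}}$ is $\mathrm{Inn}(\mathcal{T})\cong\mathbb{A}_4$, the number of ordered triples of distinct elements of a $4$-set is $4\cdot 3\cdot 2=24$ (not $4\cdot 3\cdot 1$ --- the third slot still has two choices), and $\mathbb{A}_4$ has two orbits of size $12$ on them. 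Concretely, a permutation fixing $x$ and $y$ and swapping $z$ and $w$ is the odd transposition $(z\ w)$, so no element of $G_{\mathcal{T}}$ can "interchange whether $g\tri c=z$ or $g\tri d=z$" while preserving $g\tri a=x$ and $g\tri b=y$; neither $g_xg$ nor an element of the centralizer of $x$ does this (the centralizer acts on $\mathcal{T}$ only by powers of the $3$-cycle $\varphi_x$, which moves $y$). So the repair mechanism you propose for the bad ordering does not exist, and the $2$-transitivity claim you fall back on in the geometric version is likewise false for ordered triples.

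The unordered statement is nevertheless true, and the fix is short. The paper's route: since $g\tri$ is a bijection of $\mathcal{T}$, one has $\{g\tri a,g\tri b,g\tri c\}=\mathcal{T}\setminus\{g\tri d\}$, so the triple case reduces to finding $g$ with $g\tri d=w$ for $\{w\}=\mathcal{T}\setminus\{x,y,z\}$, which is again just transitivity. Alternatively, your own composition idea can be salvaged if you only insist on $g\tri a=x$: the powers of $\varphi_x$ permute the three $2$-element subsets of $\mathcal{T}\setminus\{x\}$ cyclically, so some $g_x{}^k g$ sends $\{a,b,c\}$ onto $\{x,y,z\}$ --- but then the images of $b$ and $c$ may come out in the opposite order, which is harmless only because the lemma asks for sets rather than tuples. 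As written, though, the triple step does not go through.
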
 

\begin{proof}
The existence of $g_1$ follows from the indecomposability of the rack $\mathcal{T}$. The existence of $g_2$ follows from the transitivity of the action of $g_a$ on $\mathcal{T}\setminus\{a\}$. The claim for the sets with three elements follows directly from the claim for the sets with one element since $g\tri$ defines a bijection on $X$ for all $g\in G_{\mathcal{T}}$. Alternatively, one can easily convince oneself of the correctness of the claim by using the geometrical idea of the rack as vertices of a tetrahedron with rotations as action.    
\end{proof}

\begin{lem}\label{le:Tcomoduletrafo}
 Let $t,\lambda\in\fK^{\times}$ with $\lambda^2=t^4$ be the parameters of a one-dimensional representation of $G_{\mathcal{T}}{}^{g_a}$ as in Lemma~\ref{le:centr_tetra}. Let $V=M(g_a,\rho_{t,\lambda})$ be the corresponding $\fK G_{\mathcal{T}}$-Yetter-Drinfeld module. Let $W\subseteq V$ be a $\fK G_{\mathcal{T}}$-subcomodule. 
 \begin{enumerate}
     \item  If $\dim W=1$, then there is an element $g\in G_{\mathcal{T}}$ such that $W=g\cdot V_{g_a}.$ 
     \item  If $\dim W=2$, then there is an element $g\in G_{\mathcal{T}}$ such that $W=g\cdot (V_{g_a}\oplus V_{g_b}).$ 
     \item  If $\dim W=3$, then there is an element $g\in G_{\mathcal{T}}$ such that $W=g\cdot (V_{g_a}\oplus V_{g_b}\oplus V_{g_c}).$ 
 \end{enumerate}
\end{lem}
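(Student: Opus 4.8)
The plan is to reduce the assertion to the combinatorial Lemma~\ref{le:Tsubsetpaths}, after a short bookkeeping with the Yetter-Drinfeld compatibility; the explicit module table for $V$ will not be needed.

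First I would observe that $\fK G_{\mathcal{T}}$-subcomodules of $V=M(g_a,\rho_{t,\lambda})$ are nothing but unions of homogeneous lines. Indeed, since $\rho_{t,\lambda}$ is one-dimensional, each $G_{\mathcal{T}}$-homogeneous component of $V$ is at most one-dimensional and $\supp V=\mathcal{O}_{g_a}=\{g_a,g_b,g_c,g_d\}$; by Lemma~\ref{le:injectiverack} the rack $\mathcal{T}$ is injective, so $g_a,g_b,g_c,g_d$ are pairwise distinct in $G_{\mathcal{T}}$. A $\fK G_{\mathcal{T}}$-subcomodule is the same as a $G_{\mathcal{T}}$-graded subspace, so $W=\bigoplus_{g\in G_{\mathcal{T}}}(W\cap V_g)$ with each intersection equal to $0$ or $V_g$; hence $W=\bigoplus_{x\in Y}\fK v_x$ for a unique $Y\subseteq\mathcal{T}$ with $|Y|=\dim W$.

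Next I would record how a group element moves these lines. For $g\in G_{\mathcal{T}}$ and $y\in\mathcal{T}$ the Yetter-Drinfeld condition gives $g\cdot V_{g_y}\subseteq V_{g g_y g^{-1}}$, and $g g_y g^{-1}=g_{g\tri y}$ in $G_{\mathcal{T}}$ (for a generator $g=g_x$ this is the defining relation $g_x g_y g_x^{-1}=g_{x\tri y}$ of $G_{\mathcal{T}}$, and for arbitrary $g$ one concludes by induction on word length, using that $g\tri$ extends the rack operation). As $g$ acts invertibly, this gives $g\cdot V_{g_y}=V_{g_{g\tri y}}=\fK v_{g\tri y}$, hence $g\cdot\bigl(\bigoplus_{y\in Z}\fK v_y\bigr)=\bigoplus_{y\in Z}\fK v_{g\tri y}$ for any $Z\subseteq\mathcal{T}$. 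I expect this to be the only place where a little care is needed, and even there it is routine.

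Finally I would combine the two observations. Writing $Y$ as $\{x\}$, $\{x,y\}$ or $\{x,y,z\}$ with pairwise distinct entries according as $\dim W\in\{1,2,3\}$, Lemma~\ref{le:Tsubsetpaths} supplies $g\in G_{\mathcal{T}}$ with $\{g\tri a\}=\{x\}$, or $\{g\tri a,g\tri b\}=\{x,y\}$, or $\{g\tri a,g\tri b,g\tri c\}=\{x,y,z\}$, respectively. Applying the previous paragraph yields $g\cdot V_{g_a}=\fK v_x=W$, $g\cdot(V_{g_a}\oplus V_{g_b})=\fK v_x\oplus\fK v_y=W$, and $g\cdot(V_{g_a}\oplus V_{g_b}\oplus V_{g_c})=W$ in the three cases. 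The genuine content — transitivity of the $G_{\mathcal{T}}$-action on $\mathcal{T}$ and the fact that $\{a\},\{a,b\},\{a,b,c\}$ can be carried onto any subset of the same size — is already packaged in Lemma~\ref{le:Tsubsetpaths}, so no substantial obstacle remains.
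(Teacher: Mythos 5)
Your proposal is correct and follows exactly the paper's route: the paper's proof is the one-line observation that the claim follows from Lemma~\ref{le:Tsubsetpaths} because $\dim V_{g_x}=1$ for all $x\in\mathcal{T}$, and your write-up simply fills in the same bookkeeping (subcomodules are sums of homogeneous lines, and $g\cdot V_{g_y}=V_{g_{g\tri y}}$).
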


\begin{proof}
The claim follows from Lemma~\ref{le:Tsubsetpaths} since $\dim V_{g_x}=1$ for all $x\in\mathcal{T}$.
\end{proof}

\begin{prop}\label{prop:5184}
Let $t\in \fK$ such that $t^2+t+1=0$ 
and $\lambda =-t^{-1}$ be the parameters of a one-dimensional representation of $G_{\mathcal{T}}{}^{g_a}$ as in Lemma~\ref{le:centr_tetra}. Let $V=M(g_a,\rho_{t,\lambda})$ be the corresponding $\fK G_{\mathcal{T}}$-Yetter-Drinfeld module. Let $v_a$, $v_b$, $v_c$ and $v_d$ as in Equation~(\ref{eq:Tbasis}).
\begin{enumerate}
\item The Nichols algebra $\B(V)$ has dimension $5184$.
\item \begin{align*} 
&\dim\langle V_{g_a} \rangle=3,\quad \dim\langle V_{g_a}+ V_{g_b}\rangle=72,\\ &\dim\langle V_{g_a}+V_{g_b}+ V_{g_c}\rangle=1728.
\end{align*}
\item Let $W\subset V$ be a $\fK G_{\mathcal{T}}$-subcomodule. If $\dim W=1$, then $\dim\langle W\rangle=3$, if $\dim W=2$, then $\dim\langle W\rangle=72$, if $\dim W=3$, then $\dim\langle W\rangle=1728$.
\item The linear map 
\begin{align*}
f:\ &V\rightarrow V^*,\\ &v_a\mapsto -tv_a{}^*,\ v_b\mapsto v_c{}^*,\ v_c\mapsto v_b{}^*,\ v_d\mapsto v_d{}^*
\end{align*}
is an isomorphism of braided vector spaces with respect to the Yetter-Drinfeld braidings $c_V$ and $c_{V^*}$.   
\item Let $W\subset V^*$ be a $\fK G_{\mathcal{T}}$-subcomodule.  If $\dim W=1$, then $\dim\langle W\rangle=3$, if $\dim W=2$, then $\dim\langle W\rangle=72$, if $\dim W=3$, then $\dim\langle W\rangle=1728$.   
\end{enumerate}
\end{prop}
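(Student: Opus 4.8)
The plan is to prove the five parts of Proposition~\ref{prop:5184} largely in the order stated, using the explicit $\fK G_{\mathcal{T}}$-module structure in Table~\ref{tab:twococycleT} together with the realization over $G_{\mathcal{T}}$ (equivalently, over the finite enveloping group $\ol{G_{\mathcal{T}}}$, which for $t$ a primitive cube root of unity and $\lambda=-t^{-1}$ is a finite group) to make the computations tractable. For part (1), I would compute $\dim\B(V)$ via a Gröbner basis computation for the Nichols algebra relations; since $V$ is a rank-one irreducible Yetter-Drinfeld module over the finite group $\ol{G_{\mathcal{T}}}$, this is a finite computation (e.g.\ with GAP and GBNP, as already used in the proof of Theorem~\ref{thm:T6rack}). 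One first checks that $\B(V)$ is slim in degree two (Lemma~\ref{le:degreetworel}) for these parameters, obtaining the quadratic relations, and then verifies that the resulting algebra has Hilbert series whose total dimension is $5184=2^6\cdot 3^4$.

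For part (2), the subalgebra $\langle V_{g_a}\rangle=\langle v_a\rangle$ is the subalgebra generated by a single element $v_a$ with $c_q(v_a\ot v_a)=t\,v_a\ot v_a$; since $t$ is a primitive cube root of unity, $v_a^3=0$ in $\B(V)$ and $v_a^2\neq 0$, so $\dim\langle v_a\rangle=3$. The dimensions $72$ and $1728$ for $\langle v_a,v_b\rangle$ and $\langle v_a,v_b,v_c\rangle$ I would again read off from the Gröbner basis / Hilbert series computation (these are sub-Hilbert-series of the one for $\B(V)$), noting that $\{a,b\}$ generates $\mathcal{T}$ as a rack so $\langle v_a,v_b\rangle$ is already a genuinely ``large'' subalgebra; one checks $72\mid 5184$ and $1728\mid 5184$ as a sanity consistency with the expected factorization $\B(V)\cong \langle V_1\rangle\ot\langle V_2\rangle$ when $c^2$ is trivial on the relevant cross-term (Proposition~\ref{prop:lcsadecompextension} and \cite[Proposition~1.10.12]{MR4164719}), although for part (2) the bare computation suffices.

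Part (3) follows from part (2) together with Lemma~\ref{le:Tcomoduletrafo} and Lemma~\ref{le:groupactioniso}: any $\fK G_{\mathcal{T}}$-subcomodule $W$ of dimension $1$, $2$, or $3$ is of the form $g\cdot V_{g_a}$, $g\cdot(V_{g_a}\oplus V_{g_b})$, or $g\cdot(V_{g_a}\oplus V_{g_b}\oplus V_{g_c})$ for some $g\in G_{\mathcal{T}}$, and by Lemma~\ref{le:groupactioniso}~(3) the map $\alpha_g$ is an $\ndN_0$-graded algebra isomorphism carrying $\langle W'\rangle$ to $\langle \alpha_g(W')\rangle$; hence $\dim\langle W\rangle$ depends only on $\dim W$ and equals the value computed in part (2). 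For part (4), I would verify directly that $f$ is a morphism of braided vector spaces: using Lemma~\ref{le:dualYD} for the explicit action and coaction on $V^*$, it suffices to check that $(f\ot f)c_V = c_{V^*}(f\ot f)$ on each basis vector $v_x\ot v_y$, which amounts to checking the sixteen scalar identities relating the cocycle values $q_{x,y}$ (extracted from Table~\ref{tab:twococycleT}) on both sides; the scaling factor $-t$ on $v_a$ is exactly what is needed to make these match. Then part (5) is immediate: $f$ induces an $\ndN_0$-graded algebra isomorphism $\B(V)\to\B(V^*)$ carrying $\langle W\rangle$ (for a $\fK G_{\mathcal{T}}$-subcomodule $W\subseteq V$) to $\langle f(W)\rangle\subseteq\B(V^*)$, and since $f$ is an isomorphism of $\fK G_{\mathcal{T}}$-comodules every $\fK G_{\mathcal{T}}$-subcomodule of $V^*$ arises this way, so the dimension statement transfers verbatim from part (3).

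The main obstacle is the explicit computation of $\dim\B(V)=5184$ and the three subalgebra dimensions in part (2): unlike the degree-two slimness check, these require a full Gröbner basis computation (or a structural argument identifying $\B(V)$ with a known Nichols algebra over a finite group realization). I would make the realization over $\ol{G_{\mathcal{T}}}$ explicit, note that $\ol{G_{\mathcal{T}}}$ is finite (with the extra relation $g_x^4=1$ after quotienting, cf.\ Lemma~\ref{le:relenvelgroup} and Lemma~\ref{le:centr_tetra}~(1)), and record that the computation was carried out with GAP/GBNP, in parallel with how Theorem~\ref{thm:T6rack} is handled. The verification in part (4) is a finite and routine, if slightly tedious, check; the only subtlety is bookkeeping the images $v_b\mapsto v_c^*$, $v_c\mapsto v_b^*$ correctly against the rack operation $\tri$ of Table~\ref{tab:rackTetrahedron}.
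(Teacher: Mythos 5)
Your proposal follows essentially the same route as the paper: parts (2)--(3) via a GAP/GBNP Gr\"obner basis computation over the finite enveloping group combined with Lemma~\ref{le:Tcomoduletrafo} and Lemma~\ref{le:groupactioniso}, part (4) by a direct check against the dual structure from Lemma~\ref{le:dualYD}, and part (5) by transport along the induced isomorphism $\B(V)\to\B(V^*)$; the only cosmetic difference is that the paper obtains part (1) by citing \cite[Prop.~36]{MR2891215} rather than recomputing it. One small correction: the finite enveloping group here is $\ol{G_{\mathcal{T}}}=G_{\mathcal{T}}/\langle g_a{}^3\rangle$, since $\varphi_a$ has order $3$ on $\mathcal{T}$ and $g_a{}^3$ acts by $t^3=1$; your parenthetical relation $g_x{}^4=1$ belongs to the cube rack $\mathcal{B}$, and imposing it here would not yield a valid realization because $g_a{}^4$ acts by $t\neq 1$.
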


\begin{proof}
(1) See \cite[Prop. 36]{MR2891215}.

(2) We proved this using the GAP package GBNP for computing Gröbner bases (\cite{GAP4}, \cite{GBNP1.0.5}). There we used the realization of the braided vector space $V$ over the finite enveloping group $\overline{G_{\mathcal{T}}}=G_{\mathcal{T}}/\langle g_a{}^3\rangle$ (and Proposition~\ref{prop:chooseenvelopingrealization} (2))\footnote{The braided vector space $V$ can be realized as Yetter-Drinfeld module over the quotient $\overline{G_{\mathcal{T}}}$ since $g_a{}^3$ acts trivially in the representation $\rho_{t,\lambda}$ since $t^3=1$ by assumption.}. 

(3) If  $\dim W=2$, then there is an element $g\in G_{\mathcal{T}}$ such that $W=g\cdot(V_{g_a}+ V_{g_b})$ by Lemma~\ref{le:Tcomoduletrafo}. The map \[\alpha_g:\B(V)\rightarrow \B(V),\quad x\mapsto g\cdot x\] is an isomorphism of $\ndN_0$-graded algebras by Lemma~\ref{le:groupactioniso}. Hence $\langle W\rangle\cong\langle V_{g_a}+ V_{g_b}\rangle$ as $\ndN_0$-graded algebras and the claim follows from part (2). If $\dim W\in\{1,3\}$, we can use analogous arguments. 

(4) Using Lemma \ref{le:dualYD} it follows that $\delta_{V^*}(v_x{}^*)=g_x{}^{-1}\ot v_x{}^*$ for all $x\in\mathcal{T}$ and the $\fK G_{\mathcal{T}}$-action on $V^*$ is given in Table~\ref{tab:GTactiondual}.  
\begin{table}[h]
\[\begin{array}{c|cccc }
\cdot & v_a{}^* & v_b{}^* & v_c{}^* & v_d{}^* \\
\hline
g_a{}^{-1} &tv_a{}^* &tv_d{}^* &tv_b{}^* &tv_c{}^* \\
g_b{}^{-1} &-t^2v_c{}^* &tv_b{}^* &-tv_d{}^* &v_a{}^* \\
g_c{}^{-1} &-t^2v_d{}^* &v_a{}^* &tv_c{}^* &-tv_b{}^* \\
g_d{}^{-1} &-t^2v_b{}^* &-tv_c{}^* &v_a{}^* &tv_d{}^*  \\ 
\end{array}\]
\caption{$\fK G_{\mathcal{T}}$-module structure on $V^*$}
\label{tab:GTactiondual}
\end{table}
Using this it is a simple calculation to check that \[(f\ot f)c_V(v_x\ot v_y)=c_{V^*}(f\ot f)(v_x\ot v_y)\] for all $x,y\in\mathcal{T}$.

(5) The isomorphism of braided vector spaces $f:V\rightarrow V^*$ in (4) induces an isomorphism $\B(V)\rightarrow\B(V^*)$ of $\ndN_0$-graded algebras. If $W\subseteq V^*$ is a $\fK G_{\mathcal{T}}$-subcomodule, then $f^{-1}(W)\subseteq V$ is a $\fK G_{\mathcal{T}}$-subcomodule since $f$ maps $G_{\mathcal{T}}$-homogeneous elements to $G_{\mathcal{T}}$-homogeneous elements. Hence the claim follows from (2). 
\end{proof}

\begin{thm}\label{thm:Track}
Let $t\in \fK^{\times}$ and  $\lambda\in\fK^{\times}$ with $\lambda^2=t^4$ be the parameters of a one-dimensional representation of $G_{\mathcal{T}}{}^{g_a}$ as in Lemma~\ref{le:centr_tetra}. Let $V=M(g_a,\rho_{t,\lambda})$ be the corresponding Yetter-Drinfeld module over $\fK G_{\mathcal{T}}$ and denote its braiding by $c_V$. Let $v_a$, $v_b$, $v_c$ and $v_d$ as in Equation~(\ref{eq:Tbasis}). Then the following hold.
\begin{enumerate}
\item If $t^6\neq 1$, then $\langle v_b,v_av_b-tv_cv_a+tv_bv_c\rangle$ is a left coideal subalgebra of $\B(V)$ in the category of $\ndN_0$-graded $\fK G_{\mathcal{T}}$-comodules, that is not generated in degree one. 
\item If $t=-1$ and $\lambda =1$ 
or $t^2-t+1=0$ and $\lambda =-t^{-1}$,  
then $\langle v_a,v_b,z\rangle$ is a left coideal subalgebra of $\B(V)$ in the category of $\ndN_0$-graded $\fK G_{\mathcal{T}}$-comodules that is not generated in degree one, where $z=v_cv_bv_a-\lambda v_av_cv_b+\lambda^2 v_bv_av_c$.  
\item If $t=1$ and $\lambda=-1$, then the left coideal subalgebra $\langle v_a,v_b\rangle$ of $\B(V)$ has an extension $z$ in degree $11$, where $z\in\langle v_a,v_b,v_c\rangle(11)\subset\B(V)(11)$.
\item If $t^2+t+1=0$ and $\lambda =-t^{-1}$, then all left coideal subalgebras of $\B(V)$ in the category of $\ndN_0$-graded $\fK G_{\mathcal{T}}$-comodules are generated in degree one. 
\end{enumerate}  
\end{thm}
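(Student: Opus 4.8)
The four assertions fall into the three ``positive'' cases (1)--(3), where a concrete non-primitively generated left coideal subalgebra is produced, and the ``rigidity'' case (4), where all of them must be excluded; the real work is in (4), so I would treat (1)--(3) only briefly. For (1), one reads off from Table~\ref{tab:twococycleT} that $q_{a,b}q_{c,a}q_{b,c}=t\lambda$, and since $\lambda^2=t^4$ this product squares to $t^6\ne 1$, hence is $\ne -1$; so by Lemma~\ref{le:degreetworel}~(2) the element $x=\mu(\id-c_V+c_V{}^2)(v_a\ot v_b)=v_av_b-tv_cv_a+tv_bv_c$ is non-zero, and Theorem~\ref{thm:cocycle}~(2) (applicable since $a\tri b=c\ne b$) yields the claim. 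For (2) one checks with Table~\ref{tab:rackTetrahedron} that, in the notation of Theorem~\ref{thm:degree3} applied to the distinct elements $a,b,c$ with $a\tri b\ne b$, one has $d=c\tri(b\tri a)=a$ and $e=d\tri(c\tri b)=b$, so $z=Z(a,b,c)$ and $S=\{v_a,v_b\}$; moreover $\B(V)$ is slim in degree two in both sub-cases (again $t\lambda=-1$), so Theorem~\ref{thm:degree3}~(1) shows $\langle v_a,v_b,z\rangle$ is a left coideal subalgebra, and it only remains to verify $z\notin\langle v_a,v_b\rangle$, which follows from a $G_{\mathcal{T}}$-degree comparison (or a dimension count via Proposition~\ref{prop:5184}). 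Part (3) is the exceptional case $t=1$, $\lambda=-1$, where $\B(V)$ is finite-dimensional and the degree-$11$ extension $z$ of $\langle v_a,v_b\rangle$ is exhibited by a Gröbner-basis computation in the manner of Theorem~\ref{thm:T6rack}, with $\langle v_a,v_b,z\rangle$ a left coideal subalgebra by Corollary~\ref{cor:lcsaExt}.

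For (4) the plan is to argue purely numerically, using the dimensions from Proposition~\ref{prop:5184} together with the bosonization/duality mechanism of Theorem~\ref{thm:mulbijnoextension} and Corollary~\ref{cor:noextension}~(2), now applied directly to an arbitrary left coideal subalgebra rather than to a hypothetical extension. So let $C\subseteq\B(V)$ be a left coideal subalgebra in the category of $\ndN_0$-graded $\fK G_{\mathcal{T}}$-comodules with $C\ne\fK 1$. By Lemma~\ref{le:degreeonepartlc}, $V_1:=C\cap V=C(1)$ is non-zero; since $\dim V_{g_x}=1$ for all $x\in\mathcal{T}$, it is a sum of some of the lines $V_{g_x}$, and the sum $V_2$ of the remaining lines is a $\fK G_{\mathcal{T}}$-subcomodule with $V=V_1\oplus V_2$. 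Running through the possibilities $\dim V_1\in\{1,2,3,4\}$ and reading off Proposition~\ref{prop:5184}~(1),(3) (so $\dim\langle W\rangle$ equals $3,72,1728,5184$ according as $\dim W=1,2,3,4$), one finds in every case
\[ \dim\langle V_1\rangle\cdot\dim\langle V_2\rangle=\dim\B(V)=5184 \]
(the products being $3\cdot1728$, $72\cdot72$, $1728\cdot3$, $5184\cdot1$).

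Now set $M=C^+\B(V)$. Since multiplication in $\B(V)$ is graded, $M$ is $\ndN_0$-graded with $M(1)=V_1$, hence $M^{\perp}\subseteq\B(V^*)$ has $M^{\perp}(1)=V_2{}^*$, and by Proposition~\ref{prop:complcsa} it is a left coideal subalgebra in the category of $\ndN_0$-graded $\fK G_{\mathcal{T}}$-comodules; in particular $\langle V_2{}^*\rangle\subseteq M^{\perp}$. By \cite[Corollary~6.3.10]{MR4164719} there is an $\ndN_0$-graded isomorphism $C\ot\B(V)/M\to\B(V)$, and $\pair$ induces a non-degenerate pairing of $\ndN_0$-graded $\fK G_{\mathcal{T}}$-comodules $M^{\perp}\ot\B(V)/M\to\fK$, so $\dim(\B(V)/M)=\dim M^{\perp}\ge\dim\langle V_2{}^*\rangle$. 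Using the self-duality of Proposition~\ref{prop:5184}~(4),(5), $\dim\langle V_2{}^*\rangle=\dim\langle V_2\rangle$, and $\langle V_1\rangle\subseteq C$ gives $\dim C\ge\dim\langle V_1\rangle$. Chaining these,
\[ \dim\B(V)=\dim C\cdot\dim(\B(V)/M)\ge\dim\langle V_1\rangle\cdot\dim\langle V_2\rangle=\dim\B(V), \]
so equality holds throughout; in particular $\dim C=\dim\langle V_1\rangle$, and since $\langle V_1\rangle\subseteq C$ we conclude $C=\langle V_1\rangle=\langle C(1)\rangle$ is generated in degree one.

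Granting Proposition~\ref{prop:5184}, the argument for (4) is essentially formal; the genuine obstacle is Proposition~\ref{prop:5184} itself, namely the computation of the dimensions $3,72,1728$ of the subalgebras generated by $1$-, $2$- and $3$-dimensional subcomodules (and $\dim\B(V)=5184$), which is done with GAP, and the verification of the self-dual isomorphism $f\colon V\to V^*$, a direct check against Table~\ref{tab:twococycleT} and Table~\ref{tab:GTactiondual}. One point worth flagging is that $f$ must be checked to send $G_{\mathcal{T}}$-homogeneous elements to $G_{\mathcal{T}}$-homogeneous elements, so that for any subcomodule $V_2\subseteq V$ the image is again a subcomodule of $V^*$ of the same dimension; this is what lets one read off $\dim\langle V_2{}^*\rangle=\dim\langle V_2\rangle$ from Proposition~\ref{prop:5184}~(3),(5) regardless of which subcomodule $V_2$ happens to be.
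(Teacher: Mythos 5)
Parts (1), (3) and (4) of your proposal are correct and essentially coincide with the paper's own route: (1) is the same computation $q_{a,b}q_{c,a}q_{b,c}=t\lambda$ with $(t\lambda)^2=t^6\ne 1$ followed by Theorem~\ref{thm:cocycle}~(2); (3) defers to the same Gr\"obner-basis computation (though your aside that $\B(V)$ is finite-dimensional for $t=1$ is false --- $c_V(v_a\ot v_a)=v_a\ot v_a$, so $\B(V)$ contains a polynomial ring --- but nothing depends on it); and your (4) is a correct direct unwinding of the argument the paper packages as Corollary~\ref{cor:noextension}~(2), using exactly the dimension bookkeeping of Proposition~\ref{prop:5184}. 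Your identification in (2) of $z$ with $Z(a,b,c)$, $d=a$, $e=b$, $S=\{v_a,v_b\}$ is also correct, and Theorem~\ref{thm:degree3}~(1) does yield the left coideal subalgebra property once one checks that \emph{every} product $q_{x,y}q_{x\tri y,x}q_{y,x\tri y}$ equals $t\lambda=-1$ (true, but it must be verified for all non-commuting pairs, not just $(a,b)$).

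The genuine gap is in part (2), at the step ``$z\notin\langle v_a,v_b\rangle$''. Theorem~\ref{thm:degree3}~(2) is unavailable here (both $a\tri b=c$ and $a\tri c=b\tri a$ hold), and neither substitute you offer works as stated. First, any degree or comparison argument presupposes $z\ne 0$, which is not automatic: one computes $\Delta_{1,2}(z)=(1+\lambda^3)\,v_c\ot v_bv_a$, so $z=0$ whenever $\lambda^3=-1$; one must check $\lambda^3=1$ in both sub-cases (true, but nowhere verified in your proposal). Second, the $G_{\mathcal{T}}$-degree comparison reduces to showing $g_cg_bg_a\ne g_a{}^3=g_b{}^3$ in $G_{\mathcal{T}}$ (the remaining monomials in $v_a,v_b$ have degrees with non-trivial image in $\mathbb{A}_4$, whereas $g_cg_bg_a$ maps to the identity). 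This inequality is true but genuinely delicate: both elements are central in $G_{\mathcal{T}}$, have the same image in $\mathrm{Inn}(\mathcal{T})\cong\mathbb{A}_4$ and in the abelianization, and in both sub-cases of (2) one has $\lambda=t^2$, so they even act by the same scalar $t^3=t\lambda$ on $V$ itself; separating them requires an auxiliary argument (e.g.\ the realization with $\lambda=-t^2$, on which they act by $t^3$ and $-t^3$ respectively). Finally, your parenthetical fallback ``a dimension count via Proposition~\ref{prop:5184}'' is inapplicable, since that proposition concerns the parameters $t^2+t+1=0$, $\lambda=-t^{-1}$ of case (4), not those of case (2). The paper sidesteps all of this by showing $z\ne 0$ from $\Delta_{1,2}(z)$ and then proving $z\notin\langle v_a,v_b\rangle$ via the Hurwitz orbit of $(c,b,a)$, which contains no triple with entries only in $\{a,b\}$.
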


\begin{proof}
(1) We have 
\begin{align*}
\Delta_{1,1}\left(\mu\left(1-c_V+c_V{}^2\right)\left(v_a\ot v_b\right)\right)&=\Delta_{1,1}\left(v_av_b-tv_cv_a+tv_bv_c\right)\\
&=\left(1+t\lambda\right)v_a\ot v_b 
\end{align*}
and $1+t\lambda\neq 0$ since $t^6\neq 1$ and $\lambda^2=t^4$ by assumption. Hence the claim follows from Theorem~\ref{thm:cocycle} (2). 

(2) First note that $\delta(z)=g_cg_a\ot z$. In $\B(V)$ the relation \[v_cv_a-v_bv_c+\lambda v_av_b=0\] holds. Indeed, we have \[\Delta_{1,1}(v_cv_a-v_bv_c+\lambda v_av_b)=(1+\lambda t)v_c\ot v_a\] and $1+\lambda t=0$ for the considered parameters $\lambda$ and $t$. Using this relation and Equation~(\ref{eq:partial1n-1}) we get 
\begin{align*}
\Delta_{1,2}(z)&=v_c\ot (1+\lambda^3)v_bv_a+v_d\ot (\lambda t^{-1}v_cv_a-\lambda tv_av_b+\lambda^2 v_bv_c) \\
&=v_c\ot (1+\lambda^3)v_bv_a \in V\ot \langle v_a,v_b\rangle(2).
\end{align*}
Hence by Corollary~\ref{cor:lcsaExt}, it follows that $\langle v_a,v_b,z\rangle$ is a left coideal subalgebra. Moreover, $z\neq 0$ since $(1+\lambda^3)v_bv_a\neq 0\in\B(V)$. To prove that $z\notin\langle v_a, v_b\rangle$ it suffices to prove that the Hurwitz orbit of the triple $(c,b,a)\in \mathcal{T}^3$ does not contain a triple with only $a$ and $b$ as entries. Then it follows that $\Delta_{1^3}(x)\neq \Delta_{1^3}(z)$ and thus $x\neq z$ for all elements $x\in\langle v_a,v_b\rangle(3)$. One checks that the Hurwitz orbit of $(c,b,a)\in \mathcal{T}^3$ is the union of the $\sigma_1\sigma_2$-orbits 
\begin{align*}
&\{(a,c,b),(b,a,c),(c,b,a)\},\{(a,b,d),(d,a,b),(b,d,a)\},\\
&\{(a,d,c),(c,a,d),(d,c,a)\},\{(b,c,d),(d,b,c),(c,d,b)\}.
\end{align*}

(3) We found this extension using the GAP package GBNP for computing Gröbner bases (\cite{GAP4}, \cite{GBNP1.0.5}). There we used the realization of the braided vector space $V$ over the finite enveloping group $\overline{G_{\mathcal{T}}}=G_{\mathcal{T}}/\langle g_a{}^3\rangle$ (and Proposition~\ref{prop:chooseenvelopingrealization} (2))\footnote{The braided vector space $V$ can be realized as Yetter-Drinfeld module over the quotient $\overline{G_{\mathcal{T}}}$ since $g_a{}^3$ acts trivially in the representation $\rho_{t,\lambda}$ since $t^3=1$ by assumption.}. 

(4) Let $C\subseteq\B(V)$, $C\neq\fK 1$, be a left coideal subalgebra in the category of $\ndN_0$-graded $\fK G_{\mathcal{T}}$-comodules. Then $V\cap C\neq 0$ by Lemma~\ref{le:degreeonepartlc} and $V_1=V\cap C$ is a $\fK G_{\mathcal{T}}$-subcomodule by assumption. Since $\fK G_{\mathcal{T}}$ is cosemisimple, there exists a $\fK G_{\mathcal{T}}$-subcomodule $V_2$ such that $V=V_1\oplus V_2$. Then $\dim\langle V_1\rangle\dim\langle V_2\rangle=\dim\B(V)$ by Proposition~\ref{prop:5184} (1) and (3) and $\dim\langle V_2\rangle=\dim\langle V_2{}^*\rangle$ by Proposition~\ref{prop:5184} (3)  and (5). Hence the claim follows from Corollary~\ref{cor:noextension} (2). 
\end{proof}
 
\begin{rema}\label{rema:72dim}
    The Nichols algebra in Theorem~\ref{thm:Track} (2) with parameters $t=-1$ and $\lambda=1$ appeared in \cite[Theorem~6.15]{MR1994219}. It has dimension $72$.  
\end{rema}

\section{The rack $\mathcal{B}$ associated with the faces of a cube}\label{se:cube}

Let $\mathcal{B}=\{a,b,c,d,e,f\}$ be the rack associated to the conjugacy class of $4$-cycles of the symmetric group $\mathbb{S}_4$, where $a=(2\ 3\ 4\ 5)$, $b=(1\ 5\ 6\ 3)$, $c=(1\ 2\ 6\ 4)$, $d=(1\ 3\ 6\ 5)$, $e=(1\ 4\ 6\ 2)$ and $f=(2\ 5\ 4\ 3)$ (see e.g. \cite[Example 2.2]{MR2803792}). The rack structure is explicitly given in Table~\ref{tab:rackCubic}.  
\begin{table}[h]
\[\begin{array}{c|cccccc}
\tri & a & b & c & d & e & f  \\
\hline
a & a & c & d & e & b & f\\
b & e & b & a & d & f & c\\
c & b & f & c & a & e & d\\
d & c & b & f & d & a & e\\
e & d & a & c & f & e & b\\
f & a & e & b & c & d & f\\
\end{array}\]
\caption{The rack $\mathcal{B}$}
\label{tab:rackCubic}
\end{table}

The rack $\mathcal{B}$ has also a geometric interpretation, where $\{a,b,c,d,e,f\}$ represents the set of faces of a cube with center $0$ (see e.g. \cite[Example~1.3,\ Example~3.5]{MR1994219}). The action of an element $x\in\mathcal{B}$ is given by $\pi/2$-rotation of the cube around the center of face $x$ with a fixed orientation (right-hand rule pointing the thumb to $x$, see Figure~\ref{fig:cube}). 

\begin{figure}[h]
\begin{tikzcd}
[x={(2cm,0cm)},
y={(0cm,2cm)},
z={({0.5*cos(45)},{0.5*sin(45)})},
]
\draw[] (0.5,0.95,0.5) node{a};
\draw[] (0.4,0.3,0.5) node{b};
\draw[] (1,0.5,0.5) node{c};

\draw[] (0.9,-0.55,0.5) node{f};
\draw [dashed,->] (0.8,-0.5,0.5)
    arc [start angle=-90, end angle=-180,
         x radius=0.6cm, y radius=0.4cm];

\draw[] (1.75,0.8,0.5) node{\varphi_a};
\draw [<-] (1.75,0.9)
    arc [start angle=0, end angle=-200,
         x radius=2.1cm, y radius=1.1cm];

\draw[] (0,0,0) -- (1,0,0) -- (1,1,0) -- (0,1,0) -- (0,0,0);
\draw[] (1,0,0) -- (1,0,1) -- (1,1,1) -- (1,1,0);
\draw[] (1,1,1) -- (0,1,1) -- (0,1,0);
\draw[densely dashed] (0,0,0) -- (0,0,1) -- (1,0,1);
\draw[densely dashed] (0,0,1) -- (0,1,1);
\end{tikzcd}
\caption{The rack $\mathcal{B}$ as rack of the faces of a cube}
\label{fig:cube}
\end{figure}
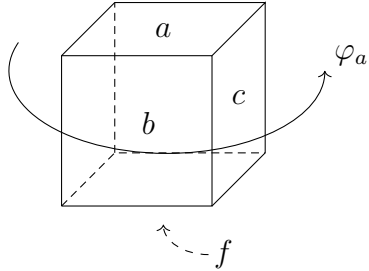

\begin{rema}
The rack $\mathcal{B}$ is braided (see \cite[Example~5]{MR2891215}), indecomposable and generated by each subset with two not-commuting elements.
\end{rema}

\begin{lem}\label{le:centralizerEnvGroupCubicRack}
\begin{enumerate}
    \item  \cite[Lemma 5.10]{MR2803792} The centralizer $G_{\mathcal{B}}{}^{g_a}$ of $g_a\in G_{\mathcal{B}}$ is the abelian subgroup generated by $\{g_a,g_f\}$. Moreover, the relation $g_a{}^4=g_f{}^4$ holds in $G_{\mathcal{B}}$.
    \item There is a group isomorphism \[G_{\mathcal{B}}{}^{g_a}\rightarrow \mathbb{Z}^2/4(e_1-e_2), \ a\mapsto e_1, f\mapsto e_2,\] where $e_1,e_2$ are generators of $\mathbb{Z}^2$. 
    \item Let $\rho$ be a one-dimensional representation of the centralizer $G_{\mathcal{B}}{}^{g_a}$. Then there are an element $t\in\fK^{\times}$ and an element $\lambda\in\fK^{\times}$ with $\lambda^4=1$ such that $\rho$ is isomorphic to the representation 
    \begin{align*}
        \rho_{t,\lambda}:\ G_{\mathcal{B}}{}^{g_a}&\rightarrow\mathrm{Aut}(\fK v)\\
        g_a&\mapsto (v\mapsto tv)\\
        g_f&\mapsto (v\mapsto\lambda tv).
    \end{align*}
\end{enumerate}    
\end{lem}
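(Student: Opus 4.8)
The plan is to take part~(1) as given and derive the rest from it. Part~(1) is \cite[Lemma~5.10]{MR2803792}; alternatively the relation $g_a^4=g_f^4$ follows from Lemma~\ref{le:relenvelgroup}, since Table~\ref{tab:rackCubic} shows that $\varphi_a$ acts on $\mathcal{B}$ as the $4$-cycle $(b\ c\ d\ e)$ and fixes $a$ and $f$, so $\operatorname{ord}(\varphi_a)=4$. Part~(3) is then immediate from~(1): a one-dimensional representation $\rho$ of the abelian group $G_{\mathcal{B}}{}^{g_a}=\langle g_a,g_f\rangle$ is determined by $t:=\rho(g_a)\in\fK^{\times}$ and $\mu:=\rho(g_f)\in\fK^{\times}$; the relation $g_a^4=g_f^4$ forces $t^4=\mu^4$, hence $\lambda:=\mu t^{-1}$ satisfies $\lambda^4=1$ and $\rho=\rho_{t,\lambda}$. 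Conversely, for any such pair $(t,\lambda)$ the assignments $g_a\mapsto t$, $g_f\mapsto\lambda t$ respect $g_a^4=g_f^4$, so $\rho_{t,\lambda}$ is a well-defined representation, and one-dimensional representations with equal character are isomorphic.

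For part~(2) I would first produce the surjection. Define $\phi\colon\ndZ^2\to G_{\mathcal{B}}{}^{g_a}$ by $e_1\mapsto g_a$, $e_2\mapsto g_f$; this is well defined, by~(1) it is surjective, and $\phi\bigl(4(e_1-e_2)\bigr)=g_a^4 g_f^{-4}=1$, so $\phi$ induces a surjection $\bar\phi\colon\ndZ^2/4(e_1-e_2)\to G_{\mathcal{B}}{}^{g_a}$. In the new basis $u:=e_1-e_2$, $v:=e_2$ the source is $\ndZ v\oplus(\ndZ/4\ndZ)u$. To locate $\ker\bar\phi$ I would use abelianisation: since $\mathcal{B}$ is indecomposable, $G_{\mathcal{B}}^{\mathrm{ab}}\cong\ndZ$ and the abelianisation $\psi\colon G_{\mathcal{B}}\to\ndZ$ satisfies $\psi(g_x)=1$ for all $x\in\mathcal{B}$; then $\psi\bar\phi$ maps $v\mapsto 1$ and $u\mapsto 0$, so $\ker\bar\phi\subseteq(\ndZ/4\ndZ)u$, and $\bar\phi$ is an isomorphism if and only if the image $g_ag_f^{-1}$ of $u$ has order exactly $4$ in $G_{\mathcal{B}}$. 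Its order divides $4$ (the two elements commute and $g_a^4=g_f^4$), and it is not $1$ since $g_a\neq g_f$ ($\mathcal{B}$ is injective by Lemma~\ref{le:injectiverack}).

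The only genuinely delicate point — and the one I expect to be the main obstacle — is to rule out order $2$, i.e.\ to show $g_a^2\neq g_f^2$ in $G_{\mathcal{B}}$. This is invisible in the abelianisation (which kills $g_ag_f^{-1}$) and in both standard realizations of $\mathcal{B}$: with the listed permutations one has $f=a^{-1}$, and in the rotation group $\mathbb{S}_4$ of the cube the faces $a,f$ are opposite, so their $\pi/2$-rotations are mutually inverse — in either case $g_a^2$ and $g_f^2$ have the same image. One therefore needs a non-abelian quotient of $G_{\mathcal{B}}$ that still sees the central element $g_a^4$. The robust route, consistent with the GAP computations used elsewhere in the paper, is to verify $g_a^2\neq g_f^2$ directly in the finite enveloping group $\overline{G_{\mathcal{B}}}=G_{\mathcal{B}}/\langle g_a^4\rangle$; this suffices because $g_a^2 g_f^{-2}\in\langle g_a^4\rangle$ would force $g_a^2 g_f^{-2}=1$ upon applying $\psi$. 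Conceptually, the same fact is expressed by the binary octahedral group $2O$, the double cover of $\mathbb{S}_4$: a $\pi/2$-face-rotation lifts to an element $r$ of order $8$ with $r^4=-1$, the lifts attached to opposite faces are $r$ and $r^{-1}$, and the induced homomorphism $G_{\mathcal{B}}\to 2O$ sends $g_a^2\mapsto r^2$ and $g_f^2\mapsto r^{-2}$, which differ since $r^4\neq 1$. Granting $g_a^2\neq g_f^2$, the element $g_ag_f^{-1}$ has order $4$, $\bar\phi$ is an isomorphism, and~(2) follows.
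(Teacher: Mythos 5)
Your argument is correct, and its skeleton coincides with the paper's proof of (2): both reduce the statement, via the surjection $\mathbb{Z}^2/4(e_1-e_2)\twoheadrightarrow G_{\mathcal{B}}{}^{g_a}$ together with $g_a{}^4=g_f{}^4$ and $g_a\neq g_f$, to the single delicate claim $g_a{}^2\neq g_f{}^2$, and both settle that claim by mapping $\mathcal{B}$ into a two-dimensional matrix group where the squares separate. You correctly identify this as the crux (it is exactly the point the paper's proof is devoted to), and your explicit use of the abelianization $G_{\mathcal{B}}^{\mathrm{ab}}\cong\mathbb{Z}$ to confine $\ker\bar\phi$ to the torsion part $(\mathbb{Z}/4\mathbb{Z})(e_1-e_2)$ makes precise a reduction the paper leaves implicit; it also justifies your GAP fallback in $\overline{G_{\mathcal{B}}}$. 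The only genuine divergence is the choice of realization: the paper exhibits $\mathcal{B}$ as an explicit conjugacy class $\{A,\dots,F\}$ in $\operatorname{GL}(2,\mathbb{F}_9)$ generating the unitary group $U(2,3)$, with $A=\operatorname{diag}(1,i)$, $F=\operatorname{diag}(i,1)$ and hence $A^2=\operatorname{diag}(1,-1)\neq\operatorname{diag}(-1,1)=F^2$, whereas you use the binary octahedral group $2O\subset SU(2)$ in characteristic zero, where the half-angle lifts of the six $\pi/2$ face rotations form a single $6$-element conjugacy class (conjugation-invariant since it is cut out by the trace, of the right size since the centralizer of $r$ has order $8$) containing both $r$ and $r^{-1}$, and $r^2\neq r^{-2}$ because $r^4=-1$. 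These are essentially the same $2$-dimensional representation in different characteristics, and both verifications come down to the same diagonal computation; the paper's choice has the minor advantage of being directly checkable by finite linear algebra, while yours is more conceptual and explains geometrically why a double cover is the natural place to see $g_a{}^2\neq g_f{}^2$. Parts (1) and (3) are handled as in the paper, which cites (1) and omits the (routine) proof of (3).
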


\begin{proof}
(2) Since $G_{\mathcal{B}}{}^{g_a}$ is abelian and $g_a{}^4=g_f{}^4$ by (1), it suffices to prove that $g_a{}^k\neq g_f{}^l$ for all $1<k,l<4$. Since $g_a\neq g_f$, it suffices to prove that $g_a{}^2\neq g_f{}^2$. To see this we consider the subset $M=\{A,B,C,D,E,F\}$ of the general linear group $\operatorname{GL}(2,\mathbb{F}_9)$, where 
\begin{align*}
& A=\left(\begin{array}{cc}
1 & 0 \\
0 & i	
\end{array}
\right), 
&& B=-(1+i)\left(\begin{array}{cc}
1 & -i \\
-i & 1	
\end{array}
\right), \\
& C=-(1+i)\left(\begin{array}{cc}
1 & -1 \\
1 & 1	
\end{array}
\right),
&& D=-(1+i)\left(\begin{array}{cc}
1 & i \\
i & 1	
\end{array}
\right), \\ 
& E=-(1+i)\left(\begin{array}{cc}
1 & 1 \\
-1 & 1	
\end{array}
\right), 
&& F=\left(\begin{array}{cc}
i & 0 \\
0 & 1	
\end{array}
\right),
\end{align*}
where $i^2=-1$.\footnote{The set $M\subset\operatorname{GL}(2,\mathbb{F}_9)$ is a conjugacy class and a generating set of the unitary group $U(2,3)$. The Hermitian form on the vector space $\mathbb{F}_9{}^2$ is defined with respect to the order two automorphism $\mathbb{F}_9\rightarrow\mathbb{F}_9$, $x\mapsto x^3$.} They form a rack, where the rack operation is conjugation, and the map 
\begin{align*}
\mathcal{B}\rightarrow M, \quad 
&a\mapsto A,\quad b\mapsto B,\quad c\mapsto C,\\&d\mapsto D,\quad e\mapsto E,\quad f\mapsto F
\end{align*}
is a rack isomorphism. Hence from $A^2\neq F^2$ in $\operatorname{GL}(2,\mathbb{F}_9)$, it follows that $g_a{}^2\neq g_f{}^2$ in $G_{\mathcal{B}}{}^{g_a}$.  
\end{proof}

Let $t\in\fK^{\times}$, $\lambda\in\fK^{\times}$ with $\lambda^4=1$ and let $\rho_{t,\lambda}$ be a one-dimensional representation of $G_{\mathcal{B}}{}^{g_a}$ as in  Lemma~\ref{le:centralizerEnvGroupCubicRack} (3). Let $V=M(g_a,\rho_{t,\lambda})$ and 
\begin{equation}\label{eq:Bbasis}
\begin{aligned}
&v_a\in V_{g_a}\setminus\{0\},\quad  
&v_b=g_c\cdot v_a\in V_{g_b},\quad &v_c=g_d\cdot v_a\in V_{g_c}\\
&v_d=g_e\cdot v_a\in V_{g_d},\quad
&v_e=g_b\cdot v_a\in V_{g_e},\quad
&v_f=g_b{}^2\cdot v_a\in V_{g_f}.
\end{aligned}
\end{equation}
The $\fK G_{\mathcal{B}}$-module structure on $V$ is given in Table~\ref{tab:twococycleB}.

\begin{table}[h]
\[\begin{array}{c|cccccc }
\cdot & v_a & v_b & v_c & v_d & v_e & v_f \\
\hline
g_a &tv_a &tv_c &tv_d &tv_e &tv_b & t\lambda v_f \\
g_b &v_e &tv_b &t^2\lambda v_a &t\lambda v_d &v_f &t^2\lambda^3 v_c \\
g_c &v_b &\lambda v_f &tv_c &t^2\lambda v_a &t\lambda v_e &t^2\lambda^2 v_d \\
g_d &v_c &t\lambda v_b &\lambda^2 v_f &t v_d &t^2\lambda v_a &t^2\lambda v_e  \\ 
g_e &v_d &t^2\lambda v_a &t\lambda v_c &\lambda^3 v_f &t v_e &t^2v_b \\
g_f &t\lambda v_a &t\lambda v_e &t\lambda v_b &t\lambda v_c &t\lambda v_d &tv_f \\
\end{array}\]
\caption{$\fK G_{\mathcal{B}}$-module structure on $V$}
\label{tab:twococycleB}
\end{table}

\begin{lem}\label{le:Bsubsetpaths}
    Let $x_1,x_2,\dots,x_5\in\mathcal{B}$ be pairwise distinct elements.
    \begin{enumerate}
        \item There is an element $g\in G_{\mathcal{T}}$ such that $\{g\tri a\}=\{x_1\}$.
        \item If $x_1\tri x_2=x_2$, then there is an element $g\in G_{\mathcal{T}}$ such that $\{g\tri a, g\tri f\}=\{x_1, x_2\}$.
        \item If $x_1\tri x_2\neq x_2$, then there is an element $g\in G_{\mathcal{T}}$ such that $\{g\tri a, g\tri b\}=\{x_1, x_2\}$.
        \item $x_1\tri x_2=x_2$ then there is an element $g\in G_{\mathcal{T}}$ such that $\{g\tri a, g\tri f, g\tri b\}=\{x_1, x_2, x_3\}$.
        \item If $x_1$, $x_2$ and $x_3$ do pairwise not commute, then there is an element $g\in G_{\mathcal{T}}$ such that $\{g\tri a, g\tri b, g\tri c\}=\{x_1, x_2, x_3\}$.
        \item $x_1\tri x_2=x_2$ and $x_3\tri x_4\neq x_4$ then there is an element $g\in G_{\mathcal{T}}$ such that $\{g\tri a, g\tri f, g\tri b, g\tri c\}=\{x_1, x_2, x_3, x_4 \}$.
        \item If $x_1\tri x_2=x_2$ and $x_3\tri x_4=x_4$, then there is an element $g\in G_{\mathcal{T}}$ such that $\{g\tri a, g\tri f, g\tri b, g\tri d\}=\{x_1, x_2, x_3, x_4 \}$.
        \item There is an element $g\in G_{\mathcal{T}}$ such that $\{g\tri a, g\tri b, g\tri c, g\tri d, g\tri e\}=\{x_1, x_2, x_3, x_4, x_5\}$.
    \end{enumerate}  
\end{lem}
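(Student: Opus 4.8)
The plan is to reduce each assertion to a transitivity statement for the canonical action of $G_{\mathcal{B}}$ on $\mathcal{B}$, which factors through $\mathrm{Inn}(\mathcal{B})$, the rotation group of the cube; all the needed combinatorics can be read off from Table~\ref{tab:rackCubic} (equivalently, from the picture of $\mathcal{B}$ as the faces of a cube). Two preliminary facts do most of the work. First, since $\mathcal{B}$ is indecomposable, $\mathrm{Inn}(\mathcal{B})$ is transitive on $\mathcal{B}$. Second, for every $g\in G_{\mathcal{B}}$ the map $g\tri{}\colon\mathcal{B}\to\mathcal{B}$ is a rack automorphism (by self-distributivity), hence it preserves the relation ``$x$ commutes with $y$'' and satisfies $g\tri(y\tri z)=(g\tri y)\tri(g\tri z)$; in particular $\varphi_x=g_x\tri{}$ for the generator $g_x$ of $G_{\mathcal{B}}$. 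From the table one reads that the commuting pairs of distinct elements are precisely $\{a,f\}$, $\{b,d\}$, $\{c,e\}$, that each $x\in\mathcal{B}$ commutes with a unique element $\bar x\neq x$, that $c=a\tri b$, and that $\varphi_x$ acts as a $4$-cycle on $\mathcal{B}\setminus\{x,\bar x\}$ (clear for $x=a$, where $\varphi_a=(b\ c\ d\ e)$ and $\bar a=f$, and inherited by every $x$ since $\mathrm{Inn}(\mathcal{B})$ is transitive and each $g\tri{}$ is an automorphism).

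I would first dispose of the ``complement'' cases. Part (1) is just transitivity. In (8), $\{x_1,\dots,x_5\}=\mathcal{B}\setminus\{y\}$ for a single $y$, so any $g$ with $g\tri f=y$ satisfies $g\tri\{a,b,c,d,e\}=\{x_1,\dots,x_5\}$. In (7), the four distinct elements $x_1,x_2,x_3,x_4$ form a union of two of the three opposite pairs, hence their complement is the third opposite pair; since $\mathrm{Inn}(\mathcal{B})$ is transitive on opposite pairs (move $a$, and $\{a,f\}=\{a,\bar a\}$ moves to $\{g\tri a,\overline{g\tri a}\}$), some $g$ sends $\{c,e\}=\mathcal{B}\setminus\{a,f,b,d\}$ onto that complement, whence $g\tri\{a,f,b,d\}=\{x_1,x_2,x_3,x_4\}$. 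In (6), the complement of $\{x_1,x_2,x_3,x_4\}$ cannot be an opposite pair (else $\{x_1,x_2,x_3,x_4\}$ would be a union of two opposite pairs, and, $\{x_1,x_2\}$ being one of them, $\{x_3,x_4\}$ would be the other, contradicting $x_3\tri x_4\neq x_4$), so it is an adjacent pair, exactly like $\{d,e\}=\mathcal{B}\setminus\{a,f,b,c\}$; transitivity of $\mathrm{Inn}(\mathcal{B})$ on adjacent pairs, established in (3) below, then gives a $g$ with $g\tri\{d,e\}=\mathcal{B}\setminus\{x_1,x_2,x_3,x_4\}$, so $g\tri\{a,f,b,c\}=\{x_1,x_2,x_3,x_4\}$.

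Next I would treat the small configurations. For (2), choose $g$ with $g\tri a=x_1$; then $g\tri f$ is the unique element $\neq x_1$ commuting with $x_1$, namely $x_2$ (the hypothesis $x_1\tri x_2=x_2$, $x_1\neq x_2$ forces $x_2=\overline{x_1}$). For (3), choose $g_0$ with $g_0\tri a=x_1$; then $g_0\tri b$ and $x_2$ both lie in $\mathcal{B}\setminus\{x_1,\overline{x_1}\}$, a single $\varphi_{x_1}$-orbit, so $g=g_{x_1}^{k}g_0$ satisfies $g\tri a=x_1$ and $g\tri b=x_2$ for a suitable $k$; this proves transitivity on ordered adjacent pairs. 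For (4), take such a $g$ with $g\tri a=x_1$ (hence $g\tri f=x_2$) and replace it by $g_{x_1}^{k}g$: this keeps $g\tri a=x_1$, $g\tri f=x_2$ while running $g\tri b$ through all of $\mathcal{B}\setminus\{x_1,x_2\}$, so some $k$ gives $g\tri b=x_3$. For (5), I would first check that a pairwise non-commuting triple is rigid: the elements adjacent to both $x_1$ and $x_2$ are exactly $x_1\tri x_2$ and $x_2\tri x_1$ (using the $4$-cycle structure of $\varphi_{x_1}$, the fact that $\varphi_{x_1}^2$ pairs the four elements of $\mathcal{B}\setminus\{x_1,\overline{x_1}\}$ into commuting pairs, and the braided-rack identity $x_1\tri(x_2\tri x_1)=x_2$), so $x_3\in\{x_1\tri x_2,\ x_2\tri x_1\}$; then by (3) choose $g$ with $(g\tri a,g\tri b)$ equal to $(x_1,x_2)$ or $(x_2,x_1)$ accordingly, and note $g\tri c=g\tri(a\tri b)=(g\tri a)\tri(g\tri b)$ equals $x_1\tri x_2$, resp.\ $x_2\tri x_1$, so $\{g\tri a,g\tri b,g\tri c\}=\{x_1,x_2,x_3\}$.

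I do not expect a single deep obstacle; the difficulty is bookkeeping --- certifying that in each part the configuration singled out by the (only partial) commuting hypotheses lies in the same $\mathrm{Inn}(\mathcal{B})$-orbit as the reference configuration built from $a,b,c,d,e,f$. The complement trick handles (6)--(8) cleanly once one knows the three orbit types of pairs and the single orbit of faces, and the pair/triple cases reduce to the $4$-cycle structure of the $\varphi_x$. The finicky points are the rigidity claim in (5) (an adjacent pair has exactly two completions to a pairwise non-commuting triple) and the small case distinctions in (6) and (7); all of these are elementary inspections of the cube, equivalently of Table~\ref{tab:rackCubic}, that I would carry out explicitly.
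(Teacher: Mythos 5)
Your proposal is correct and follows essentially the same route as the paper: the paper likewise appeals to the cube picture (transitivity of the rotations, commuting elements being opposite faces, $g_a$ acting as a $4$-cycle on $\{b,c,d,e\}$) and only writes out case (4) explicitly, by exactly the argument you give there ($g=g_{x_1}^k h$ with $h$ from part (2)). You simply carry out the remaining bookkeeping — including the complement trick for (6)--(8) and the rigidity of non-commuting triples in (5) — that the paper leaves to the reader.
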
 

\begin{proof}
One can easily convince oneself of the correctness of the claim by using the geometrical idea of the rack as faces of a cube with rotations as action (see above). Note that commuting elements of $\mathcal{B}$ belong to opposite faces. Hence in order not to be too detailed we give the explicit proof only for (4). Note first that by (2) there is an element $h\in G_{\mathcal{T}}$ such that $\{x_1,x_2\}=\{h\tri a,h\tri f\}$. Since $x_3\notin \{x_1,x_2\}$, it follows that $h^{-1}\tri x_3\in \{b,c,d,e\}$. Since $a$ and $f$ are fixed by $g_a$ and since $g_a$ acts transitively on $\{b,c,d,e\}$, there exists $k\ge 0$ such that $g=g_a^kh$ and $\{g\tri a,g\tri f,g\tri b\}=\{x_1,x_2,x_3\}$.  
\end{proof}

\begin{lem}\label{le:Bcomoduletrafo}
Let $t\in\fK^{\times}$, $\lambda\in\fK^{\times}$ with $\lambda^4=1$ and $\rho_{t,\lambda}$ a one-dimensional representation of $G_{\mathcal{B}}{}^{g_a}$ as in  Lemma~\ref{le:centralizerEnvGroupCubicRack} (3). Let $V=M(g_a,\rho_{t,\lambda})$. Let $W\subseteq V$ be a $\fK G_{\mathcal{B}}$-subcomodule. Let $S=\{x\in\mathcal{B}\mid W\cap V_{g_x}\neq 0\}$. Note that $\vert S\vert=\dim W$ since $\dim V_{g_x}=1$ for all $x\in\mathcal{B}$ and $W$ is a $\fK G_{\mathcal{B}}$-subcomodule. 
  \begin{enumerate}
     \item  If $\dim W=1$, then there is an element $g\in G_{\mathcal{B}}$ such that $W=g\cdot V_{g_a}$. 
     \item  Assume that $\dim W=2$. If the elements of $S$ commute, then there is an element $g\in G_{\mathcal{B}}$ such that $W=g\cdot (V_{g_a}\oplus V_{g_f})$. If the elements of $S$ do not commute, then there is an element $g\in G_{\mathcal{B}}$ such that $W=g\cdot (V_{g_a}\oplus V_{g_b})$. 
     \item  Assume that $\dim W=3$. If two elements of $S$ commute, then there is an element $g\in G_{\mathcal{B}}$ such that $W=g\cdot (V_{g_a}\oplus V_{g_b}\oplus V_{g_f})$. If the elements of $S$ are pairwise not commuting, then there is an element $g\in G_{\mathcal{B}}$ such that $W=g\cdot (V_{g_a}\oplus V_{g_b}\oplus V_{g_c})$.
     \item Assume that $\dim W=4$. If precisely two elements of $S$ commute, then there is an element $g\in G_{\mathcal{B}}$ such that $W=g\cdot (V_{g_a}\oplus V_{g_b}\oplus V_{g_c}\oplus V_{g_f})$. If $S$ consists of two pairs of commuting elements, then there is an element $g\in G_{\mathcal{B}}$ such that $W=g\cdot (V_{g_a}\oplus V_{g_b}\oplus V_{g_d}\oplus V_{g_f})$. 
     \item If $\dim W=5$, then there is an element $g\in G_{\mathcal{B}}$ such that $W=g\cdot\oplus_{x\in\{a,b,c,d,e\}} V_{g_x}$.
 \end{enumerate}
\end{lem}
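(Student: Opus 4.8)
The plan is to follow the same route as in the proof of Lemma~\ref{le:Tcomoduletrafo}: reduce the classification of $\fK G_{\mathcal{B}}$-subcomodules of $V$ up to the automorphisms $\alpha_g$ of Lemma~\ref{le:groupactioniso} to the purely combinatorial Lemma~\ref{le:Bsubsetpaths} about orbits of subsets of $\mathcal{B}$ under the canonical $G_{\mathcal{B}}$-action. First I would note that a $\fK G_{\mathcal{B}}$-subcomodule $W$ of $V$ is the same as a $G_{\mathcal{B}}$-graded subspace, so $W=\bigoplus_{x\in S}V_{g_x}$ with $S=\{x\in\mathcal{B}\mid W\cap V_{g_x}\neq 0\}$; since $\dim V_{g_x}=1$ for every $x\in\mathcal{B}$ we get $|S|=\dim W$, and $S$ may be an arbitrary subset of $\mathcal{B}$ (there is no submodule condition, and each $v_x$ is already $G_{\mathcal{B}}$-colinear).

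Next I would record how the maps $\alpha_g\colon x\mapsto g\cdot x$ act on homogeneous components: for $v\in V_{g_x}$ one has $g\cdot v\in V_{gg_xg^{-1}}=V_{g_{g\tri x}}$, and since $\alpha_g$ is bijective and all components are at most one-dimensional, $\alpha_g(V_{g_x})=V_{g_{g\tri x}}$; hence $\alpha_g\bigl(\bigoplus_{x\in T}V_{g_x}\bigr)=\bigoplus_{x\in g\tri T}V_{g_x}$. In other words $\alpha_g$ realizes the canonical $G_{\mathcal{B}}$-action on subsets of $\mathcal{B}$, so each of the statements (1)--(5) is equivalent to saying that the subset $S$ attached to $W$ lies in the $G_{\mathcal{B}}$-orbit of the corresponding model subset $\{a\}$, $\{a,f\}$, $\{a,b\}$, $\{a,b,f\}$, $\{a,b,c\}$, $\{a,b,c,f\}$, $\{a,b,d,f\}$ or $\{a,b,c,d,e\}$. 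This is exactly the content of the respective items of Lemma~\ref{le:Bsubsetpaths} once one checks the commutation pattern of each model subset, using that two distinct elements of $\mathcal{B}$ commute precisely when they are opposite faces of the cube; thus the commuting pairs are $\{a,f\}$, $\{b,d\}$, $\{c,e\}$, so e.g.\ $\{a,f\}$ is a commuting pair while $\{a,b\}$ is not, $\{a,b,f\}$ has exactly one commuting pair, $\{a,b,c\}$ has none, $\{a,b,c,f\}$ has exactly one and $\{a,b,d,f\}$ is a union of two commuting pairs.

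Finally I would verify that the case distinctions in (2)--(4) are exhaustive. For $|S|=2$ this is clear. For $|S|=3$ one observes that $S$ can contain at most one commuting pair (otherwise two distinct opposite-face pairs would overlap), so $S$ has exactly one commuting pair or none. For $|S|=4$ one passes to the two-element complement $\mathcal{B}\setminus S$: since the opposite-face involution is a fixed-point-free matching, $\mathcal{B}\setminus S$ is either a commuting pair, in which case $S$ is the union of the other two commuting pairs, or a non-commuting pair, in which case a short count shows $S$ contains exactly one commuting pair. Feeding the matching sub-case of Lemma~\ref{le:Bsubsetpaths} into the second paragraph then produces the required $g\in G_{\mathcal{B}}$ in every case. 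I do not expect a genuine obstacle: the substantive work --- exhibiting the group elements $g$ --- is already carried out in Lemma~\ref{le:Bsubsetpaths}, and what remains is the elementary bookkeeping of commuting pairs and matching it to the hypotheses there.
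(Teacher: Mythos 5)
Your proposal is correct and follows essentially the same route as the paper, which likewise deduces the lemma directly from Lemma~\ref{le:Bsubsetpaths} using that $\dim V_{g_x}=1$ for all $x\in\mathcal{B}$; your additional bookkeeping (identifying $\alpha_g$ with the $G_{\mathcal{B}}$-action on supports and checking exhaustiveness of the commutation cases via the opposite-face matching) just makes explicit what the paper leaves implicit.
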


\begin{proof}
The claim follows from Lemma~\ref{le:Bsubsetpaths} since $\dim V_x=1$ for all $x\in\mathcal{B}$.
\end{proof}

\begin{prop}\label{prop:576NA}
Let $t=-1$ and $\lambda =1$ be the parameters of a one-dimensional representation of $G_{\mathcal{B}}{}^{g_a}$ as in Lemma~\ref{le:centralizerEnvGroupCubicRack}. Let $V=M(g_a,\rho_{t,\lambda})$ be the corresponding $\fK G_{\mathcal{B}}$-Yetter-Drinfeld module. Let $v_a$, $v_b$, $v_c$, $v_d$, $v_e$ and $v_f$ as in Equation~(\ref{eq:Bbasis}). Let $W\subset V$ be a $\fK G_{\mathcal{B}}$-subcomodule. Let $S=\{x\in\mathcal{B}\mid W\cap V_{g_x}\neq 0\}$.
\begin{enumerate}
\item The Nichols algebra $\B(V)$ has dimension $576$.
\item 
\begin{align*}
    &\dim\langle V_{g_a} \rangle=2,\\
    &\dim\langle V_{g_a}+ V_{g_b}\rangle=6,\quad \dim\langle V_{g_a}+ V_{g_f}\rangle=4, \\ 
    &\dim\langle V_{g_a}+V_{g_b}+V_{g_c}\rangle=24,\quad \dim\langle V_{g_a}+V_{g_b}+V_{g_f}\rangle=24,\\
    &\dim\langle V_{g_a}+V_{g_b}+V_{g_c}+V_{g_d}\rangle=96,\\ 
    &\dim\langle V_{g_a}+V_{g_b}+V_{g_d}+V_{g_f}\rangle=144,\\
    &\dim\langle V_{g_a}+V_{g_b}+V_{g_c}+V_{g_d}+V_{g_e}\rangle=288.
\end{align*}
\item If $\dim W=1$, then $\dim\langle W\rangle=2$.
\item Assume that $\dim W=2$. If the elements of $S$ commute, then $\dim\langle W\rangle=4$. If the elements of $S$ do not commute, then $\dim\langle W\rangle=6$.  
\item Assume that $\dim W=3$. If two elements of $S$ commute, then $\dim\langle W\rangle=24$. If the elements of $S$ do pairwise not commute, then $\dim\langle W\rangle=24$. 
\item Assume that $\dim W=4$. If precisely two elements of $S$ commute, then $\dim\langle W\rangle=96$. If $S$ consists of two pairs of commuting elements, then $\dim\langle W\rangle=144$.  
\item If $\dim W=5$, then $\dim\langle W\rangle=288$.
\item The linear map 
\begin{align*}
   F:V\rightarrow V^*,\quad  &v_c\mapsto v_e{}^*, v_e\mapsto v_c{}^*\\
   &v_x\mapsto v_x{}^* \text{ for } x\in\{a,b,d,f\}
\end{align*} 
is an isomorphism of the braided vector spaces $V$ and $V^*$ with respect to the Yetter-Drinfeld braidings $c_V$ and $c_{V^*}$.
\item Let $W^*\subseteq V^*$ be the dual $\fK G_{\mathcal{B}}$-subcomodule of $W$ and $\langle W^*\rangle\subset\B(V^*)$ the subalgebra generated by $W^*$. Then $\dim\langle W\rangle=\dim\langle W^*\rangle$. 
\end{enumerate}
\end{prop}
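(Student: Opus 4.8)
The plan is to follow the proof of Proposition~\ref{prop:5184} almost verbatim: reduce the whole statement to one explicit computation in $\B(V)$ and then spread it over all $\fK G_{\mathcal{B}}$-subcomodules using the symmetry of the rack $\mathcal{B}$.

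Part~(1) is the known value of $\dim\B(V)$ for the cube rack with the constant two-cocycle $-1$ (see e.g. \cite{MR1994219}). For the computational input I would first observe that, since $t=-1$ and $\lambda=1$ force $t{}^4=\lambda{}^4=1$, the central element $g_a{}^4$ acts trivially on $V=M(g_a,\rho_{t,\lambda})$, so by Proposition~\ref{prop:chooseenvelopingrealization}~(2) the braided vector space may be realised over the finite enveloping group $\overline{G_{\mathcal{B}}}=G_{\mathcal{B}}/(g_a{}^4)$. In this finite realisation I would compute the eight dimensions listed in~(2) with GAP and the package GBNP for Gröbner bases (\cite{GAP4}, \cite{GBNP1.0.5}), exactly as in Proposition~\ref{prop:5184}~(2). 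The point of choosing these eight generating subcomodules is that, by Lemma~\ref{le:Bcomoduletrafo}, every $\fK G_{\mathcal{B}}$-subcomodule $W\subseteq V$ is $G_{\mathcal{B}}$-conjugate to exactly one of them, the correct one being determined by $\dim W$ and by the commuting pattern of the set $S=\{x\in\mathcal{B}\mid W\cap V_{g_x}\neq 0\}$.

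Parts~(3)--(7) are then purely formal. Given $W$, Lemma~\ref{le:Bcomoduletrafo} yields $g\in G_{\mathcal{B}}$ with $W=g\cdot W_0$ for a standard subcomodule $W_0$ appearing in~(2); by Lemma~\ref{le:groupactioniso} the map $\alpha_g\colon\B(V)\to\B(V)$, $x\mapsto g\cdot x$, is an isomorphism of $\ndN_0$-graded algebras, and being an algebra map it sends $\langle W_0\rangle$ onto $\langle g\cdot W_0\rangle=\langle W\rangle$; hence $\dim\langle W\rangle=\dim\langle W_0\rangle$, the value computed in~(2). For part~(8) I would write down the $\fK G_{\mathcal{B}}$-coaction and action on $V^*$ by Lemma~\ref{le:dualYD} (part~(1) for the coaction, part~(4) for the action, which applies because $\mathcal{B}$ is braided), specialise to $t=-1,\lambda=1$, and verify $(F\ot F)c_V(v_x\ot v_y)=c_{V^*}(F\ot F)(v_x\ot v_y)$ on all $x,y\in\mathcal{B}$; this is a finite check, completely analogous to Proposition~\ref{prop:5184}~(4). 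The choice of $F$ is dictated by the requirement that the underlying transposition $\sigma=(c\ e)$ of $\mathcal{B}$ be compatible with inverting $G_{\mathcal{B}}$-degrees and with the two-cocycle $q$.

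For part~(9), the isomorphism of braided vector spaces $F$ induces an isomorphism $\B(V)\to\B(V^*)$ of $\ndN_0$-graded algebras, and $F$ carries $G_{\mathcal{B}}$-homogeneous elements to $G_{\mathcal{B}}$-homogeneous ones; hence $F^{-1}(W^*)\subseteq V$ is a $\fK G_{\mathcal{B}}$-subcomodule with $\dim F^{-1}(W^*)=\dim W^*=\dim W$ and $\dim\langle W^*\rangle=\dim\langle F^{-1}(W^*)\rangle$. Writing $F^{-1}(W^*)=\bigoplus_{x\in S}\fK v_{\sigma(x)}$, one checks that $\sigma=(c\ e)$ permutes the three commuting (opposite) pairs $\{a,f\}$, $\{b,d\}$, $\{c,e\}$ of $\mathcal{B}$, so $\sigma(S)$ has the same commuting pattern as $S$; therefore parts~(3)--(7) give $\dim\langle F^{-1}(W^*)\rangle=\dim\langle W\rangle$ (the cases $\dim W\in\{0,6\}$ being trivial), which is~(9). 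The real obstacle is part~(2): it carries all the genuinely new content and rests on a Gröbner basis computation in the finite enveloping group; once it is available, parts~(3)--(9) are routine, the only subtlety being the bookkeeping of commuting patterns under $\sigma$ in~(8) and~(9).
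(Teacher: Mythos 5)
Your proposal is correct and follows essentially the same route as the paper: the eight dimensions in (2) via GAP/GBNP over $\overline{G_{\mathcal{B}}}=G_{\mathcal{B}}/\langle g_a{}^4\rangle$, parts (3)--(7) by transporting $W$ to a standard subcomodule with Lemma~\ref{le:Bcomoduletrafo} and the graded algebra automorphism $\alpha_g$ of Lemma~\ref{le:groupactioniso}, part (8) by the explicit dual action from Lemma~\ref{le:dualYD}, and part (9) by combining the braided isomorphism $F$ with the observation that the permutation $(c\ e)$ preserves the commuting pattern of $S$ (the paper phrases this via an auxiliary map $\widetilde{F}:V\to V$ with $W^*=F(\widetilde{F}(W))$, which is the same argument as your $F^{-1}(W^*)$). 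No gaps.
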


\begin{proof}
(1) See e.g. \cite[Proposition 5.11]{MR2803792} or \cite[Theorem 6.12]{MR1994219}. 

(2)  We proved this using the GAP package GBNP for computing Gröbner bases (\cite{GAP4}, \cite{GBNP1.0.5}). There we used the realization of the braided vector space $V$ over the finite enveloping group $\overline{G_{\mathcal{B}}}=G_{\mathcal{B}}/\langle g_a{}^4\rangle$ (and Proposition~\ref{prop:chooseenvelopingrealization} (2))\footnote{The braided vector space $V$ can be realized as Yetter-Drinfeld module over the quotient $\overline{G_{\mathcal{B}}}$ since $g_a{}^4$ acts trivially in the representation $\rho_{t,\lambda}$ since $t^4=1$ by assumption.}. 

(3) If $\dim W=1$, then there is an element $g\in G_{\mathcal{T}}$ such that $W=g\cdot V_{g_a}$ by Lemma~\ref{le:Bcomoduletrafo}. The map \[\alpha_g:\B(V)\rightarrow \B(V),\quad x\mapsto g\cdot x\] is an isomorphism of $\ndN_0$-graded algebras by Lemma~\ref{le:groupactioniso}. Hence $\langle W\rangle\cong\langle V_{g_a}\rangle$ as $\ndN_0$-graded algebras and the claim follows from part (2).   
 
(4)-(7) The arguments are analogous to those in part (3). 

(8) Using Lemma \ref{le:dualYD} it follows that $\delta_{V^*}(v_x{}^*)=g_x{}^{-1}\ot v_x{}^*$ for all $x\in\mathcal{B}$ and the $\fK G_{\mathcal{B}}$-action on $V^*$ is given in Table ~\ref{tab:GBactiondual}.  
\begin{table}[h]
\[\begin{array}{c|cccccc }
\cdot & v_a{}^* & v_b{}^* & v_c{}^* & v_d{}^* & v_e{}^* & v_f{}^* \\
\hline
g_a{}^{-1} &-v_a{}^* &-v_e{}^* &-v_b{}^* &-v_c{}^* & -v_d{}^* & -v_f{}^* \\
g_b{}^{-1} &v_c{}^* &-v_b{}^* &v_f{}^* &-v_d{}^* & v_a{}^* &v_e{}^* \\
g_c{}^{-1} &v_d{}^* &v_a{}^* &-v_c{}^* &v_f{}^* &-v_e{}^* & v_b{}^*  \\
g_d{}^{-1} &v_e{}^* &-v_b{}^* &v_a{}^* &-v_d{}^* & v_f{}^* &v_c{}^*  \\ 
g_e{}^{-1} &v_b{}^* &v_f{}^* &-v_c{}^* &v_a{}^* & -v_e{}^* & v_d{}^* \\
g_f{}^{-1} & v_a{}^* &-v_c{}^* &-v_d{}^* &-v_e{}^* & -v_b{}^* &-v_f{}^* \\
\end{array}\]
\caption{$\fK G_{\mathcal{B}}$-module structure on $V^*$  }
\label{tab:GBactiondual}
\end{table} 
 Using this it is a simple calculation to check that \[(F\ot F)c_V(v_x\ot v_y)=c_{V^*}(F\ot F)(v_x\ot v_y)\] for all $x,y\in\mathcal{B}$ and hence the map $F:V\rightarrow V^*$ is a braided isomorphism. 

(9) Define the linear map 
 \begin{align*}
   \widetilde{F}:V\rightarrow V,\quad  &v_c\mapsto v_e, v_e\mapsto v_c\\
   &v_x\mapsto v_x \text{ for all } x\in\{a,b,d,f\}.
\end{align*}  
Then $\dim\langle\widetilde{F}(W)\rangle=\dim\langle W\rangle$ since by (3)-(7) the dimensions only depend on the dimension of $W$ and the number of pairs of commuting elements of $S$, but $F$ is a bijection and the number of pairs of commuting elements of $S$ equals the number of pairs of commuting elements of $\{x\in\mathcal{B}\mid \widetilde{F}(W)\cap V_{g_x}\neq 0\}$. Moreover, $W^*=F(\widetilde{F}(W))$ by definition. The isomorphism of braided vector spaces $F:V\rightarrow V^*$ in (8) induces an isomorphism $\B(V)\rightarrow\B(V^*)$ of $\ndN_0$-graded algebras. It follows that \[\dim\langle W^*\rangle=\dim\langle F(\widetilde{F}(W))\rangle=\dim\langle\widetilde{F}(W)\rangle=\dim\langle W\rangle.\]
\end{proof}

\begin{thm}\label{thm:Brack}
Let $t\in\fK^{\times}$, $\lambda\in\fK^{\times}$ with $\lambda^4=1$ be the parameters of a one-dimensional representation of $G_{\mathcal{B}}{}^{g_a}$ as in Lemma~\ref{le:centralizerEnvGroupCubicRack}. Let $V=M(g_a,\rho_{t,\lambda})$ be the corresponding $\fK G_{\mathcal{B}}$-Yetter-Drinfeld module. Let $v_a$, $v_b$, $v_c$, $v_d$, $v_e$ and $v_f$ as in Equation~(\ref{eq:Bbasis}). Then the following hold.
\begin{enumerate}
\item If $t\neq -\lambda$, then $\langle v_b,v_av_b-tv_cv_a+tv_bv_c\rangle$ or $\langle v_f,v_av_f-t\lambda v_fv_a\rangle$ is a left coideal subalgebra in $\B(V)$ in the category of $\ndN_0$-graded $\fK G_{\mathcal{B}}$-comodules, that is not generated in degree one. 
\item If $t=1$ and $\lambda=-1$, then there is an extension of the left coideal subalgebra $\langle v_a,v_b,v_c\rangle$ in degree 11, which is an element of the subalgebra $\langle v_a,v_b,v_c,v_d\rangle$. 
\item If $\lambda^2=-1$ and $t=-\lambda$, 
then there is an extension of the left coideal subalgebra $\langle v_a,v_b,v_c\rangle$ in degree 7 in the subalgebra $\langle v_a,v_b,v_c,v_d\rangle$.
\item If $t=-1$ and $\lambda=1$, then all left coideal subalgebras of $\B(V)$ in the category of $\ndN_0$-graded $\fK G_{\mathcal{B}}$-comodules are generated in degree one. 
\end{enumerate}  
\end{thm}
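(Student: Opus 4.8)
The four parts partition the parameter set $t\in\fK^\times$, $\lambda^4=1$ according to whether $t\neq -\lambda$ (part~(1)) or $t=-\lambda$. In the latter case $t^4=\lambda^4=1$, so $t\in\{1,-1,i,-i\}$, and these values give exactly $(t,\lambda)=(1,-1)$ (part~(2)), $t^2=\lambda^2=-1$ (part~(3)), and $(t,\lambda)=(-1,1)$ (part~(4)). The plan is to handle the parts in this order, with the essential work in part~(4).

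For part~(1) I would read the needed two-cocycle values off Table~\ref{tab:twococycleB}. As $a\tri b=c\neq b$, Lemma~\ref{le:degreetworel}~(2) gives that $v_av_b-tv_cv_a+tv_bv_c=\mu(\id-c_q+c_q^2)(v_a\ot v_b)$ vanishes exactly when $q_{a,b}q_{c,a}q_{b,c}=-1$; the table yields $q_{a,b}=t$, $q_{c,a}=1$, $q_{b,c}=t^2\lambda$, so this product equals $t^3\lambda$. As $a\tri f=f\neq a$, Lemma~\ref{le:degreetworel}~(1) gives that $v_av_f-t\lambda v_fv_a=\mu(\id-c_q)(v_a\ot v_f)$ vanishes exactly when $q_{a,f}q_{f,a}=1$, and the table yields $q_{a,f}=q_{f,a}=t\lambda$, so this product equals $(t\lambda)^2$. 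A short computation shows that $t^3\lambda=-1$ together with $(t\lambda)^2=1$ forces $t=-\lambda$; hence whenever $t\neq-\lambda$ at least one of the two displayed elements is nonzero, and the corresponding case of Theorem~\ref{thm:cocycle} produces a not primitively generated left coideal subalgebra.

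Parts~(2) and~(3) concern the braided vector spaces with $(t,\lambda)=(1,-1)$ and with $t^2=\lambda^2=-1$, $t=-\lambda$. Since $t^4=1$ in both, the braiding can be realized over the finite enveloping group $\overline{G_{\mathcal{B}}}=G_{\mathcal{B}}/\langle g_a{}^4\rangle$ by Proposition~\ref{prop:chooseenvelopingrealization}~(2), and I would produce the asserted extensions of $\langle v_a,v_b,v_c\rangle$ --- in degree~$11$ for part~(2) and degree~$7$ for part~(3), both lying in $\langle v_a,v_b,v_c,v_d\rangle$ --- by the same Gr\"obner basis computation with GAP and the package GBNP already employed in Theorem~\ref{thm:T6rack} and Theorem~\ref{thm:Track}~(3).

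The core is part~(4), where $t=-1$, $\lambda=1$ and $\dim\B(V)=576$. Let $C\neq\fK 1$ be a left coideal subalgebra in the category of $\ndN_0$-graded $\fK G_{\mathcal{B}}$-comodules; by Lemma~\ref{le:degreeonepartlc} the subcomodule $V_1=V\cap C$ is nonzero, and cosemisimplicity of $\fK G_{\mathcal{B}}$ supplies a complement $V=V_1\oplus V_2$. The table in Proposition~\ref{prop:576NA}~(3)--(7) records $\dim\langle W\rangle$ in terms of $\dim W$ and the number of commuting pairs in the support of $W$, the commuting pairs of $\mathcal{B}$ being the three opposite-face pairs $\{a,f\}$, $\{b,d\}$, $\{c,e\}$. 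Running over all partitions $\mathcal{B}=S_1\sqcup S_2$ --- by the matching symmetry it suffices to take $\lvert S_1\rvert\le 3$ --- yields in every case the product $\dim\langle V_1\rangle\dim\langle V_2\rangle=576$, the occurring products being $2\cdot 288$, $4\cdot 144$, $6\cdot 96$ and $24\cdot 24$. Combined with $\dim\langle V_2\rangle=\dim\langle V_2{}^*\rangle$ from Proposition~\ref{prop:576NA}~(9), Corollary~\ref{cor:noextension}~(2) then shows that $\langle V_1\rangle$ cannot be extended in any degree $n\ge 2$. To finish, I suppose $C\supsetneq\langle V_1\rangle$ and take the least $n\ge 2$ with $C(n)\supsetneq\langle V_1\rangle(n)$ and a $G_{\mathcal{B}}$-homogeneous $x\in C(n)\setminus\langle V_1\rangle(n)$; minimality together with Proposition~\ref{prop:partialleftcoideal} gives $\Delta_{1,n-1}(x)\in V\ot\langle V_1\rangle(n-1)$, so Corollary~\ref{cor:lcsaExt} makes $x$ an extension of $\langle V_1\rangle$, a contradiction. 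Hence $C=\langle V_1\rangle$ is generated in degree one. The main obstacle is the completeness of this finite case analysis: verifying that the product of the two subalgebra dimensions is constantly $576$ over every decomposition and commuting type, which rests entirely on the dimension table of Proposition~\ref{prop:576NA} and on the explicit opposite-face matching of the cube rack.
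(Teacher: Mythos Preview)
Your proposal is correct and follows essentially the same approach as the paper: part~(1) via Lemma~\ref{le:degreetworel} and Theorem~\ref{thm:cocycle} with the cocycle values from Table~\ref{tab:twococycleB}, parts~(2)--(3) via GAP/GBNP over $\overline{G_{\mathcal{B}}}$, and part~(4) via the dimension case analysis of Proposition~\ref{prop:576NA} together with Corollary~\ref{cor:noextension}~(2). Your final paragraph in~(4), passing from ``$\langle V_1\rangle$ admits no extension'' to ``$C=\langle V_1\rangle$'', spells out a step the paper leaves implicit; it is correct and is a welcome clarification rather than a different argument.
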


\begin{proof}
(1) We have 
\begin{align*}
\Delta_{1,1}\left(\mu\left(1-c_V+c_V{}^2\right)\left(v_a\ot v_b\right)\right)&=\Delta_{1,1}\left(v_av_b-tv_cv_a+tv_bv_c\right)\\
&=\left(1+t^3\lambda\right)v_a\ot v_b 
\end{align*}
and 
\begin{align*}
\Delta_{1,1}\left(\mu\left(1-c_V\right)\left(v_a\ot v_f\right)\right)&=\Delta_{1,1}\left(v_av_f-t\lambda v_fv_a\right)\\
&=\left(1-t^2\lambda^2\right)v_a\ot v_f. 
\end{align*}
The claim follows from Theorem~\ref{thm:cocycle} since $1+t^3\lambda\neq 0$ or $1-t^2\lambda^2\neq 0$ since $t\neq -\lambda$ by assumption. 

(2), (3) We found these extensions using the GAP package GBNP for computing Gröbner bases (\cite{GAP4}, \cite{GBNP1.0.5}). There we used the realization of the braided vector space $V$ over the finite enveloping group $\overline{G_{\mathcal{B}}}=G_{\mathcal{B}}/\langle g_a{}^4\rangle$ (and Proposition~\ref{prop:chooseenvelopingrealization} (2))\footnote{The braided vector space $V$ can be realized as Yetter-Drinfeld module over the quotient $\overline{G_{\mathcal{B}}}$ since $g_a{}^4$ acts trivially in the representation $\rho_{t,\lambda}$ since $t^4=1$ by assumption.}.

(4) Let $C\subseteq\B(V)$, $C\neq\fK 1$, be a left coideal subalgebra in the category of $\ndN_0$-graded $\fK G_{\mathcal{B}}$-comodules. Then $V\cap C\neq 0$ by Lemma~\ref{le:degreeonepartlc} and $V_1=V\cap C$ is a $\fK G_{\mathcal{B}}$-subcomodule by assumption. Since $\fK G_{\mathcal{B}}$ is cosemisimple, there exists a $\fK G_{\mathcal{B}}$-subcomodule $V_2$ such that $V=V_1\oplus V_2$. By Proposition~\ref{prop:576NA} (9), we have $\dim\langle V_2\rangle=\dim\langle V_2{}^*\rangle$.
For $i\in\{1,2\}$ let $S_i=\{x\in\mathcal{B}\mid V_i\cap V_{g_x}\neq 0\}$. We now consider different cases depending on the dimension of $V_1$.
\begin{enumerate}
    \item[(i)] $\dim V_1=1$. Then $\dim V_2=5$ and it follows that
    \[ \dim\langle V_1\rangle\dim\langle V_2\rangle=2\cdot 288=576=\dim\B(V) \]
    by Proposition~\ref{prop:576NA} (3), (7).
    \item[(iia)] $\dim V_1=2$ and the elements of $S_1$ commute. Then $\dim V_2=4$ and $S_2=\mathcal{B}\setminus S_1$ consists of two pairs of commuting elements. It follows that $\dim\langle V_1\rangle\dim\langle V_2\rangle=4\cdot 144=576=\dim\B(V)$ by Proposition~\ref{prop:576NA} (4), (6).
    \item[(iib)] $\dim V_1=2$ and the elements of $S_1$ do not commute. Then $\dim V_2=4$ and exactly two elements of $S_2=\mathcal{B}\setminus S_1$ commute. It follows that $\dim\langle V_1\rangle\dim\langle V_2\rangle=6\cdot 96=576=\dim\B(V)$ by Proposition~\ref{prop:576NA} (4), (6).
    \item[(iii)] $\dim V_1=3$. Then $\dim V_2=3$. It follows that $\dim\langle V_1\rangle\dim\langle V_2\rangle=24\cdot 24=576=\dim\B(V)$ by Proposition~\ref{prop:576NA} (5). 
\end{enumerate}
If $\dim V_1>3$, then it follows that $\dim\langle V_1\rangle\dim\langle V_2\rangle=576=\dim\B(V)$ by changing the roles of $V_1$ and $V_2$. Hence the claim follows from Corollary~\ref{cor:noextension} (2).
\end{proof}

\section{Racks with special subracks}\label{se:rackswithsubracks}

Let $(X,\tri)$ be a finite indecomposable rack of a conjugacy class of a group and $q: X\times X\rightarrow\fK^{\times}$ a two-cocycle. Let $V=\fK X$ be the corresponding Yetter-Drinfeld realization over $\fK G_X$ and $v_x\in V_x\setminus\{0\}$, $x\in X$, such that $g_x\cdot v_y=q_{x,y}v_{x\tri y}$ for all $x,y\in X$. Denote the braiding by $c_q$ and let $\B(V)=\B(X,q)$ be the corresponding Nichols algebra.   

Our tactic in this section is to assume that all left coideal subalgebras of $\B(X,q)$ in the category of $\ndN_0$-graded $\fK G_X$-comodules are generated in degree one and to argue then that $X$ is either $\mathbb{T}_n$, $\mathcal{T}$ or $\mathcal{B}$, where $n\in\ndN$. In our main Theorem~\ref{thm:mainthm} we combine this result with our results about the racks $\mathbb{T}_n$, $\mathcal{T}$ and $\mathcal{B}$ in Sections~\ref{se:Transpos}-\ref{se:cube}.     

\begin{lem}\label{le:noextensiont_3tC}
Assume that all left coideal subalgebras of $\B(X,q)$ in the category of $\ndN_0$-graded $\fK G_X$-comodules are generated in degree one. Let $x,y\in X$ with $x\tri y\neq y$ and let $Y$ be the subrack of $X$ generated by $x$ and $y$. Then $Y$ is isomorphic to $\mathbb{T}_3$, $\mathcal{T}$ or $\mathcal{B}$.
\end{lem}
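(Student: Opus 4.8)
The plan is to use the structural results on braided racks that are already available in the literature together with the degree-two, three and four extension theorems proved in Section~\ref{se:Extensionsdeg234}. First I would observe that since $X$ is a rack of a conjugacy class in which all left coideal subalgebras of $\B(X,q)$ are generated in degree one, the rack $X$ must be braided: if $X$ were not braided, Proposition~\ref{prop:braid} would produce a left coideal subalgebra of $\B(X,q)$ in the category of $\ndN_0$-graded $\fK G_X$-comodules that can be extended in degree two, contradicting the hypothesis. Hence $X$, and therefore the subrack $Y$ generated by $x$ and $y$, is braided; moreover $Y$ inherits Equation~\eqref{eq:cclassrack} since a subrack of a rack of conjugacy classes is again of this type. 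Note also that $Y$ is indecomposable, being generated by $x$ and $y$ with $x\tri y\ne y$, so $\mathrm{Inn}(Y)$ acts transitively on $Y$.

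Next I would invoke the classification (due to Heckenberger--Vendramin, cf.\ the references already cited for braided racks) of finite braided indecomposable racks generated by two non-commuting elements: such a rack is one of a short explicit list, namely $\mathbb{T}_3$ (the rack of transpositions of $\mathbb{S}_3$), $\mathcal{T}$ (the tetrahedron rack), $\mathcal{B}$ (the cube/affine rack on six elements), together with a handful of others — in particular the larger dihedral-type affine racks, which one must rule out. The point is that $Y$ is a two-generated braided indecomposable quandle of conjugacy-class type, and the cocycle $q$ restricted to $Y$ makes $\B(Y,q)$ a subalgebra of $\B(X,q)$; by Corollary~\ref{cor:degreeonegeneratedlcsa} and Proposition~\ref{pro:minmaxlcsa} any left coideal subalgebra of $\B(Y,q)$ not generated in degree one would, via the inclusion $\fK Y\hookrightarrow \fK X$ and Lemma~\ref{le:injectiverack} (the subrack $Y$ is injective, being contained in a group), give rise to a left coideal subalgebra of $\B(X,q)$ not generated in degree one. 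So it suffices to show that every two-generated braided indecomposable rack of conjugacy-class type \emph{other than} $\mathbb{T}_3$, $\mathcal{T}$, $\mathcal{B}$ carries, for \emph{every} two-cocycle, a not-primitively-generated left coideal subalgebra in $\B(Y,q)$.

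For that last step I would run through the remaining racks on the list and apply Theorem~\ref{thm:degree3} or Theorem~\ref{thm:degree4}. Concretely: if $\B(Y,q)$ fails to be slim in degree two, Theorem~\ref{thm:cocycle} already yields the desired left coideal subalgebra; so assume $\B(Y,q)$ is slim in degree two. Then one picks suitable elements $a,b,c$ (resp.\ $a,b,c,d$) inside $Y$ satisfying the commutation hypotheses of Theorem~\ref{thm:degree3} (resp.\ Theorem~\ref{thm:degree4}) — this is where the explicit multiplication tables of the candidate racks are used — and checks that the numerical non-degeneracy condition ($a\tri b\ne c$ or $a\tri c\ne b\tri a$, resp.\ $e\ne d$ or $f\ne c$ or $g\ne b$) holds; it does precisely because the rack is \emph{not} one of $\mathbb{T}_3$, $\mathcal{T}$, $\mathcal{B}$. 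The main obstacle is the case analysis: one has to be sure the list of two-generated braided indecomposable conjugacy-class racks is complete, and then for each surviving rack exhibit the triple or quadruple making the extension theorems apply, independently of $q$. I expect the dihedral-type affine racks $\mathbb{Z}/p$ (for $p\ge 5$ prime) to be the trickiest to dispose of, since there the commutation pattern is rigid and one must verify by hand that Theorem~\ref{thm:degree3}'s or Theorem~\ref{thm:degree4}'s hypotheses can be met. Once all such racks are eliminated, the only possibilities left for $Y$ are $\mathbb{T}_3$, $\mathcal{T}$ and $\mathcal{B}$, which is the claim.
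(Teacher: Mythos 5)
There is a genuine gap. Your opening step (braidedness of $X$, hence of $Y$, via Proposition~\ref{prop:braid}) and the reduction to the subrack $Y$ (cf.\ Remark~\ref{rema:subracklcsas}) match the paper, but the core of your argument rests on a ``classification of finite braided indecomposable racks generated by two non-commuting elements'' into a short explicit list, and no such classification exists in the form you need --- the class is in fact infinite. For instance, the affine quandle $(\mathbb{F}_7,\tri)$ with $x\tri y=-2x+3y$ is the conjugacy class of $t\mapsto 3t+b$ in $\mathbb{F}_7\rtimes\langle 3\rangle$; it is indecomposable, generated by $0$ and $1$, has no two commuting elements (so Equation~\eqref{eq:cclassrack} is vacuous), and is braided because $x\tri(y\tri x)=(1-\omega+\omega^2)x+(\omega-\omega^2)y=y$ for $\omega=3$ a primitive sixth root of unity; the same works over any $\mathbb{F}_p(\omega)$ with $\omega^2-\omega+1=0$. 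These seven-or-more-element racks are none of $\mathbb{T}_3$, $\mathcal{T}$, $\mathcal{B}$. (Your named worry, the dihedral racks $\ndZ/p$ with $p\ge 5$, is moot: there $x\tri(y\tri x)=3x-2y\notin\{x,y\}$ for $x\ne y$, so they are not braided and are already killed by Proposition~\ref{prop:braid}.) Since the family you would have to ``rule out'' is infinite and unclassified, and since you do not actually exhibit the triples or quadruples needed to feed into Theorems~\ref{thm:degree3} and~\ref{thm:degree4}, the essential content of the lemma is missing from your plan.

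The paper needs no classification: it argues directly inside $Y$. After establishing that $X$ is braided and $\B(V)$ is slim in degree two, it distinguishes three cases according to whether $x\tri(x\tri y)=y$, or $x\tri(x\tri(x\tri y))=y$, or neither; the first two give $Y\cong\mathbb{T}_3$ and $Y\cong\mathcal{T}$ by pure rack combinatorics. In the remaining case it sets $z=x\tri(x\tri y)$, notes $z\ne y\tri x$, and applies Theorem~\ref{thm:degree3} to the triple $(a,b,c)=(y,x,z)$ to conclude that $z$ must commute with one of $x$, $y$, $x\tri y$ (otherwise a not primitively generated left coideal subalgebra exists); since $z\tri x=x\tri y\ne x$ and $(x\tri y)\tri z=x\ne z$, this forces $y\tri z=z$, whence $x^4\tri y=y$ and $Y\cong\mathcal{B}$. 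That single uniform use of the degree-three extension theorem is exactly what eliminates all other two-generated braided racks (including the affine family above) in one stroke; it is the step your outline replaces by an appeal to a list that does not exist.
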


\begin{proof}
Since all left coideal subalgebras of $\B(X,q)$ are generated in degree one, it follows that $X$ is braided by Proposition~\ref{prop:braid} and that $\B(V)$ is slim in degree two by Theorem~\ref{thm:cocycle}. We distinguish three cases. 
\begin{enumerate}
    \item Assume that $x\tri(x\tri y)=y$. 
    Since $X$ is braided, this implies that $x\tri y=y\tri x$ and $Y=\{x,y,x\tri y\}$. Hence there is a rack isomorphism $Y\rightarrow\mathbb{T}_3$ such that \[x\mapsto (1\ 2),\ y\mapsto (1\ 3).\]
    \item Assume that $x\tri (x\tri y)\neq y$ and $x\tri(x\tri(x\tri y)=y$. 
    Since $X$ is braided, this implies that $x\tri (x\tri y)=y\tri x$ and $Y=\{x,y,x\tri y,y\tri x\}$. Hence there is a rack isomorphism $Y\rightarrow\mathcal{T}$ such that \[x\mapsto a=(2\ 3\ 4),\ y\mapsto b=(1\ 4\ 3)\] (see Table~\ref{tab:rackTetrahedron}).  
    \item Assume that $x\tri(x\tri y)\neq y$ and $x\tri(x\tri (x\tri y))\neq y$. Since $X$ is braided, this implies that $x\tri (x\tri y)\neq y\tri x$. Let $z=x\tri(x\tri y)$. Then $z\neq y\tri x$ and since all left coideal subalgebras of $\B(X,q)$ in the category of $\ndN_0$-graded $\fK G_X$-comodules are generated in degree one, it follows from Theorem~\ref{thm:degree3} that $z$ commutes with at least one of the elements $x$, $y$, $x\tri y$. Since $(x\tri y)\tri z=x\neq z$ and $z\tri x=x\tri y\neq x$, this implies that $y\tri z=z$. Since $X$ is braided, it follows that $x\tri z=y\tri x$ and $x^4\tri y=y$. Moreover, $x\tri(y\tri(y\tri x))=y\tri(y\tri x)$ and $y^3\tri x=x\tri y$ since $X$ is braided. Hence $Y$ has $6$ elements and degree $4$ and there is a rack isomorphism $Y\rightarrow\mathcal{B}$ such that \[x\mapsto a=(2\ 3\ 4\ 5),\ y\mapsto b=(1\ 5\ 6\ 3)\] (see Table~\ref{tab:rackCubic}).      
    \end{enumerate}
    \end{proof}

\begin{prop}\label{prop:rackcontainingtetrahedron}
  Assume that $X$ is not generated by less than $3$ elements and that $X$ contains the rack $\mathcal{T}$ as subrack. Then there is a primitively generated left coideal subalgebra of $\B(X,q)$ in the category of $\ndN_0$-graded $\fK G_X$-subcomodules which can be extended in a degree $\leq 3$. 
\end{prop}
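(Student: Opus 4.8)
The plan is to exploit the hypothesis that $X$ contains $\mathcal{T}$ as a subrack together with the hypothesis that $X$ is not generated by fewer than three elements, in order to find three elements $a,b,c\in X$ satisfying the weak hypotheses of Theorem~\ref{thm:degree3} (or else already land in an easier situation covered by Proposition~\ref{prop:braid} or Theorem~\ref{thm:cocycle}). First I would observe that we may assume $X$ is braided and $\B(X,q)$ is slim in degree two, since otherwise Proposition~\ref{prop:braid} or Theorem~\ref{thm:cocycle} already produces a left coideal subalgebra that can be extended in degree $\le 2$, and we are done. So from now on $X$ is braided and $\B(V)$ is slim in degree two, which is exactly the standing setup required to apply Theorem~\ref{thm:degree3} and Lemma~\ref{le:lyinginS}.

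Next I would use the embedded copy of $\mathcal{T}=\{a,b,c,d\}$. Inside $\mathcal{T}$ one has (reading off Table~\ref{tab:rackTetrahedron}) pairwise distinct non-commuting elements, e.g. $a,b$ with $a\tri b=c\neq b$. The subrack generated by $\{a,b\}$ is all of $\mathcal{T}$, so it has four elements; in particular $\langle a,b\rangle_{\mathrm{rack}}=\mathcal{T}\subsetneq X$. Since $X$ is \emph{not} generated by two elements, there exists $e\in X$ not in the subrack generated by $a$ and $b$; I would then pick a suitable third element $c'\in X$ (a conjugate of $e$ under $\mathrm{Inn}(X)$, using indecomposability, or $e$ itself) so that $a,b,c'$ are pairwise distinct with $a\tri b\neq b$, and check the commutation hypotheses of Theorem~\ref{thm:degree3}(2): that $c'$ commutes with none of $a$, $b$, $b\tri a$. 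The key geometric/combinatorial point is that inside $\mathcal{T}$ every pair of distinct elements fails to commute, so commutation of $c'$ with $a$, $b$ or $b\tri a$ would force $c'$ into relations that, by indecomposability and the assumption that $X$ needs three generators, we can arrange to violate — or, if we cannot avoid such commutations for any choice, then $Y=\langle a,b,c'\rangle_{\mathrm{rack}}$ is small and the commuting case of Lemma~\ref{le:lyinginS} shows the candidate element $Z(a,b,c')$ already lies in $\langle S\rangle$, forcing us to re-choose. The bookkeeping here is to ensure that for at least one admissible triple we also have $a\tri b\neq c'$ or $a\tri c'\neq b\tri a$, which is the remaining hypothesis of Theorem~\ref{thm:degree3}(2); if both failed for all candidate triples, one again gets a strong structural constraint on $X$ contradicting the three-generator assumption.

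Granting such a triple $a,b,c'$, I would set $d=c'\tri(b\tri a)$, $e=d\tri(c'\tri b)$, $S=\{v_a,v_b,v_d,v_e\}$, and $Z=Z(a,b,c')=\mu(\id-c_1c_2+(c_1c_2)^2)(v_{c'}\ot v_b\ot v_a)$ as in Theorem~\ref{thm:degree3}. By part (1) of that theorem the subalgebra $C=\langle S\cup\{Z\}\rangle$ is a left coideal subalgebra of $\B(X,q)$ in the category of $\ndN_0$-graded $\fK G_X$-comodules, and by part (2) it is not generated in degree one, i.e. $Z\notin\langle S\rangle$. Since $\langle S\rangle=\langle v_a,v_b,v_d,v_e\rangle$ is a primitively generated left coideal subalgebra (Corollary~\ref{cor:degreeonegeneratedlcsa}) sitting inside $C$ and $Z$ has degree $3$, this exhibits a primitively generated left coideal subalgebra that can be extended in degree $3$; together with the degree $\le 2$ cases disposed of at the start, this proves the proposition.

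The main obstacle I anticipate is the case analysis needed to actually \emph{produce} a third element $c'$ for which all the commutation and inequality hypotheses of Theorem~\ref{thm:degree3}(2) hold simultaneously. The assumption ``$X$ is not generated by less than $3$ elements'' has to be converted into the concrete statement that some element lies outside the two-generated subrack $\mathcal{T}$, and then one must verify — using only that $X$ is a braided rack of a conjugacy class, that it is indecomposable, and the explicit multiplication table of $\mathcal{T}$ — that a good choice exists; the danger is that every naive choice of $c'$ happens to commute with one of $a,b,b\tri a$ or satisfies both $a\tri b=c'$ and $a\tri c'=b\tri a$, in which case Lemma~\ref{le:lyinginS} says $Z\in\langle S\rangle$ and the construction collapses. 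Handling this cleanly will likely require tracking the action of $g_{c'}$ on the copy of $\mathcal{T}$ and arguing by indecomposability that one can replace $c'$ by a conjugate for which the bad coincidences disappear; this is the part where one genuinely uses that three generators are needed rather than two.
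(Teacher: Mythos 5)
Your overall strategy coincides with the paper's: reduce to the braided, slim-in-degree-two case via Proposition~\ref{prop:braid} and Theorem~\ref{thm:cocycle}, use two-generation of $\mathcal{T}$ to produce an element outside it, and then feed a triple into Theorem~\ref{thm:degree3}. However, there is a genuine gap at exactly the point you flag as ``the main obstacle I anticipate'': you never actually produce a triple satisfying the hypotheses of Theorem~\ref{thm:degree3}(2), and the hedged alternatives you offer (``forcing us to re-choose'', ``one again gets a strong structural constraint'') are not arguments. The missing idea is a short centralizer computation: for $z\in X$ the set $\{t\in X\mid z\tri t=t\}$ is a subrack (if $z\tri u=u$ and $z\tri v=v$ then $z\tri(u\tri v)=(z\tri u)\tri(z\tri v)=u\tri v$), and since $\mathcal{T}$ is generated by \emph{any} two of its elements, an element $z\notin\mathcal{T}$ commuting with two elements of $\mathcal{T}$ commutes with all of $\mathcal{T}$. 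Indecomposability rules out the possibility that every $z\notin\mathcal{T}$ commutes with all of $\mathcal{T}$ (that would make $\mathcal{T}$ invariant under $\mathrm{Inn}(X)$), so one can choose $z\notin\mathcal{T}$ commuting with \emph{at most one} element of $\mathcal{T}$. Since $\mathcal{T}$ has four elements and for distinct $x,y\in\mathcal{T}$ the elements $x\tri y$ and $y\tri x$ are the two remaining ones, one can always pick $x,y\in\mathcal{T}$ so that $z$ commutes with none of $x$, $y$, $y\tri x$. This is the content of the paper's proof, and it is the step your proposal leaves open.

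Two smaller points. First, the remaining hypothesis of Theorem~\ref{thm:degree3}(2), namely $x\tri y\neq z$ or $x\tri z\neq y\tri x$, which you worry about at length, is automatic in the first disjunct: $x\tri y\in\mathcal{T}$ while $z\notin\mathcal{T}$. Second, your suggestion to replace the outside element by a conjugate under $\mathrm{Inn}(X)$ is not needed and would require extra care (a conjugate of an element outside $\mathcal{T}$ could land inside $\mathcal{T}$); the argument above works with the element $z$ itself once it is chosen to fail to commute with at least one element of $\mathcal{T}$.
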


\begin{proof}
  Assume that $\B(X,q)$ does not contain a left coideal subalgebra in the category of $\ndN_0$-graded $\fK G_X$-subcomodules that has an extension in degree two. Then $X$ is braided by Proposition~\ref{prop:braid} and $\B(V)$ is slim in degree two by Theorem~\ref{thm:cocycle}. Since the subrack $\mathcal{T}$ is generated by each two of its elements but $X$ is not generated by less than $3$ elements by assumption, there is an element $z\in X$ with $z\notin\mathcal{T}$. Since $X$ is indecomposable, we can choose $z$ such that it does not commute with at least one element of $\mathcal{T}$. Then $z$ commutes with at most one element of $\mathcal{T}$. Indeed, if $z$ would commute with more than one element of $\mathcal{T}$, it would commute with any element of $\mathcal{T}$ since $\mathcal{T}$ is generated by each two of its elements. Hence there are elements $x,y\in\mathcal{T}$ such that $x\tri y\neq z$ and $z$ commutes with none of $x$, $y$ and $y\tri x$. By Theorem~\ref{thm:degree3}, it follows that the left coideal subalgebra $\langle v_x,v_y,v_{z\tri(y\tri x)},v_{z\tri x}\rangle\subset \B(X,q)$ in the category of $\ndN_0$-graded $\fK G_X$-comodules can be extended in degree three by the element $Z(x,y,z)$.      
\end{proof}

\begin{prop}\label{prop:rackcontainingcube}
 Assume that $X$ is not generated by less than $3$ elements and that $X$ contains the rack $\mathcal{B}$ as subrack. Then there is a primitively generated left coideal subalgebra of $\B(X,q)$ in the category of $\ndN_0$-graded $\fK G_X$-subcomodules that can be extended in a degree $\leq 4$.  
\end{prop}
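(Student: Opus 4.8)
The plan is to mirror the proof of Proposition~\ref{prop:rackcontainingtetrahedron}, replacing the use of Theorem~\ref{thm:degree3} by Theorem~\ref{thm:degree4}. Assume, for contradiction, that no primitively generated left coideal subalgebra of $\B(X,q)$ in the category of $\ndN_0$-graded $\fK G_X$-comodules can be extended in a degree $\le 4$. Then $X$ is braided by Proposition~\ref{prop:braid} and $\B(V)$ is slim in degree two by Theorem~\ref{thm:cocycle}, while a degree-three extension is also excluded, so Theorem~\ref{thm:degree3} never produces a new generator for an admissible triple. In particular, by Proposition~\ref{prop:rackcontainingtetrahedron} the rack $X$ has no subrack isomorphic to $\mathcal{T}$, and then by Lemma~\ref{le:noextensiont_3tC} every subrack of $X$ generated by two non-commuting elements is isomorphic to $\mathbb{T}_3$ or to $\mathcal{B}$.

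Next I would produce the ``extra'' element: since $\mathcal{B}$ is generated by any two of its non-commuting elements while $X$ needs at least three generators, there is $z\in X\setminus\mathcal{B}$, and by indecomposability of $X$ we may choose $z$ not commuting with some element of $\mathcal{B}$. As in Proposition~\ref{prop:rackcontainingtetrahedron}, $z$ cannot commute with two non-commuting faces (else it would commute with all of $\mathcal{B}$), so the faces commuting with $z$ form a subset of a single opposite pair; in particular at least four of the six faces do not commute with $z$, and two of the three opposite pairs of $\mathcal{B}$ consist of faces none of which commutes with $z$.

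The heart of the argument is then to select, using Table~\ref{tab:rackCubic}, elements $a,b,c,d$ satisfying the hypotheses of Theorem~\ref{thm:degree4}: $a\square c$, all other pairs among $a,b,c,d$ non-commuting, $b\square(c\tri d)$ and $a\square(b\tri d)$, and additionally $e\ne d$ or $f\ne c$ or $g\ne b$, where $e=abc\tri d$, $f=eab\tri c$, $g=fea\tri b$. One first checks that inside $\mathcal{B}$ alone every admissible quadruple already forces $e=d$, $f=c$, $g=b$ (so $Z$ would lie in $\langle v_b,v_c,v_d,v_e,v_f,v_g\rangle$), which is precisely why $z$ must enter the quadruple. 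I would therefore place $z$ in one of the four slots --- e.g.\ take $a,c$ a commuting pair (a pair of faces, or the pair $\{z,w\}$ when $z$ commutes with a face $w$), $b$ an equatorial face, and $d$ the appropriate $\varphi$-image so that $c\tri d$ is opposite to $b$ and $b\tri d$ is opposite to $a$ --- and then argue from $z\notin\mathcal{B}$, i.e.\ from $\varphi_z$ fixing no face outside the possible commuting opposite pair, that one of $e,f,g$ is genuinely moved off $d,c,b$. Once such a quadruple is in place, Theorem~\ref{thm:degree4}(1) shows that the subalgebra generated by $\{v_b,v_c,v_d,v_e,v_f,v_g\}\cup\{Z(a,b,c,d)\}$ is a left coideal subalgebra, and Theorem~\ref{thm:degree4}(2) shows it is not generated in degree one; hence the primitively generated subalgebra $\langle v_b,v_c,v_d,v_e,v_f,v_g\rangle$ extends in degree $4$, contradicting the assumption.

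I expect the main obstacle to be precisely this combinatorial construction, together with a small case analysis according to how $z$ meets $\mathcal{B}$ (commuting with no face, with a single face, or with a whole opposite pair) and according to whether the subracks $\langle z,w\rangle$, for $w$ a face not commuting with $z$, are $\mathbb{T}_3$ or $\mathcal{B}$; in the most degenerate subcases one may have to fall back on Proposition~\ref{prop:braid} or Theorem~\ref{thm:degree3} directly, or reduce to the already-treated rack $\mathbb{T}_3$. Establishing the inequality $e\ne d$ (or $f\ne c$, $g\ne b$) is the delicate point, since it must be deduced from the limited control we have over how $z$ conjugates the faces rather than from an explicit presentation of $G_X$.
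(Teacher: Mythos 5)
Your outline points in the right direction, but the decisive step is missing: you never actually exhibit a quadruple satisfying the hypotheses of Theorem~\ref{thm:degree4}, and you yourself flag the non-degeneracy condition ($e\neq d$, $f\neq c$ or $g\neq b$) as an unresolved ``delicate point.'' That verification is the entire content of the proposition, so as written the proposal is a plan rather than a proof. The paper's argument splits into exactly the two cases you hint at, but resolves them concretely. If $z$ commutes with \emph{no} element of $\mathcal{B}$, one does not use Theorem~\ref{thm:degree4} at all: any $a,b\in\mathcal{B}$ with $a\tri b\neq b$ give a triple $(a,b,z)$ to which Theorem~\ref{thm:degree3} applies (note $a\tri b\neq z$ since $z\notin\mathcal{B}$), yielding a degree-$3$ extension. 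If instead $z\tri a=a$ for some face $a$, one picks $b\in\mathcal{B}$ with $a\tri b\neq b$ (so $z\tri b\neq b$, else $z$ would commute with all of $\mathcal{B}$), sets $c=a\tri b$, and feeds the quadruple $(z,b,a,c)$ into Theorem~\ref{thm:degree4}: the commuting pair is $\{z,a\}$ in positions one and three, the conditions $b\square(a\tri c)$ and $z\square(b\tri c)$ follow from $b\tri c=a$ and the internal relations of the cube rack, and the non-degeneracy $zba\tri c\neq c$ is forced because $zba\tri c=c$ would give $a\tri(z\tri c)=c$, hence $z\tri c=b$ and $z=c\tri b\in\mathcal{B}$, contradicting $z\notin\mathcal{B}$.

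Two smaller points. First, your reduction via Proposition~\ref{prop:rackcontainingtetrahedron} (to conclude $X$ contains no copy of $\mathcal{T}$) and the appeal to Lemma~\ref{le:noextensiont_3tC} are detours the argument does not need; everything happens inside $\mathcal{B}\cup\{z\}$. Second, your auxiliary claim that every admissible quadruple contained in $\mathcal{B}$ alone forces $e=d$, $f=c$, $g=b$ is asserted without verification and plays no role once the correct quadruple involving $z$ is written down. The fix is simply to replace the speculative middle of your argument with the explicit case split and quadruple above.
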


\begin{proof}
  Assume that $\B(X,q)$ does not contain a left coideal subalgebra in the category of $\ndN_0$-graded $\fK G_X$-subcomodules that has an extension in degree two. Then $X$ is braided by Proposition~\ref{prop:braid} and $\B(V)$ is slim in degree two by Theorem~\ref{thm:cocycle}.
 Recall that $\mathcal{B}$ is generated by every two not commuting elements of $\mathcal{B}$. Since $X$ is not generated by less than $3$ elements and since $X$ is indecomposable, there is an element $z\in X\setminus\mathcal{B}$ which does not commute with at least one element of $\mathcal{B}$. We distinguish two cases. 

(1) Assume that $z\tri x\neq x$ for all $x\in\mathcal{B}$. Choose elements $a,b\in\mathcal{B}$ with $a\tri b\neq b$. Then $a\tri b\neq z$ since $z\notin\mathcal{B}$. Moreover, the element $z$ commutes with none of $a$, $b$, $b\tri a$ by assumption in the considered case. By Theorem~\ref{thm:degree3}, it follows that the left coideal subalgebra $\langle v_a,v_b,v_{z\tri(b\tri a)},v_{z\tri a}\rangle\subset\B(X,q)$ in the category of $\ndN_0$-graded $\fK G_X$-subcomodules can be extended in degree $3$ by the element $Z(a,b,z)$.

(2) Assume that there is an element $a\in\mathcal{B}$ such that $z\tri a=a$. Let $b\in\mathcal{B}$ such that $a\tri b\neq b$. Then $z\tri b\neq b$ since otherwise $z$ would commute with both elements of the generating set $\{a,b\}$ of $\mathcal{B}$ and hence with all elements of $\mathcal{B}$. Let $c=a\tri b$. Then $a\tri c\neq c$, $a\tri c\neq b$ and $b\tri(a\tri c)=a\tri c$ since these relations hold for each two not commuting elements $a$, $b$ of the cube rack $\B$. Moreover, $z\tri c\neq c$ since  $z\tri a=a$, $z\tri b\neq b$ and $a\tri b\neq b$. Since $b\tri c=a$, it follows that $z\tri (b\tri c) =b\tri c$ by assumption and $z\neq b\tri c$ since $b\tri c\in\mathcal{B}$ but $z\notin\mathcal{B}$. Then by Theorem~\ref{thm:degree4}, there is a left coideal subalgebra of $\B(X,q)$ in the category of $\ndN_0$-graded $\fK G_X$-subcomodules that can be extended by the element $Z(z,b,a,c)$ of degree $4$ since $zba\tri c \neq c$. Indeed, if $zba\tri c=c$, then $z\tri (a\tri c)=c$ and hence $a\tri (z\tri c)=c$. It follows $z\tri c=b$ and hence $z=c\tri b\in\B$ but $z\notin\mathcal{B}$. 
\end{proof}

\begin{lem}\label{le:T3property}
Assume that all left coideal subalgebras of $\B(X,q)$ in the category of $\ndN_0$-graded $\fK G_X$-comodules are generated in degree one. Assume that the subrack of $X$ generated by $x$ and $y$ is isomorphic to $\mathbb{T}_3$ for all $x,y\in X$ with $x\tri y\neq y$. Then for all pairwise not commuting elements $x,y,z\in X$ the equality $x\tri (y\tri z)=y\tri z$ holds.
\end{lem}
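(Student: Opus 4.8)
The plan is to argue by contradiction: suppose $x,y,z\in X$ are pairwise not commuting but $x\tri(y\tri z)\neq y\tri z$. Since all left coideal subalgebras of $\B(X,q)$ are generated in degree one, Proposition~\ref{prop:braid} forces $X$ to be braided and Theorem~\ref{thm:cocycle} forces $\B(V)$ to be slim in degree two. These are the standing hypotheses that let us invoke Theorem~\ref{thm:degree3}. The idea is to find a triple $(a,b,c)$ of pairwise distinct elements of $X$ with $a\tri b\neq b$ to which Theorem~\ref{thm:degree3}~(2) applies and yields a not primitively generated left coideal subalgebra, contradicting the assumption.

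First I would set up the right triple. The natural guess is to take $c=x$, $b=y$, $a=z$ (so that $c$ should commute with none of $a$, $b$, $b\tri a$); the hypothesis $x\tri(y\tri z)\neq y\tri z$ says exactly that $c$ does not commute with $b\tri a=y\tri z$, and pairwise non-commutativity of $x,y,z$ gives that $c$ does not commute with $a$ or with $b$. It remains to check the two alternatives in Theorem~\ref{thm:degree3}~(2): we need $a\tri b\neq c$ or $a\tri c\neq b\tri a$, i.e. $z\tri y\neq x$ or $z\tri x\neq y\tri z$. Here is where the hypothesis on the subracks generated by two non-commuting elements enters: the subrack generated by $x$ and $y$ is isomorphic to $\mathbb{T}_3$, hence has exactly three elements and $x\tri(x\tri y)=y$, equivalently $x\tri y=y\tri x$ for all non-commuting $x,y$. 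So $z\tri x\neq y\tri z$ would give $z\tri x\neq z\tri y$ (using $y\tri z=z\tri y$), which since $\varphi_z$ is a bijection forces $x=y$, contradiction; wait—one must be careful, so the cleaner route is: assume both $z\tri y=x$ and $z\tri x=y\tri z$. From $z\tri y=x$ and the $\mathbb{T}_3$-property applied to $z,y$ we get $y\tri z=z\tri y=x$, so $z\tri x=y\tri z=x$, meaning $z$ commutes with $x$, contradicting pairwise non-commutativity. Hence at least one of the two inequalities holds, and Theorem~\ref{thm:degree3}~(2) applies.

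Having verified the hypotheses, Theorem~\ref{thm:degree3} produces an element $Z(a,b,c)=Z(z,y,x)$ with $Z\notin\langle S\rangle$ where $S=\{v_z,v_y,v_d,v_e\}$ ($d=x\tri(y\tri z)$, $e=d\tri(x\tri y)$), and the subalgebra generated by $S\cup\{Z\}$ is a left coideal subalgebra in the category of $\ndN_0$-graded $\fK G_X$-comodules that is not generated in degree one. This contradicts the assumption, completing the proof. The main obstacle is the bookkeeping in the previous paragraph: making sure the triple $(a,b,c)$ really is pairwise distinct and really satisfies "c commutes with none of $a$, $b$, $b\tri a$," and then correctly extracting from the $\mathbb{T}_3$-subrack hypothesis the disjunction $z\tri y\neq x$ or $z\tri x\neq y\tri z$; all of this is elementary rack manipulation using only braidedness (Lemma~\ref{le:braidrack}, Lemma~\ref{le:braidrackequi}) and the three-element structure of $\mathbb{T}_3$, but it must be done without sign or orientation errors.
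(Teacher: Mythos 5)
Your proposal is correct and follows essentially the same route as the paper: both apply Theorem~\ref{thm:degree3}~(2) to the triple $(a,b,c)=(z,y,x)$, using pairwise non-commutativity and the $\mathbb{T}_3$-subrack hypothesis (which gives $y\tri z=z\tri y$) to verify its assumptions and obtain a contradiction. The only cosmetic difference is that the paper verifies the disjunction by directly showing $z\tri y\neq x$ (if $x=y\tri z$ then $x\tri(y\tri z)=x=y\tri z$, contradicting the hypothesis), whereas you assume both disjuncts fail and derive a contradiction; both arguments are valid.
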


\begin{proof}
Since all left coideal subalgebras of $\B(X,q)$ in the category of $\ndN_0$-graded $\fK G_X$-comodules are generated in degree one, $X$ is braided by Proposition~\ref{prop:braid} and $\B(V)$ is slim in degree two by Theorem~\ref{thm:cocycle}. Assume that there are pairwise not commuting elements $x,y,z\in X$ such that $x\tri (y\tri z)\neq y\tri z$. Then $x\tri z\neq z$, $x\tri y\neq y$ and $x\tri (y\tri z)\neq y\tri z$. Moreover, $y\tri z=z\tri y$ since the subrack generated by $z$ and $y$ is the rack $\mathbb{T}_3$ by assumption. Hence $x\neq z\tri y$ and by Theorem~\ref{thm:degree3} there is a left coideal subalgebra of $\B(X,q)$ in the category of $\ndN_0$-graded $\fK G_X$-comodules that is not generated in degree one. This is a contradiction to the assumption. 
\end{proof}

\begin{lem}\label{le:T3property2}
Assume that all left coideal subalgebras of $\B(X,q)$ in the category of $\fK G_X$-comodules are generated in degree one. Assume that the subrack of $X$ generated by $x$ and $y$ is isomorphic to $\mathbb{T}_3$ for all $x,y\in X$ with $x\tri y\neq y$. Let $x,a,b,c\in X$ be pairwise distinct elements such that $a\tri b=c$ and $x\tri a\neq a$. Then $x$ commutes with precisely one element of $\{b,c\}$.
\end{lem}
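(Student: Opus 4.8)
The plan is to reduce everything to the rigid combinatorics of the three‑element rack $\mathbb{T}_3$ and one application of Lemma~\ref{le:T3property}. First I would record the local rack structure around $a,b,c$. Since $a,b,c$ are pairwise distinct and $a\tri b=c$, we have $a\tri b\neq b$, so by hypothesis the subrack of $X$ generated by $a$ and $b$ is isomorphic to $\mathbb{T}_3$; as it already contains the three distinct elements $a$, $b$, $a\tri b=c$, it equals $\{a,b,c\}$. In $\mathbb{T}_3$ any two distinct elements fail to commute, and for distinct $u,v$ the element $u\tri v$ is the (unique) third element of the rack (it is neither $v$, since they do not commute, nor $u$, since $\varphi_u$ is bijective and $u\tri u=u$). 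Transporting this along the isomorphism yields $b\tri c=a$ and $a\tri c=b$, and that no two of $a,b,c$ commute.

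Next I would prove that $x$ commutes with \emph{at most} one of $b,c$. Suppose to the contrary that $x$ commutes with both, i.e. $x\tri b=b$ and $x\tri c=c$. Then self‑distributivity of $\tri$ gives
\[
x\tri a = x\tri(b\tri c) = (x\tri b)\tri(x\tri c) = b\tri c = a,
\]
contradicting the assumption $x\tri a\neq a$. Hence at least one of $b,c$ does not commute with $x$.

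Then I would prove that $x$ commutes with \emph{at least} one of $b,c$. Suppose to the contrary that $x$ commutes with neither. Then $x$, $a$, $b$ are pairwise non‑commuting: $x\tri a\neq a$ is given, $x\tri b\neq b$ by assumption, and $a\tri b=c\neq b$. By Lemma~\ref{le:T3property} this forces $x\tri(a\tri b)=a\tri b$, that is $x\tri c=c$, so $x$ commutes with $c$ — a contradiction. (Equivalently one may apply Lemma~\ref{le:T3property} to the triple $x,a,c$ using $a\tri c=b$.) Combining the two paragraphs, $x$ commutes with precisely one element of $\{b,c\}$.

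I do not anticipate a real obstacle here: the argument is elementary rack theory once the $\mathbb{T}_3$‑hypothesis pins down the multiplication table on $\{a,b,c\}$, and the only point requiring a little care is checking that the relevant triple is genuinely pairwise non‑commuting so that Lemma~\ref{le:T3property} applies — which is exactly what the contradiction hypothesis together with the given $x\tri a\neq a$ supplies. No Nichols‑algebra computations, dimension estimates, or Hurwitz‑orbit analysis are needed for this statement.
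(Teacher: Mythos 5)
Your proof is correct and follows essentially the same route as the paper: the ``at most one'' half is the identical self-distributivity computation, and the ``at least one'' half is the same appeal to Lemma~\ref{le:T3property}, only applied to the triple $(x,a,b)$ to conclude $x\tri c=c$ rather than to $(x,b,c)$ to conclude $x\tri a=a$ as the paper does. The only other (cosmetic) difference is that you extract $b\tri c=a$ from the $\mathbb{T}_3$-subrack hypothesis, whereas the paper gets it from braidedness via Proposition~\ref{prop:braid}; both are valid.
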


\begin{proof}
Since all left coideal subalgebras of $\B(X,q)$ in the category of $\ndN_0$-graded $\fK G_X$-comodules are generated in degree one, $X$ is braided by Proposition~\ref{prop:braid} and hence $a=b\tri c$. Assume that $x\tri b=b$ and $x\tri c=c$. Then $x\tri a=x\tri (b\tri c)=b\tri c=a$, but $x\tri a\neq a$ by assumption. Hence $x$ commutes with at most one element of $\{b,c\}$. Assume that $x\tri b\neq b$ and $x\tri c\neq c$. Then $x\tri (b\tri c)=b\tri c$ by Lemma~\ref{le:T3property}. But this is a contradiction to the assumption $x\tri a\neq a$.   
\end{proof}

\begin{rema}\label{rema:subracklcsas}
    Assume that all left coideal subalgebras of $\B(X,q)$ in the category of $\ndN_0$-graded $\fK G_X$-comodules are generated in degree one. Let $Y\subset X$ be a subrack and $q':Y\times Y\rightarrow \fK$ the restriction of $q$ on $Y\times Y$. Then all left coideal subalgebras of $\B(Y,q')$ in the category of $\ndN_0$-graded $\fK G_Y$-comodules are generated in degree one 
\end{rema}

\begin{prop}\label{prop:transpositioncomp}
    Assume that $X$ is generated by $n\geq 2$ elements and that any $n-1$ elements of $X$ generate a proper subrack. Assume that all left coideal subalgebras of $\B(X,q)$ in the category of $\ndN_0$-graded $\fK G_X$-comodules are generated in degree one. Assume that the subrack of $X$ generated by $x$ and $y$ is isomorphic to $\mathbb{T}_3$ for all $x,y\in X$ with $x\tri y\neq y$. Then $X$ is the rack of transpositions $\mathbb{T}_{n+1}$.  
\end{prop}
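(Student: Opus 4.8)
The plan is to reconstruct from $X$ a set of ``vertices'' on which $X$ becomes the rack of transpositions, and then to pin down the number of vertices by the generation hypothesis. First I would record the standing reductions: since every left coideal subalgebra of $\B(X,q)$ in the category of $\ndN_0$-graded $\fK G_X$-comodules is generated in degree one, Proposition~\ref{prop:braid} shows that $X$ is braided and Theorem~\ref{thm:cocycle} shows that $\B(V)$ is slim in degree two, so Lemmas~\ref{le:T3property} and \ref{le:T3property2} are available throughout. By indecomposability $X$ has a pair of non-commuting elements, so the hypothesis that $\langle x,y\rangle\cong\mathbb{T}_3$ whenever $x\tri y\neq y$ is non-vacuous; for such a pair $\langle x,y\rangle=\{x,y,x\tri y\}$, and these three elements pairwise do not commute.

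Next I would introduce, for each non-commuting pair $\{x,y\}$, the \emph{pencil}
\[ \pi_{x,y}=\{x,y\}\cup\{w\in X:\ w\tri x\neq x,\ w\tri y\neq y,\ w\neq x\tri y\}; \]
in $\mathbb{T}_m$ this is exactly the star of the common vertex of $x$ and $y$. The core of the proof is the following four claims: (i) $\pi_{x',y'}=\pi_{x,y}$ for every non-commuting pair $\{x',y'\}\subseteq\pi_{x,y}$, so that the pencils are well-defined ``lines'' and form a set $\mathcal{P}$; (ii) every element of $X$ lies in exactly two pencils, which yields a map $V\colon X\to\binom{\mathcal{P}}{2}$, $x\mapsto V(x)$; (iii) $V$ is injective, $x\square y$ holds iff $|V(x)\cap V(y)|=1$ and $x,y$ commute iff $V(x)\cap V(y)=\emptyset$, and $V$ is rack-equivariant, i.e. $V(x\tri y)=\{p,r\}$ whenever $V(x)=\{p,q\}$ and $V(y)=\{q,r\}$; (iv) $V$ is surjective, using the indecomposability of $X$ to connect all pencils. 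Granting (i)--(iv), $V$ is an isomorphism of racks $X\to\mathbb{T}_{|\mathcal{P}|}$.

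Once $X\cong\mathbb{T}_m$ with $m=|\mathcal{P}|$, the conclusion follows by counting generators: any generating set of transpositions of $\mathbb{T}_m$ must connect all $m$ vertices, so the minimal number of generators of $\mathbb{T}_m$ equals $m-1$; on the other hand, by hypothesis $X$ is generated by $n$ elements while any $n-1$ elements generate a proper subrack, so the minimal number of generators of $X$ equals $n$. Therefore $m-1=n$ and $X\cong\mathbb{T}_{n+1}$.

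The hard part will be claims (i)--(iv), and the single real obstruction is essentially the same in all of them. Lemma~\ref{le:T3property2} forces that, for a non-commuting pair $\{x,y\}$, any third element fails to commute with precisely one of the two ``legs'' of each copy of $\mathbb{T}_3$ through $\{x,y\}$, and Lemma~\ref{le:T3property} propagates these choices consistently through overlapping triples; what remains is to exclude the ``mixed'' configuration in which an element $w$ fails to commute with the four edges of a combinatorial $4$-cycle of $X$ while commuting with its two diagonals, since that is exactly what would violate (i) and (ii). I would rule this out by exhibiting, from such a configuration, pairwise non-commuting elements to which Theorem~\ref{thm:degree3} (or, where this fails, Theorem~\ref{thm:degree4}) applies, producing a left coideal subalgebra that can be extended in degree $3$ (respectively $4$) and contradicting the hypothesis. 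An alternative bookkeeping, perhaps closer to the surrounding lemmas, is induction on $n$: using Remark~\ref{rema:subracklcsas} one reduces to showing that $\langle Y,z\rangle\cong\mathbb{T}_{n+1}$ for a subrack $Y\cong\mathbb{T}_n$ and a single extra generator $z$, and the same $4$-cycle exclusion shows that $z$ fails to commute with exactly one star $\{(i\,k):k\neq i\}$ of $Y$, from which $\langle Y,z\rangle=\mathbb{T}_{n+1}$ follows directly.
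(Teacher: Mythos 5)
Your overall strategy is sound and its technical core coincides with the paper's: the paper likewise reduces everything to excluding two bad configurations, namely the ``two disjoint commuting diagonals'' pattern (ruled out via Theorem~\ref{thm:degree4}) and a ``three-block'' pattern (ruled out via Lemma~\ref{le:T3property2}); moreover, the induction you sketch in your final sentences is essentially the paper's actual argument. It adjoins a single element $a$ to a subrack $X_{n-1}\cong\mathbb{T}_n$, observes that $T^c=\{y\in X_{n-1}\mid a\tri y=y\}$ is a subrack and hence corresponds to a partition of $\{1,\dots,n\}$, and then pins the partition down to one singleton block plus its complement. Your closing count (the minimal generating number of $\mathbb{T}_m$ is $m-1$, hence $m=n+1$) is correct.

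The gap is that claims (i)--(iv) are the entire content of the proposition and you do not prove them: you assert that ``the single real obstruction is essentially the same in all of them'' and that it is the $4$-cycle configuration, but that assertion is itself what has to be established. For instance, well-definedness of the pencils (claim (i)) requires excluding a configuration of four elements $x,y,u,w$ with $x,y$ non-commuting, $u,w$ commuting, and all four cross-pairs non-commuting; this is abstractly the same ``$K_4$ minus an edge'' pattern that Theorem~\ref{thm:degree4} addresses, but that theorem carries additional hypotheses ($b\square(c\tri d)$, $a\square(b\tri d)$, and the nondegeneracy condition that $e\ne d$, $f\ne c$ or $g\ne b$) whose verification is precisely where the paper's proof spends its effort (the explicit computations with $(i\ k)$, $(k\ l)$, $(j\ k)$ and the check that $a\tri(b\tri(c\tri d))\ne d$). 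Your proposal never verifies these, nor does it show that excluding this one configuration really yields all of (i)--(iv); in particular ``every element lies in exactly two pencils'' needs both an upper and a lower bound, and the lower bound does not follow from the $4$-cycle exclusion alone. Finally, a small but real slip: in the paper's notation $x\square y$ means that $x$ and $y$ commute, so in your claim (iii) it should correspond to $V(x)\cap V(y)=\emptyset$, not to $|V(x)\cap V(y)|=1$.
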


\begin{proof}
Since all left coideal subalgebras of $\B(X,q)$ in the category of $\ndN_0$-graded $\fK G_X$-comodules are generated in degree one, $X$ is braided by Proposition~\ref{prop:braid}. We prove the claim by induction. 

For $n=2$ the claim holds by assumption. To clarify the idea of the proof, we additionally show the proof for the case $n=3$. Choose $x, y\in X$ such that $x\tri y\neq y$ 
and let $X_2\subset X$ be the subrack generated by $x$ and $y$. Then $X_2\cong\mathbb{T}_3$ by assumption. Hence we can identify the elements of $X_2$ with transpositions $x=(1\ 2)$, $y=(1\ 3)$ and $z=(2\ 3)$. Since $X$ is generated by $3$ elements and since $X$ is indecomposable, there exists an element $a\in X\setminus X_2$ that does not commute with at least one element of $X_2$. Hence $a$ does not commute with precisely two elements of $X_2$ by Lemma~\ref{le:T3property2}. Without loss of generality we may assume that $a$ does not commute with $(1\ 2)$ and $(1\ 3)$. It follows that there is a rack isomorphism $X\rightarrow \mathbb{T}_4$ such that \[(1\ 2)\mapsto (1\ 2),\  (1\ 3)\mapsto (1\ 3),\ a\mapsto (1\ 4).\]  

Let $n\geq 4$ and let $X_{n-1}\subset X$ be a subrack generated by $n-1$ elements. Then $X_{n-1}\cong\mathbb{T}_n$ by induction hypothesis and Remark~\ref{rema:subracklcsas}. We identify the elements of $X_{n-1}$ with transpositions, i.e. \[X_{n-1}=\{(i\ j)\mid 1\leq i<j\leq n\}.\] Let $a\in X\setminus X_{n-1}$. Since $X$ is indecomposable, we can assume that $a$ does not commute with at least one element of $X_{n-1}$. Let
\[ T=\{y\in X_{n-1}\mid a\tri y\neq y\}. \]
Then $T^c=X_{n-1}\setminus T$ is a subrack. Indeed, for $x, y\in T^c$ we have $a\tri x=x$ and $a\tri y=y$ and hence $a\tri (x\tri y)=x\tri y$. Hence there is a number $p\in\mathbb{N}$ and a disjoint decomposition
\[ \dot\cup_{\alpha=1}^p M_{\alpha}=\{1,\dots ,n\}\]
such that $T^c=\cup_{\alpha=1}^p \{(i\ j)\mid i,j\in M_\alpha ,i< j\}$. We illustrate the situation by drawing $n$ points and connecting the points $i$ and $j$ if and only if $(i\ j)\in T^c$. 
For an example see Figure~\ref{fig:n7p3}, where we have $n=6$ and $p=3$ and assumed $T^c=\{(1\ 2),(1\ 3),(2\ 3),(4\ 5)\}$. 

\begin{figure}[H]
\centering
\begin{tikzpicture}

\filldraw[black] (0,0) circle (2pt) node[anchor=east]{1};
\filldraw[black] (-0.5,-0.5) circle (2pt) node[anchor=east]{2};
\filldraw[black] (-0.25,1) circle (2pt) node[anchor=east]{3};
\draw[black] (-0.4,0.25) circle (40pt);

\filldraw[black] (4,0) circle (2pt) node[anchor=east]{4};
\filldraw[black] (3.5,0.6) circle (2pt) node[anchor=east]{5};
\draw[black] (3.5,0.25) circle (30pt);

\filldraw[black] (7,0.3) circle (2pt) node[anchor=east]{6};
\draw[black] (7,0.3) circle (20pt);

\draw[gray, thick] (-0.5,-0.5) -- (-0.25,1);
\draw[gray, thick] (0,0) -- (-0.5,-0.5);
\draw[gray, thick] (0,0) -- (-0.25,1);
\draw[gray, thick] (4,0) -- (3.5,0.6);
\end{tikzpicture}
\caption{$n=6$, $p=3$}
\label{fig:n7p3}
\end{figure}

Since $X$ is indecomposable, we have $T\neq\emptyset$ and hence $p>1$. We are going to prove that $p=2$ and one of $M_1$, $M_2$ has cardinality $1$. 

Claim 1: There is at most one $\alpha\in\{1,\dots ,p\}$ such that $\vert M_{\alpha}\vert\geq 2$. 

Proof of claim 1: Assume that there are $\alpha,\beta\in\{1,\dots ,p\}$ and pairwise distinct $i,j,k,l\in\{1,\dots ,n\}$ such that $i,j\in M_{\alpha}$ and $k,l\in M_{\beta}$. Then $(i\ j), (k\ l)\in T^c$ and $\{(i\ k), (i\ l), (j\ k), (j\ l)\}\subset T$. In Figure~\ref{fig:deg4ex} we indicate this constellation. There elements of $T$ are represented by a dotted line. We only plotted relevant points. 

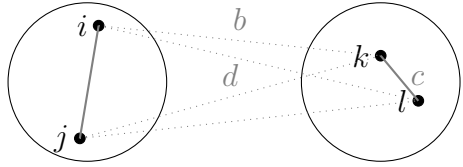
\begin{figure}[H]
\centering
\begin{tikzpicture}

\filldraw[black] (-0.5,-0.5) circle (2pt) node[anchor=east]{$j$};
\filldraw[black] (-0.25,1) circle (2pt) node[anchor=east]{$i$};
\draw[black] (-0.4,0.25) circle (30pt);

\filldraw[black] (4,0) circle (2pt) node[anchor=east]{$l$};
\filldraw[black] (3.5,0.6) circle (2pt) node[anchor=east]{$k$};
\draw[black] (3.5,0.25) circle (30pt);

\draw[gray, thick] (-0.5,-0.5) -- (-0.25,1);
\draw[gray, thick] (4,0) -- node[right]{$c$} (3.5,0.6);
\draw[gray, dotted] (-0.5,-0.5) -- (4,0);
\draw[gray, dotted] (-0.5,-0.5) -- node[above]{$d$} (3.5,0.6);
\draw[gray, dotted] (-0.25,1) -- node[above]{$b$} (3.5,0.6);
\draw[gray, dotted] (-0.25,1) -- (4,0);
\end{tikzpicture}
\caption{Impossible constellation with extension in degree 4}
\label{fig:deg4ex}
\end{figure}

Let $b=(i\ k)$, $c=(k\ l)$ and $d=(j\ k)$. Then $a, b, c, d$ are pairwise not commuting elements except $a\tri c=c$ but $a\neq c$ since $a\notin X_{n-1}$. The elements fulfill the following relations:
\begin{align*}
    &c\tri d=(j\ l)\neq b,\\
    &b\tri(c\tri d)=(i\ k)\tri (j\ l)=(j\ l)=c\tri d,\\
    &a\tri (b\tri d)=a\tri (i\ j)=(i\ j)=b\tri d\ \text{ and }\\
    &b\tri d\neq a \text{ since } a\notin X_{n-1}, (i\ j)\in X_{n-1}.  
\end{align*}
Moreover, $a\tri(b\tri(c\tri d))\neq d$ since otherwise we would have \[a\tri(b\tri(c\tri d))=a\tri (j\ l)=(j\ k).\] Then $(j\ l)\tri (j\ k)=a$ since $X$ is braided. But $a\notin X_{n-1}$. Hence by Theorem~\ref{thm:degree4}, there is a left coideal subalgebra of $\B(X,q)$ in the category of $\ndN_0$-graded $\fK G_X$-comodules that is not generated in degree one. This is a contradiction to the assumption.      
Claim 2: $p=2$.

Proof of claim 2: 
Assume that $p\geq 3$. Let $i\in M_1$, $j\in M_2$ and $k\in M_3$. Then $(i\ j)$, $(j\ k)$, $(i\ k)\in T$ and $(i\ j)\tri (j\ k)=(i\ k)$. This a contradiction to Lemma~\ref{le:T3property2}.   
\begin{figure}[H]
\centering
\begin{tikzpicture}

\filldraw[black] (0,0) circle (2pt) node[anchor=west]{$k$};
\filldraw[black] (-0.5,-0.5) circle (2pt);
\filldraw[black] (-0.25,1) circle (2pt); 
\draw[black] (-0.4,0.25) circle (40pt);

\filldraw[black] (3.5,0.6) circle (2pt) node[anchor=east]{$j$};
\draw[black] (3.5,0.25) circle (30pt);

\filldraw[black] (7,0.3) circle (2pt) node[anchor=east]{$i$};
\draw[black] (7,0.3) circle (20pt);

\draw[gray, thick] (-0.5,-0.5) -- (-0.25,1);
\draw[gray, thick] (0,0) -- (-0.5,-0.5);
\draw[gray, thick] (0,0) -- (-0.25,1);
\end{tikzpicture}
\caption{Impossible constellation}
\label{fig:p3}
\end{figure}
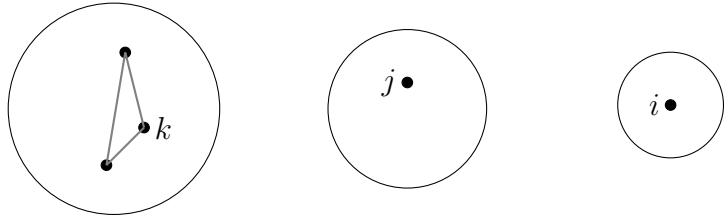

From claims 1 and 2 it follows that $p=2$ and one of $M_1$, $M_2$ has cardinality $1$.
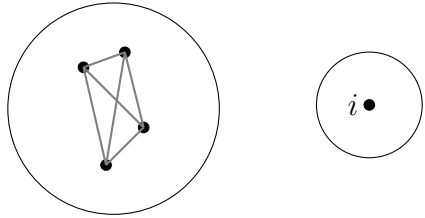
\begin{figure}[H]
\centering
\begin{tikzpicture}

\filldraw[black] (0,0) circle (2pt); 
\filldraw[black] (-0.5,-0.5) circle (2pt);
\filldraw[black] (-0.25,1) circle (2pt); 
\filldraw[black] (-0.8,0.8) circle (2pt);
\draw[black] (-0.4,0.25) circle (40pt);

\filldraw[black] (3,0.3) circle (2pt) node[anchor=east]{$i$};
\draw[black] (3,0.3) circle (20pt);

\draw[gray, thick] (-0.5,-0.5) -- (-0.25,1);
\draw[gray, thick] (0,0) -- (-0.5,-0.5);
\draw[gray, thick] (0,0) -- (-0.25,1);
\draw[gray, thick] (0,0) -- (-0.8,0.8);
\draw[gray, thick] (-0.25,1) -- (-0.8,0.8);
\draw[gray, thick] (-0.5,-0.5) -- (-0.8,0.8);
\end{tikzpicture}
\caption{Possible constellation}
\end{figure}

Hence there is an element $i\in\{1,...,n\}$ such that for all $k, l\in\{1,...,n\}$, $k<l$ the element $a$ commutes with $(k\ l)$ if and only if $k=i$ or $l=i$. Without loss of generality we may assume that $i=1$. It follows that there is a rack isomorphism $X\rightarrow \mathbb{T}_{n+1}$ such that
\begin{align*}
    a\mapsto (1\ n+1),\ (1\ j)\mapsto (1\ j)\ \text{for all } j\in\{2,...,n\}.
\end{align*}
\end{proof}

Now all preparations are made for the proof of the following main Theorem. 

\begin{thm}\label{thm:mainthm}
Let $(X,\tri)$ be a finite indecomposable rack of a conjugacy class of a group, $q: X\times X\rightarrow\fK^{\times}$ a two-cocycle and $(\fK X,c_q)$ the corresponding braided vector space. Let $V=\fK X$ be its Yetter-Drinfeld realization over $\fK G_X$. Then all left coideal subalgebras of $\B(V)$ in the category of $\ndN_0$-graded $\fK G_X$-comodules are generated in degree one if and only if $V$ appears in the following list (besides isomorphy). 
\begin{enumerate}   
    \item $V=\fK\mathbb{T}_3=M(g_{(1\ 2)},\rho_t)$ with $t=-1$ or $t^2-t+1=0$, where $\rho_t$ is the linear character on the centralizer $G_{\mathbb{T}_3}{}^{g_{(1\ 2)}}=\langle g_{(1\ 2)}\rangle$ with $\rho_t(g_{(1\ 2)})=t$. The corresponding $G_{\mathbb{T}_3}$-action on $V$ is explicitly given in Table~\ref{tab:twococycleS3}. 
    \item $V=\fK\mathbb{T}_4=M(g_{(1\ 2)},\rho_{t,\lambda})$ with $t=-1$ and $\lambda\in\{1,-1\}$, where $\rho_{t,\lambda}$ is the linear character on the centralizer $G_{\mathbb{T}_4}{}^{g_{(1\ 2)}}=\langle g_{(1\ 2)},g_{(3\ 4)}\rangle$ with $\rho_{t,\lambda}(g_{(1\ 2)})=t$ and $\rho_{t,\lambda}(g_{(3\ 4)})=\lambda t$. The corresponding two-cocycles are the constant two-cocycle $-1$ and the two-cocycle $q=\chi$ given in Equation~(\ref{eq:SntwococycleChi}), respectively.
    \item $V=\fK\mathbb{T}_5=M(g_{(1\ 2)},\rho_{t,\lambda})$ with $t=-1$ and $\lambda\in\{1,-1\}$, where $\rho_{t,\lambda}$ is the linear character on the centralizer $G_{\mathbb{T}_5}{}^{g_{(1\ 2)}}=\langle g_{(1\ 2)},g_{(3\ 4)},g_{(3\ 5)},g_{(4\ 5)}\rangle$ with $\rho_{t,\lambda}(g_{(1\ 2)})=t$ and $\rho_{t,\lambda}(g_{(i\ j)})=\lambda t$ for all $2<i<j\leq 5$. The corresponding two-cocycles are the constant two-cocycle $-1$ and the two-cocycle $q=\chi$ given in Equation~(\ref{eq:SntwococycleChi}), respectively.   
    \item $V=\fK\mathcal{T}=M(g_a,\rho_{t,\lambda})$ with $t\in\fK$ such that $t^2+t+1=0$ and $\lambda=-t^{-1}$, where $\rho_{t,\lambda}$ is the linear character on the centralizer $G_{\mathcal{T}}{}^{g_{a}}=\langle g_a,g_bg_d\rangle$ with $\rho_{t,\lambda}(g_a)=t$ and $\rho_{t,\lambda}(g_bg_d)=\lambda$. The  
    $G_{\mathcal{T}}$-action on $V$ is explicitly given in Table~\ref{tab:twococycleT}.
    \item $V=\fK\mathcal{B}=M(g_a,\rho_{t,\lambda})$ with $t=-1$ and $\lambda=1$, where $\rho_{t,\lambda}$ is the linear character on the centralizer $G_{\mathcal{B}}{}^{g_{a}}=\langle g_a,g_f\rangle$ with $\rho_{t,\lambda}(g_a)=t$ and $\rho_{t,\lambda}(g_f)=\lambda t$. The 
    $G_{\mathcal{B}}$-action on $V$ is explicitly given in Table~\ref{tab:twococycleB}.
    \item $V=\fK X$, where $X$ is the rack with one element and $q$ is an arbitrary two-cocycle. 
\end{enumerate}
\end{thm}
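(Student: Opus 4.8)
The plan is to prove the two implications separately. The backward implication is a compilation of the case analyses of Sections~\ref{se:Transpos}--\ref{se:cube}: for each entry of the list one invokes the corresponding statement already established. Concretely, case~(1) is Theorem~\ref{thm:3elrack}~(1); cases~(2) and~(3) are Theorem~\ref{thm:Tnrack} for $n=4$ and $n=5$ (using Remark~\ref{re:twisteq} to match the cocycles $-1$ and $\chi$ with the characters $\lambda=1$ and $\lambda=-1$); case~(4) is Theorem~\ref{thm:Track}~(4); case~(5) is Theorem~\ref{thm:Brack}~(4). For case~(6) the module $V$ is one-dimensional, so by Lemma~\ref{le:degreeonepartlc} every non-zero left coideal is $\fK 1$ or contains $V$, and in the second case the subalgebra it generates is all of $\B(V)$; hence every left coideal subalgebra is generated in degree one.

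For the forward implication I would assume that every left coideal subalgebra of $\B(X,q)$ in the category of $\ndN_0$-graded $\fK G_X$-comodules is generated in degree one and first determine the rack $X$. By Proposition~\ref{prop:braid} the rack $X$ must be braided, and by Theorem~\ref{thm:cocycle} the Nichols algebra $\B(V)$ is slim in degree two. If $x\tri y=y$ for all $x,y\in X$ then $\mathrm{Inn}(X)$ is trivial and indecomposability forces $|X|=1$, which is case~(6); otherwise pick $x,y\in X$ with $x\tri y\ne y$ and apply Lemma~\ref{le:noextensiont_3tC}, so that every subrack of $X$ generated by two non-commuting elements is isomorphic to $\mathbb{T}_3$, $\mathcal{T}$ or $\mathcal{B}$. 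The argument then splits according to which of these occurs. If some two-generated subrack is isomorphic to $\mathcal{B}$, then either it equals $X$ (so $X=\mathcal{B}$), or $X$ properly contains $\mathcal{B}$; in the latter case, comparing cardinalities of the possible two-generated subracks shows $X$ cannot be generated by fewer than three elements, so Proposition~\ref{prop:rackcontainingcube} yields a left coideal subalgebra not generated in degree one, a contradiction. If no two-generated subrack is isomorphic to $\mathcal{B}$ but some is isomorphic to $\mathcal{T}$, the same reasoning with Proposition~\ref{prop:rackcontainingtetrahedron} forces $X=\mathcal{T}$. In the remaining case every two-generated subrack is isomorphic to $\mathbb{T}_3$; taking $n$ to be the minimal size of a generating set of $X$, so that any $n-1$ elements generate a proper subrack, Proposition~\ref{prop:transpositioncomp} gives $X\cong\mathbb{T}_{n+1}$. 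Hence $X$ is $\mathbb{T}_m$ for some $m\ge 3$, or $\mathcal{T}$, or $\mathcal{B}$, or the one-element rack.

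It then remains to pin down the two-cocycle, i.e.\ the parameters of the one-dimensional character defining $V$. If $X=\mathbb{T}_m$ with $m\ge 6$, then $X$ contains $\mathbb{T}_6$ as a subrack and, by Theorem~\ref{thm:T6rack} together with Remark~\ref{rema:subracklcsas}, $\B(X,q)$ admits a left coideal subalgebra not generated in degree one, a contradiction; so $m\le 5$. For $\mathbb{T}_3$ I would parametrise the character of $G_{\mathbb{T}_3}{}^{g_{(1\ 2)}}$ by $t$ as in Lemma~\ref{le:transp_reps}; Theorem~\ref{thm:3elrack}~(2) then forces $t=-1$ or $t^2-t+1=0$, which is case~(1). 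For $\mathbb{T}_4$ and $\mathbb{T}_5$, Theorem~\ref{thm:generalexttranspos} forces $t=-1$, hence $\lambda\in\{1,-1\}$, giving cases~(2) and~(3). For $\mathcal{T}$, parametrising by $(t,\lambda)$ with $\lambda^2=t^4$ as in Lemma~\ref{le:centr_tetra} and combining Theorem~\ref{thm:Track}~(1)--(3) with Theorem~\ref{thm:cocycle} and Lemma~\ref{le:degreetworel}---in particular, any admissible $(t,\lambda)$ with $1+t\lambda\ne 0$ already admits a degree-two extension---leaves only $t^2+t+1=0$, $\lambda=-t^{-1}$, which is case~(4). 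For $\mathcal{B}$, parametrising by $(t,\lambda)$ with $\lambda^4=1$ as in Lemma~\ref{le:centralizerEnvGroupCubicRack}, Theorem~\ref{thm:Brack}~(1)--(3) leaves only $t=-1$, $\lambda=1$, which is case~(5). This exhausts all possibilities.

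I expect the main obstacle to be organisational rather than computational: all the genuinely hard input (the extension constructions in degrees $2$, $3$, $4$ and the GAP-assisted dimension counts) has been isolated in the auxiliary results. The care needed is in checking that the trichotomy ``$\mathcal{B}$-subrack / $\mathcal{T}$-subrack / only $\mathbb{T}_3$-subracks'' together with the choice of a minimal generating set feeds correctly into Propositions~\ref{prop:rackcontainingtetrahedron}, \ref{prop:rackcontainingcube} and~\ref{prop:transpositioncomp}, and that the parameter analysis for $\mathcal{T}$ and $\mathcal{B}$ is genuinely exhaustive over all admissible $(t,\lambda)$ once the degree-two criterion of Theorem~\ref{thm:cocycle} is combined with the partial results of Theorems~\ref{thm:Track} and~\ref{thm:Brack}.
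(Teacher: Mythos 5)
Your proposal is correct and follows essentially the same route as the paper: backward direction by citing Theorems~\ref{thm:3elrack}, \ref{thm:Tnrack}, \ref{thm:Track}~(4) and \ref{thm:Brack}~(4); forward direction via Proposition~\ref{prop:braid}, Lemma~\ref{le:noextensiont_3tC}, Propositions~\ref{prop:rackcontainingtetrahedron}, \ref{prop:rackcontainingcube} and \ref{prop:transpositioncomp}, then Theorems~\ref{thm:generalexttranspos} and \ref{thm:T6rack} with Remark~\ref{rema:subracklcsas} to pin down the racks and cocycles. Your explicit use of the degree-two criterion ($1+t\lambda\ne 0$) for the tetrahedron rack is in fact a useful precision, since parameters such as $(t,\lambda)=(-1,-1)$ are not literally covered by Theorem~\ref{thm:Track}~(1)--(3) alone, whereas the paper only writes ``it follows from Theorem~\ref{thm:Track}.''
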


\begin{proof}
First assume that all left coideal subalgebras of $\B(V)$ in the category of $\ndN_0$-graded $\fK G_X$-comodules are generated in degree one. The rack $X$ is braided by Proposition~\ref{prop:braid}. Assume that $X$ is not the rack with one element. Then there are two elements $x,y\in X$ with $x\tri y\notin\{x,y\}$ since $X$ is indecomposable. Hence $X$ includes $\mathbb{T}_3$, $\mathcal{T}$ or $\mathcal{B}$ as subrack by Lemma~\ref{le:noextensiont_3tC}. It follows that $X$ is $\mathbb{T}_3$, $\mathcal{T}$ or $\mathcal{B}$ if $X$ is generated by two elements. If $X=\mathbb{T}_3$, it follows from Theorem~\ref{thm:3elrack} that the only possible Yetter-Drinfeld structures on $\fK\mathbb{T}_3$ such that all left coideal subalgebras of $\B(\fK\mathbb{T}_3)$ in the category of $\ndN_0$-graded $\fK G_{\mathbb{T}_3}$-subcomodules are generated in degree one are those in (1). For $X=\mathcal{T}$ it follows from Theorem~\ref{thm:Track} that the only choices for suitable Yetter-Drinfeld structures are those in (4). For $X=\mathcal{B}$ it follows from Theorem~\ref{thm:Brack} that the only choice for a suitable Yetter-Drinfeld structure is that in (5). Assume that $X$ is not generated by less than $3$ elements. Proposition~\ref{prop:rackcontainingtetrahedron} and Proposition~\ref{prop:rackcontainingcube} imply that $X$ does not contain the rack $\mathcal{T}$ or the rack $\mathcal{B}$ as subrack. If $X$ is generated by $n\geq 3$ elements (and not by less than $n$ elements) and if $X$ does not contain $\mathcal{T}$ or $\mathcal{B}$ as a subrack, then for all $x,y\in X$ with $x\tri y\neq y$ the subrack generated by $x$ and $y$ is isomorphic to $\mathbb{T}_3$ by Lemma~\ref{le:noextensiont_3tC} and hence $X$ is the rack of transpositions of $\mathbb{S}_{n+1}$ by Proposition~\ref{prop:transpositioncomp}. Then if $n\in\{4,5\}$, the claim follows from Theorem~\ref{thm:generalexttranspos}. If $n\geq 6$, the claim follows from Theorem~\ref{thm:T6rack} and from Remark~\ref{rema:subracklcsas}.

The other implication follows in case (1) from Theorem~\ref{thm:3elrack} (1), in cases (2) and (3) from Theorem~\ref{thm:Tnrack}, in case (4) from Theorem~\ref{thm:Track} (4) and in case (5) from Theorem~\ref{thm:Brack} (4). In case (6) it is obvious.   
\end{proof}

\begin{rema}
The corresponding Nichols algebras $\B(V)$ to the braided vector spaces $V$ in (2)-(5) and to that with $t=-1$ in (1) of Theorem~\ref{thm:mainthm} were already known to be finite-dimensional (see Remark~\ref{rema:12dim} for (1), Remark~\ref{re:twisteq} for (2) and (3), Proposition~\ref{prop:5184} for (4) and Proposition~\ref{prop:576NA} for (5)). The Nichols algebras of the braided vector spaces in Theorem~\ref{thm:mainthm} (1) with $t^2-t+1=0$ is infinite-dimensional by \cite{heckenberger2023simple}. The $72$-dimensional Nichols algebra in Theorem~\ref{thm:Track}~(2) and Remark~\ref{rema:72dim} is an example for a finite-dimensional Nichols algebra that contains a left coideal subalgebra which is not generated in degree one.
\end{rema}

\begin{cor}\label{cor:braiddecomposition}
    Let $G$ be a group and $V$ a Yetter-Drinfeld module over $\fK G$ such that $\dim V_g\leq 1$ for all $g\in G$. Then all left coideal subalgebras of $\B(V)$ in the category of $\ndN_0$-graded $\fK G$-comodules are generated in degree one if and only if there are braided subspaces\footnote{A subspace $W\subseteq V$ is a braided subspace if $c_V(W\ot W)=W\ot W$.} $V_1,\dots, V_n\subseteq V$ such that 
    \begin{enumerate}
        \item $V_i$ is a $\fK G$-subcomodule for all $i\in\{1,\dots,n\}$, 
        \item $V=\bigoplus_{i=1}^n V_i$,
        \item $c_V{}^2\vert V_i\ot V_j=\id_{V_i\ot V_j}$ for all $i,j\in\{1,\dots,n\}$ with $i\ne j$, and
        \item $V_i$ is isomorphic to one of the braided vector spaces in (1)-(6) of Theorem~\ref{thm:mainthm} as braided vector space for all $i\in\{1,\dots,n\}$. 
    \end{enumerate}
\end{cor}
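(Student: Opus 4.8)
The plan is to reduce the statement, in both directions, to the indecomposable case settled by Theorem~\ref{thm:mainthm}. Put $X=\supp V$; by the Yetter--Drinfeld condition $X$ is closed under conjugation in $G$, hence (Example~\ref{ex:conjugacyrack}, Lemma~\ref{le:injectiverack}) a finite injective rack, and by Lemma~\ref{le:YDrackcocylce} it carries a two-cocycle $q$ with $(V,c_V)\cong(\fK X,c_q)$. By Proposition~\ref{prop:chooseenvelopingrealization} the property ``every $\ndN_0$-graded left coideal subalgebra is generated in degree one'' is unchanged if we replace $V$ by its realization $V_{G_X}$ over the enveloping group, and since $X$ is injective the $\fK G$-subcomodules of $V$ coincide with the $\fK G_X$-subcomodules of $V_{G_X}$; so I would work throughout with $V_{G_X}$. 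Let $X=\bigsqcup_{i=1}^n X_i$ be the decomposition into indecomposable components, i.e. the $\mathrm{Inn}(X)$-orbits, and set $V_i=\fK X_i\subseteq V$. Each $V_i$ is a braided subspace (since $X_i$ is a subrack) and a $\fK G_X$-subcomodule; since $\mathrm{Inn}(X)$ --- hence the conjugation action of $G_X$ on $X$ --- preserves each $X_i$, each $V_i$ is in fact a Yetter--Drinfeld submodule of $V_{G_X}$. Finally I would record the computation $c_q^2(v_x\ot v_y)=q_{x,y}q_{x\tri y,x}\,v_{(x\tri y)\tri x}\ot v_{x\tri y}$, which equals $v_x\ot v_y$ if and only if $x\tri y=y$ and $q_{x,y}q_{y,x}=1$; thus condition~(3) says precisely that elements of $X_i$ and $X_j$ commute for $i\ne j$ and that the corresponding cocycle values are mutually inverse.

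For the forward implication, assume all $\ndN_0$-graded left coideal subalgebras of $\B(V)$ are generated in degree one. Then $X$ is braided by Proposition~\ref{prop:braid} and $\B(V)$ is slim in degree two by Theorem~\ref{thm:cocycle}. If some $x\in X_i$, $y\in X_j$ with $i\ne j$ did not commute, then taking $V_2=V_i$ and $V_1=\bigoplus_{k\ne i}V_k$ (both Yetter--Drinfeld submodules of $V_{G_X}$) we would get $c_V^2|_{V_2\ot V_1}\ne\id$ from the computation above, so Proposition~\ref{prop:reducibleext} would give a left coideal subalgebra not generated in degree one, a contradiction. Hence distinct components commute, and since $X$ is a rack of conjugacy classes, $x\tri y=y\ne x$ together with slimness and Lemma~\ref{le:degreetworel}~(1) yields $q_{x,y}q_{y,x}=1$; so condition~(3) holds. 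Now $V_{G_X}=\bigoplus_iV_i$ is a direct sum of pairwise $c^2$-commuting Yetter--Drinfeld submodules with one-dimensional homogeneous components, so by Proposition~\ref{prop:lcsadecompextension}, applied inductively (using the remark after it and that every subcomodule of $V$ splits along the $V_i$), all left coideal subalgebras of each $\B(V_i)$ are generated in degree one. Since $X_i$ is a single conjugacy class of $G_X$ and is indecomposable, Theorem~\ref{thm:mainthm}, after another application of Proposition~\ref{prop:chooseenvelopingrealization} to pass to the realization over $G_{X_i}$, identifies $V_i$ as a braided vector space with one of the spaces in (1)--(6). Transporting the decomposition back to $V$ --- underlying subspaces, braiding and comodule structure being unchanged --- gives (1)--(4).

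For the converse, suppose braided subspaces $V_1,\dots,V_n$ satisfying (1)--(4) are given. Because $\dim V_g\le1$, each $V_i=\bigoplus_{x\in S_i}V_x$ with $S_i=\supp V_i$, and the $S_i$ partition $X$; each $S_i$ is a subrack, and by (4) it is isomorphic as a rack to one of the indecomposable racks of Theorem~\ref{thm:mainthm}, hence indecomposable. Using (3) (so distinct $S_i$ commute) one checks that each $S_i$ is $\mathrm{Inn}(X)$-invariant and that $\mathrm{Inn}(X)$ acts transitively on it, so the $S_i$ are exactly the indecomposable components of $X$ and $V_i=\fK X_i$. Realizing $V$ over $G_X$ as above, the $V_i$ become pairwise $c^2$-commuting Yetter--Drinfeld submodules, so Proposition~\ref{prop:lcsadecompextension} reduces the claim to showing that all left coideal subalgebras of each $\B(V_i)$ are generated in degree one; this follows from Theorem~\ref{thm:mainthm} (via Proposition~\ref{prop:chooseenvelopingrealization}, passing to $G_{X_i}$) since $V_i$ is, up to isomorphism of braided vector spaces, one of the listed spaces. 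Hence all left coideal subalgebras of $\B(V)$ are generated in degree one.

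The step I expect to be most delicate is not any single computation but the bookkeeping of group realizations: the indecomposable components $X_i$ need not be Yetter--Drinfeld submodules of $V$ over the original group $G$, whereas Proposition~\ref{prop:lcsadecompextension} is stated for genuine Yetter--Drinfeld submodules, so one must pass to the enveloping group $G_X$ (and then to $G_{X_i}$) and invoke Proposition~\ref{prop:chooseenvelopingrealization} to see that this does not change the question --- together with the verification that condition~(3) simultaneously encodes the commutativity of distinct components and the degree-two relation $q_{x,y}q_{y,x}=1$ that ``slim in degree two'' supplies.
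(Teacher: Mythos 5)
Your proof is correct and uses exactly the same ingredients as the paper's: Proposition~\ref{prop:reducibleext} and Lemma~\ref{le:degreetworel} to force condition~(3), Proposition~\ref{prop:lcsadecompextension} to split the problem along the summands, Proposition~\ref{prop:chooseenvelopingrealization} to move between group realizations, and Theorem~\ref{thm:mainthm} for the indecomposable pieces. The one organizational difference is the decomposition itself: the paper first splits $V$ into irreducible $\fK G$-Yetter-Drinfeld submodules, observes (in a footnote) that their supports may still be decomposable racks, and therefore iterates the passage to enveloping groups and further decompositions until all summands are irreducible; you instead pass to the realization over $G_{\supp V}$ once and take the $\mathrm{Inn}(\supp V)$-orbits, which are automatically indecomposable and (being orbits of the generating set of $G_{\supp V}$) full conjugacy classes, so no iteration is needed. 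This streamlines the forward direction slightly at the cost of having to verify up front that the $\mathrm{Inn}$-orbits are Yetter-Drinfeld submodules over the enveloping group and are still $\fK G$-subcomodules of the original $V$ --- both of which you do, and both of which are immediate from $\dim V_g\le 1$ and injectivity of the rack. Your explicit translation of condition~(3) into ``components commute and $q_{x,y}q_{y,x}=1$'' is a useful clarification that the paper leaves implicit.
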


\begin{proof}
First assume that all left coideal subalgebras of $\B(V)$ in the category of $\ndN_0$-graded $\fK G$-comodules are generated in degree one. Let $V=\bigoplus_{k=1}^m W_k$ be the decomposition of $V$ into its irreducible $\fK G$-Yetter-Drinfeld submodules. Then $c_V{}^2\vert W_k\ot W_l=\id_{W_k\ot W_l}$ for all $k,l\in\{1,\dots,m\}$ with $k\ne l$ by Proposition~\ref{prop:reducibleext} and for all $1\leq k\leq m$ all left coideal subalgebras of $\B(W_k)$ in the category of $\ndN_0$-graded $\fK G$-comodules are generated in degree one by Proposition~\ref{prop:lcsadecompextension}. For all $1\leq k\leq m$ let $X_k=\supp W_k\subseteq G$ be the underlying rack\footnote{Note that the racks $X_k$ may be decomposable since it is not provided that $\supp W_k$ generates $G$.} and $\widetilde{W_k}$ the $\fK G_{X_k}$-Yetter-Drinfeld realization of the underlying braided vector space $\fK X_k=W_k$ (see Lemma~\ref{le:YDrackcocylce} and Lemma~\ref{le:YDrealization1} for the construction). Then for all $1\leq k\leq m$ all left coideal subalgebras of $\B(\widetilde{W_k})$ in the category of $\ndN_0$-graded $\fK G_{X_k}$-comodules are generated in degree one by Proposition~\ref{prop:chooseenvelopingrealization} (2). If $\widetilde{W_k}$ is irreducible as Yetter-Drinfeld module over $\fK G_{X_k}$, then $X_k$ is indecomposable by Lemma~\ref{le:irredindecomp}. By Theorem~\ref{thm:mainthm}, it follows that $\widetilde{W_k}$ is isomorphic to one of the braided vector spaces mentioned in (1)-(6) of Theorem~\ref{thm:mainthm} as braided vector space. If $\widetilde{W_k}$ is reducible as Yetter-Drinfeld module over $\fK G_{X_k}$, then we decompose it into a direct sum of its irreducible $\fK G_{X_k}$-Yetter-Drinfeld submodules and repeat the steps of the proof until all summands are irreducible. Note that $\fK G_{X_k}$-Yetter-Drinfeld submodules of $\widetilde{W_k}$ are also $\fK G$-subcomodules of $V$ by construction. 

For the other implication assume that there are braided subspaces $V_1,\dots, V_n\subseteq V$ that fulfill the conditions (1)-(4). Let $X=\supp V$ and let $V_{G_X}$ be the $\fK G_X$-Yetter-Drinfeld realization of the braided vector space $V$. By Proposition~\ref{prop:chooseenvelopingrealization} (2) it suffices to prove that all left coideal subalgebras of $\B(V_{G_X})$ in the category of $\ndN_0$-graded $\fK G_X$-comodules are generated in degree one. The construction of $V_{G_X}$ is described in Lemma~\ref{le:YDrackcocylce} and Lemma~\ref{le:YDrealization1}. In particular, the $\fK G$-subcomodules $V_i$ are also $\fK G_X$-subcomodules and the braidings $c_V$ and $c_{V_{G_X}}$ coincide. The assumptions $\dim V_g\le 1$ for all $g\in G$ and $c_V{}^2\vert V_i\ot V_j=\id_{V_i\ot V_j}$ for all $i,j\in\{1,\dots,n\}$ with $i\ne j$ imply that $V_h=V_{ghg^{-1}}$ for all $g,h\in X$ with $g,h$ belonging to supports of different direct summands $V_i$ of $V$. Since $X$ generates $G_X$ and since $\dim V_g\leq 1$ for all $g\in G$ by assumption, it follows that the subcomodules $V_i$ are $\fK G_X$-Yetter-Drinfeld submodules of $V_{G_X}$. By Theorem~\ref{thm:mainthm}, for all $1\leq i\leq n$ all left coideal subalgebras of $\B(V_i)$ in the category of $\ndN_0$-graded $\fK G_{\supp V_i}$-comodules are generated in degree one. By Proposition~\ref{prop:chooseenvelopingrealization} (2) it follows that all left coideal subalgebras of $\B(V_i)$ in the category of $\ndN_0$-graded $\fK G_X$-comodules are generated in degree one. Then the claim follows from Proposition~\ref{prop:lcsadecompextension} since $c_V{}^2\vert V_i\ot V_j=\id_{V_i\ot V_j}$ for all $1\leq i,j\leq n$ with $i\ne j$ and $\dim V_g\leq 1$ for all $g\in G$ by assumption.  
\end{proof}

\providecommand{\bysame}{\leavevmode\hbox to3em{\hrulefill}\thinspace}
\providecommand{\MR}{\relax\ifhmode\unskip\space\fi MR }
\providecommand{\MRhref}[2]{%
  \href{http://www.ams.org/mathscinet-getitem?mr=#1}{#2}
}
\providecommand{\href}[2]{#2}

\end{document}